\def\RR{\mathds R}
\DeclareMathOperator{\Ima}{\mathrm{Im}}
\DeclareMathOperator{\Diff}{\mathrm{Diff}}
\DeclareMathOperator{\dom}{\mathrm{dom}}
\DeclareMathOperator{\Hom}{\mathrm{Hom}}
\DeclareMathOperator{\rank}{\mathrm{rk}}
\DeclareMathOperator{\loc}{\mathrm{loc}}
\DeclareMathOperator{\rk}{\mathrm{rk}}
\DeclareMathOperator{\Lin}{\mathrm{Lin}}
\DeclareMathOperator{\X}{\mathfrak{X}}
\def\({\left(}
\def\){\right)}
\def\[{\left[}
\def\]{\right]}
\def\al{\alpha}
\def\be{\beta}
\def\de{\delta}
\def\ep{\epsilon}
\def\te{\theta}
\def\la{\lambda}
\def\ze{\zeta}
\def\om{\omega}
\def\si{\sigma}
\def\Ga{\Gamma}
\def\Te{\Theta}
\def\Om{\Omega}
\def\tors{\tau}
\def\g{\mathfrak{g}}
\theoremstyle{plain}
\newtheorem{theorem}{Theorem}[section]
\newtheorem{proposition}[theorem]{Proposition}
\newtheorem{corollary}[theorem]{Corollary}
\newtheorem{lemma}[theorem]{Lemma}
\newtheorem{definition}[theorem]{Definition}
\theoremstyle{definition}
\newtheorem{obsx}[theorem]{Remark}
\newenvironment{observation}
  {\pushQED{\qed}\obsx}
  {\popQED\endobsx}
\newenvironment{example}
  {\pushQED{\qed}\examplex}
  {\popQED\endexamplex}
\def\del{\partial} %derivata parziale
\def\na{\nabla} %nabla
\def\arr{\rightarrow} %freccia semplice
\def\Etemp{F} %freccia da implicazione
\def\Ftemp{E} %freccia da coimplicazione
\begin{document}

\title{From PDEs to Pfaffian fibrations}
\date{\today}

%\author{Francesco Cattafi, Marius Crainic, Mar\'ia Amelia Salazar}

\author{Francesco Cattafi \footnote{Mathematical Institute, Universiteit Utrecht, The Netherlands, \texttt{f.cattafi@uu.nl} } 
\and Marius Crainic \footnote{Mathematical Institute, Universiteit Utrecht, The Netherlands, \texttt{m.crainic@uu.nl} } 
\and Mar\'ia Amelia Salazar \footnote{Departamento de Matem\'atica Aplicada, Universidade Federal Fluminense, Brazil, \texttt{mariasalazar@id.uff.br} } }

\maketitle

\begin{abstract}
 We explain how to encode the essential data of a PDE on jet bundle into a more intrinsic object called Pfaffian fibration. We provide motivations to study this new notion and show how prolongations, integrability and linearisations of PDEs generalise to this setting.
\end{abstract}
    
\begin{center}
\textbf{MSC2010}: 58A10, 58A30, 58A20, 58A15
\end{center}
%58A10   	Differential forms
%58A30   	Vector distributions (subbundles of the tangent bundles)
%58A20   	Jets
%58A15   	Exterior differential systems (Cartan theory)

\tableofcontents

\section{Introduction} 

The history and the importance of theory of Partial Differential Equations (PDEs) are themselves subjects of entire monographs. Very briefly, one of the central questions is that of integrability, i.e.\ the existence of local solutions of a PDE passing through each point. There are various techniques to handle this problem, each one with its own advantages. For instance, the Cartan-K\"{a}hler theorem can be applied in many instances but it is bound to the analytic setting. Another standard approach starts with the attempt to solve the PDE formally- and then one talks about formal integrability. One also discovers the notion of prolongations, which allows one to replace a given PDE with a new, ``larger'' one, but which may be easier to handle and, of course, has the same solutions as the original one. Another standard technique is that of linearising a PDE- the outcome is a PDE that is much easier to handle and which, although it usually has different solutions than the original one, often carries important informations about the behaviour of the solutions one is looking for. 

\medskip 

While the role of jets is clear already in the local study of PDEs, formalising it was important for a more geometric approach to PDEs; this was carried out by Charles Ehresmann \cite{Ehr51} in the 50's, leading to the the notion of jet bundle as the standard formalism to study PDEs on manifolds. Solutions of a PDE were then becoming sections of a bundle $R\rightarrow M$ over a manifold $M$, the PDEs themselves were becoming subspaces $P \subset J^k R$ of the bundles of jets of sections of $R$, and the condition for a section $s$ of $R$ to be a solution of $P$ was that $j_{x}^{k}s\in P$ for all $x\in M$.  Many of the notions and techniques known in the local study (e.g.\ prolongations, linearisations, etc) were then recast in this formalism; that process quickly revealed the notion of Cartan distribution(s), or Cartan form(s), on the jet bundles $J^kR$ and its central role to the entire geometric theory. The various ways of understanding these objects gave rise to different schools/approaches to the subject, e.g.\ depending on whether (and how) one works with vector fields or differential forms; see, among others, the monographs \cites{Boc99, Kra86, Olv86, Sau89, Sto00}. For instance, the Cartan-K\"{a}hler theorem mentioned above is now part of the standard material on Exterior Differential Systems \cite{Bry91}. Another example is the notion of diffiety, due to Vinogradov and his school \cite{Vin01}, which arises from the theory of differential equations in the same way the concept of algebraic variety arise from that of algebraic equations. It is important to mention that all these modern approaches to PDEs (including ours) have been greatly influenced by the pioneering works by Sophus Lie \cite{Lie88} and by \'{E}lie Cartan \cites{Car04, Car05}.

\medskip 

The aim of this paper is to emphasise and (hopefully) to clarify the importance of the Cartan distribution/form even further. The main message is that what is needed for the theory to work is not the jet bundles $J^k R$ but just the fibration $P \arr M$ together with the induced Cartan distribution; or, in our language, a {\it Pfaffian fibration}.  Of course, there are points at which the jet bundles are still important, but often they are just  ``noise'' in the background, giving rise to unnecessarily complicated formulae. Also, we are aware that this point may be, in principle, rather obvious to the specialists (and there are similar theories carried out at the level of infinite jet bundles), but we find it useful to spell it out in detail, taking care of the subtleties that arise along the way. We hope that, in this way, various techniques and notions that are often presented in a rather pragmatic way, via ``down to earth'' (but complicated) local formulae, become more transparent to people with a more geometric background/interests. 

\medskip 

On the other hand, our main motivation for carrying this out comes from the study of Lie pseudogroups and of geometric structures: the theory is now ready to be used right away to understand the main structures underlying the theory of Lie pseudogroups $\Gamma$ and, furthermore, of $\Gamma$-structures on manifolds. For instance, one may say that the Pfaffian groupoids of \cite{Sal13} are just the multiplicative version of the Pfaffian fibrations discussed in this paper. Again, while this may still seem rather abstract for someone whose interest on Lie pseudogroups comes from the study of symmetries of concrete PDEs, it reveals the theory from a more geometric perspective, pinpointing the actual structure that makes everything work, and uncovers rather unexpected bridges with other parts of Differential Geometry. For instance, the abstract (Pfaffian) groupoids arising from pseudogroups behave surprisingly similar to the symplectic groupoids of Poisson Geometry. This similarity can really be exploited: for instance, the analogues of the Hamiltonian spaces and of Morita equivalences of Poisson Geometry turn out to be precisely what is needed to study general geometric structures and their integrability - as carried out in \cite{Cat19}. In all of these, the notion of Pfaffian fibration that is being discussed in this paper has the role of building block. 

\medskip 

A few words on the structure of this paper. In section 2 we review the basics on PDEs: this include the notion of (finite-order) jet bundle and Cartan form, as well as its linear counterpart, the classical Spencer operator. Moreover, we recall the concepts of prolongation and of integrability of a PDE, and various important theorems in this area, together with the necessary technical tools, i.e.\ tableaux and Spencer cohomology.

In section 3 we introduce the definition of Pfaffian fibration in a double way, using either a differential form or a distribution. We define as well a number of objects naturally inspired from the theory of PDEs, such as symbol spaces and curvatures, and then we focus on the particular case of linear Pfaffian fibrations and the process of linearising Pfaffian fibration along a solution. We conclude with the discussion of the main examples that sparked our interest in this field.

Section 4 is the core of the paper: we use the definitions and the ideas from the previous section to develop a theory of prolongation in the context of Pfaffian fibrations. In particular, we present first the general notions of morphism and prolongation in the Pfaffian category, followed by the explicit construction of a prolongation which inspired from the classical notion of prolongation for PDEs, and which is ``universal'' in a certain sense. Since this process is not always possible, we show concrete criteria for the prolongability of a Pfaffian fibration, and then see how these results translate to the linear picture.

Last, in section 5 we apply the theorems from section 4 in order to tackle integrability of Pfaffian fibrations up to a finite order, as well as formal integrability. Borrowing ideas and terminology from the theory of $G$-structures, we associate inductively to any Pfaffian fibration certain obstructions to formal integrability, called the {\it torsions}. In this setting, we can prove fundamental result such as the Goldschmidt criterion for formal integrability, the integrability criterion for Pfaffian fibrations of finite type and the fact that analytic formally integrable Pfaffian fibrations are integrable.

\subsection*{Notations and conventions}

All manifolds and maps are smooth, unless stated explicitly otherwise. By a {\it fibration} between two manifolds $P$ and $M$ we  mean a surjective submersion $\pi: P \arr M$. Given a fibration $\pi: P \arr M$, by $T^\pi P$ we denote the {\it vertical bundle} $\ker (d\pi) \subset TP$ over $P$. By $\Om^k (P, \mathcal{N})$ we mean the space of differential $k$-forms on the manifold $P$ with coefficients in some vector bundle $\mathcal{N} \arr P$, i.e.\ $\Om^k (P, \mathcal{N}):= \Ga (\wedge^k T^*P \otimes \mathcal{N})$. We say that a form $\te \in \Om^k (P, \mathcal{N})$ is {\it (pointwise) surjective} if the linear map $\te_p: \wedge^k T_p P \arr \mathcal{N}_p$ is surjective for every $p \in P$.
Often we are given a vector bundle $E \arr M$, so that one can consider the pullback $\pi^*E \arr P$; when $\pi$ is clear from the context, we may omit the pullback notation. In particular, we often write $\Om^k (P, E)$ instead of $\Om^k (P,\pi^*E)$.

\subsection*{Acknowledgements}
 The authors would like to thank Luca Vitagliano for useful comments and suggestions. The first and second authors were supported by the NWO grant number 639.033.312. The third author was supported by CNPq Universal grant number 409552/2016-0; this study was financed in part by the Coordena\c{c}\~ao de Aperfei\c{c}oamento de Pessoal de N\'ivel Superior - Brasil (CAPES) - Finance code 001.

\section{PDEs on jet bundles}\label{sec:PDE}

The different notions which we will develop in the theory of Pfaffian fibrations arise as a way to geometrically encapsulate the fundamental properties of PDEs. In this section we review the various geometrical notions that motivated and inspired the analogous ones for Pfaffian fibrations.
In particular, we will restrict our attention to PDEs defined on jets of sections of a fibration, which are easier to deal with, more widely studied in the literature, and powerful enough for many applications. We will therefore not consider the more general setting of jets of submanifolds, even if we think that a suitable generalisation of Pfaffian fibrations could be introduced also in that case.

\subsection{Jets, PDEs, and the Cartan form}\label{sec:Cartan_form}

A PDE of order $k$ in the function $u=u(x_1,\ldots,x_n):\RR^n\to \RR^m$ is an equation of the form 
$$F \left( x_i,u, \frac{\partial^{|\alpha|} u}{\partial x_1^{\alpha_1}\cdots \partial x_m^{\alpha_m}} \right)=0$$ 
for all $m$-multi-indices $\alpha=(\alpha_1,\ldots,\alpha_m)$ with $|\alpha| =\alpha_1+\cdots+\alpha_m \leq k$.
However, in order to describe a conceptual theory of PDEs on manifolds, the language of jets will be very well suited, since it sees the PDE as a submanifolds of the $k$-jet bundle given by the zero locus of $F$ (see \cite{Kra86,Sau89} as references for jets).

More precisely, the $k$-jet of $u$ at $x\in\RR^n$ is encoded by all the partial derivatives of $u$ up to order $k$: this means that two such functions $u$ and $v$ have the same $k$-jet at $x$ if they have the same Taylor polynomial of degree $k$ at $x$. This defines an equivalence relation $\sim_x^k$ on the space of smooth maps $C^\infty (\RR^n,\RR^m)$; the induced equivalence class of $u$, called the {\it $k$-jet of $u$ at $x$}, is denoted by $j_x^ku$. Such an element of this quotient has coordinates $u^\alpha = \frac{\partial^{|\alpha|} u}{\partial x_1^{\alpha_1}\cdots \partial x_m^{\alpha_m}}$, with $\alpha$ as above.

More generally, given a fibration (by which we mean a surjective submersion) 
$$\pi:R\to M,$$
we denote by $\Gamma(R)$ the set of sections of $\pi$, and by $\Gamma_{\loc}(R)$ the local ones. For any integer $k\geq 0$, the space of {\bf $k$-jets of sections} of $\pi$ is defined as
$$J^kR:=\{j^k_x\beta\mid \beta\in\Gamma_{\loc}(R),\ x\in\mathrm{dom}(\beta)\}.$$
This set has a canonical manifold structure which fibres over $M$: indeed, the collection of $k$-jets of functions $u:\RR^n\to \RR^m$ coincides with $J^k S$, when $S = \RR^n \times \RR^m$ is the trivial bundle over $\RR^n$ with fibre $\RR^m$, hence the coordinates described above can be taken as local coordinates for $J^kR$ when $\dim(M) = n$ and $\rk(R)=m$.

In the case $k= 1$, a jet $j^1_x \be$ is completely encoded by $\be(x)\in R$ and the differential $d_x\be: T_xM\arr T_{\be(x)}R$. Actually, since $\be$ is a section of $\pi$, its differential is completely encoded by its image
$$ H_{\be(x)} := \textrm{Im}(d_x\be) \subset T_{\be(x)}R.$$
Indeed, $d_x\be$ will be the inverse of $d \pi |_{H}$. Of course, $H$ is not an arbitrary subspace: it is a complement in $T_{\be(x)}R$ of the vertical subspace $T^\pi_{\be(x)} R$. Such a complement is also called a {\bf horizontal subspace} for $\pi$. Therefore, one has
\begin{equation}\label{alternative_description_first_jet}
\begin{split}
J^1 R & \cong \{(p, H_p) \mid p\in R, H_p\subset T_pR\ \textrm{horizontal}\}\\
&  \cong \{ (p, \zeta) \mid p \in R, \zeta: T_x M \arr T_{\zeta(x)}R \text{ linear }, d\pi \circ \zeta = id \}.
\end{split}
\end{equation}
%Note also the relationship with the notion of Ehresmann connection on $\pi: R \arr M$ which, by definition, means a subbundle $H\subset TR$ complementary to $T^\pi R$. We see that such connections are basically the same thing as sections of the canonical projection $J^1\pi \arr R$. 

The various jet bundles are related to each other by the obvious projection maps
$$\cdots \to J^2R\to J^1R\to J^0R=R,$$
and each projection $J^kR\to J^{k-1}R$ is an affine bundle modelled on the pullback of $S^k(T^*M)\otimes T^\pi R$ (see for example Theorem 6.2.9 of \cite{Sau89}). To simplify the notation, we denote all the projections above by $\mathrm{pr}$, and the fibration of $J^kR$ over $M$ by $\pi.$ Having at hand the language of jets, we can naturally formalise the following definition (see \cite{Gol67b}): a PDE of order $k$ on $\pi$ is a (connected) submanifold
\begin{equation}\label{def_PDE_jet_bundle}
P\subset J^kR
\end{equation}
which fibres over $M$. Typically, a PDE is also asked to satisfy some mild regularity conditions. While one could develop most of the theory with no further assumptions, these conditions simplify the exposition and avoid unnecessary technicalities. Accordingly, in the rest of the thesis we will follow Section 1.4 of \cite{Yud16} and require that, if $P \subset J^k R$ is a PDE, then $\mathrm{pr}(P) \subset J^{k-1} R$ is a submanifold as well, and the projections $P \arr \mathrm{pr}(P)$ and $\mathrm{pr}(P) \arr \pi(P) \subset M$ are submersions.

A {\bf (local) solution} of a PDE $P$ is any (local) section $\beta$ of $R$ with the property that 
$$j_x^k\beta\in P \quad \forall x\in\mathrm{dom}(\beta);$$ 
this means that the (local) section $j^k\beta$ of $J^kR$ must be a (local) section of $P$. In other words, the set of solutions of $P$, denoted by $\mathrm{Sol}(P)$, is made up by all the sections $\alpha$ of $P$ which are {\it holonomic}, i.e.\ of the form $\alpha=j^k\beta$ for $\beta$ a section of $R$. Accordingly, in order to detect which sections are holonomic, we introduce the {\bf Cartan 1-form} 
\begin{equation*}
\te_{\mathrm{can}}\in\Omega^1(J^kR,\mathrm{pr}^*(T^\pi (J^{k-1}R)))\end{equation*}
with $T^\pi (J^{k-1}R):=\ker(d\pi)$ the vector bundle over $J^{k-1}R$ of vectors tangent to the fibres of $J^{k-1}R\to M$. For instance, in the case $k=1$, $\te_{\mathrm{can}}$ is defined as follows: if $p:=j^1_x\beta$, and $X\in T_pJ^1R$,
\begin{equation}\label{eq:Cartan_form}
(\te_{\mathrm{can}})_p (X) := d\mathrm{pr}(X)-d_x\beta (d\pi(X))\in T^\pi_{\beta(x)}R.
\end{equation}
In the general case, at level $k$, $\te_{\mathrm{can}}$ is defined analogously (it is the difference between the two canonical ways to move from the $k$- to the $(k-1)$-jet space). Moreover, we let $\mathcal{C}:=\ker(\te_{\mathrm{can}})$ be the kernel of the Cartan form, called the {\bf Cartan distribution} (see \cite{Boc99, Kra86, Olv86}).

The main property of this new object is the following:
\begin{lemma}\label{lemma:holonomic} 
A section $\alpha$ of $J^kR\to M$ is holonomic, i.e.\ of the form $\al = j^k\beta$, $\beta\in\Gamma(R)$, if and only if $\alpha^*\te_{\mathrm{can}}=0$ (equivalently, the section $d\alpha:TM\to TJ^kR$ takes values in $\mathcal{C}$).
\end{lemma}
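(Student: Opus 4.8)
The plan is to prove both directions of the equivalence by working first in the base case $k=1$, where the Cartan form has the explicit description \eqref{eq:Cartan_form}, and then explaining how the general case reduces to this same local computation. The key observation is that a section $\alpha$ of $J^kR \to M$ can always be written, in the base coordinate $x$, as $\alpha(x) = j^k_x \beta_x$ for some local section $\beta_x$ of $R$ that depends on $x$; the section $\alpha$ is holonomic precisely when one can choose a single $\beta$ with $\beta_x = \beta$ for all $x$, i.e.\ when $\alpha = j^k\beta$. What $\alpha^*\te_{\mathrm{can}} = 0$ should detect is exactly this compatibility.

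First I would unwind what $\alpha^*\te_{\mathrm{can}} = 0$ means pointwise. By definition of pullback, for $x \in M$ and $v \in T_x M$ we have $(\alpha^*\te_{\mathrm{can}})_x(v) = (\te_{\mathrm{can}})_{\alpha(x)}(d_x\alpha(v))$. Specialising to $k=1$ and writing $p = \alpha(x) = j^1_x\beta$ (for some local $\beta$ representing the jet at the single point $x$), formula \eqref{eq:Cartan_form} gives
\begin{equation*}
(\alpha^*\te_{\mathrm{can}})_x(v) = d\mathrm{pr}(d_x\alpha(v)) - d_x\beta\bigl(d\pi(d_x\alpha(v))\bigr).
\end{equation*}
Now $\alpha$ is a section of $\pi \colon J^1 R \to M$, so $\pi \circ \alpha = \mathrm{id}_M$ and hence $d\pi \circ d_x\alpha = \mathrm{id}$, which collapses the second term to $d_x\beta(v)$. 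Writing $\bar\alpha := \mathrm{pr}\circ\alpha$ for the underlying section of $R \to M$, the first term is $d_x\bar\alpha(v)$. Thus $\alpha^*\te_{\mathrm{can}} = 0$ is equivalent to $d_x\bar\alpha = d_x\beta$ as maps $T_xM \to TR$, where $\beta$ is the representative of $\alpha(x)$. In view of the description \eqref{alternative_description_first_jet}, saying $d_x\bar\alpha = d_x\beta$ for every $x$ is exactly saying $\alpha(x) = j^1_x\bar\alpha$, i.e.\ $\alpha = j^1\bar\alpha$ is holonomic. This handles both implications simultaneously for $k=1$, and the parenthetical reformulation follows since $\te_{\mathrm{can}} = 0$ on a vector iff that vector lies in $\mathcal{C} = \ker(\te_{\mathrm{can}})$, so $\alpha^*\te_{\mathrm{can}} = 0$ iff $d\alpha$ takes values in $\mathcal{C}$.

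For general $k$ I would invoke the description of $\te_{\mathrm{can}}$ as the difference of the two canonical ways of passing from the $k$-jet to the $(k-1)$-jet space, set $\bar\alpha := \mathrm{pr}\circ\alpha$ (a section of $J^{k-1}R$), and run the identical argument: the condition $\alpha^*\te_{\mathrm{can}} = 0$ says that $d\bar\alpha$ agrees with the ``holonomic lift'' prescribed by the jet $\alpha(x)$, which by the same bijective correspondence as in \eqref{alternative_description_first_jet} (now at level $k-1$) forces $\alpha(x) = j^1_x\bar\alpha$. Then I would argue by downward induction on $k$: this identity says $\alpha$ is the $1$-jet prolongation of the lower-order section $\bar\alpha$, and the condition $\alpha^*\te_{\mathrm{can}}=0$ simultaneously forces $\bar\alpha$ itself to satisfy the analogous holonomicity constraint at level $k-1$, so iterating down to level $0$ produces a single $\beta \in \Gamma(R)$ with $\alpha = j^k\beta$.

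The main obstacle I anticipate is purely bookkeeping rather than conceptual: making the phrase ``the two canonical ways to move from the $k$- to the $(k-1)$-jet space'' precise enough that the collapse of the second term (using $d\pi\circ d\alpha = \mathrm{id}$) and the identification of the first term go through cleanly at every level, and verifying that the inductive step genuinely descends — i.e.\ that vanishing of $\alpha^*\te_{\mathrm{can}}$ at level $k$ really does entail the hypothesis needed at level $k-1$, rather than merely the top-order compatibility. The base case $k=1$ is transparent because we have the closed formula \eqref{eq:Cartan_form}; the care is all in checking that the general $\te_{\mathrm{can}}$ has the same ``$d\mathrm{pr} - (\text{holonomic lift})$'' shape so that the same cancellation occurs.
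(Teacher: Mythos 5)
The paper states Lemma \ref{lemma:holonomic} without proof, treating it as a classical fact about jet bundles, so there is no in-paper argument to compare against; judged on its own, your proof is correct and is the standard one. The $k=1$ case is complete: your computation $(\alpha^*\te_{\mathrm{can}})_x(v)= d_x\bar\alpha(v)-d_x\beta(v)$, combined with the observation that $\beta(x)=\mathrm{pr}(\alpha(x))=\bar\alpha(x)$ and the identification \eqref{alternative_description_first_jet}, yields the equivalence $\alpha^*\te_{\mathrm{can}}=0 \iff \alpha = j^1\bar\alpha$, which handles both directions at once exactly as you say. For general $k$, the two points you flag as the anticipated obstacles do go through: the level-$k$ Cartan form at $p=j^k_x\beta_x$ has precisely the shape $\te_{\mathrm{can}}(X)=d\mathrm{pr}(X)-d_x(j^{k-1}\beta_x)(d\pi(X))$ (this is the embedding $J^kR \hookrightarrow J^1(J^{k-1}R)$ made explicit), so $\alpha^*\te_{\mathrm{can}}=0$ forces $d_x\bar\alpha = d_x(j^{k-1}\beta_x)$; and since the genuinely holonomic section $j^{k-1}\beta_x$ pulls back the level-$(k-1)$ Cartan form to zero (the easy direction, by the same one-line cancellation $d\mathrm{pr}\circ d(j^{k-1}\beta_x)=d(j^{k-2}\beta_x)$), this gives $\bar\alpha^*\te_{\mathrm{can}}^{k-1}=0$, which is exactly the hypothesis needed one level down. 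The only step you leave implicit is the recombination when climbing back up: from $\alpha_j = j^1\alpha_{j-1}$ at every level $j$ (where $\alpha_j$ denotes the projection of $\alpha$ to $J^jR$) you conclude $\alpha = j^k\beta$ with $\beta = \alpha_0$, and this uses that the inclusion $J^jR \hookrightarrow J^1(J^{j-1}R)$, $j^j_x\gamma \mapsto j^1_x(j^{j-1}\gamma)$, is injective — a routine Taylor-coefficient check, but worth stating, since it is what converts the chain of one-step prolongations into the single identity $\alpha(x)=j^k_x\beta$ inside $J^kR$.
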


Conceptually this means that we can characterise the solutions of $P$ only in terms of $P$ viewed as a bundle over $M$ (and not as a subbundle of $J^kR$), together with the restriction of $\te_{\mathrm{can}}$ to $P$:
$$\mathrm{Sol}(P)\cong \Gamma(P,\te_{\mathrm{can}}):=\{\alpha\in\Gamma(P)\mid \alpha^*\te_{\mathrm{can}}=0\}.$$
In other words, for the study of PDEs, the only relevant data is a fibration $P\to M$ endowed with an appropriate 1-form (or, equivalently, with its kernel): this will be our starting point for the definition of a Pfaffian fibration (which forgets the ambient jet space).

\subsection{Linear PDEs and Spencer operators}\label{sec:Spencer_operator}

If $R=E$ is a vector bundle over $M$, $J^kE$ is canonically a vector bundle over $M$ with fibrewise addition and multiplication by a scalar $\lambda \in \RR$ defined by 
$$j_x^k\beta+j_x^k\eta:=j^k_x(\beta+\eta), \quad \quad \lambda j^k_x \beta := j^k_x (\lambda \beta).$$
A linear PDE of order $k$ on $E$ is a vector subbundle $F\subset J^kE$ over $M$. As in the general case, solutions of $F$ are sections of $F$ that are holonomic; however, in this linear setting, the {\bf classical Spencer operator} of $E$ plays the role of the Cartan form \eqref{eq:Cartan_form}, i.e.\ detecting holonomic sections. As for the Cartan form, we will define explicitly this operator when $k=1$, using a very convenient way to describe sections of $J^1E$, known as the {\it Spencer decomposition}: it is the canonical isomorphism of vector spaces 
\begin{equation}\label{eq:Spencer_decomposition}\Gamma(J^1E) \cong \Gamma(E)\oplus \Omega^1(M,E).\end{equation}
This decomposition comes from the following short exact sequence of vector bundles over $M$
\begin{equation}\label{eq:exact_sec}
0\to \Hom(TM,E)\overset{i}{\to}J^1E \xrightarrow{pr} E\to 0,
\end{equation}
where $i$, at the level of sections, is defined as $i(df\otimes s):=fj^1s-j^1(fs)$. Although the sequence \eqref{eq:exact_sec} does not have a canonical right splitting, at the level of sections it does: $s\mapsto j^1s$. This gives the decomposition \eqref{eq:Spencer_decomposition}, so that the classical Spencer operator $D^{\mathrm{clas}}$ is by definition the projection to the second component:
\begin{equation}\label{eq:Spencer_operator}
D^{\mathrm{clas}}:\Gamma(J^1E)\to \Omega^1(M,E).
\end{equation}
This operator has been extensively studied, see for example \cites{Gol76a,Gol77,Spe69,Spe71, Ngo,Ngo2}. Moreover, it is clear from its description that holonomic sections of $F \subset J^1E$ are precisely the sections $\alpha$ with the property that $D^{\mathrm{clas}} (\alpha)=0$.

The same story can be also repeated for higher jets, obtaining classical Spencer operators of the form $D^{\mathrm{clas}}:\Gamma(J^kE)\to\Omega^1(M,J^{k-1}E)$. More precisely, since $J^k E$ is a vector subbundle of $J^1 (J^{k-1}E)$ (over $M$), we can consider the Spencer operator of the vector bundle $J^{k-1}E \arr M$ (where $J^{k-1}E$ now plays the role of $E$) and restrict it to space of sections $\Ga(J^k E)$.

This operator $D^\mathrm{clas}: \Gamma(J^kE)\to\Omega^1(M,J^{k-1}E)$ vanishes on the solutions of $k^{th}$-order linear PDEs $F \subset J^k E$; hence, in analogy with the Cartan form, we can characterise the solutions of $F$ only in terms of $F$ viewed as a vector bundle (and not as a subbundle of $J^k E$), together with the restriction of $D=D^\mathrm{clas}$ to $F$:
$$\mathrm{Sol}(F)\cong \Gamma(F,D):=\{\alpha\in\Gamma(F)\mid D(\alpha)=0\}.$$
After defining Pfaffian fibrations as generalisation of PDEs with their Cartan forms, their linear counterpart (the linear Pfaffian fibrations) will be in turn a generalisation of linear PDEs with their classical Spencer operators.

\begin{observation}
We will also show (see Proposition \ref{linear_Pfaffian_bundles_with_connections} and Remark \ref{linearisation_of_linear_pfaffian_bundle}) that the classical Spencer operator can be seen as the linearisation of the Cartan form in the sense of Definition \ref{def_linearisation_pfaffian_bundle}. Actually, the whole picture relating the two objects can be more clearly seen in the world of Lie groupoids endowed with multiplicative forms and Lie algebroids endowed with (non classical) Spencer operators: the linearisation of a Lie groupoid is its Lie algebroid, and the linearisation of a multiplicative forms is a Spencer operator. See \cite{Cra12} as a reference for this topic.  
\end{observation}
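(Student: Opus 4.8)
The statement only acquires content once the underlying fibration is a vector bundle $E \arr M$, so that $J^kE$ inherits its canonical linear structure and the zero section becomes a distinguished holonomic solution. The plan is therefore to specialise to $R = E$ and to unwind Definition \ref{def_linearisation_pfaffian_bundle}: linearising the Cartan form $\te_{\mathrm{can}}$ along a solution $\si$ produces the operator sending a vertical variation $V \in \Ga(\si^* T^\pi(J^kE))$ to the first-order term $\frac{d}{dt}\big|_{t=0}\si_t^*\te_{\mathrm{can}}$, for $\si_t$ any path of sections of $J^kE$ with $\si_0 = \si$ and velocity $V$. Since $\si$ is a solution, $\si^*\te_{\mathrm{can}} = 0$ by Lemma \ref{lemma:holonomic}, so this derivative is independent of the chosen path and the construction is legitimate.

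First I would record the linear refinement of Lemma \ref{lemma:holonomic} in the case $k=1$: for an arbitrary $\al \in \Ga(J^1E)$, written through the Spencer decomposition \eqref{eq:Spencer_decomposition} as a pair $(s, \om) \in \Ga(E) \oplus \Om^1(M,E)$, the defining formula \eqref{eq:Cartan_form} together with $\mathrm{pr}\circ\al = s$ and $\pi\circ\al = \mathrm{id}_M$ gives, for $v \in T_xM$,
\begin{equation*}
(\al^*\te_{\mathrm{can}})_x(v) = d_xs(v) - d_x\be(v),
\end{equation*}
where $\be \in \Ga_{\loc}(E)$ represents the $1$-jet $\al(x)$, so that $d_x\be$ is its derivative slot. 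After the canonical identification $T^\pi E \cong \pi^*E$ this reads $\al^*\te_{\mathrm{can}} = D^{\mathrm{clas}}(\al)$, up to the sign fixed by the convention for $i$ in \eqref{eq:exact_sec}. In other words, the pullback of the Cartan form by sections of $J^1E$ is exactly the classical Spencer operator \eqref{eq:Spencer_operator} --- a relation which is already linear in $\al$.

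Feeding this into the linearisation is then immediate. Taking $\si$ to be the zero section and $\si_t$ a path through it with velocity $V$, identified with a section of $J^1E$ via $\si_0^* T^\pi(J^1E) \cong J^1E$, linearity of $D^{\mathrm{clas}}$ yields
\begin{equation*}
\frac{d}{dt}\Big|_{t=0}\si_t^*\te_{\mathrm{can}} = \frac{d}{dt}\Big|_{t=0}D^{\mathrm{clas}}(\si_t) = D^{\mathrm{clas}}(V),
\end{equation*}
so the linearisation of $\te_{\mathrm{can}}$ is $D^{\mathrm{clas}}$ itself; the same computation along any other solution gives the same operator, matching the fact that $D^{\mathrm{clas}}$ does not refer to a choice of solution. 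For general $k$ I would use the inclusion $J^kE \subset J^1(J^{k-1}E)$ from Section \ref{sec:Spencer_operator}, apply the $k=1$ result with $E$ replaced by $J^{k-1}E$, and restrict the resulting operator to $\Ga(J^kE)$ to recover $D^{\mathrm{clas}}\colon \Ga(J^kE) \arr \Om^1(M,J^{k-1}E)$.

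The main obstacle is bookkeeping rather than conceptual. One must reconcile the general Definition \ref{def_linearisation_pfaffian_bundle}, phrased for possibly nonlinear Pfaffian fibrations and involving the extension of vertical vectors and the independence of that choice, with the explicit linear structure used here, and keep track of the several canonical identifications at play: the vertical bundle of $J^1E$ along a section with $J^1E$ itself, and $T^\pi E$ with $\pi^*E$. Fixing the sign and these isomorphisms so that the conclusion agrees with Proposition \ref{linear_Pfaffian_bundles_with_connections} and Remark \ref{linearisation_of_linear_pfaffian_bundle} is where the care lies; the Lie-groupoid counterpart mentioned at the end --- differentiating a multiplicative form to a Spencer operator, cf.\ \cite{Cra12} --- is a conceptual parallel rather than a step in this argument.
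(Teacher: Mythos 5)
Your proposal is correct and follows essentially the same route as the paper: the identity $\al^*\te_{\mathrm{can}} = D^{\mathrm{clas}}(\al)$ is exactly the content of Proposition \ref{linear_Pfaffian_bundles_with_connections} together with Example \ref{example_linear_pfaffian_bundle} (where the relative connection of $\te_{\mathrm{can}}$ is identified with the classical Spencer operator, including the higher-order case via $J^kE \subset J^1(J^{k-1}E)$), and your derivative computation along the path $\si_t = tV$ through the zero section is literally the computation of Remark \ref{linearisation_of_linear_pfaffian_bundle}, which exploits linearity of $\te_{\mathrm{can}}$ in the same way; with the paper's convention $i(df\otimes s)=fj^1s-j^1(fs)$ the signs in fact match exactly, so no hedge is needed. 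Your added observation that linearising along \emph{any} holonomic section (not just $\mathbf{0}$) yields the same operator is a correct small extension not stated in the paper, and it holds because the coefficient bundle of a linear Pfaffian form is a pullback $\pi^*F$, so the curve $\be_t^*\te(X_x)$ lives in the fixed fibre $F_x$.
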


\subsection{Prolongations of PDEs}\label{sec:PDE_prolongation}

The theory of prolongations of a PDE is a powerful tool to find its solutions; the literature on this topic is very rich and dates back several decades: we mention \cites{Gol76a, Gol76b, Olv86, Boc99, Vin01, Sto00} and we briefly and informally recall here some of these notions.\\

A prolongation of a PDE $P$ of order $k$ on $\pi:R\to M$ can be thought as the $(k+1)$-order PDE on $\pi$ obtained by taking the first order differential consequences of $P$, with the fundamental property of having the same space of solutions. The first naive guess to define the prolongation of $P$ would be simply $J^1 P = \{ j^1_x \si \mid \si \in \Ga (P) \}$. However, one immediately sees that $J^1P$ fails to be a PDE of order $k+1$ on $\pi$, since $J^1P$ is by construction a subset of $J^1 (J^k R)$, not of $J^{k+1} R \subset J^1 (J^k R)$. The way to solve this (set-theoretical) problem is to define the {\bf prolongation} $P^{(1)}$ as
\begin{equation}\label{eq:PDE_prolongation}
P^{(1)}:= J^1 P \cap J^{k+1} R.
\end{equation}
However, $P^{(1)}$ may fail to be a subbundle of $J^{k+1} R$; even more, $P^{(1)}$ may fail to be smooth. The PDE $P$ is said to be {\it integrable up to order $k+1$} if $P^{(1)}$ happens to be ``nice enough'', meaning that it is indeed a new PDE, and the projection $P^{(1)}\to P$ is a surjective submersion. If $P$ is integrable up to any order, it is said to be {\it formally integrable}. In this case we obtain a tower of bundles over $M$
\begin{equation}\label{eq:tower}
\cdots\to P^{(2)}\to P^{(1)}\to P,
\end{equation}
each of them endowed with the restriction of the Cartan form at every order, and all the maps being surjective submersions. 

The study of formal integrability of a PDE is a very useful tool to prove the existence of its solutions. This can be best seen in the analytic case, where formal integrability becomes a sufficient condition for {\it integrability}, i.e.\ finding local solutions at every point.

\begin{theorem}[Theorem 9.1 of \cite{Gol67b}]\label{existence_analytic_solutions_PDE}
If $P\subset J^kR$ is an analytic formally integrable PDE, then for every $p \in P^{(l)} \subset J^{k+l} R$ over $x \in M$ there is an analytic local solution $\be$ of $P$ such that $j^{k+l}_x \be = p$ on a neighbourhood of $x \in \dom(\be)$.

In particular, through every $p \in P$ there exists a local (analytic) solution of $P$.
\end{theorem}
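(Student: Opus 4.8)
The plan is to split the argument into a purely formal stage and an analytic (convergence) stage, with essentially all of the content in the second.

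\textbf{Formal solution.} First I would exploit formal integrability directly. By definition the tower \eqref{eq:tower} consists of surjective submersions $\mathrm{pr}\colon P^{(m+1)} \arr P^{(m)}$ for every $m \geq 0$. Hence, starting from the given $p \in P^{(l)}$ over $x$, I can choose inductively lifts $p_m \in P^{(m)}$ with $p_l = p$ and $\mathrm{pr}(p_{m+1}) = p_m$. Since a point of $J^{k+m}R$ lying over $x$ is exactly a Taylor datum of degree $k+m$ at $x$, the compatible sequence $(p_m)_{m \geq l}$ assembles into a formal power series, i.e.\ a formal section $\hat\be$ at $x$ with $j^{k+m}_x \hat\be = p_m$ for all $m$. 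By construction $\hat\be$ is a formal solution of $P$ passing through $p$.

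\textbf{Convergence.} The substance of the theorem is to upgrade $\hat\be$ to a genuine analytic solution with the same $(k+l)$-jet at $x$; this is where the analyticity hypothesis enters and where I expect the main difficulty to lie. The tool is the Cauchy--Kovalevskaya theorem. After passing, if necessary, to a finite prolongation so that the symbol of $P$ becomes involutive (a standard preliminary reduction), I would choose coordinates $x_1, \dots, x_n$ on $M$ near $\pi(p)$ adapted to the symbol and use the submersivity of the projection $P \arr \mathrm{pr}(P)$ together with that of the prolongation maps to solve the equation for the ``principal'' top-order derivatives. This rewrites $P$, locally around $p$, in a Cauchy--Kovalevskaya normal form, in which the highest $x_n$-derivatives of the principal unknowns are expressed analytically in terms of the remaining jet data, with analytic Cauchy data obtained by restricting $\hat\be$ to a slice $\{x_n = c\}$. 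The Cauchy--Kovalevskaya theorem then produces a unique convergent solution of this normal-form system, and iterating over the independent variables (an induction on $n$, one Cauchy--Kovalevskaya problem per variable) yields an analytic section $\be$ defined near $x$.

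\textbf{Matching the full equation.} It remains to check that the section $\be$ produced from the normal form is actually a solution of all of $P$, not merely of the equations retained in the normal form. Here formal integrability is used a second time and crucially: the ``side conditions'' of $P$ not built into the normal form are satisfied by $\hat\be$ to all orders, so the associated obstruction (itself analytic, being built from analytic data) vanishes along the initial slice and is propagated by the normal-form evolution precisely because the prolongation projections are surjective submersions. The obstruction therefore vanishes identically, whence $\be \in \mathrm{Sol}(P)$, and by construction $j^{k+l}_x \be = p$; the final assertion is the case $l = 0$. I expect the hardest part to be the convergence stage --- arranging a normal form compatible with the symbol and verifying that the discarded equations propagate --- that is, the involutivity and compatibility bookkeeping that makes Cauchy--Kovalevskaya applicable and guarantees its output solves the whole system.
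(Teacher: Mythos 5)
First, a point of comparison that matters here: the paper does not prove this statement at all. Theorem \ref{existence_analytic_solutions_PDE} is imported verbatim as Theorem 9.1 of \cite{Gol67b} and is used later as a black box (notably in the proof of Theorem \ref{thm:integrability}). So your proposal cannot match or diverge from an in-paper argument; the relevant benchmark is Goldschmidt's original proof and the Cartan--K\"ahler circle of ideas.

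Measured against that, your architecture is the standard and correct one, and the formal stage is executed properly: formal integrability gives the tower \eqref{eq:tower} of surjective submersions, a compatible sequence of jets at the single point $x$ is literally a formal power series (no Borel-type lemma needed, since only a formal section is wanted), and, because solutions of a prolongation coincide with solutions of $P$, producing a solution of the prolonged system through a lift $p_m$ of $p$ does yield $j^{k+l}_x \beta = p$. The preliminary reduction to an involutive symbol after finitely many prolongations is also legitimate --- it is exactly \cite[Lemma 2]{Gol68}, quoted in Section \ref{section:tableaux} --- provided you add that formal integrability and the solution set pass to the prolongations (Remark \ref{cor_integrability_order_k}, resting on \cite[Theorem 7.2]{Gol67b}).

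The genuine gap is that the two stages you yourself flag as hard \emph{are} the theorem: the convergence stage and the propagation stage together constitute the Cartan--K\"ahler theorem in jet-bundle form, and its proof --- the existence of $\delta$-regular coordinates for the symbol, the majorant estimates making each Cauchy--Kovalevskaya step work, and Cartan's involutivity test guaranteeing compatibility of the successive initial-value problems --- is the entire analytic content, which your sketch names but does not perform. Moreover, the one mechanism you do state is imprecise: the discarded equations are not propagated ``because the prolongation projections are surjective submersions.'' Surjectivity of those projections is what produces the formal solution and kills the torsion at each level; the propagation of the side conditions along the evolution comes instead from \emph{uniqueness} in Cauchy--Kovalevskaya applied to the analytic system satisfied by the obstruction functions, and it is the involutivity (in particular $2$-acyclicity) of the symbol that guarantees this derived system is again of Cauchy--Kovalevskaya type with vanishing initial data. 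Finally, note that whether one follows Goldschmidt's original route, which leans on his linear analytic existence theorem \cite{Gol67a} and the $\delta$-cohomology machinery, or the Cartan--K\"ahler route you sketch (cf.\ \cite{Bry91}), the heavy lifting sits precisely in these deferred steps; as written, your text is a correct plan for a proof rather than a proof.
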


However, in the smooth category Theorem \ref{existence_analytic_solutions_PDE} is not always true, since there exist formally integrable PDEs admitting no solution: see the famous Lewy counterexample \cite{Lewy57}.

To understand better the structure of the prolongations and the notion of formal integrability, one arrives at the notion of a tableau (see \cite{Bry91,Gol67a} and Definition \ref{def_tableau} in the next section). The tableaux are linear spaces that provide the framework to handle the intricate linear algebra behind PDEs; they also provide (Spencer) cohomological criteria for integrability of PDEs.

In particular, the {\bf symbol space} $\g$ of the PDE $P \subset J^k R$ is the following tableau 
%defined as the kernel of the projection $d pr:T^\pi P\to T(J^{k-1}R)$, seen as a subbundle
%bundle of vector subspaces of the symmetric space over $P$:
\begin{equation}\label{eq:symbol_pde}
\g := \ker (d \mathrm{pr}: T^\pi P\to T J^{k-1}R ) \subset \ker (d \mathrm{pr}: T^\pi J^kR\to T J^{k-1}R )\cong S^k(T^*M)\otimes T^\pi R .
\end{equation}
This last isomorphism comes from the following short exact sequence:
\begin{equation}\label{eq:jets}
0\to S^kT^*M\otimes T^\pi R\to T^\pi J^kR\overset{d\mathrm{pr}}{\to} T^\pi J^{k-1}R\to 0,
\end{equation}
where we assume that all vector bundles sit on top $J^kR$ as pullback by the obvious maps (which we omit).

Using the definition of the Cartan form $\te_{\mathrm{can}}$, one checks that
\begin{equation*}\label{symbol_space}
\g = \{ v \in T^\pi P | \te_{\mathrm{can}}(v) = 0\} = T^{\pi} P \cap \ker(\te_{\mathrm{can}})\cong T^{\pi} P\cap  (S^k(T^*M)\otimes T^\pi R).
\end{equation*}

We can use the symbol space to provide a sufficient criterion for formal integrability of PDEs in terms of the prolongations and the Spencer cohomology of $\g$, which we recall in the next section (see \cite{Gol67b} for the original result and \cite{Yud16} for a more careful and modern proof):
\begin{theorem}[\bf Goldschmidt formal integrability criterion]\label{Goldschmidt_criterion_PDE}
Let $P$ be a PDE whose symbol space $\g$ is 2-acyclic, i.e.\ its Spencer cohomology $H^{k,2}(\g)$ vanishes for every $k \geq 0$.
If, moreover, $P^{(1)} \arr P$ is surjective and the prolongation $\g^{(1)} := \{ \eta \in S^{k+1} (T^* M) \otimes T^\pi R \mid \iota_X \eta \in \g \ \forall X \in \mathfrak{X}(M) \}$ is of constant rank, then $P$ is formally integrable.
\end{theorem}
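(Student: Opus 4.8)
The plan is to prove formal integrability by induction on the order, after isolating a single ``one-step'' statement and then iterating it. The statement I would prove is: if $P \subset J^k R$ has $\g$ 2-acyclic, $\g^{(1)}$ of constant rank, and $P^{(1)} \to P$ surjective, then $P^{(1)}$ is a smooth PDE of order $k+1$ whose symbol is $\g^{(1)}$, this symbol is again 2-acyclic and of constant rank, and $P^{(2)} \to P^{(1)}$ is surjective. Granting this, the three hypotheses regenerate themselves at each level, so applying the lemma to $P^{(l)}$ shows inductively that every $P^{(l)}$ is a smooth PDE and every $P^{(l+1)} \to P^{(l)}$ is a surjective submersion; this makes the tower \eqref{eq:tower} well defined, which is precisely formal integrability. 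The base case of the induction is supplied verbatim by the hypotheses, so all the content is in the one-step lemma.

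The algebraic backbone of the induction is the behaviour of Spencer cohomology and of symbol ranks under prolongation, and here I would quote two purely linear-algebraic facts from the $\delta$-Poincaré machinery recalled in the next section. First, the Spencer complex of $\g^{(1)}$ is the Spencer complex of $\g$ shifted by one in the polynomial weight, giving the standard isomorphism $H^{k,2}(\g^{(1)}) \cong H^{k+1,2}(\g)$; since $\g$ is 2-acyclic in every weight, so is $\g^{(1)}$, and 2-acyclicity propagates. Second, 2-acyclicity forces the relevant symbol sequences \eqref{eq:jets} to be exact, which determines $\dim \g^{(l+1)}$ from $\dim \g^{(l)}$; combined with the constant rank of $\g^{(1)}$ and the identity $\g^{(l)} = (\g^{(1)})^{(l-1)}$, this makes every prolonged symbol a vector bundle of constant rank.

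The geometric heart is the surjectivity of $P^{(2)} \to P^{(1)}$, which I would treat by the classical obstruction argument. Working locally, present $P$ as $\Phi^{-1}(0)$ for a submersion $\Phi$ into a vector bundle $N$, so that $P^{(l)}$ is cut out by the formal total derivatives $D^i\Phi$ with values in $S^i T^*M \otimes N$ for $0 \le i \le l$, and $\g = \ker\bigl(\sigma(\Phi)\bigr)$ for the principal symbol $\sigma(\Phi): S^k T^*M \otimes T^\pi R \to N$. Fix $q \in P^{(1)}$ over $p \in P$; lifting $q$ to $P^{(2)}$ means finding a point $q'$ in the affine fibre of $J^{k+2}R \to J^{k+1}R$ over $q$, modelled on $S^{k+2}T^*M \otimes T^\pi R$, with $D^2\Phi(q') = 0$. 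The assignment $q' \mapsto D^2\Phi(q')$ is affine with linear part the prolonged symbol map into $S^2 T^*M \otimes N$, so $q$ lifts exactly when the value of $D^2\Phi$ at a reference lift lies in the image of that map. Because $q$ already satisfies $D\Phi = 0$, this value is $\delta$-closed, and a diagram chase in the Spencer complex identifies its class in the cokernel with an element of $H^{k+1,2}(\g)$. Two-acyclicity kills this class, so the lift exists and $P^{(2)} \to P^{(1)}$ is surjective; the smoothness of $P^{(2)}$ and the submersion property then follow from the constant rank of $\g^{(2)}$ via the constant-rank theorem.

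I expect the main obstacle to be the precise construction and control of this obstruction class. One must verify that $D^2\Phi$ at a reference lift is genuinely $\delta$-closed, that the resulting class is independent of all choices (the reference lift, the local presentation $\Phi$, and the splittings of the affine bundles), and that the diagram chase really lands it in $H^{k+1,2}(\g)$ rather than in a larger group. Handling the nonlinearity of $\Phi$ cleanly, so that the affine picture at the symbol level faithfully governs solvability of the nonlinear lifting equation, is the delicate technical point, and it is exactly here that the hypothesis that $P^{(1)} \to P$ is onto, rather than merely a condition on the symbol, is indispensable.
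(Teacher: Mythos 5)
Your proposal is correct in outline, but it is essentially a reconstruction of Goldschmidt's original argument (as in \cite{Gol67b} and \cite{Yud16}), which this paper deliberately does \emph{not} reproduce: the statement is quoted with a citation, and the proof the paper actually supplies is for the generalisation to Pfaffian fibrations in Section 5, by a different mechanism. The paper's induction does not regenerate the full hypothesis package at each level via the shift isomorphism $H^{l,2}(\g^{(1)})\cong H^{l+1,2}(\g)$; instead, Lemma \ref{lemma:up_to_k} identifies the symbol of each prolongation with the pullback of the classical prolongation $\g^{(i)}$ of the \emph{original} tableau, and Proposition \ref{prop:torsion} shows that the obstruction to lifting at level $l$ --- the torsion $\tors^{l+1}$, defined intrinsically as the class of $\widetilde{\kappa}_{H^{(l)}}$ modulo $\mathrm{Im}(\delta)$, i.e.\ via the curvature of the Cartan distribution --- takes values directly in $H^{l-1,2}(\g)$. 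Two-acyclicity of the original $\g$ then kills all torsions at once, and Proposition \ref{integrability_one_step_higher} closes the induction; constant rank of all $\g^{(l)}$ comes from Lemma \ref{lemma:2-acyclic} and Remark \ref{rmk:lemma}, exactly as you intend. What your route buys is self-containedness at the level of jets: a local presentation $P=\Phi^{-1}(0)$ and formal total derivatives make the obstruction concrete. What the paper's route buys is coordinate- and presentation-freedom: there is no defining function $\Phi$, no reference lift, and hence none of the well-definedness checks you correctly flag as the delicate point --- the torsion is well defined by construction, and the entire $\delta$-closedness computation is carried out once, geometrically, in the proof of Proposition \ref{prop:torsion} using a Cartan--Ehresmann connection. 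Two small inaccuracies to fix if you carry your plan out: the sequence \eqref{eq:jets} is exact unconditionally, so what 2-acyclicity actually provides is exactness of the Spencer sequence at the $\wedge^2$ spot (this is the rank bookkeeping behind Lemma \ref{lemma:2-acyclic}, not exactness of \eqref{eq:jets}); and the $\delta$-closedness of $D^2\Phi$ at a reference lift needs more than ``$q$ satisfies $D\Phi=0$'' --- it requires the compatibility of the prolonged symbol sequences, which is precisely the content your deferred diagram chase must supply.
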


\begin{observation}
In the same way that the theory of Pfaffian fibrations (developed in Section \ref{section_pfaffian_bundle}) is inspired from the theory of PDEs (recalled in Section \ref{sec:Cartan_form}), the notion of prolongation of a Pfaffian fibration (developed in section 4) comes as a geometrical way to describe the prolongation of a PDE only in terms of $P$ and the Cartan form, i.e.\ it isolates the properties that each map of \eqref{eq:tower} has in terms of $\te_{\mathrm{can}}$, forgetting the ambient jet space where $P$ lived.
\end{observation}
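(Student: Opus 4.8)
The plan is to read this observation as a compatibility (specialization) statement and to prove it by exhibiting the section 4 prolongation of the Pfaffian fibration $(P \arr M, \te_{\mathrm{can}}|_P)$ attached to a PDE $P \subset J^k R$, and checking that it reproduces the classical prolongation $P^{(1)} = J^1 P \cap J^{k+1}R$ of \eqref{eq:PDE_prolongation}. Concretely, assuming the abstract prolongation is defined as $\mathrm{Prol}(P) := \{(p, H_p) \in J^1 P \mid \te_p(H_p)=0\}$ equipped with the induced first-order Cartan form and its projection to $P$, I would verify three matches: (i) the underlying fibration $\mathrm{Prol}(P) \arr M$ coincides with $P^{(1)} \arr M$; (ii) the intrinsically defined 1-form on $\mathrm{Prol}(P)$ restricts to the level-$(k+1)$ Cartan form on $P^{(1)}$; and (iii) the structural map $\mathrm{Prol}(P) \arr P$ is the projection $P^{(1)} \arr P$. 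Since $\mathrm{Prol}(P)$ is phrased solely in terms of $(P, \te_{\mathrm{can}}|_P)$, establishing these matches is exactly what it means to say the abstract prolongation forgets the ambient jet space.

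The engine of the argument is the intrinsic description of $J^{k+1}R$ inside $J^1(J^k R)$ purely via the Cartan form. Using the model \eqref{alternative_description_first_jet}, a point of $J^1(J^k R)$ over $\eta \in J^k R$ is a horizontal subspace $H_\eta \subset T_\eta J^k R$ for the fibration $J^k R \arr M$; writing $j^1_x\alpha$ for the corresponding jet, the value $(\alpha^*\te_{\mathrm{can}})_x$ depends only on $\eta = \alpha(x)$ and on $H_\eta = \mathrm{Im}(d_x\alpha)$, hence only on $j^1_x\alpha$. I would then invoke the standard pointwise refinement of Lemma \ref{lemma:holonomic}: the locus where this value vanishes, i.e.\ where $H_\eta \subset \mathcal{C}_\eta = \ker(\te_{\mathrm{can}})_\eta$, is exactly $J^{k+1}R$, realised as the horizontal integral elements of the Cartan distribution. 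Intersecting with $J^1 P$ then identifies $P^{(1)}$ with the set of horizontal subspaces $H_p \subset T_p P$ satisfying $H_p \subset \ker(\te_{\mathrm{can}}|_P)_p = \mathcal{C}_p \cap T_p P$; since $H_p$ is tangent to $P$, this is the same as $\mathrm{Prol}(P)$, and the description involves only $P$ and the restricted form.

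For the 1-form, I would observe that both the classical $\te_{\mathrm{can}}$ at level $k+1$ and the prolonged form of section 4 arise from the canonical first-order Cartan form of $J^1$ (applied to the base $J^k R$ and to $P$ respectively), so that matching them reduces to identifying their coefficient bundles along the inclusion $T^\pi P \hookrightarrow T^\pi(J^k R)$ and checking compatibility with the projections of the tower \eqref{eq:tower}. This last identification is where I expect the main obstacle to lie: the classical form takes values in $\mathrm{pr}^*(T^\pi(J^k R))$, whereas the abstract prolonged form takes values in a bundle built intrinsically from $(P, \te_{\mathrm{can}}|_P)$, so one must produce a canonical isomorphism between these coefficient bundles and verify it is natural under prolongation. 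By contrast, the set-level and distribution-level matches of the previous paragraph are comparatively soft consequences of \eqref{alternative_description_first_jet} and Lemma \ref{lemma:holonomic}. I would also flag that smoothness and constant-rank issues -- whether $P^{(1)}$ is an honest bundle and $P^{(1)}\arr P$ a submersion -- are not part of this identification: they are the content of integrability up to order $k+1$ and of the criteria of section 5, so the observation concerns the definitional match and deliberately brackets those regularity questions.
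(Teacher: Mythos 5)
Your overall strategy --- realising the Pfaffian fibration $(P,\te_{\mathrm{can}}|_P)$ attached to a PDE, forming an intrinsic prolongation, and matching it with $P^{(1)} = J^1P \cap J^{k+1}R$ --- is indeed the paper's route (Proposition \ref{prop:PDE_prolongation}, together with Theorem \ref{thm:classical_prolongation} and Example \ref{example_normalised_prolongation}). But there is a genuine gap at the step you call ``the engine'': it is \emph{not} true that $J^{k+1}R$ is the locus in $J^1(J^kR)$ where the horizontal subspace satisfies $H_\eta \subset \mathcal{C}_\eta$. That locus is the \emph{partial} prolongation $J^1_{\mathcal{C}}(J^kR)$ in the sense of Definition \ref{def_partial_prolongation}, which is strictly larger: $J^{k+1}R$ consists only of those horizontal subspaces inside $\mathcal{C}$ whose second-order part is \emph{symmetric}, and this symmetry is exactly the integral-element condition $\sigma^*\kappa_{\mathcal{C}} = 0$ of Definition \ref{def:integral_elements}. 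Indeed, $J^1_{\mathcal{C}}(J^kR) \to J^kR$ is affine modelled on $\Hom(\pi^*TM, S^kT^*M \otimes T^\pi R)$, while $J^{k+1}R \to J^kR$ is modelled on $S^{k+1}T^*M\otimes T^\pi R$; these differ whenever $\dim M \geq 2$. Concretely, for $R = \RR^n \times \RR$ and $k=1$, the horizontal subspace at $(x^i,u,u_i)$ spanned by $X_i = \partial_{x^i} + u_i\partial_u + a_{ij}\partial_{u_j}$ lies in $\mathcal{C}$ for \emph{any} matrix $(a_{ij})$, but lies in $J^2R$ only if $a_{ij}=a_{ji}$, and one computes $\te_{\mathrm{can}}([X_i,X_j]) = (a_{ji}-a_{ij})\,\partial_u$, so the curvature detects precisely the failure of symmetry. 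The ``standard pointwise refinement of Lemma \ref{lemma:holonomic}'' you invoke is therefore false: holonomicity of a section $\alpha$ on a whole neighbourhood forces $(d\alpha)^*\kappa_{\mathcal{C}}=0$ through $[d\alpha(X),d\alpha(Y)]=d\alpha([X,Y])$, but a single $1$-jet tangent to $\mathcal{C}$ carries no such constraint.

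Consequently, your object $\mathrm{Prol}(P) := \{(p,H_p)\in J^1P \mid \te_p(H_p)=0\}$ is the paper's partial prolongation $J^1_HP$, not the classical prolongation of Definition \ref{def:classical_prolongation}, which additionally imposes $\zeta^*\kappa_H=0$; your match (i) then fails already for the trivial PDE $P=J^kR$. The fix is exactly the curvature condition the paper builds into abstract prolongations via \eqref{eq:curv_2}: take $\mathrm{Prol}(P,H)=\{(p,\zeta)\in J^1_HP \mid \zeta^*\kappa_H=0\}$ and check, as in the proof of Proposition \ref{prop:PDE_prolongation}, that for $\zeta$ with image in $T_pP$ one has $\zeta^*\kappa_H = \zeta^*\kappa_{\mathcal{C}}$, since $H=\mathcal{C}\cap TP$ and the brackets may be computed with extensions tangent to $P$. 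With this correction your steps (ii) and (iii) do go through, and more easily than you anticipate: the coefficient-bundle matching you flag as the main obstacle is handled formally by the normalised-prolongation identifications of Remarks \ref{rmk:normalised} and \ref{rk_classical_prol_for_forms} (namely $\te^{(1)}|_{T^\pi}=d\mathrm{pr}$ and $\mathrm{pr}^*\te = \bar\te\circ\te^{(1)}$), so the genuinely nontrivial content sits in the set-level match, not in the form-level one. Your closing point --- that smoothness of $P^{(1)}$ and submersivity of $P^{(1)}\to P$ are deliberately bracketed, being the content of integrability up to order $k+1$ --- is correct and agrees with the paper.
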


\subsection{Tableaux and Spencer cohomology}\label{section:tableaux}

As stated in Theorem \ref{Goldschmidt_criterion_PDE}, Goldschmidt provides in \cite{Gol67b} a cohomological criterion for formal integrability of a PDE in terms of its tableau. In this section we recall the general notions of tableau and Spencer cohomology, and state some facts relevant to the theory of PDEs. We also describe a small variant of the Spencer cohomology which will appear in the theory of Pfaffian fibrations, when dealing with a slightly more general notion of tableau.

\begin{definition}\label{def_tableau}
Let $E,F$ be vector spaces. A {\bf tableau} on $(E,F)$ is a linear subspace 
$$\g\subset \mathrm{Hom}(E,F).$$
We define the {\bf $1^{\text{st}}$ prolongation of} $\g$ as 
$$\g^{(1)}:=\{\eta\in \mathrm{Hom}(E, \mathfrak{g}): \eta(X)(Y)= \eta(Y)(X)\ \forall\ X, Y\in E\}=\Hom(E,\g)\cap S^{2}E^*\otimes F,$$
and we define inductively the {\bf $i^{\text{th}}$ prolongation} of $\g$ by 
 $$\mathfrak{g}^{(i)}:= \left(\mathfrak{g}^{(i-1)}\right)^{(1)}=\mathrm{Hom}(E, \mathfrak{g}^{(i-1)})\cap S^{i+1}E^*\otimes F.$$
\end{definition}

Next, we recall from Section 6 of \cite{Gol67b} that the following operator on $E$,
$$\delta:S^kE^*\to E^*\otimes S^{k-1}E^*,\quad \delta(\eta)(v)=\iota_v\eta\in S^{k-1}E^*,$$
extends to a linear map 
$$\delta:\wedge^jE^*\otimes S^kE^*\to \wedge^{j+1}E^*\otimes S^{k-1}E^*,\quad \delta(\omega\otimes \eta)=(-1)^j\omega\wedge\delta(\eta).$$
The resulting sequence of complexes (i.e.\ $\delta\circ\delta=0$) is of the form
\begin{equation}\label{eq:complex}
0\to S^kE^*\overset{\delta}{\to}E^*\otimes S^{k-1}E^*\overset{\delta}{\to}\cdots\overset{\delta}{\to}\wedge^{n}E^*\otimes S^{k-n}E^*\to 0
\end{equation}
for each $k$ (we set $S^lE^*=0$ for $l<0$). We tensor then the sequence \eqref{eq:complex} by $F$, and the operator $\delta$ by $id_F$, keeping still the same notation $\delta$. Note that, for a tableau $\g\subset \mathrm{Hom}(E,F)$, each prolongation $\mathfrak{g}^{(i)}$ can be described as the kernel of the restriction of the appropriate $\delta$ to $\textrm{Hom}(E, \mathfrak{g}^{(i-1)})$:
\begin{equation}\label{eq:differential}
 \delta=\delta_i: \textrm{Hom}(E, \mathfrak{g}^{(i-1)})\rightarrow \textrm{Hom}(\wedge^2E, \mathfrak{g}^{(i-2)}), \quad \delta(\eta)(X, Y)= \eta(X)(Y)- \eta(Y)(X). 
 \end{equation}
Therefore, it is not difficult to see that the sequence of complex \eqref{eq:complex} tensored with $F$ contains the subsequence of complexes
$$0\to \g^{(i)}\overset{\delta}{\to} E^*\otimes \g^{(i-1)}\overset{\delta}{\to}\wedge^2E^*\otimes \g^{(i-2)}\overset{\delta}{\to}\cdots\overset{\delta}{\to}\wedge^{i}E^*\otimes \g\overset{\delta}{\to} \wedge^{i+1}E^*\otimes F,$$
for each $i$. At $\wedge^mE^*\otimes \g^{(l)}$, the cocycles are denoted by
$$Z^{l,m}(\g):=\ker(\delta:\wedge^mE^*\otimes\g^{(l)}\to \wedge^{m+1}E^*\otimes\g^{(l-1)}),$$
and the coboundaries by
$$B^{l,m}(\g):=\mathrm{Im}(\delta:\wedge^{m-1}E^*\otimes\g^{(l+1)}\to \wedge^{m}E^*\otimes\g^{(l)});$$
the resulting cohomology groups are denoted by
\begin{equation}\label{eq:Spencer_cohomology}
H^{l,m}(\g):=Z^{l,m}(\g)/B^{l,m}(\g).
\end{equation}
Note that by construction $H^{l,1}(\g)=0$ for all $l\geq0$. The resulting cohomology is called the {\bf Spencer cohomology of the tableau $\g$.}

\begin{definition}\label{def_acyclic}
Let $r\geq 1$ be an integer. A tableau $\g$ is said to be {\bf $r$-acyclic} if 
$$H^{l,m}(\g)=0,\quad \forall 1\leq m\leq r,\ l\geq0,$$
and it is {\bf involutive} if it is $r$-acyclic for all $r\geq 1$, i.e.
$$H^{l,m}(\g)=0,\quad \forall m \geq 1,\ l\geq0.$$
\end{definition}

Later on, in the theory of Pfaffian fibrations, we will need a small variant of the Spencer complex of a tableau $\g\subset \Hom(E,F)$, in which the inclusion $\g\hookrightarrow \Hom(E,F)$ is replaced by a linear map 
$$\partial:\g\to \Hom(E,F).$$
In this case we define the {\bf $1^{\text{st}}$ prolongation of} $\g$ {\bf (with respect to $\partial$)} by
\begin{equation}\label{eq:prolongation_tableau}
\g^{(1)}(\partial):=\{\eta\in\Hom(E,\g)\mid \partial(\eta(X))(Y)=\partial(\eta(Y))(X),\ \forall X,Y\in E\}.
\end{equation} 
We can regard $\g^{(1)}(\partial)$ as a (classical) tableau on $(E,\g)$ and prolong it repeatedly, giving rise to the higher prolongations
$$\g^{(i)}(\partial):= S^iE^*\otimes\g\cap\Hom(E,\g^{(i-1)}), \quad i>1.$$
The Spencer sequence for $\g^{(1)}(\partial)$ can be extended in the following way: we extend $\partial$ to the linear map 
$$\delta_\partial:\wedge^jE^*\otimes\g\to\wedge^{j+1}E^*\otimes F,\quad \delta_\partial(\omega\otimes v)=(-1)^j\omega\wedge\partial(v).$$
A simple computation shows that the sequence of Spencer complexes of $\g^{(1)}(\partial)$ extends to the sequence of complexes
\begin{equation}\label{eq:partial_Spencer}0\to \g^{(i)}\overset{\delta}{\to} E^*\otimes \g^{(i-1)}\overset{\delta}{\to}\cdots \overset{\delta}{\to}\wedge^{i-1}E^*\otimes \g^{(1)}\overset{\delta}{\to}\wedge^{i}E^*\otimes \g\overset{\delta_\partial}{\to} \wedge^{i+1}E^*\otimes F,\end{equation}
for each $i$. We call the {\bf $\partial$-Spencer cohomology of $\g$} the cohomology of the sequence \eqref{eq:partial_Spencer}.

\ 

Now, when dealing with vector bundles $E,F$ over $M$ instead of vector spaces, all the notions discussed above extend naturally. In particular, a {\bf tableau bundle on $(E,F)$} is a bundle $\g \subset \Hom(E,F)$ of linear subspaces $\{\g_x \subset\Hom(E_x;F_x)\}_{x \in M}$, whose rank may vary; $\g$ is therefore a (smooth) vector subbundle over $M$ only when it is of constant rank. However, let us point out that the prolongations $\g^{(i)}$ may fail to be smooth even if we start with a smooth tableau bundle $\g$; at certain points the rank of some prolongations may not be constant anymore. One of the roles of the acyclicity condition from Definition \ref{def_acyclic} is to ensure the smoothness of the prolongations (see \cite{Gol67b,Yud16}):

\begin{lemma}\label{lemma:2-acyclic}
Let $\g\subset\Hom(E,F)$ be a tableau bundle over a connected manifold $M$. If $\g$ is 2-acyclic and $\g^{(1)}\subset\Hom(E,\g)$ is a vector bundle of constant rank, then $\g^{(i)}\subset\Hom(E,\g^{(i-1)})$ is also a vector bundle of constant rank for all $i\geq0.$
\end{lemma}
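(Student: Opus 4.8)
The plan is to argue by induction on $i$, proving that whenever $\g^{(-1)} := F$, $\g^{(0)} := \g$, $\g^{(1)}, \dots, \g^{(i)}$ are all smooth vector bundles of constant rank, the same holds for $\g^{(i+1)}$. The base case $i = 1$ is immediate: $\g$ and $\g^{(1)}$ are vector bundles of constant rank by hypothesis (the former because $\Hom(E,\g)$ is assumed to be a bundle, the latter explicitly), and $F$ is one trivially. The whole argument rests on viewing the prolongation as a kernel: by the description in \eqref{eq:differential}, at each $x \in M$ one has $\g^{(i+1)}_x = \ker(\delta_1)_x$, where
$$\delta_1 \colon E^* \otimes \g^{(i)} \arr \wedge^2 E^* \otimes \g^{(i-1)}, \qquad \delta_2 \colon \wedge^2 E^* \otimes \g^{(i-1)} \arr \wedge^3 E^* \otimes \g^{(i-2)}$$
are the two consecutive Spencer differentials at this level (both honest morphisms of constant-rank bundles by the inductive hypothesis, using $\g^{(-1)} = F$ when $i = 1$). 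Since $\dim(E^* \otimes \g^{(i)})$ is then constant, proving that $\g^{(i+1)}$ has constant rank is equivalent to proving that $\rank(\delta_1)_x$ is independent of $x$.

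First I would invoke the two standard semicontinuity facts for a smooth morphism $A$ of constant-rank vector bundles: the function $x \mapsto \rank A_x$ is lower semicontinuous, whereas $x \mapsto \dim\ker A_x$ is upper semicontinuous. Applied to $\delta_1$, this gives that $\rank(\delta_1)_x$ is lower semicontinuous; on its own this only tells us that $\dim \g^{(i+1)}_x$ is upper semicontinuous, which is not enough to conclude.

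The decisive step, and the one where $2$-acyclicity is used, is to upgrade this to constancy. Because $\g$ is $2$-acyclic we have $H^{i-1,2}(\g_x) = 0$ for every $x$, i.e.\ the Spencer complex is exact at $\wedge^2 E^* \otimes \g^{(i-1)}$, so that $\Ima(\delta_1)_x = \ker(\delta_2)_x$ pointwise and hence
$$\rank(\delta_1)_x = \dim\ker(\delta_2)_x.$$
The right-hand side is \emph{upper} semicontinuous by the general fact above (here it is essential that $\g^{(i-1)}$ and $\g^{(i-2)}$ are already of constant rank, so that $\delta_2$ is a morphism of constant-rank bundles). Thus $\rank(\delta_1)_x$ is both lower and upper semicontinuous, hence locally constant, hence constant since $M$ is connected. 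Therefore $\dim \g^{(i+1)}_x$ is constant, and a kernel of constant rank of a smooth bundle morphism is a smooth subbundle; this makes $\g^{(i+1)}$ a vector bundle of constant rank and closes the induction.

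I expect the substance of the proof to lie entirely in this semicontinuity ``sandwich'': $2$-acyclicity is exactly what rewrites the lower-semicontinuous quantity $\rank(\delta_1)$ as the upper-semicontinuous quantity $\dim\ker(\delta_2)$, and without it one cannot rule out a rank jump. The only points requiring care are bookkeeping ones --- checking that the relevant exactness is precisely $H^{\bullet,2}(\g) = 0$ and never a higher cohomology group, and treating the boundary level $i=1$ through the convention $\g^{(-1)} = F$ --- rather than any serious difficulty.
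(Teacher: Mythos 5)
Your proof is correct and is essentially the intended argument: the paper does not spell out a proof of Lemma \ref{lemma:2-acyclic} but defers to \cite{Gol67b,Yud16}, and the proof found there is precisely your semicontinuity sandwich, in which $2$-acyclicity identifies the lower-semicontinuous quantity $\rank(\delta_1)_x$ with the upper-semicontinuous quantity $\dim\ker(\delta_2)_x$, so that connectedness of $M$ forces the rank to be constant and the kernel $\g^{(i+1)}=\ker(\delta_1)$ to be a smooth subbundle. Your bookkeeping at the boundary level, setting $\g^{(-1)}:=F$ and using $H^{0,2}(\g)=0$ (which is part of $2$-acyclicity as in Definition \ref{def_acyclic}), matches the standard treatment as well.
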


 \begin{observation}\label{rmk:lemma}
 Lemma \ref{lemma:2-acyclic} above also holds when dealing with a tableau bundle defined by a vector bundle map $\partial:\g\to \Hom(E,F)$ over $M$; in that case we are considering of course the $1^{\text{st}}$ prolongation $\g^{(1)}(\partial)$ w.r.t.\ $\partial$ from equation \eqref{eq:prolongation_tableau}. The proof follows the same lines as the proof of Lemma \ref{lemma:2-acyclic}.
 \end{observation}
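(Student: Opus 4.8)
The plan is to re-run the proof of Lemma \ref{lemma:2-acyclic} essentially verbatim, the point being that the only structural change — replacing the inclusion $\g\hookrightarrow\Hom(E,F)$ by the bundle map $\partial$, so that the bottom differential of the Spencer complex becomes $\delta_\partial$ instead of the plain $\delta$ induced by the inclusion — is harmless. Throughout I abbreviate $\g^{(i)}:=\g^{(i)}(\partial)$, set $\g^{(0)}:=\g$ and adopt the convention $\g^{(-1)}:=F$; with this convention the $\partial$-Spencer complex \eqref{eq:partial_Spencer} is formally identical to the one in Lemma \ref{lemma:2-acyclic}, its only distinguished feature being that the bottom arrow $\wedge^{i}E^*\otimes\g\to\wedge^{i+1}E^*\otimes F$ is $\delta_\partial$. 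The statement to prove is that, if the $\partial$-Spencer cohomology of $\g$ is $2$-acyclic (Definition \ref{def_acyclic}) and $\g^{(1)}$ is a bundle of constant rank, then every $\g^{(i)}$ is a bundle of constant rank. I would argue by strong induction on $i$, with base cases $i=0$ ($\g$ is a bundle by assumption) and $i=1$ (hypothesis).

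For the inductive step, recall that $\g^{(i)}=\ker\left(\delta\colon E^*\otimes\g^{(i-1)}\to\wedge^2E^*\otimes\g^{(i-2)}\right)$. By the inductive hypothesis $\g^{(i-1)}$, $\g^{(i-2)}$ and $\g^{(i-3)}$ are constant-rank bundles (for $i=2$ the role of $\g^{(i-3)}$ is played by $\g^{(-1)}=F$), so every term in sight is an honest vector bundle and every $\delta$ (resp.\ $\delta_\partial$) is a genuine bundle map. I would then invoke the two elementary semicontinuity facts that drive the whole argument: the fibrewise dimension of the \emph{kernel} of a bundle map is upper semicontinuous, while the fibrewise dimension of its \emph{image} is lower semicontinuous. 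Since $\rk\g^{(i)}=\rk(E)\cdot\rk(\g^{(i-1)})-\dim\Ima(\delta)$, constancy of $\rk\g^{(i)}$ reduces to constancy of $\dim\Ima(\delta)$.

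This is precisely where $2$-acyclicity enters. The complex \eqref{eq:partial_Spencer} is exact at $\wedge^2E^*\otimes\g^{(i-2)}$, i.e.\ $H^{i-2,2}(\g)=0$, which holds for all $i\geq2$ by assumption. Hence $\Ima\left(\delta\colon E^*\otimes\g^{(i-1)}\to\wedge^2E^*\otimes\g^{(i-2)}\right)$ coincides with $\ker\left(\delta'\colon\wedge^2E^*\otimes\g^{(i-2)}\to\wedge^3E^*\otimes\g^{(i-3)}\right)$, where $\delta'$ is the subsequent differential. The left-hand side is an image, so its dimension is lower semicontinuous; the right-hand side is the kernel of a bundle map between constant-rank bundles, so its dimension is upper semicontinuous. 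Being equal, this common dimension is constant, whence $\dim\Ima(\delta)$ is constant and $\rk\g^{(i)}$ is constant, closing the induction.

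The one place where $\partial$ is visible is the case $i=2$, where the next differential $\delta'$ is $\delta_\partial\colon\wedge^2E^*\otimes\g\to\wedge^3E^*\otimes F$; but since $\partial$ is a vector bundle map and $\g,F$ are bundles, $\delta_\partial$ is again a bundle map between constant-rank bundles, so its kernel is upper semicontinuous exactly as the argument requires, and nothing changes. I regard this as the crux of the observation: the $2$-acyclicity hypothesis is phrased in terms of the $\partial$-Spencer cohomology precisely so that the identification of $\Ima(\delta)$ with $\ker(\delta_\partial)$ survives at the bottom of the complex. The main thing to watch — really the only possible obstacle — is the index bookkeeping: one must check that the exactness $H^{i-2,2}(\g)=0$ feeds each step, that constancy of $\g^{(i-3)}$ (resp.\ the bundle $F$ when $i=2$) is available so that the target $\wedge^3E^*\otimes\g^{(i-3)}$ is a bundle, and that the definition of $\g^{(1)}(\partial)$ via \eqref{eq:prolongation_tableau} is the correct seed for the complex \eqref{eq:partial_Spencer}. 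Once this is verified, the proof is word-for-word the proof of Lemma \ref{lemma:2-acyclic}.
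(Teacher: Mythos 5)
Your proposal is correct and matches the paper's intent exactly: the paper gives no separate argument here, deferring to the proof of Lemma \ref{lemma:2-acyclic} (as in \cite{Gol67b,Yud16}), which is precisely the semicontinuity-plus-exactness induction you spell out, with the only $\partial$-dependent step being the bottom arrow $\delta_\partial$ at $i=2$, handled by phrasing $2$-acyclicity in the $\partial$-Spencer cohomology. The only implicit point is that passing from ``locally constant'' to ``constant'' dimensions uses the connectedness of $M$ assumed in Lemma \ref{lemma:2-acyclic}.
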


A fundamental result in the theory of prolongations of PDEs states that, even if a tableau bundle is not involutive, it becomes so after a finite number of prolongations (see \cite[Lemma 2]{Gol68}):

\begin{theorem}
Let $\g$ be a tableau bundle. There exists an integer $l_0$ such that $\g^{(l)}$ is involutive for all $l\geq l_0$.
\end{theorem}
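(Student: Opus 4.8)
The plan is to reduce the statement to a purely algebraic finiteness, coming ultimately from the Hilbert syzygy theorem. The first thing I would record is a \emph{shift identity} for Spencer cohomology. Since $\g^{(p)}$ is itself a tableau (on $(E,\g^{(p-1)})$) whose own prolongations are $(\g^{(p)})^{(i)}=\g^{(p+i)}$, its Spencer complex in bidegree $(l,m)$ sits, term by term and with matching differentials, inside that of $\g$ in bidegree $(p+l,m)$; hence $H^{l,m}(\g^{(p)})\cong H^{p+l,m}(\g)$. Consequently ``$\g^{(l)}$ is involutive for all $l\ge l_0$'' is \emph{equivalent} to the single statement that $H^{j,m}(\g)=0$ for all $j\ge l_0$ and all $m\ge 1$; that is, the Spencer cohomology of $\g$ vanishes in all sufficiently high prolongation degrees. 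This is what I would prove.

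Next I would encode the whole prolongation tower as one graded module. Writing $\g_k:=\g^{(k-1)}\subset S^kE^*\otimes F$ (with the convention $\g_0=F$), the defining property $\iota_v\g_k\subset\g_{k-1}$ for all $v\in E$ shows that $N:=\bigoplus_k\g_k$ is a graded submodule of $P:=\bigoplus_k S^kE^*\otimes F$, on which $\mathrm{Sym}(E)$ acts by contraction. With respect to this module structure the Spencer differential $\delta$ is precisely the Koszul differential, so that $H^{l,m}(\g)$ becomes the Koszul homology of $N$ in the corresponding bidegree. Passing to the graded dual $N^\vee$, which is a quotient of the finitely generated free module $P^\vee\cong\mathrm{Sym}(E)\otimes F^*$ and is therefore itself finitely generated over $\mathrm{Sym}(E)$, identifies this Koszul homology with the graded pieces of $\mathrm{Tor}^{\mathrm{Sym}(E)}_m(N^\vee,\RR)$.

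I would then invoke the Hilbert syzygy theorem: $N^\vee$ admits a finite graded free resolution of length at most $n=\dime E$, so $\mathrm{Tor}_m(N^\vee,\RR)=0$ for $m>n$, while for each of the finitely many indices $1\le m\le n$ the group $\mathrm{Tor}_m(N^\vee,\RR)$ is a finite-dimensional graded vector space, concentrated in finitely many degrees. Dualising back, $H^{j,m}(\g)$ vanishes once $j$ exceeds the largest degree occurring in any of these finitely many Tor groups; taking $l_0$ to be this bound settles the algebraic statement, and hence the theorem at a single fibre.

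The main obstacle is the identification in the second step: matching the Spencer $\delta$-complex with the Koszul complex of a \emph{genuinely finitely generated} module. This forces one to keep careful track of the two gradings (the exterior degree and the symmetric degree), of the direction of the module action, and of the graded (Matlis) duality used to move from the contraction module $N$ to the finitely generated $N^\vee$. A secondary point is the passage from one fibre to the tableau bundle: a priori $l_0$ depends on $x\in M$, and one must check it can be chosen uniformly. I would handle this by observing that the Castelnuovo--Mumford regularity of $N^\vee$ is bounded by a quantity depending only on the discrete data $\dime E$, $\rk F$ and the order $k$, which are the same at every fibre, so a single $l_0$ works over all of $M$.
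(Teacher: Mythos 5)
Your proposal cannot be compared against an internal proof: the paper states this theorem as quoted background, citing \cite[Lemma 2]{Gol68}, and gives no argument of its own. What you have written is a correct outline of the standard commutative-algebra proof of that lemma (the circle of ideas going back to Serre's appendix to Guillemin--Sternberg, Quillen's thesis, and Malgrange's identification of involutivity with Castelnuovo--Mumford regularity), and since Goldschmidt's cited proof ultimately rests on the same algebra, you have in effect reconstructed the quoted result rather than found a divergent route. The individual steps check out: the shift identity $H^{l,m}(\g^{(p)})\cong H^{p+l,m}(\g)$ is right, using that $(\g^{(p)})^{(i)}=\g^{(p+i)}$ (an element of $\Hom(E,\g^{(p)})$ symmetric in its first two arguments is fully symmetric, because each value already lies in $S^{p+1}E^*\otimes F$); the contraction module $N$ with $N_k=\g^{(k-1)}$, $N_0=F$ is indeed a graded submodule of $\bigoplus_k S^kE^*\otimes F$; and since all graded pieces are finite dimensional, dualizing is exact, intertwines the Spencer $\delta$ with the Koszul differential, and identifies $H^{j,m}(\g)^*$ with a graded piece of $\mathrm{Tor}_m^{\mathrm{Sym}(E)}(N^\vee,\RR)$, which is finite dimensional for each $m\le \dime E$ and zero beyond, killing $H^{j,m}(\g)$ for $j$ large. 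One small simplification you could make explicit: $N^\perp$ is the submodule of $\mathrm{Sym}(E)\otimes F^*$ generated by $\g^\perp$ in degree $1$ (the dual statement of $\g^{(i)}$ being the \emph{maximal} prolongation), so $N^\vee$ is generated in degree $0$ with relations generated in degree $1$; in particular, the tableaux of this paper are first order, and your mention of ``the order $k$'' is extraneous.

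The only step you assert rather than prove is the uniformity of $l_0$ over $M$, and it does need an argument or a reference, since regularity is not controlled by finite generation alone. Two standard ways to fill it: either invoke explicit regularity bounds for modules over $\RR[x_1,\dots,x_n]$ generated in degree $0$ with relations in degree $1$, depending only on $n=\rk E$ and $\rk F$ (Galligo/Giusti/Caviglia--Sbarra-type bounds, doubly exponential in $n$); or note that the possible fibres $\g_x$ are parametrized by the finitely many Grassmannians $\mathrm{Gr}(r,E_x\otimes F_x^*)$, $0\le r\le \rk E\cdot\rk F$ (the paper explicitly allows the rank of $\g$ to vary), and bound the graded Betti numbers over each parameter space by generic flatness plus Noetherian induction --- naive upper semicontinuity is not available because the family of modules $N^\vee$ need not be flat, the Hilbert function of the prolongations being exactly what jumps. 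With either justification your fibrewise $l_0(x)$ is bounded by a constant depending only on $\rk E$ and $\rk F$, and the proof is complete.
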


\section{Pfaffian fibrations and their geometry}\label{section_pfaffian_bundle}

We present now the central object of this paper, which we obtain by replacing the jet bundles with their hidden ``PDE structures". Furthermore, we explain how to recover in this new formalism many concepts from the theory of PDEs. As anticipated in the introduction (and discussed in the section of examples), we stress that the leading idea in this picture is not to give an abstract generalisation of the notion of PDE, but to shed light on its geometry.

\subsection{Pfaffian fibrations}

\begin{definition}\label{def_Pfaffian_bundle}
A {\bf Pfaffian fibration} $(P,\theta)$ over $M$ is a fibration $\pi:P\to M$ together with a pointwise surjective form $\te \in \Om^1(P,\mathcal{N})$ with coefficients in some vector bundle $\mathcal{N} \arr P$ such that
\begin{itemize}
\item $\te$ is $\pi$-regular, i.e.\ the restriction of $d\pi$ to $\ker(\theta)$ is pointwise surjective, or equivalently, $\ker(\theta)$ is transversal to the $\pi$-fibres:
$$T^\pi P + \ker(\te) = TP$$
\item $\te$ is $\pi$-involutive, i.e.\ the following distribution is involutive (in the sense of Frobenius)
\begin{equation}\label{eq:symbol_space}
\g(\theta):=T^\pi P\cap \ker\theta
\end{equation}
\end{itemize}
The form $\theta$ satisfying the properties above is called a {\bf Pfaffian form}, the vector bundle $\mathcal{N}$ the {\bf coefficient bundle}, and the distribution $\g(\theta)$ the {\bf symbol space} of $\theta$. 
\end{definition}

From the $\pi$-regularity of the Pfaffian form $\theta$ it follows that $\te$ has constant rank, hence it defines a vector subbundle $\g(\theta)\subset TP$ over $P$, i.e.\ a regular distribution (therefore it makes sense to ask it to be Frobenius-involutive).

%The involutivity of the symbol space means that $\theta([u,v])=0$ for any $u,v\in\Gamma(\g(\theta))$. In this case we say that $\theta$ is {\bf $\pi$-involutive}. That we impose this condition on $\theta$ is not {\it ad hoc}, it comes from the theory of PDEs:
%This fact also justifies the (apparently not necessary) condition of $\pi$-involutivity, which is given in order to make the symbol space of a PDE coincide with that of the same manifold seen as a Pfaffian fibration.

\begin{observation}{\bf (Pfaffian distributions)}\label{rmk:correspondence}
We can look at pointwise surjective $\pi$-regular 1-forms from the equivalent point of view of distributions transversal to the $\pi$-fibres (or {\it$\pi$-transversal} distributions). In particular, starting with a $\pi$-transversal distribution $H\subset TP$
$$TP=H+T^\pi P,$$ one defines the {\it symbol space} of $H$
$$\g(H):=T^\pi P\cap H.$$
and the normal bundle
$$ \mathcal{N}_H:= TP/H \cong T^\pi P/\g(H)$$ 
If, moreover, the symbol space of $H$ is Frobenius-involutive, we call $H$ a {\it Pfaffian distribution}. We can then produce the surjective 1-form $\te_H$ (and say that $\theta_H$ is induced by $H$) given by the projection $TP\to \mathcal{N}_H$: by construction $\te_H$ satisfies $\ker (\te_H) = H$, is $\pi$-regular, and its symbol space coincides with that of $H$. 

Viceversa, if some distribution $H_\te \subset TP$ is already the kernel of a surjective $\pi$-regular 1-form $\theta\in\Omega^1(P,\mathcal{N})$, then its normal bundle becomes isomorphic to the coefficient bundle $\mathcal{N}$ via the map $\mathcal{N}_H\ni [u]\mapsto \theta(u)\in \mathcal{N}$. Under this isomorphism $\theta$ can be trivially written as the projection map $TP\to\mathcal{N}_H$. Clearly, $H_\te$ is $\pi$-transversal and its symbol space coincides with that of $\te$.
\end{observation}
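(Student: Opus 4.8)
The plan is to make the two constructions in the statement precise and to check that they are mutually inverse, the essential tool being the second isomorphism theorem for vector bundles, applied fibrewise and globalised once constant rank is in hand. Throughout, the only data being manipulated are the subbundle $H$ (or $\ker\te$) and the quotient $TP/H$, so the verifications reduce to linear algebra in each fibre; the single point to watch is smoothness.

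First I would treat the passage from a $\pi$-transversal Pfaffian distribution $H$ to its form $\te_H$. Since $H$ is a (regular) distribution it has constant rank, so $\mathcal{N}_H:=TP/H$ is a smooth vector bundle and the quotient projection $\te_H\colon TP\arr \mathcal{N}_H$ is a pointwise surjective $1$-form with $\ker(\te_H)=H$ by construction. The $\pi$-regularity of $\te_H$, namely $T^\pi P+\ker(\te_H)=TP$, is literally the $\pi$-transversality $TP=H+T^\pi P$ of $H$; and the symbol space is unchanged,
$$\g(\te_H)=T^\pi P\cap\ker(\te_H)=T^\pi P\cap H=\g(H),$$
so $\te_H$ is $\pi$-involutive exactly when $H$ is Pfaffian. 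The asserted isomorphism $\mathcal{N}_H\cong T^\pi P/\g(H)$ is then the second isomorphism theorem: the composite $T^\pi P\hookrightarrow TP\twoheadrightarrow TP/H$ is surjective (by $T^\pi P+H=TP$) with kernel $T^\pi P\cap H=\g(H)$, and being a bundle map between constant-rank bundles it is a smooth isomorphism.

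Conversely, starting from a pointwise surjective $\pi$-regular form $\te\in\Om^1(P,\mathcal{N})$, I would set $H_\te:=\ker\te$. This is a constant-rank subbundle by the constant-rank consequence of $\pi$-regularity recorded just after Definition \ref{def_Pfaffian_bundle}, and its $\pi$-transversality is again nothing but the $\pi$-regularity of $\te$. The map $\mathcal{N}_{H_\te}=TP/H_\te\arr\mathcal{N}$, $[u]\mapsto\te(u)$, is well defined because $\te$ vanishes on $H_\te$, injective because $\ker\te=H_\te$, and surjective because $\te$ is pointwise surjective; being a bundle map of constant-rank bundles it is a smooth isomorphism, and under it $\te$ becomes the projection $TP\arr TP/H_\te$. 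Since $\g(\te)=T^\pi P\cap\ker\te=\g(H_\te)$, the symbol spaces, and hence involutivity, agree. Finally the two constructions are mutually inverse, since $\ker\te_H=H$ and $\te_{H_\te}=\te$ up to the identification just produced, both of which are immediate from the definitions.

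I expect no genuine obstacle here: the whole content is the second isomorphism theorem together with the bookkeeping of kernels and images, with involutivity preserved automatically because the symbol space is literally the same distribution on the two sides. The only place that demands a word of care is smoothness, i.e.\ knowing that $H$ (resp.\ $\ker\te$) is a genuine constant-rank subbundle, so that the quotient $TP/H$ is a smooth vector bundle and $\g(H)$ a smooth distribution on which Frobenius involutivity is meaningful. For forms this is exactly the constant-rank remark following Definition \ref{def_Pfaffian_bundle}; for distributions it is built into the meaning of the word \emph{distribution}.
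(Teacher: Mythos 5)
Your proposal is correct and follows essentially the same route as the paper, which presents this remark as a direct construction (quotient projection $TP\to TP/H$ one way, the isomorphism $[u]\mapsto\te(u)$ the other way) and leaves the fibrewise verifications implicit; your write-up just makes explicit the second-isomorphism-theorem step $\mathcal{N}_H\cong T^\pi P/\g(H)$, the mutual inverseness, and the smoothness bookkeeping. One small attribution to note: the constant rank of $\ker\te$ follows already from the pointwise surjectivity of $\te$ (the paper's remark after Definition \ref{def_Pfaffian_bundle} uses this together with $\pi$-regularity to get constant rank of the symbol $\g(\te)$), but this does not affect your argument.
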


\begin{proposition}\label{correspondence_pfaffian_forms_distribution}
The previous construction (of Remark \ref{rmk:correspondence}) gives a 1-1 correspondence: 
$$ \left\{   \begin{array}{c}
    \text{Pfaffian distributions}  \\
    H\subset TP
    \end{array} \right\} 
\tilde{\longleftrightarrow}
\left\{   \begin{array}{c}
    \text{(equivalence classes) of Pfaffian forms}  \\
    \theta\in\Omega^1(P,\mathcal{N})
    \end{array} \right\}. $$
where two forms $\te_1, \te_2$ are equivalent if there exists a vector bundle isomorphism $\phi: \mathcal{N}_1 \arr \mathcal{N}_2$ between their coefficients such that $\phi (\te_1 (v)) = \te_2 (v)$ $\forall v \in TP$.
\end{proposition}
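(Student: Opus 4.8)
The plan is to exhibit the two maps explicitly and show they are mutually inverse, with the equivalence relation on forms being exactly what collapses the ambiguity in the construction $H\mapsto\te_H$. Concretely, I would define $\Psi$ sending a Pfaffian form $\theta$ to its kernel $H_\theta:=\ker(\theta)$, and $\Phi$ sending a Pfaffian distribution $H$ to the projection form $\te_H\colon TP\arr \mathcal{N}_H=TP/H$ from Remark \ref{rmk:correspondence}. Both constructions were already shown there to land in the correct class of objects: $\pi$-regularity of $\theta$ matches $\pi$-transversality of $H$ (both read $TP=T^\pi P+H$), and since the symbol space $\g(\theta)=T^\pi P\cap\ker\theta$ coincides with $\g(H)=T^\pi P\cap H$, the Frobenius-involutivity requirement transfers verbatim between the two pictures.

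First I would check that $\Psi$ descends to equivalence classes. If $\te_1\sim\te_2$ via a vector bundle isomorphism $\phi\colon\mathcal{N}_1\arr\mathcal{N}_2$ with $\phi\circ\te_1=\te_2$, then fibrewise injectivity of $\phi$ gives $\te_1(v)=0\iff\te_2(v)=0$, so $\ker(\te_1)=\ker(\te_2)$ and $\Psi([\theta])$ is well-defined. Next I would verify the two compositions. For $\Psi\circ\Phi=\mathrm{id}$ the identity is immediate, since by construction $\ker(\te_H)=H$. For $\Phi\circ\Psi=\mathrm{id}$, given $\theta$ one has $\Phi(H_\theta)=[\te_{H_\theta}]$ where $\te_{H_\theta}$ is the projection $TP\arr TP/\ker(\theta)$; the canonical map $\phi\colon TP/\ker(\theta)\arr\mathcal{N}$, $[v]\mapsto\theta(v)$, is well-defined and bijective because $\theta$ is surjective with kernel $\ker(\theta)$, it is precisely the identification singled out in Remark \ref{rmk:correspondence}, and it satisfies $\phi(\te_{H_\theta}(v))=\theta(v)$. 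Hence $\te_{H_\theta}\sim\theta$, i.e.\ $[\te_{H_\theta}]=[\theta]$.

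Once Remark \ref{rmk:correspondence} is in place, the verification is essentially bookkeeping; the only step requiring genuine care is smoothness, namely that $\te_H$ is an honest smooth $1$-form. This follows because $\pi$-transversality forces $H$ to have constant rank, so $H\subset TP$ is a smooth subbundle and the quotient $TP/H$ is a smooth vector bundle with smooth projection. Rather than any real obstacle, the conceptual heart is recognising that the chosen equivalence relation on forms is exactly the freedom of identifying the coefficient bundle with the normal bundle $\mathcal{N}_H$; this is what makes $\Phi$ well-defined as a map into equivalence classes and renders it a two-sided inverse to $\Psi$.
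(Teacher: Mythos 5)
Your proposal is correct and follows essentially the same route as the paper, which in fact gives no separate proof: Remark \ref{rmk:correspondence} already contains both constructions, the identity $\ker(\te_H)=H$, and the canonical isomorphism $TP/\ker(\theta)\cong\mathcal{N}$, $[v]\mapsto\theta(v)$, which is exactly what you use to show $\te_{H_\theta}\sim\theta$; your check that $\Psi$ descends to equivalence classes is the only (easy) point the paper leaves entirely tacit. The one inaccuracy is your closing claim that $\pi$-transversality \emph{forces} $H$ to have constant rank --- it does not (transversality only bounds the fibre dimension from below); smoothness of $\te_H$ instead holds because a distribution is, by the paper's convention, already a smooth constant-rank subbundle of $TP$.
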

              
 Accordingly, we have the equivalent notion of a {\it Pfaffian fibration} $(P,H)$ over $M$ when dealing with a {\it Pfaffian distribution}; in the following, we will switch freely between these two definitions (with forms or with distributions).

As we will see later (Proposition \ref{PDE_is_Pfaffian_bundlle}), PDEs on jet bundles are the main example of Pfaffian fibrations. With this in mind, the correspondence from Proposition \ref{correspondence_pfaffian_forms_distribution} recovers the correspondence between the Cartan form and the Cartan distribution.

\begin{observation}{\bf (Pfaffian systems)} Pfaffian fibrations are related to another way of studying differential equations, namely exterior differential systems (EDSs): every Pfaffian fibration induces a special kind of EDS.

An EDS is differential ideals of the exterior algebra of a manifold (see \cite{Bry91} for an introduction). In particular, a {\it Pfaffian system} is an EDS $\mathcal{I}\subset \Omega^*(P)$, generated as an exterior differential ideal in degree one, together with a transversal (or independence) condition.
%encoded in the subbundles
%$$I\subset J\subset T^*P$$
It can be proved that a $\pi$-transversal distribution $H\subset TP$ induces such kind of Pfaffian systems,
% when setting 
%$$J=(TP/\g(H)),\quad \ I=(\mathcal{N}_H)^*,$$
and moreover, if $H$ is also $\pi$-involutive, the induced Pfaffian system turns out to be {\it linear} (another notion from the theory of EDSs, different from that of linear Pfaffian fibration in section \ref{sec:linear_Pfaffian}).

%\end{observation}
%\begin{observation}

In conclusion, the framework of Pfaffian fibrations fits nicely in between two classical ways of studying differential equations:
\begin{itemize}
\item The formalism of jet bundles becomes a particular case (we give up the jets and retain the main structure given by the Cartan form).
\item The formalism of exterior differential systems is a more general case (we concentrate only on Pfaffian systems which have a transversal condition and are linear). \qedhere
\end{itemize}
\end{observation}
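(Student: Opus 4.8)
The plan is to manufacture both pieces of data of a Pfaffian system---a degree-one-generated differential ideal and an independence condition---directly out of $H$ and the fibration $\pi$, and then to recognise linearity as the Frobenius-involutivity of the symbol space. First I would let $I:=\mathrm{Ann}(H)\subset T^*P$ be the annihilator of $H$; locally, after trivialising $\mathcal{N}_H$, the scalar components $\theta^1,\dots,\theta^s$ of $\theta_H$ span $I$ and satisfy $H=\bigcap_a\ker\theta^a$. Since $H$ is a regular distribution, $I$ is a genuine subbundle and the exterior ideal $\mathcal{I}\subset\Omega^*(P)$ algebraically generated by $\Gamma(I)$ together with $d\Gamma(I)$ is, by construction, a differential ideal generated in degree one.

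For the independence condition I would use the fibration: in local coordinates $(x^i)$ on $M$ the pullbacks $\omega^i:=\pi^*dx^i$ span $\pi^*T^*M=\mathrm{Ann}(T^\pi P)$, and $\Omega:=\omega^1\wedge\cdots\wedge\omega^n=\pi^*\mu$ is nowhere vanishing because $\pi$ is a submersion; this $\Omega$ is the transversal condition. The $\pi$-transversality $TP=H+T^\pi P$ gives $\mathrm{Ann}(H)\cap\mathrm{Ann}(T^\pi P)=\mathrm{Ann}(TP)=0$, so one obtains a well-defined flag $I\subset J:=I\oplus\pi^*T^*M\subset T^*P$ with $J=\mathrm{Ann}(H)+\mathrm{Ann}(T^\pi P)=\mathrm{Ann}(\g(H))$. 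This flag, together with $\Omega$, is exactly a Pfaffian system with independence condition, proving the first assertion (its integral manifolds on which $\Omega\neq 0$ are precisely the local sections tangent to $H$).

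For the second assertion I would pass to an adapted coframe $\{\theta^a,\omega^i,\pi^\epsilon\}$, where the $\theta^a$ span $I$, the $\omega^i$ represent $J/I$, and the $\pi^\epsilon$ complete it, and write the structure equations $d\theta^a\equiv A^a_{\epsilon i}\,\pi^\epsilon\wedge\omega^i+\tfrac12 c^a_{ij}\,\omega^i\wedge\omega^j\pmod{I}$. Recall (see \cite{Bry91}) that the system is \emph{linear} precisely when these contain no quadratic term $\pi^\epsilon\wedge\pi^\delta$, i.e.\ when the quadratic torsion vanishes. The crucial identification is that the frame $\{g_\epsilon\}$ dual to the $\pi^\epsilon$ spans $\mathrm{Ann}(J)=\g(H)=T^\pi P\cap H$; hence the coefficient of $\pi^\epsilon\wedge\pi^\delta$ in $d\theta^a$ is $d\theta^a(g_\epsilon,g_\delta)$.

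The computation is then immediate: since $g_\epsilon,g_\delta\in\Gamma(\g(H))\subset\Gamma(\ker\theta^a)$, the Koszul formula collapses to $d\theta^a(g_\epsilon,g_\delta)=-\theta^a([g_\epsilon,g_\delta])$, and $\pi$-involutivity is exactly the statement that $\g(H)$ is Frobenius-involutive, so $[g_\epsilon,g_\delta]\in\Gamma(\g(H))\subset\Gamma(H)=\Gamma(\bigcap_a\ker\theta^a)$ and therefore $\theta^a([g_\epsilon,g_\delta])=0$. All quadratic terms vanish and the induced Pfaffian system is linear. The main obstacle I expect is not this one-line computation but the bookkeeping that precedes it: correctly matching the EDS vocabulary (the ideal $I$, the independence flag $J$, and especially the transverse forms $\pi^\epsilon$ as the dual coframe of $\g(H)$) to the intrinsic objects, so that ``vanishing quadratic torsion'' becomes literally ``$\g(H)$ is involutive''.
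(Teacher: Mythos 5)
Your proposal is correct, and since the paper only asserts this remark without proof (``It can be proved that\dots''), your argument supplies exactly the intended standard EDS construction: the degree-one ideal generated by $\Gamma(\mathrm{Ann}(H))$, the independence flag $I\subset J=\mathrm{Ann}(H)\oplus\pi^*T^*M=\mathrm{Ann}(\g(H))$ (well-defined by $\pi$-transversality), and linearity via the vanishing of the $\pi^\epsilon\wedge\pi^\delta$ torsion terms. The key identification $d\theta^a(g_\epsilon,g_\delta)=-\theta^a([g_\epsilon,g_\delta])$ for $g_\epsilon,g_\delta\in\Gamma(\g(H))$ is sound and in fact shows the two conditions are equivalent, slightly more than the one direction the remark claims.
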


In both cases outlined above, a (local) solution of a PDE (i.e.\ a holonomic section in the jet bundle language, an ``integral manifold'' in the EDS language) corresponds to a (local) section of the Pfaffian fibration which pullbacks the Pfaffian form to zero:

\begin{definition} Given a Pfaffian fibration $(P,\theta)$, a {\bf holonomic (local) section} of $(P,\theta)$ is any (local) section $\beta$ of $P$ with the property that $\beta^*\theta=0$. The set of holonomic sections is denoted by $\Gamma(P,\theta)$ and that of local ones by $\Gamma_{\loc}(P,\theta)$. 

Analogously, a holonomic section of a Pfaffian fibration $(P,H)$ is any section $\beta$ of $P$ tangent to $H$ (i.e.\ $d\beta:TM\to TP$ takes values in $H$). We denote by $\Gamma(P,H)$ the set of holonomic sections, and by $\Gamma_{\loc}(P,H)$ that of local ones. 
\end{definition}

One of the main questions for Pfaffian fibrations is the integrability from the PDE point of view:

\begin{definition}\label{def_integrability}
A Pfaffian fibration $(P,\theta)$ (or $(P,H)$) is {\bf PDE-integrable} if through each point $p \in P$ there is a local holonomic section $\beta\in\Gamma_{\loc}(P,\theta)$ (or $\beta\in\Gamma_{\loc}(P,H)$), i.e.\ $\beta(\pi(p))=p$. 
\end{definition}

\begin{observation}
Of course the notion of holonomic section makes sense for any 1-form $\te$ on a fibration $P \arr M$, without any {\it a priori} relation with $T^\pi P$; however, PDE-integrability implies $\pi$-regularity of $\theta$, which is therefore {\it a posteriori} meaningful condition to ask in the definition. This can be more easily seen using $H=\ker\theta$: if for any $p$ there is a local section $\beta:M\to P$ passing through $p$ which is tangent to $H$, then 
$$T_xM=d(\pi\circ \beta)(T_xM)=d\pi(d\beta(T_xM))\subset d\pi (H_p),$$
where $x=\pi(p)$. This means that $d\pi$ is surjective when restricted to $H$, i.e.\ $H$ is $\pi$-transversal (or $\theta$ is $\pi$-regular).
\end{observation}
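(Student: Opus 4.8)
The plan is to reduce the claim to a pointwise check and exploit the single structural feature of a holonomic section: a section always splits its own projection. Since $\pi$-regularity (equivalently, $\pi$-transversality of $H := \ker\te$) asserts that $d\pi$ restricted to $\ker\te$ is \emph{pointwise} surjective, I would fix an arbitrary $p \in P$, set $x := \pi(p)$, and aim to show that $d\pi$ carries $H_p$ onto all of $T_x M$. Working with the distribution $H$ rather than the form $\te$ is cleaner here, because the defining condition to be verified then reads simply ``$d\pi(H_p) = T_x M$''.

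The first step is to invoke PDE-integrability (Definition \ref{def_integrability}) to produce, through the chosen point $p$, a local holonomic section $\be \in \Ga_{\loc}(P,H)$ with $\be(x) = p$. Two facts about $\be$ then combine to give everything. Since $\be$ is holonomic it is tangent to $H$, so $d_x\be(T_xM) \subseteq H_{\be(x)} = H_p$; and since $\be$ is a section of $\pi$ we have $\pi \circ \be = \mathrm{id}$ on a neighbourhood of $x$, whence the chain rule yields $d\pi \circ d_x\be = \mathrm{id}_{T_xM}$. Chaining these is the whole argument:
$$T_xM = d(\pi\circ\be)(T_xM) = d\pi\bigl(d_x\be(T_xM)\bigr) \subseteq d\pi(H_p) \subseteq T_xM,$$
forcing all inclusions to be equalities; in particular $d\pi|_{H_p}$ is surjective. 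As $p \in P$ was arbitrary, this is exactly $\pi$-regularity of $\te$.

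I do not expect a genuine obstacle: the surjectivity is an immediate consequence of the fact that $d_x\be$ is a right inverse of $d\pi$ whose image lands inside $H_p$. The only point meriting any care is the bookkeeping between the two equivalent formulations (the form $\te$ versus its kernel distribution $H$) established in Proposition \ref{correspondence_pfaffian_forms_distribution}, so that one consistently reads ``holonomic'' as ``tangent to $H$'' and ``$\pi$-regular'' as ``$d\pi|_H$ pointwise surjective''; once that dictionary is fixed, the conclusion is a one-line consequence of a section splitting its own projection.
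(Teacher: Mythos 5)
Your proposal is correct and follows essentially the same argument as the paper: produce a local holonomic section $\be$ through an arbitrary $p$ via PDE-integrability, note that tangency to $H$ gives $d_x\be(T_xM)\subseteq H_p$ while $\pi\circ\be=\mathrm{id}$ gives $d\pi\circ d_x\be = \mathrm{id}_{T_xM}$, and chain these to conclude $T_xM = d\pi(d_x\be(T_xM))\subseteq d\pi(H_p)$. The paper's remark is exactly this computation, so there is nothing to add.
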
  

A natural notion that comes into play when studying PDE-integrability is that of {\bf integral element} (see \cite{Bry91} for the analogous notion for an EDS). Intuitively, an integral element of $(P,H)$ is a linear subspace $V\subset T_pP$, $p\in P$, which is a ``good'' candidate to be the tangent space of a holonomic (local) section $\be$ that passes through $p$. Suppose that $V$ is indeed tangent to $\be$ i.e.\ $V=d\beta(T_xM),$ $x=\pi(p)$: this immediately implies that the dimension of $V$ is the dimension of $M$ and that $T_pP$ can be written as the direct sum $V\oplus T^\pi_pP$. Due to the holonomicity of $\be$, one further obtains that 
\begin{equation*}\label{eq:integral_element}V\subset H_p,\quad \text{and} \quad [u,v]_p\in V,\end{equation*}
for any $u=d\beta(X), v=d\beta(Y)$ with $X,Y\in\X(M)$.

In order to rewrite this last condition independently of the extensions of $u_p$ and $v_p$, we introduce the {\bf curvature map} of $H$,
\begin{equation}\label{eq:curvature}
\kappa_H:H\times H\to \mathcal{N}_H,
\end{equation}  
which is the $C^\infty(P)$-bilinear map defined at the level of sections by $(U,V)\mapsto [U,V]\mod H$. The Leibniz identity of the Lie bracket of vector fields implies that $\kappa_H$ is indeed well defined. Alternatively, if $H=\ker\theta$, the curvature map is denoted by $\kappa_\theta:H\times H\to \mathcal{N}$ and can be described by $(U,V)\mapsto \theta([U,V])$; therefore, it coincides with the restriction of $d_\nabla\theta$ to $\ker(\theta)$, where $d_\nabla$ is the De Rham-like differential associated to any linear connection $\nabla$ on $P$.

\begin{definition}\label{def:integral_elements}
Given a Pfaffian fibration $(P,H)$ (or $(P,\theta)$), a linear subspace $V\subset T_pP$ of dimension equal to the dimension of $M$ is called a {\bf partial integral element} if 
\begin{equation*}\label{condition_integral_element}
V\subset H_p \quad \text{and} \quad  T_pP=V\oplus T^\pi_pP.
\end{equation*}
If, moreover, the restriction of the curvature map $(\kappa_H)_p$ to $V\times V$ is zero, then $V$ is called an {\bf integral element}. 
\end{definition}

\subsection{Linear Pfaffian fibrations and relative connections}\label{sec:linear_Pfaffian}

In this section we discuss the notion of Pfaffian fibrations in the linear case, i.e.\ when the fibration $P \arr M$ is a vector bundle. We will also introduce an equivalent description in terms of {\it relative connections}.

Let $\pi:\Ftemp\to M$ be a vector bundle with zero section $\textbf{0} (x)= (x,0)$, fibrewise addition $a(e,f)=e+f$ and multiplication by a scalar $m_\la (e) = \la e$, for $\la \in \RR$. Its tangent vector bundle is the vector bundle $T\Ftemp$ over $TM$ defined as follows: the fibrewise projection is the differential $d \pi: T\Ftemp \arr TM$, the zero section is $d \textbf{0}$, the fibrewise addition is given by the differential $da: T\Ftemp \times_{TM} T\Ftemp \arr T\Ftemp$ and the fibrewise multiplication by $\la \in \RR$ is given by the differential $dm_\la: TE \arr TE$.
\begin{itemize}
\item  A differential form $\theta\in\Omega^1(\Ftemp,\pi^*\Etemp)$ with values in the (pullback of the) {\it coefficient bundle} $\Etemp\to M$ is called {\bf linear} if $a^*\theta=\mathrm{pr}_1^*\theta+\mathrm{pr}_2^*\theta$, where $\mathrm{pr}_1,\mathrm{pr}_2:\Ftemp\times _M\Ftemp\to \Ftemp$ denote the canonical projections
\item A distribution $H\subset T\Ftemp$ is called {\bf linear} if it is a vector subbundle of $T\Ftemp$ over the same base $TM$.
\end{itemize}

\begin{lemma}\label{rmk:linear}
Let $H$ be a linear distribution on a vector bundle $\Ftemp \arr M$. Then the distribution $H \cap T^\pi \Ftemp$ satisfies
\begin{equation}\label{eq:pullback_symbol}
H \cap T^\pi \Ftemp \cong \pi^*( (H \cap T^\pi \Ftemp) \mid_M).
\end{equation}
Similarly, the normal bundle $TE/H$ can be recovered from the $\pi$-pullback of the vector bundle
\begin{equation*}\label{eq:coefficient_bundle}
\Etemp_H:= (T\Ftemp/H) \mid_M \arr M.
\end{equation*}
%Many of the notions for general distributions are over $\Ftemp$, but when we deal with linear distribution these objects descend on $M$. For instance, we show now that the normal bundle $T\Ftemp/H$ can be recovered from the $\pi$-pullback of the following vector bundle (over $M$):
%\begin{equation}\label{eq:coefficient_bundle}\Etemp_H=(T\Ftemp/H)|_M, \end{equation}
%and that the same happens for the symbol space $\g(H)$:
%\begin{equation}\label{eq:pullback_symbol}\g(H)\cong \pi^*(\g(H)|_M). \end{equation}
Moreover, $H$ is $\pi$-transversal.
\end{lemma}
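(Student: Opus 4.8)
The plan is to exploit the single structural feature that distinguishes a linear distribution, namely its invariance under the fibrewise homotheties $m_\la\colon \Ftemp\to\Ftemp$, or equivalently its homogeneity in the fibre coordinate of $\Ftemp$. Throughout I use the canonical vertical lift $\nu\colon\pi^*\Ftemp\xrightarrow{\sim}T^\pi\Ftemp$, $(e,v)\mapsto\frac{d}{dt}\big|_{0}(e+tv)$, which identifies the vertical bundle with $\pi^*\Ftemp$ and intertwines $dm_\la$ with $(e,v)\mapsto(\la e,\la v)$. I also record that, since $H$ is a vector subbundle of $T\Ftemp\to TM$ and $dm_\la$ is precisely the fibrewise scalar multiplication of that bundle, $H$ is invariant under $dm_\la$ for every $\la\ne 0$.

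First I would prove the isomorphism for the symbol. Writing $W:=H\cap T^\pi\Ftemp$ and transporting it through $\nu$ to a family of subspaces $W_e\subset\Ftemp_{\pi(e)}$, the $dm_\la$-invariance of $H$ gives $W_{\la e}=\la\cdot W_e$, hence $W_{\la e}=W_e$ for all $\la\ne 0$; letting $\la\to 0$ and using that $W$ is a smooth subbundle then forces $W_e=W_{\textbf{0}(\pi(e))}$, which is exactly the claimed $W\cong\pi^*(W|_M)$. The only gap here is the smoothness and constant rank of $W$, which I would supply together with $\pi$-transversality below (the constant rank of $d\pi|_H$ makes $W=\ker(d\pi|_H)$ a genuine subbundle).

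Next I would establish $\pi$-transversality, which I expect to be the main obstacle, since it is the point where the two vector bundle structures on $T\Ftemp$ (over $\Ftemp$ and over $TM$) must be reconciled. One observes that $d\pi\colon T\Ftemp\to TM$ is the bundle projection of the structure $T\Ftemp\to TM$, so its restriction $d\pi|_H\colon H\to TM$ is the projection of the vector subbundle $H$ onto its base, hence a surjective submersion of constant rank; transversality is the statement that this surjectivity holds fibrewise over $\Ftemp$, and I would deduce it from the local normal form of a linear distribution. Concretely, fibre coordinates $u^a$ on $\Ftemp$ and the induced $(\dot x^i,\dot u^a)$ on $T\Ftemp$ present $H$ as the zero locus of forms $C^r_{ia}(x)\,\dot x^i u^a+B^r_a(x)\,\dot u^a$, and the requirement that $H$ have constant rank over $TM$ (including over the zero section $\dot x=0$) forces $\Ima\big(u\mapsto C(x)(\dot x)u\big)\subseteq\Ima B(x)$ for every $\dot x$; this is exactly the solvability in $\dot u$ of $B(x)\dot u=-C(x)(\dot x)u$ for arbitrary $\dot x,u$, i.e. $\pi$-transversality. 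This normal form simultaneously yields the constant rank of $W$ needed above (set $\dot x=0$: the condition becomes $B(x)\dot u=0$, independent of $u$).

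Finally, the claim about the normal bundle follows formally from the previous two. Once $T\Ftemp=H+T^\pi\Ftemp$ is known, the inclusion $T^\pi\Ftemp\hookrightarrow T\Ftemp$ induces an isomorphism $T^\pi\Ftemp/(H\cap T^\pi\Ftemp)\xrightarrow{\sim}T\Ftemp/H$; combining with $T^\pi\Ftemp\cong\pi^*\Ftemp$ and the symbol isomorphism $W\cong\pi^*(W|_M)$ gives $T\Ftemp/H\cong\pi^*\Ftemp/\pi^*W=\pi^*(\Ftemp/W)$, and restricting this chain to the zero section identifies $\Ftemp/W$ with $\Etemp_H=(T\Ftemp/H)|_M$. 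I would stress that one cannot instead transport $T\Ftemp/H$ directly by $dm_\la$: the limiting map $dm_0=d\textbf{0}\circ d\pi$ kills the vertical directions and hence collapses the quotient, so the detour through $\pi$-transversality is what makes the argument work.
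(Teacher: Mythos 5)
Your plan is correct, but it reaches both nontrivial conclusions by a genuinely different mechanism than the paper. For the symbol, the paper uses no homotheties and no limit: it right-translates vertical vectors to the zero section via $da_e(V)=da(V,0_{-e})$, and closure of $H$ under $da$ (it is a subbundle of $TE\to TM$, so closed under its fibrewise addition) makes this directly an isomorphism $\g(H)_e\cong\g(H)_{\pi(e)}$. This avoids the $\la\to 0$ limit entirely, and with it the need to know beforehand that $H\cap T^\pi E$ is a smooth subbundle --- the smoothness input you correctly flag and outsource to your step 2. For transversality, the paper observes that $d\textbf{0}(TM)$ is the zero section of the vector bundle $TE\to TM$, hence automatically contained in the subbundle $H$; this gives transversality along $M$, which then propagates to all of $E$ by the rank count $\rk(H_e+T^\pi_eE)=\rk(H_e)+\rk(T^\pi_eE)-\rk(\g(H)_e)$, each term agreeing with its value at $\pi(e)$. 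Your route --- constant rank of $H\to TM$ at $\dot x=0$ forcing $\mathrm{Im}\,C(x)(\dot x)\subseteq\mathrm{Im}\,B(x)$, hence solvability of $B\dot u=-C(\dot x)u$ --- is a correct, more computational alternative; what it buys is an explicit local picture of linear distributions, at the price of a coordinate normal form the paper never needs. The normal-bundle step is the same formal argument in both.

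The one point you should not leave implicit is that normal form itself: the presentation of $H$ as the zero locus of $C^r_{ia}(x)u^a\dot x^i+B^r_a(x)\dot u^a$ is exactly where linearity gets cashed out (it is the local statement that $H$ is a double vector subbundle of $TE$), and it cannot be imported from the correspondence with linear forms without circularity, since the paper's construction of $\theta_H$ uses this very lemma. Moreover, your rank computation over a point $(x,\dot x)\in TM$ ranges over \emph{all} $u\in E_x$, so the presentation must be valid along entire fibres, not merely near a point of $E$; this is true --- a presentation near the zero section propagates along fibres by the same $dm_\la$-invariance you invoke --- but it needs saying. Once the normal form is in place, note also that $\g(H)_{(x,u)}=\ker B(x)$ is visibly independent of $u$, so your homothety-and-limit argument in step 1 becomes redundant.
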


\begin{proof}
First, we notice that we can right translate vectors tangent to the fibres to the zero section. Indeed, any vector $V$ at $e\in \Ftemp$ tangent to the fibre $\Ftemp_x$, $x=\pi(e)$, moves to a vector based at $\textbf{0}(x)=(x,0)$ by taking the differential of right translation $a_{e}(\cdot):=a(\cdot,-e)$ by $-e$:
\begin{equation}\label{eq:translation}
da_e:T_e(\Ftemp_x)\to T_x(\Ftemp_x),\ V\mapsto da(V,0_{-e}).
\end{equation}
The advantage of this is that $da_e$ takes $\g(H)_e$ to $\g(H)_x$ because $H$ is linear, hence we get  \eqref{eq:pullback_symbol}.

Second, as $H$ is linear, $TM=d\textbf{0}(TM)\subset H|_M$ and this shows that $H$ is $\pi$-transversal on $M$. This, together with the identification \eqref{eq:pullback_symbol}, implies the $\pi$-transversality of $H$:
\begin{equation}\label{eq:transversal}
T\Ftemp=H+T^\pi \Ftemp.
\end{equation} 
Indeed, it is enough to compute $\rk(H_e+T^\pi_e \Ftemp)=\rk(H_e)+\rk(T^\pi_e \Ftemp)-\rk(\g(H)_e)$ and compare it with the ranks at $x=\pi(e)$.

Condition \eqref{eq:transversal} implies in turn that the normal bundle can be rewritten as
$$T\Ftemp/H=T^\pi \Ftemp/ (H \cap T^\pi \Ftemp).$$ 
Using \eqref{eq:translation} and \eqref{eq:pullback_symbol}, and passing again to the normal bundle, we obtain the isomorphism 
\begin{equation*}
\pi^* \Etemp_H\cong T\Ftemp/H. \qedhere
\end{equation*}
\end{proof}

\begin{proposition}\label{rmk:equivalence}{\bf (Equivalence between linear forms and distributions)}
Any pointwise surjective linear form $\te \in \Om^1 (E, \pi^* F)$ induces a linear distribution $H_\te := \ker(\te) \subset T\Ftemp$.

Conversely, any linear distribution $H$ on $\Ftemp$ arises as $\ker(\te_H)$, for $\te_H \in \Om^1 (\Ftemp, \pi^* \Etemp_H)$ the linear form defined by the canonical projection $T\Ftemp\to T\Ftemp/H$ followed by the isomorphism $T E/H \cong \pi^*\Etemp_H$ of Lemma \ref{rmk:linear}. 
\end{proposition}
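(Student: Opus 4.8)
The constructions $\te \mapsto \ker(\te)$ and $H \mapsto \te_H$ appearing in the statement are exactly the form--kernel constructions underlying the correspondence of Remark~\ref{rmk:correspondence}, so the plan is to take that bijection for granted and verify only that it matches linear forms with linear distributions in each direction. Recall that a pointwise surjective $\te$ has constant rank, hence a well-defined kernel distribution $\ker(\te) \subset T\Ftemp$ (a subbundle over $\Ftemp$), while a distribution $H$ gives back a pointwise surjective form by composing the projection $T\Ftemp \to T\Ftemp/H$ with the identification $T\Ftemp/H \cong \pi^*\Etemp_H$ of Lemma~\ref{rmk:linear}; none of this basic bookkeeping uses linearity, so only the compatibility with the two linear structures is at stake.

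For the implication ``linear form $\Rightarrow$ linear distribution'', the key point is that the defining identity $a^*\te = \mathrm{pr}_1^*\te + \mathrm{pr}_2^*\te$ says precisely that $\te$ is additive along the fibres of the vector bundle $d\pi: T\Ftemp \to TM$: for $V \in T_e\Ftemp$ and $W \in T_f\Ftemp$ with $d\pi(V) = d\pi(W)$ one has $\te(da(V,W)) = \te(V) + \te(W)$ in $\Etemp_x$. Since $\te$ is smooth, additivity along each fibre $(d\pi)^{-1}(\xi)$ upgrades to $\RR$-linearity, and I would then read off the three closure properties that characterise a vector subbundle of $T\Ftemp \to TM$: closure of $\ker(\te)$ under $da$ is immediate; the containment $d\textbf{0}(TM) \subset \ker(\te)$ follows by specialising the identity to $e = f = \textbf{0}(x)$, $V = W = d\textbf{0}(X)$ and using $da(d\textbf{0}(X), d\textbf{0}(X)) = d\textbf{0}(X)$ to force $\te(d\textbf{0}(X)) = 2\,\te(d\textbf{0}(X)) = 0$; and closure under $dm_\la$ is the homogeneity just obtained. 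Finally, $\te$ is $\pi$-regular (as in Lemma~\ref{rmk:linear}, this is itself a consequence of linearity), so its restriction to each fibre $(d\pi)^{-1}(\xi)$ is onto $\Etemp_x$ of constant corank; hence $\ker(\te)$ has constant rank over $TM$ and is a linear distribution.

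For the converse, let $H$ be a linear distribution. Lemma~\ref{rmk:linear} supplies both the $\pi$-transversality and the isomorphism $\psi: T\Ftemp/H \xrightarrow{\sim} \pi^*\Etemp_H$ obtained by right-translating classes to the zero section; set $\te_H := \psi \circ \mathrm{proj}$, so that $\ker(\te_H) = H$ holds by construction. It remains to check that $\te_H$ is linear, i.e.\ additive along $d\pi: T\Ftemp \to TM$. Because $H$ is a subbundle of $T\Ftemp$ over $TM$, the projection $T\Ftemp \to T\Ftemp/H$ is a morphism of vector bundles over $\mathrm{id}_{TM}$, hence additive for $da$; so the additivity of $\te_H$ reduces to the claim that $\psi$ intertwines the induced addition on $T\Ftemp/H$ over $TM$ with the fibrewise addition of $\Etemp_H$ over $M$ in which the right-hand side $\mathrm{pr}_1^*\te_H + \mathrm{pr}_2^*\te_H$ is computed.

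I expect this last intertwining to be the main obstacle. Concretely, for composable $V \in T_e\Ftemp$ and $W \in T_f\Ftemp$ one must show that right-translating $da(V,W)$ by $-(e+f)$ agrees with the $da$-sum of the right-translate of $V$ by $-e$ and that of $W$ by $-f$; this is an instance of the interchange law of the tangent double vector bundle $T\Ftemp$ together with $da(0_{-e}, 0_{-f}) = 0_{-(e+f)}$, and it is precisely here that the linearity of $H$ (its closure under $da$, already exploited in Lemma~\ref{rmk:linear}) enters. Granting this, $a^*\te_H = \mathrm{pr}_1^*\te_H + \mathrm{pr}_2^*\te_H$ holds, the two constructions are mutually inverse on linear objects, and the equivalence follows.
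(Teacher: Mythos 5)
Your proposal is correct and takes essentially the same route as the paper: the paper's proof of the converse is precisely the computation you sketch --- rewrite $\te_H(V)$ as $\te_0(da(V-\bar V,0_{-e}))$ using transversality and right translation, then apply the interchange law of the tangent double vector bundle, $da(da(V-\bar V,0_{-e}),da(W-\bar W,0_{-f}))=da(da(V-\bar V,W-\bar W),da(0_{-e},0_{-f}))$, together with $da(0_{-e},0_{-f})=0_{-e-f}$, with the linearity of $H$ entering exactly where you say, through its closure under $da$ (so that $da(\bar V,\bar W)\in H_{e+f}$ drops out). The forward direction, which you verify via additivity over $TM$, the zero-section specialisation and the constant-rank argument, is simply declared ``immediate'' in the paper, so your added detail there is consistent with (and fills in) the paper's claim.
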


Analogously to Proposition \ref{correspondence_pfaffian_forms_distribution}, the result above defines a 1-1 correspondence
$$ \left\{   \begin{array}{c}
    \text{Linear distributions}  \\
    H\subset T\Ftemp
    \end{array} \right\} 
\tilde{\longleftrightarrow}
\left\{   \begin{array}{c}
    \text{(equivalence classes) of pointwise surjective linear forms}  \\
    \theta\in\Omega^1(E, \pi^*\Etemp)
    \end{array} \right\}. $$

\begin{proof}
It is immediate to see that $H_\te$ is linear. 
%Any pointwise surjective linear form $\te$ induces a distribution $H=\ker(\theta)$ which is clearly linear too. 
Conversely, let us prove that $\te_H$ is linear (we omit the subscript on $H$ for simplicity). 
%Conversely, any linear distribution arises by taking $\Etemp=\Etemp_H$ as in \eqref{eq:coefficient_bundle} and $\theta_H$ the canonical projection $T\Ftemp\to T\Ftemp/H=T^\pi E/H^\pi$ followed by the isomorphism $T^\pi E/H^\pi\cong \pi^*\Etemp_H$ given by right translation to the units: $V_u\mapsto da(V_u,0_{-u})$. 
%Here we need Lemma \ref{rmk:linear} and to prove that $\theta_H\in\Omega^1(\Ftemp,\pi^*\Etemp)$ is linear (we do not write the $H$ for simplicity). 
Due to the transversality of $H$ one writes $\theta_e(V)=\theta_e(V-\bar V)$, with $\bar V\in H_e = \ker (\te_e)$ any vector such that $d\pi(V)=d\pi(\bar V)$. Hence, for any other vectors $W\in T_f\Ftemp$ with $d\pi(V)=d\pi(W)$, and $\bar W\in H_f$ with $d\pi(W)=d\pi(\bar W)$, we have
\begin{align*}
\theta_e(V)+\theta_f(W)&= \te_0 (da(da(V-\bar V,0_{-e}),da(W-\bar W,0_{-f})) ) \\
&=\te_0 (da(da(V-\bar V,W-\bar W),da(0_{-e},0_{-f})) )= \te_0 ( da(da(V,W)-da(\bar V,\bar W),0_{-e-f}) )\\
&=\theta_{e+f}(da(V,W)-da(\bar V,\bar W))=\theta_{e+f}(da(V,W)),
\end{align*} 
where in the last line we used that $da$ takes $H_e\times_{TM}H_f$ to $H_{e+f}$ by linearity of $H$. 
\end{proof}

Proposition \ref{rmk:equivalence} implies that the following definition is well given:

\begin{definition}
A {\bf linear Pfaffian fibration} is a vector bundle $\pi: \Ftemp\to M$, together with either a pointwise surjective linear form $\theta$ or a linear distribution $H\subset T\Ftemp$.
\end{definition}

\begin{proposition}
 If $(\Ftemp, \te)$ is a linear Pfaffian fibration, then it is a Pfaffian fibration in the sense of Definition \ref{def_Pfaffian_bundle}. Analogously for a linear Pfaffian fibration $(\Ftemp,H)$.
\end{proposition}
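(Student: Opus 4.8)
The plan is to verify, one by one, the requirements of Definition \ref{def_Pfaffian_bundle} for a linear Pfaffian fibration $(\Ftemp, \te)$. By Proposition \ref{rmk:equivalence} we may pass freely to the associated linear distribution $H = \ker(\te)$ and phrase everything in terms of $H$; the statement for $(\Ftemp, H)$ then follows at once, since $\te$ and $H$ determine each other and the symbol space and transversality conditions match under that correspondence. Three of the four conditions are essentially immediate. First, $\pi \colon \Ftemp \to M$ is a fibration because a vector bundle projection is a surjective submersion. Second, the pointwise surjectivity of $\te$ is part of the hypothesis (equivalently, $\te_H$ is surjective by construction as the projection $T\Ftemp \to T\Ftemp/H$). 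Third, the $\pi$-regularity of $\te$ is precisely the $\pi$-transversality $T\Ftemp = H + T^\pi \Ftemp$ proved in the last line of Lemma \ref{rmk:linear}.

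The only genuine content is the $\pi$-involutivity, namely that the symbol space $\g(H) = T^\pi \Ftemp \cap H$ is Frobenius-involutive. Here I would exploit the identification \eqref{eq:pullback_symbol} of Lemma \ref{rmk:linear}, which exhibits $\g(H)$ as the $\pi$-pullback of the subbundle $\g(H)|_M$ over $M$. Combining this with the canonical isomorphism $T^\pi \Ftemp \cong \pi^* \Ftemp$ induced by the right translations \eqref{eq:translation}, one sees that $\g(H)$ corresponds to a fixed subbundle $\g_M \subset \Ftemp$, in the sense that the fibre $\g(H)_e \subset \Ftemp_{\pi(e)}$ depends only on $\pi(e)$ and not on $e$. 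In other words, the symbol space is constant along the fibres of $\Ftemp$.

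To conclude involutivity from this fibre-constancy, I would argue in a local trivialisation $\Ftemp|_U \cong U \times \RR^m$ with fibre coordinates $v^a$ and choose a local frame $\si_1, \ldots, \si_r$ of $\g_M$ over $U$. Each $\si_i$ produces the fibre-constant vertical vector field $\hat\si_i = \sum_a \si_i^a(x)\, \partial_{v^a}$, and these fields span $\g(H)$ over $\Ftemp|_U$. Since the coefficients $\si_i^a$ depend only on the base variable $x$, a direct computation gives $[\hat\si_i, \hat\si_j] = 0$, so $\g(H)$ admits a local frame of pairwise commuting vector fields and is therefore Frobenius-involutive. The main obstacle is not a deep one: it lies entirely in correctly reading \eqref{eq:pullback_symbol} through the identification $T^\pi \Ftemp \cong \pi^* \Ftemp$ so as to extract the fibre-constancy of the symbol space; once that is set up, the vanishing of the brackets is immediate and the verification is complete.
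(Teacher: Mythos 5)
Your proof is correct and follows essentially the same route as the paper: the paper likewise reduces everything to Lemma \ref{rmk:linear} for $\pi$-transversality and then proves involutivity by observing that vertical vector fields constant along the fibres commute, and that every section of $\g(H)$ is a $C^{\infty}(\Ftemp)$-linear combination of such fibre-constant fields tangent to $\g(H)$. Your local trivialisation with the frame $\hat\si_i = \sum_a \si_i^a(x)\,\partial_{v^a}$ is just an explicit coordinate rendering of that same argument, so there is nothing to add.
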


\begin{proof}
We say that a vertical vector field $X \in \Ga (T^\pi E) \subset \mathfrak{X}(E)$ is {\it constant along the fibres of $\pi$} if, for every $x \in M$, the vector $d a_e (X) \in T_x (E_x)$ (see equation \eqref{eq:translation}) does not depend on $e \in E_x$. It can be easily seen that such vertical vector fields constant along the fibre of $\pi$ commute.

Moreover, given a linear distribution $H$ on $\pi$, we can write any vector field tangent to $\g(H) \subset \Ga (T^\pi E)$ as a $\mathcal{C}^{\infty}(E)$-linear combination of vector fields tangent to $\g(H)$ and constant along the fibres; it follows that $\g(H)$ is Frobenius-involutive. Together with Remark \ref{rmk:linear}, this concludes the proof. Using Proposition \ref{rmk:equivalence}, the same holds for a linear Pfaffian fibration $(\Ftemp,H)$.
\end{proof}

As promised, we explain now that linear forms and linear distributions can be encoded by a generalised version of linear connections, called {\it relative connections}. Starting from the well-known correspondence between linear connections $\na$ on $E \arr M$ and distributions $H \subset E$ which are horizontal and linear, relative connections will turn out to be in correspondence with distributions which are linear, but not necessarily horizontal.

\begin{definition}\label{def_relative_connection}
Let $E$ and $F$ be two vector bundles over $M$; a {\bf connection} on $E$, {\bf relative} to a surjective vector bundle map $\sigma:\Ftemp\to \Etemp$, is an $\RR$-linear map   
$$D:\Gamma(\Ftemp)\to \Omega^1(M,\Etemp),$$
satisfying, for any section $s \in \Ga(E)$ and function $f\in C^\infty(M)$, the Leibniz-type identity
\begin{equation}\label{eq:Leibniz}
D(fs)(X)=fD(s)(X)+L_X(f)\sigma(s)\quad \forall X\in\X(M).
\end{equation}  
We also say that $(D,\sigma)$ is a {\bf relative connection} and $\si$ is its {\bf symbol map}.  
\end{definition}

In particular, any linear form $\theta\in\Omega^1(M,\Etemp)$ is fully encoded by the operator 
\begin{equation}\label{eq:relative_connection}
D:\Gamma(\Ftemp)\to \Omega^1(M,\Etemp),\quad s\mapsto s^*\theta.
\end{equation}
together with the vector bundle map $\sigma:\Ftemp\to \Etemp,\ \sigma(v)=\theta(v).$
Indeed, we have the following:

\begin{proposition}\label{linear_Pfaffian_bundles_with_connections}
The above procedure induces a 1-1 correspondence between pointwise surjective linear 1-forms on a vector bundle $\pi:\Ftemp \arr M$ and relative connections on $\pi$. 
\end{proposition}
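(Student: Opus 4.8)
The plan is to show that the assignment $\theta \mapsto (D_\theta, \sigma_\theta)$ of \eqref{eq:relative_connection}, where $D_\theta(s) = s^*\theta$ and $\sigma_\theta(v) = \theta(v)$, lands among relative connections, and then to exhibit an explicit inverse. For the first point the only nontrivial axiom is the Leibniz identity \eqref{eq:Leibniz}. I would decompose, for $s \in \Ga(\Ftemp)$, $f \in C^\infty(M)$ and $X \in \X(M)$, the tangent vector $d_x(fs)(X) \in T_{f(x)s(x)}\Ftemp$ as the sum of $dm_{f(x)}(d_x s(X))$ and the vertical lift of $(L_X f)\, s(x)$. Applying $\theta$ and using that a linear form is fibrewise linear---additive by definition and hence homogeneous, so that $m_\lambda^*\theta = \lambda\theta$ and its restriction to $T^\pi\Ftemp$ equals $\sigma_\theta$ (cf.\ the right-translation \eqref{eq:translation})---gives precisely $D_\theta(fs)(X) = f\, D_\theta(s)(X) + L_X(f)\,\sigma_\theta(s)$. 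Surjectivity of $\sigma_\theta$ follows from the pointwise surjectivity of $\theta$ together with the fact that $\theta$ vanishes on $d\mathbf{0}(TM)$ (an immediate consequence of additivity), so that $\theta_{\mathbf{0}(x)}$ factors through the vertical fibre $\Ftemp_x$ surjectively.

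For the inverse, given a relative connection $(D,\sigma)$ I would reconstruct $\theta$ pointwise: for $e \in \Ftemp$ over $x=\pi(e)$ and $v \in T_e\Ftemp$, set $X := d\pi(v)$, pick any local section $s$ with $s(x)=e$, and define
\begin{equation*}
\theta_e(v) := D(s)(X) + \sigma\big(v - d_x s(X)\big),
\end{equation*}
where $v - d_x s(X) \in T^\pi_e \Ftemp \cong \Ftemp_x$. The key step is independence of the choice of $s$: if $s'$ is a second section through $e$, then $t := s-s'$ vanishes at $x$, so writing $t = \sum_i f_i \epsilon_i$ in a local frame with $f_i(x)=0$, the Leibniz identity \eqref{eq:Leibniz} collapses to $D(t)(X) = \sum_i L_X(f_i)(x)\,\sigma(\epsilon_i(x)) = \sigma(d_x t(X))$, which is exactly the difference of the two candidate values. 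Hence $\theta_e$ is well defined, and fixing $s$ near $e$ shows it depends linearly and smoothly on $v$.

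It then remains to verify that this $\theta$ is a pointwise surjective linear form and that the two constructions are mutually inverse. Pointwise surjectivity is immediate, since the restriction of $\theta$ to $T^\pi\Ftemp$ is $\sigma$, which is surjective by hypothesis. That $D_\theta = D$ and $\sigma_\theta = \sigma$ follows by plugging $v = d_x s(X)$ (resp.\ a vertical $v$) into the reconstruction formula, and conversely $\theta_{(D_\theta,\sigma_\theta)} = \theta$ follows from the fibrewise linearity of $\theta_e$. The main obstacle is checking that the reconstructed $\theta$ is genuinely \emph{linear} as a form, i.e.\ $a^*\theta = \mathrm{pr}_1^*\theta + \mathrm{pr}_2^*\theta$: here I would take $(e_1,e_2) \in \Ftemp\times_M \Ftemp$ and tangent vectors $v_1,v_2$ projecting to a common $X \in T_xM$, choose sections $s_i$ through $e_i$, and combine the additivity $D(s_1+s_2)=D(s_1)+D(s_2)$ with the compatibility of the fibrewise addition $da$ with the decomposition $v_i = d_x s_i(X) + (\text{vertical})$, so that $\theta_{e_1+e_2}(da(v_1,v_2))$ splits as $\theta_{e_1}(v_1)+\theta_{e_2}(v_2)$. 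This bookkeeping with the tangent double vector bundle structure of $T\Ftemp$ over $TM$ is the genuinely technical part; everything else is formal.
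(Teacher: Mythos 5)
Your proposal is correct and follows essentially the same route as the paper's (much terser) proof: extract $(D,\sigma)$ from $\theta$ via $D(s)=s^*\theta$ and $\sigma=\theta|_{T^\pi E}$, then reconstruct $\theta$ from $(D,\sigma)$ by splitting a tangent vector as a section direction plus a vertical part, with the Leibniz identity \eqref{eq:Leibniz} guaranteeing independence of the chosen section --- exactly the content of the paper's assertion that $\theta$ is ``uniquely determined by $s^*\theta = D(s)$ and $\theta|_{T^\pi E} = \sigma$''. The only small point to tighten is smoothness of the reconstructed $\theta$ in the base point $e$ (your ``fixing $s$ near $e$'' only gives smoothness in $v$ along the image of one section); this is routine, e.g.\ by using the constant sections of a local frame so that the defining formula is manifestly smooth in $(e,v)$ jointly.
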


\begin{proof}
The linearity of $\theta$ is translated into the fact that $D$ as in \eqref{eq:relative_connection} is $\RR$-linear and satisfies the Leibniz-type identity \eqref{eq:Leibniz}, where $\sigma:\Ftemp\to \Etemp$ is the vector bundle map over $M$ defined by
\begin{equation*}\sigma_x(u)=\theta_f(u)\end{equation*}
under the canonical identification $T^\pi _f\Ftemp=\Ftemp_x$, for $f \in \Ftemp$, $x = \pi(f) \in M$. Conversely, if $D$ is a connection relative to $\sigma$, there is a well defined linear form $\te \in \Om^1 (E, \pi^* F)$ uniquely determined by $s^*\theta=D(s)$ (for any $s\in \Gamma(\Ftemp)$) and $\theta(v)=\sigma(v)$ (for any $v\in \Ftemp=T^\pi \Ftemp|_M$).
\end{proof}

When there is no confusion, we denote a {\bf linear Pfaffian fibration} by $(\Ftemp,D)$.
Of course, all definitions and properties can be translated from the point of view of linear forms to the one of relative connections and viceversa.  Accordingly, we call $$\g(D):=\ker(\sigma)$$
the {\bf symbol space of $D$}, we say that a section $s$ is holonomic if $D(s)=0$, and we denote by $\Gamma(\Ftemp,D)$ the set of {\bf holonomic sections}. As in the case of linear distributions, the linearity of the form $\theta$ associated to $D$ implies that the natural identification between $T^\pi \Ftemp$ and the pullback $\pi^*\Ftemp$ restricts to the symbol spaces: 
\begin{equation*}\label{eq:symbol_spaces}
\g(\theta)\cong \pi^*\g(D).
\end{equation*}

\begin{observation}{\bf (Relative connections induced by linear distributions)}\label{rmk:operator_distribution}
%One can also describe directly the correspondence between linear distributions $H\subset T\Ftemp$ and relative connections 
We describe directly the correspondence between linear distributions and relative connections, bypassing Proposition \ref{linear_Pfaffian_bundles_with_connections} and Remark \ref{rmk:linear}. As we anticipated, this can be also thought as a generalisation of the well-known correspondence between linear connections $\nabla:\X(M)\times\Gamma(\Ftemp)\to \Gamma(\Ftemp)$, and transversal linear distributions, given by the horizontal distribution of $\nabla$. 

For any linear distribution $H$ on $\Ftemp$, one produces a connection 
$$D:\Gamma(\Ftemp)\to \Omega^1(M,\Ftemp/\g),$$
relative to the projection $\mathrm{pr}:\Ftemp\to \Ftemp/\g$, for $\g\subset \Ftemp$ the subbundle defined by
$$\g:=\g(H)|_{M}\subset T^\pi \Ftemp|_M\cong \Ftemp,$$ 
where we are identifying canonically $T^\pi \Ftemp$ with $\pi^*\Ftemp$. The connection $D$ is given by the formula
\begin{equation*}D_X(s)(x):=[\bar s,\bar X](x)\mod H\end{equation*}
where $X \in \X(M)$, $\bar X\in \X(\Ftemp)$ is any $\pi$-projectable extension of $X$, tangent to $H$, and $\bar s$ is the vertical vector field constant along the fibres induced by $s$. Of course, the above formula coincides with \eqref{eq:relative_connection} when $\theta_H$ is the canonical projection $T\Ftemp\to \pi^*\Etemp_H$. More generally, for any linear form $\te$, one can write the associate relative connection \eqref{eq:relative_connection} as 
$$D_X(s)(x)=\theta([\bar s, \bar X]_x).$$
To check this formula one uses the flow of $\bar s$ to compute the bracket, and the linearity of $\theta$. This equation will play a role in the theory of prolongations of a linear Pfaffian fibration.
\end{observation}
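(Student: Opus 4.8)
The statement to be proved is the identity recorded in Observation~\ref{rmk:operator_distribution}: for a linear form $\theta$ with kernel the linear distribution $H$, the relative connection $D(s)=s^*\theta$ of \eqref{eq:relative_connection} is computed by $D_X(s)=\theta([\bar s,\bar X])$ (equivalently by $[\bar s,\bar X]\bmod H$), where $\bar X$ is a $\pi$-projectable extension of $X$ tangent to $H$ and $\bar s$ is the vertical vector field constant along the fibres induced by $s$. As the hint suggests, the plan is to evaluate $\theta([\bar s,\bar X])$ through the flow of $\bar s$ and the linearity of $\theta$. The first step is to identify that flow: since $\bar s(f)=s(\pi(f))\in T^\pi_f\Ftemp\cong\Ftemp_{\pi(f)}$ is constant along fibres, its flow is fibrewise translation $\tau_t(f)=a(f,\,t\,s(\pi(f)))=f+t\,s(\pi(f))$, as one checks from $\tfrac{d}{dt}\tau_t(f)=\bar s(\tau_t(f))$ and $\tau_0=\mathrm{id}$.

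The second step exploits linearity. Writing $\tau_t=a\circ(\mathrm{id},(ts)\circ\pi)$ and using the defining relation $a^*\theta=\mathrm{pr}_1^*\theta+\mathrm{pr}_2^*\theta$ together with the homogeneity $m_\la^*\theta=\la\theta$ (the standard consequence that a smooth fibrewise-additive form is $\RR$-homogeneous), one obtains $\tau_t^*\theta=\theta+t\,\pi^*(s^*\theta)=\theta+t\,\pi^*(D(s))$; differentiating at $t=0$ gives $\mathcal{L}_{\bar s}\theta=\pi^*(D(s))$. The third step extracts the bracket: because $\bar X\in H=\ker\theta$ the function $\theta(\bar X)$ vanishes identically, so the Leibniz rule $\mathcal{L}_{\bar s}(\theta(\bar X))=(\mathcal{L}_{\bar s}\theta)(\bar X)+\theta([\bar s,\bar X])$ collapses to $\theta([\bar s,\bar X])=-(\mathcal{L}_{\bar s}\theta)(\bar X)=-\pi^*(D(s))(\bar X)=-D(s)(d\pi(\bar X))=-D(s)(X)$, the last equality since $\bar X$ projects to $X$. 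This is exactly the claimed identity up to the sign fixed by the bracket-ordering convention: taking $[\bar X,\bar s]$ instead yields $+D(s)(X)$, consistent with the classical formula $[\bar X,\bar s]=(\na_X s)^v$ recovered when $H$ is the horizontal distribution of a linear connection $\na$.

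It remains to check that the formula is well defined and to match it with the distribution description. The computed value $-D(s)(X)$ is visibly independent of the fibre point at which one evaluates, so the symbol ``$(x)$'' is unambiguous; and any two projectable $H$-tangent extensions of $X$ differ by a section of $\g(H)=T^\pi\Ftemp\cap H$. By Lemma~\ref{rmk:linear} one has $\g(H)=\pi^*(\g(H)|_M)$, so $\g(H)$ is generated by vertical vector fields constant along the fibres; these commute with $\bar s$ (as established in the proof that linear Pfaffian fibrations are Pfaffian), whence $[\bar s,V]\in\Gamma(\g(H))\subset\ker\theta$ and $\theta([\bar s,V])=0$ for every $V\in\Gamma(\g(H))$. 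Finally, the distribution version $[\bar s,\bar X]\bmod H$ is the image of the (vertical) bracket under $T\Ftemp\to T\Ftemp/H\cong\pi^*\Etemp_H\cong\pi^*(\Ftemp/\g)$ from Lemma~\ref{rmk:linear}, which is precisely $\theta_H$; hence the form and distribution formulas agree.

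The only genuinely delicate point is the second step: upgrading mere additivity of $\theta$ to the clean identity $\tau_t^*\theta=\theta+t\,\pi^*(D(s))$, which requires the homogeneity $m_\la^*\theta=\la\theta$ and careful bookkeeping of the sign and bracket ordering. Everything else is the Leibniz rule applied to $\theta(\bar X)=0$ together with the already-established pullback structure of linear distributions from Lemma~\ref{rmk:linear}.
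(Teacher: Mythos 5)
Your proof is correct and follows precisely the route the paper sketches — the paper's entire justification is the hint ``use the flow of $\bar s$ and the linearity of $\theta$'', and you execute exactly that: the flow is fibrewise translation $\tau_t(f)=f+t\,s(\pi(f))$, additivity upgraded to homogeneity gives $\tau_t^*\theta=\theta+t\,\pi^*(s^*\theta)$ and hence $\mathcal{L}_{\bar s}\theta=\pi^*(D(s))$ (legitimate here since $\pi\circ\tau_t=\pi$, so the coefficient bundle $\pi^*\Etemp$ is canonically preserved), and the Leibniz rule applied to $\theta(\bar X)\equiv 0$ yields the bracket term; your well-definedness check via constant vertical sections generating $\g(H)$ is also the intended argument. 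Your sign diagnosis is moreover accurate, not a mere convention quibble: with the standard bracket $[Y,Z]=YZ-ZY$ one gets $\theta([\bar s,\bar X])=-D_X(s)$, so the formula as printed should read $D_X(s)=\theta([\bar X,\bar s])$. One can confirm this on $J^1\Ftemp$ over $\RR$ with $\theta=du-p\,dx$: for $\bar s=u_0\,\partial_u+p_0\,\partial_p$ and $\bar X=\partial_x+p\,\partial_u$ one computes $\theta([\bar s,\bar X])=p_0-u_0'=-D_{\partial_x}(s)$, consistent with the classical identity $[X^h,s^v]=(\nabla_X s)^v$ for linear connections that you cite; the same reversed ordering reappears where the paper invokes this formula in the proof of Lemma \ref{eq:curvaturas}, so the slip is systematic there rather than an error in your argument.
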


\begin{observation}[\bf Relative connections as Spencer operators]\label{rmk:Spencer_operators} 
Any vector bundle $\Ftemp$ can be thought as a Lie algebroid with zero bracket and zero anchor. The appropriate generalisation of relative connections in the world of algebroids is the notion of Spencer operators: these are relative connections compatible with the Lie bracket and the anchor; they play the infinitesimal counterpart of multiplicative distributions (see \cite{Cra12}). These compatibility conditions are trivially satisfied when the Lie algebroid is a vector bundles, so in this case the notions of Spencer operator and relative connection coincide.
\end{observation}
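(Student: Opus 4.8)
The plan is to unwind the definition of a Spencer operator from \cite{Cra12} on the particular Lie algebroid $A = \Ftemp$ --- the vector bundle $\Ftemp$ equipped with zero bracket $[\cdot,\cdot]_A = 0$ and zero anchor $\rho = 0$ --- and to verify that, once these structures degenerate, the only surviving axioms are exactly those of a relative connection in the sense of Definition \ref{def_relative_connection}. Recall that a Spencer operator on a Lie algebroid $A$ relative to a vector bundle $\Etemp \arr M$ is a pair $(D, \sigma)$ consisting of a vector bundle map $\sigma: A \arr \Etemp$ (the symbol map of the Spencer operator) together with an $\RR$-linear operator $D: \Gamma(A) \arr \Omega^1(M, \Etemp)$, subject to a Leibniz-type rule and to two compatibility conditions encoding the interaction of $(D,\sigma)$ with the bracket and with the anchor of $A$.

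First I would check that, for $A = \Ftemp$, the underlying data and the Leibniz rule already match Definition \ref{def_relative_connection} verbatim. Indeed, the Leibniz rule of a Spencer operator reads $D(f\alpha) = fD(\alpha) + df \otimes \sigma(\alpha)$, and evaluating on $X \in \X(M)$ while using $(df)(X) = L_X(f)$ returns precisely \eqref{eq:Leibniz}; crucially, this rule refers only to the de Rham differential on the base $M$ and never to the anchor, so it transfers without modification. Thus the pair $(D,\sigma)$ underlying any Spencer operator on $\Ftemp$ is already a relative connection with symbol map $\sigma$, and conversely every relative connection supplies a candidate pair.

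The core step is then to write out the two compatibility conditions explicitly and substitute $[\cdot,\cdot]_A = 0$ and $\rho = 0$. These conditions relate, for $\alpha, \beta \in \Gamma(A)$, the terms $\sigma([\alpha,\beta]_A)$ and $D([\alpha,\beta]_A)$ to combinations of Lie-derivative and contraction operators along $\rho(\alpha), \rho(\beta)$ (including the $\Etemp$-connection that a Spencer operator induces, which is itself built by contracting $D$ and $\sigma$ with the anchor). I would verify term by term that every summand on both sides carries an explicit factor of the bracket $[\cdot,\cdot]_A$ or of the anchor $\rho$; hence each condition collapses to the identity $0 = 0$ and imposes no constraint on $(D,\sigma)$. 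Combining this with the previous step yields the bijection: Spencer operators on $\Ftemp$, viewed as a Lie algebroid with zero bracket and zero anchor, are exactly relative connections on $\Ftemp$, and hence --- via Proposition \ref{linear_Pfaffian_bundles_with_connections} --- exactly pointwise surjective linear forms.

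The step I expect to be the main obstacle is this term-by-term verification, which hinges on a point that must be argued rather than assumed: the induced $\Etemp$-connection appearing in the compatibility conditions must itself be built out of $\rho$, so that it vanishes identically when $\rho = 0$, rather than being independent extra data that survives the degeneration. Making this explicit --- namely, exhibiting the induced connection in \cite{Cra12} as a contraction of $D$ against the anchor --- is what guarantees that the correspondence is a genuine bijection and not merely a surjection of relative connections onto Spencer operators.
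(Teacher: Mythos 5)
Your proposal is correct and follows essentially the same route as the paper: the statement is a remark whose entire justification in the text is exactly the verification you outline, namely that the Leibniz rule of a Spencer operator is verbatim the identity \eqref{eq:Leibniz} of Definition \ref{def_relative_connection}, and that after substituting the zero bracket and zero anchor the remaining compatibility conditions collapse to $0=0$.

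One concrete correction to the point you flag as the main obstacle. In \cite{Cra12} the coefficient bundle $\Etemp$ of a Spencer operator carries a representation $\nabla$ of the Lie algebroid $A$ as \emph{given} data (it integrates the groupoid representation on the multiplicative side); it is not manufactured by contracting $D$ against the anchor. The compatibility conditions contain terms of two kinds: anchor-contraction terms such as $D_{[\rho(\alpha),X]}(\beta)$ and $\iota_{\rho(\beta)}D(\alpha)$, which indeed vanish identically when $\rho=0$ as you argue, and representation terms such as $\nabla_\alpha(D_X\beta)$ and $\nabla_\alpha(\sigma(\beta))$, which do \emph{not} vanish for free. What $\rho=0$ buys is that any $A$-connection on $\Etemp$ becomes $C^\infty(M)$-bilinear, and the representation one takes when regarding a bare vector bundle as a Lie algebroid (equivalently, a bundle of abelian groups as a groupoid) is the trivial one, $\nabla=0$; with that canonical choice all remaining terms vanish and your bijection with relative connections holds. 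With a nonzero (commuting) representation the conditions would genuinely constrain $(D,\sigma)$, so the triviality of $\nabla$ must be stated as part of the setup rather than derived from $\rho=0$, and this is the only repair your argument needs.
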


\subsection{Linearisation of Pfaffian fibrations along holonomic sections}\label{sec:linearisation}

In this section we discuss a natural process of linearisation in the context of Pfaffian fibrations, which can be sketched as the following map:
\begin{center}
Pfaffian fibrations and holonomic sections $\stackrel{\Lin}{\Longrightarrow}$ linear Pfaffian fibrations
\end{center}
$$ ((P,\te), \be ) \mapsto (\mathrm{Lin}_{\beta}(P, \theta), D^\beta).$$
Let us describe this application $\Lin$. 

%. We talk about the linearisation of a Pfaffian fibration $\pi: (P, \theta) \rightarrow M$ along any holonomic section $\beta$. The outcome is a linear Pfaffian fibration for which the underlying vector bundle (over $M$) is
%\begin{equation*} \mathrm{Lin}_{\beta}(P, \theta):= \beta^* T^\pi P, \end{equation*}
%and the corresponding 1-form is with values in $\beta^*\Etemp$. 

%we can just copy the usual formulas from the finite dimensional case and obtain the following explicit description of $D^{\beta}$.
%
%Starting with  
%$s\in \Gamma(\beta^* T^\pi P)$, choose a family $\beta_t$ (varying smoothly w.r.t. $t\in (-\epsilon, \epsilon)$) such that
%$$\beta_0= \beta, \quad \left.\frac{d}{dt}\right|_{t= 0} \beta_t(x)= s(x).$$
%For $X_x\in T_xP$ we now consider the resulting curve
%$$ (-\epsilon, \epsilon) \ni t \mapsto \beta_t^*(\theta)(X_x)\in \Etemp_{\beta_{t}(x)}$$
%which vanishes at $0$. Hence we can linearise it at $0$
%$$ \left.\frac{d}{dt}\right|_{t= 0} \beta_t^*(\theta)(X_x)\in T_{0_x}\Etemp\cong T_xM\oplus \Etemp_x$$
%and then define
%$$ D^{\beta}_{X}(s)(x)= \textrm{pr}_{\Etemp_x}\left(\left.\frac{d}{dt}\right|_{t= 0} (\beta_t)^*(\theta)(X_x)\right) \in \Etemp_x.$$

\begin{definition}\label{def_linearisation_pfaffian_bundle}
 Let $(P,\te)$ be a Pfaffian fibration over $M$ and $\beta \in \Ga(P,\te)$ a holonomic section, i.e.\ $\beta^*\theta=0$. The {\bf linearisation of $(P,\te)$ along $\beta$} is the pair
 $$(\mathrm{Lin}_{\beta}(P, \theta), D^\beta),$$
 where $\mathrm{Lin}_{\beta}(P, \theta)$ is the vector bundle over $M$
\begin{equation*} \mathrm{Lin}_{\beta}(P, \theta):= \beta^* T^\pi P, \end{equation*}
 and $D^\be$ is the operator
 $$D^\be: \Ga (\Lin_\be (P,\te) ) \arr \Om^1 (M, \be^*\mathcal{N})$$
 defined as follows. For any section $s\in \Gamma(\beta^* T^\pi P)$, choose a smooth family $\beta_t$ of sections of $P$ such that
$$\beta_0= \beta, \quad \left.\frac{d}{dt}\right|_{t= 0} \beta_t(x)= s(x).$$
 For $X_x\in T_xM$, the family $\beta_t^*(\theta)(X_x)\in \mathcal{N}_{\beta_{t}(x)}$ defines a curve starting at $0_{\beta(x)}$. Accordingly, its speed is a vector in $T_{0_{\beta(x)}}\mathcal{N}\cong T_{\beta(x)}P\oplus\mathcal{N}_{\beta(x)}$. We define
 $$ D^{\beta}_{X}(s)(x) := \textrm{pr}_{\mathcal{N}_{\beta(x)}}\left(\left.\frac{d}{dt}\right|_{t= 0} (\beta_t)^*(\theta)(X_x)\right) \in \mathcal{N}_{\beta(x)},$$
\end{definition}

It is straightforward to check that the operator $D^\be$ defined above is a connection on $\mathrm{Lin}_{\beta}(P, \theta)$ relative to $\si = \te \mid_{T^\pi P}$ (Definition \ref{def_relative_connection}), hence $(\mathrm{Lin}_{\beta}(P, \theta), D^\beta)$ is a linear Pfaffian fibration. Moreover, its symbol space coincides with the pull-back via $\beta$ of the symbol space $\g$ of $(P, \theta)$:
$$ \mathfrak{g}(\textrm{Lin}_{\beta}(P, \theta))= \beta^* \mathfrak{g}.$$

%\begin{definition}
% Let $(P,\te)$ be a Pfaffian fibration over $M$ and $\be \in \Ga(P,\te)$ a holonomic section, i.e.\ $\beta^*\theta=0$. The {\bf linearisation of $(P,\te)$ along $\be$} is the Pfaffian fibration $(P^\beta, D^\beta)$ defined by
%$$ P^\beta:= \beta^*(T^\pi P)$$
%with relative connection $D^\beta$ equal to the vertical part of $d_\beta \Theta$, the differential at $\beta$ of the holonomator $\Theta:\Gamma(P)\to \mathcal{M},\ \eta\mapsto \eta^*\theta $ (equation\ \eqref{eq:holonomic}).
%\end{definition}

%\begin{lemma}
%The zero section $\textbf{0}:M\to F$ of a linear Pfaffian fibration $(F,\te)$ is holonomic due to the linearity of $\theta$:
%$$\textbf{0}^*\te = (\textbf{0} - \textbf{0})^*\te = \textbf{0}^*\te - \textbf{0}^*\te = 0$$
%\end{lemma}
\begin{observation}[\bf Linearisation of a linear Pfaffian fibration]\label{linearisation_of_linear_pfaffian_bundle}
When a Pfaffian fibration is already linear, linearising along the zero section becomes the identity, i.e.\ $\mathrm{Lin}_{\textbf{0}} (\bullet) = \bullet$ (of course, the zero section \textbf{0} is always holonomic for any linear form $\theta$).

Indeed, the linearisation of $(\Ftemp,\theta)$ along $\textbf{0}$ recovers the vector bundle $\Ftemp=\Ftemp^\textbf{0}=\textbf{0}^*(T^\pi \Ftemp)$ and the relative connection $D$ associated to $\theta$ as in \eqref{eq:relative_connection}. To check this, note that a section $s$ of $\Ftemp$ can by written as 
$$s= \left. \frac{d}{d\epsilon} \right|_{\ep= 0} (\textbf{0}+\epsilon s),$$
hence
$$D^\textbf{0}(s)= \left. \frac{d}{d\epsilon} \right|_{\ep= 0} (\textbf{0}+\epsilon s)^*(\theta)= \left. \frac{d}{d\epsilon} \right|_{\ep= 0} \epsilon(s^*(\theta))=s^*(\theta)=D(s),$$
where in the second equality we used again the linearity of $\theta$ to write $(\textbf{0}+\epsilon s)^*(\theta)=\textbf{0}^*\theta+\epsilon(s^*\theta)=\epsilon(s^*\theta)$. As $\theta$ and $D$ encode the same Pfaffian fibration (see Remark \ref{rmk:Spencer_operators}), we see that linearising a linear Pfaffian fibration along the zero section does not do anything; we end up recovering the same linear Pfaffian fibration.
\end{observation}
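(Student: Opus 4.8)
The plan is to evaluate Definition \ref{def_linearisation_pfaffian_bundle} in the special case $P = E$, $\beta = \mathbf{0}$, and to check that the two pieces of output data --- the underlying vector bundle and the relative connection --- agree with the original linear Pfaffian fibration, which by Proposition \ref{linear_Pfaffian_bundles_with_connections} is encoded equivalently by $\theta$ or by the relative connection $D$ of \eqref{eq:relative_connection}.

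First I would identify the base. By definition $\Lin_{\mathbf{0}}(E,\theta) = \mathbf{0}^* T^\pi E$, and since the fibres of $E \to M$ are vector spaces, the canonical identification $T^\pi_{\mathbf{0}(x)} E \cong E_x$ (the tangent space to a vector space at its origin is the space itself) gives $\mathbf{0}^* T^\pi E \cong E$. This is exactly the identification $T^\pi E|_M \cong E$ already used throughout Section \ref{sec:linear_Pfaffian}, so no new argument is needed here.

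Next I would compute $D^{\mathbf{0}}$. For $s \in \Gamma(E)$ the convenient admissible family in Definition \ref{def_linearisation_pfaffian_bundle} is the scaling family $\beta_\epsilon := \epsilon s = m_\epsilon \circ s$, which indeed satisfies $\beta_0 = \mathbf{0}$ and $\tfrac{d}{d\epsilon}\big|_{0}\beta_\epsilon = s$; independence of $D^{\mathbf{0}}$ from the chosen family is guaranteed by Definition \ref{def_linearisation_pfaffian_bundle} itself, so this choice loses nothing. The key identity is $\beta_\epsilon^*\theta = \epsilon\,(s^*\theta)$. Since $\mathbf{0}$ is holonomic one has $\mathbf{0}^*\theta = 0$ (equivalently, additivity of the pullback of a linear form forces $\mathbf{0}^*\theta = 2\,\mathbf{0}^*\theta$), while the homogeneity $(\epsilon s)^*\theta = \epsilon\,(s^*\theta)$ is immediate from the $\RR$-linearity of the associated relative connection: by \eqref{eq:relative_connection} and Definition \ref{def_relative_connection}, $(\epsilon s)^*\theta = D(\epsilon s) = \epsilon\,D(s) = \epsilon\,(s^*\theta)$.

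Finally I would feed this into the definition. In the linear case $\mathcal{N} = \pi^* F$, so the curve $\epsilon \mapsto \beta_\epsilon^*\theta(X_x)$ has fibre component $\epsilon\,(s^*\theta)(X_x)$ lying in the fixed vector space $F_x$; under the splitting $T_{0_{\mathbf{0}(x)}}\mathcal{N} \cong T_{\mathbf{0}(x)}E \oplus \mathcal{N}_{\mathbf{0}(x)}$ the projection $\mathrm{pr}_{\mathcal{N}}$ simply differentiates this fibre component and returns $(s^*\theta)(X_x)$. Comparing with \eqref{eq:relative_connection} yields $D^{\mathbf{0}}(s) = s^*\theta = D(s)$ for every $s$, hence $D^{\mathbf{0}} = D$; since $\theta$ and $D$ encode the same linear Pfaffian fibration, linearising along $\mathbf{0}$ reproduces the original object. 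This is really a direct computation rather than a deep argument: the only place the linearity hypothesis intervenes is the homogeneity $\beta_\epsilon^*\theta = \epsilon\,(s^*\theta)$, and the one point of bookkeeping to watch is that $\mathrm{pr}_{\mathcal{N}}$ extracts precisely the (linear-in-$\epsilon$) fibre part of the velocity, discarding the base part $s(x) \in T_{\mathbf{0}(x)}E$.
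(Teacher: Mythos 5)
Your proposal is correct and follows essentially the same route as the paper: the scaling family $\beta_\epsilon = \epsilon s$ through $\mathbf{0}$, the homogeneity $(\epsilon s)^*\theta = \epsilon\,(s^*\theta)$ coming from the linearity of $\theta$ (equivalently, as you cite it, the $\RR$-linearity of the associated relative connection $D$ from Proposition \ref{linear_Pfaffian_bundles_with_connections}), and differentiation at $\epsilon = 0$ to get $D^{\mathbf{0}}(s) = s^*\theta = D(s)$. Your extra bookkeeping --- that in the linear case $\mathcal{N} = \pi^*F$, so the fibre component of the velocity lies in the fixed vector space $F_x$ and $\mathrm{pr}_{\mathcal{N}}$ returns exactly $(s^*\theta)(X_x)$ --- is a point the paper leaves implicit, but it is the same argument.
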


\begin{observation}[\bf Linearisation of a Pfaffian groupoid]\label{obs_pfaffian_groupoids}
 Intuitively, a Pfaffian groupoid is a Pfaffian fibration together with a multiplicative (group-like) structure; such multiplicativity translates into a richer geometrical content and simpler objects. Passing to the infinitesimal counterpart, we found Lie algebroids endowed with Spencer operators (see Remark \ref{rmk:Spencer_operators}): the linearisation of a Pfaffian groupoid along its unit map coincides precisely with the Spencer operator associated to a multiplicative form as in \cite{Cra12}. 
\end{observation}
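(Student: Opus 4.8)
The plan is to unwind what a Pfaffian groupoid is and then feed its unit map into the linearisation construction of Definition~\ref{def_linearisation_pfaffian_bundle}. Concretely, a Pfaffian groupoid is a Lie groupoid $\mathcal{G}\rightrightarrows M$, with source $s$, target $t$ and unit $u:M\arr\mathcal{G}$, equipped with a \emph{multiplicative} Pfaffian form $\te\in\Om^1(\mathcal{G},\mathcal{N})$; taking $s:\mathcal{G}\arr M$ as the fibration, $(\mathcal{G},\te)$ is a Pfaffian fibration in the sense of Definition~\ref{def_Pfaffian_bundle}. First I would check that the unit $u$ is a holonomic section. Since $s\circ u=\mathrm{id}_M$, the map $u$ is a genuine section of $s$; and multiplicativity forces $\te$ to vanish along the unit submanifold, i.e.\ $u^*\te=0$ (this is the standard fact that a multiplicative form with values in a representation restricts trivially to the units, seen by evaluating $m^*\te=\mathrm{pr}_1^*\te+\mathrm{pr}_2^*\te$ on composable pairs of the form $(g,u(s(g)))$). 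Thus $\be=u$ satisfies the hypotheses of Definition~\ref{def_linearisation_pfaffian_bundle}.

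Next I would identify the underlying data. By definition $\Lin_u(\mathcal{G},\te)=u^*T^s\mathcal{G}=\ker(ds)|_{u(M)}$, which is exactly the Lie algebroid $A=A(\mathcal{G})$ of $\mathcal{G}$; similarly the coefficient bundle of the linearisation is $u^*\mathcal{N}$, which is the bundle on which the Spencer operator of \cite{Cra12} takes values. Hence both sides of the claimed identity are relative connections $\Ga(A)\arr\Om^1(M,u^*\mathcal{N})$ on the same bundle, with the same symbol $\si=\te|_{T^s\mathcal{G}}$, and the task reduces to matching the two operators.

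For this last step I would compute $D^u$ from its defining formula. Realising a section $\alpha\in\Ga(A)$ as the derivative at $0$ of a smooth family $u_\ep$ of (local) bisections with $u_0=u$, the construction gives, for $X\in\X(M)$,
$$D^u_X(\alpha)=\mathrm{pr}_{\mathcal{N}}\left(\left.\frac{d}{d\ep}\right|_{\ep=0}u_\ep^*\te(X)\right).$$
Differentiating the pullback of a \emph{multiplicative} form along a path of bisections through the unit is precisely the recipe by which \cite{Cra12} produces the Spencer operator attached to $\te$, so carrying out the differentiation should reproduce their formula term by term. I expect the main obstacle to be purely one of bookkeeping and conventions: one has to align the choice of right- versus left-invariant vector fields used to trivialise $A$, the sign and ordering conventions in the two definitions, and the normalisation of the Spencer operator in \cite{Cra12}, so that the operators agree on the nose rather than up to an automorphism of $u^*\mathcal{N}$. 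Once equality is verified on a generating family of sections, $C^\infty(M)$-linearity in $X$ together with the common Leibniz identity \eqref{eq:Leibniz} (valid for $D^u$ because it is a relative connection) propagates it to all of $\Ga(A)$, which finishes the identification.
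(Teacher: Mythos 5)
The paper offers no proof of this Remark: it is stated as an observation whose content is delegated entirely to \cite{Cra12}, so your proposal is filling in a verification the authors chose to outsource. Your outline is the right one, and in substance it coincides with how \cite{Cra12} obtain the Spencer operator in the first place --- there the operator attached to a multiplicative form is precisely the derivative of $\be_\ep^*\te$ along families of bisections through the unit, which is your $D^u$ read off from Definition \ref{def_linearisation_pfaffian_bundle} (note that the Definition only requires families of \emph{sections} of $s$, so realising $\alpha\in\Ga(A)$ by bisections is a harmless but unnecessary strengthening). The identifications $\Lin_u(\mathcal{G},\te)=u^*T^s\mathcal{G}=A(\mathcal{G})$ and $\si=\te|_{T^s\mathcal{G}}$ are correct, and with them the claim does reduce to matching two relative connections on the same bundle.

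One slip worth flagging: the multiplicativity identity you invoke, $m^*\te=\mathrm{pr}_1^*\te+\mathrm{pr}_2^*\te$, is only literally correct for forms with trivial coefficients. In the setting of \cite{Cra12} one has $\te\in\Om^1(\mathcal{G},t^*E)$ for $E$ a representation of $\mathcal{G}$, and the condition reads $(m^*\te)_{(g,h)}=\mathrm{pr}_1^*\te+g\cdot\mathrm{pr}_2^*\te$, twisted by the action on coefficients. Your conclusion $u^*\te=0$ survives: evaluating the twisted identity on pairs $(g,u(s(g)))$, exactly as you propose, yields $g\cdot(u^*\te)(ds(w))=0$ for all $w\in T_g\mathcal{G}$, and since the action is by isomorphisms and $ds$ is surjective, $u^*\te=0$ follows; alternatively evaluate at a unit, where the action is the identity. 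Finally, your last step (``should reproduce their formula term by term'') is asserted rather than executed, but that computation is precisely the content of \cite{Cra12}, which is all the paper itself appeals to; so modulo the coefficient twist your sketch is a faithful reconstruction of the intended argument rather than a gap.
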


%\begin{observation}[\bf Universal linearisation]
%\begin{color}{red}
%We recall another classical approach to linearise a particularly nice class of PDEs simultaneously around every possible formal solution.
%
%If $P \subset J^k R$ is a formally integrable PDE on $R \arr M$, we define its {\bf universal linearisation}
%$$U Lin (P) = T^\pi P^{(\infty)} \subset T^\pi J^\infty R = J^{\infty} (T^\pi R)$$
%This can be seen as the infinite prolongation of the formally integrable linear PDE $T^\pi P \subset T^\pi J^k R = J^k X$ of order $k$ on the vector bundle $X = T^\pi R \arr M$.
%
%Accordingly, a solution of $U Lin (P)$ is a holonomic section $\ga = j^{\infty} \rho: M \arr U Lin(P)$, with $\rho: M \arr X$ section of $X$. This induces both a holonomic section $\be = j^k \si: M \arr P$ (i.e.\ a solution of $P$) and a section $\ep: M \arr \be^* T^\pi P$ (i.e.\ a solution of the linearisation of $P$ around $\be$). Viceversa, given such data, we can reconstruct a solution of $U Lin(P)$: we define $\rho$ by applying the projection $T^\pi P \arr X$ to $\be$.
%
%This entire construction can be translated using the Pfaffian language as well: if $(P,\te)$ is a formally integrable Pfaffian fibration (see section 5.2), one can construct the same space $U Lin (P)$ as before and see that it is the infinite prolongation of the formally integrable linear Pfaffian fibration $T^\pi P$, hence a linear Pfaffian fibration as well. Its relative connection is however less explicit than that of $Lin_\be(P,\te)$, and not easily described in terms of the holonomator...
%\end{color}
%\end{observation}

\begin{observation}[\bf Heuristics of the linearisation procedure]
In this remark we aim to give an intuitive explanation of the linearisation phenomenon, for which we will use an infinite-dimensional picture in a heuristic way, without providing precise details.

Let $(P,\te)$ be a Pfaffian fibration over $M$, with $\te \in \Om^1 (P,\mathcal{N})$, and consider the (infinite-dimensional) vector bundle $\mathcal{\Etemp}$ over the (infinite-dimensional) manifold $\mathcal{P}:= \Gamma(P)$ by setting the fibres
$$\mathcal{\Etemp}_\beta:=\Omega^1(M,\beta^*\mathcal{N}), \quad \be \in \mathcal{P}$$
and consider its global section
\begin{equation*}\label{eq:holonomic}
\Theta:\mathcal{P}\to \mathcal{F}, \quad \beta\mapsto \beta^*\theta.
\end{equation*}

%Alternatively, the holonomator can also be recovered from the section of the finite dimensional vector bundle
%$$ e: J^1 P \arr \Hom (\pi^* TM, \mathcal{N}), \quad j^1_x \be \mapsto \te \circ d_x \be $$
%as $\Theta (\be) = e \circ j^1 \be$. In section 4.3 we will use $e$ to define the partial prolongation of a Pfaffian fibration.

%Geometrically, the situation is rather simple: while the holonomic sections of $(P, \theta)$ correspond to sections 
%$\beta\in \Gamma(P)$ with $\beta^*\theta= 0$, we are looking at the zeroes of the holonomator \eqref{eq:holonomic} map:
% $\sigma\mapsto \sigma^*\theta$.
%Allowing ourselves to pass to infinite dimensional manifolds, recall that this map is itself a section 
%$$ \Theta\in \Gamma(\mathcal{P}, \mathcal{\Etemp})$$
%where $\mathcal{P}= \Gamma(P)$ and $\mathcal{\Etemp}$ is the vector bundle over $\Gamma(P)$ whose fibre 
%above $\beta$ is 
%$$\mathcal{\Etemp}_{\beta}:= \Omega^1(M, \beta^*\mathcal{N}).$$

The holonomic sections of $(P,\te)$ are now the zeroes of $\Te$, hence $\Te$ can be called {\it holonomator}. The linearisation of $(P,\te)$ around a holonomic section $\be \in \mathcal{P}$ becomes then the usual linearisation of the section $\Theta$ at the zero $\beta$, i.e.\ the $\mathcal{F}_\beta$-component of the differential  
$$ d_\beta \Theta: T_\beta\mathcal{P}\rightarrow T_0\mathcal{F}=T_\beta\mathcal{P}\oplus\mathcal{\Etemp}_{\beta}.$$
Since a vector tangent to $\mathcal{P}$ at $\be$ is realised as the velocity of a path $t\mapsto\be_t \in \mathcal{P}$ starting at $\be$, i.e.\ $T_\beta\mathcal{P}= \Gamma(\beta^* T^\pi P)$, then the linearisation becomes an operator
$$D^{\beta}:= d_\beta \Theta: \Gamma(\beta^* T^\pi P)\rightarrow \Omega^1(M, \beta^*\mathcal{N}).$$
Together with $\si^{\beta}$ given by $\theta$ restricted to $T^\pi P$, we obtain a relative connection $(D^{\beta}, \si^{\beta})$ on $\beta^* T^\pi P$ with coefficients in $\beta^*\Etemp$.
This is precisely the linearisation of $(P, \theta)$ along $\beta$ from Definition \ref{def_linearisation_pfaffian_bundle}.
%Of course, the previous description is a bit problematic since it involves infinite dimensional manifolds. However, with the resulting intuition at hand, we can just copy the usual formulas from the finite dimensional case and obtain the following explicit description of $D^{\beta}$.
\end{observation}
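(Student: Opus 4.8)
The plan is to verify, in order, that $D^\be$ is well defined, that it is $\RR$-linear, and that it satisfies the relative Leibniz identity \eqref{eq:Leibniz} with symbol $\si=\te\mid_{T^\pi P}$; then Proposition \ref{linear_Pfaffian_bundles_with_connections} identifies $(\Lin_\be(P,\te),D^\be)$ with a linear Pfaffian fibration, after which the symbol space is read off directly.

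First I would make the projection $\mathrm{pr}_{\mathcal{N}}$ of Definition \ref{def_linearisation_pfaffian_bundle} computable by fixing an auxiliary linear connection $\na$ on $\mathcal{N}\arr P$. For a linear connection the horizontal subspace along the zero section is tangent to the zero section, so the splitting it induces on $T_{0_{\be(x)}}\mathcal{N}$ coincides with the canonical one $T_{\be(x)}P\oplus\mathcal{N}_{\be(x)}$; since $\be$ is holonomic, the curve $t\mapsto(\be_t)^*\te(X_x)$ starts at $0_{\be(x)}$, and hence its $\mathcal{N}$-component equals the covariant $t$-derivative, yielding the working formula
$$D^\be_X(s)(x)=\frac{\na}{dt}\Big|_{t=0}\te_{\be_t(x)}\big(d_x\be_t(X_x)\big).$$
As the value at $t=0$ vanishes, the Christoffel correction drops out and this expression is independent of $\na$; a chart computation then shows it depends on the family $\be_t$ only through $\be$ and $s=\tfrac{d}{dt}|_0\be_t$ and their first derivatives along $M$ (using $\partial_t\partial_x\Phi|_0=\partial_x s$ for $\Phi(t,x)=\be_t(x)$). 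This gives well-definedness; conceptually it is the standard fact that the intrinsic derivative of a section at one of its zeroes is well defined, applied to the holonomator $\be\mapsto\be^*\te$.

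$\RR$-linearity in $s$ is then immediate, the working formula being manifestly linear in $d_x\be_t$ and hence in $s$. For the Leibniz identity I would compute $D^\be_X(fs)$ by choosing a family with $\tfrac{d}{dt}|_0\be_t=fs$ and applying the product rule under $\tfrac{\na}{dt}$ to $\te_{\be_t(x)}(d_x\be_t(X_x))$: the velocity of the argument is $\partial_x(fs)\cdot X_x=f\,(\partial_x s\cdot X_x)+(X_x f)\,s$, so after projecting to $\mathcal{N}$ one term reassembles into $f\,D^\be_X(s)(x)$ while the derivative falling on $f$ produces $\te_{\be(x)}\big((X_x f)\,s(x)\big)=L_X(f)\,\si(s)(x)$. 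This is exactly \eqref{eq:Leibniz}, and it pins the symbol down as $\si=\te\mid_{T^\pi P}$; note $\si$ is surjective onto $\be^*\mathcal{N}$ since $\pi$-regularity $T^\pi P+\ker\te=TP$ forces $\te(T^\pi P)=\mathcal{N}$.

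With $(D^\be,\si)$ a relative connection on $\Lin_\be(P,\te)=\be^*T^\pi P$ with coefficients in $\be^*\mathcal{N}$, Proposition \ref{linear_Pfaffian_bundles_with_connections} makes it a linear Pfaffian fibration. Finally, $\g(D^\be)=\ker(\si)$ by definition, and since $\si_x=\te_{\be(x)}\mid_{T^\pi_{\be(x)}P}$ we get $\ker(\si_x)=T^\pi_{\be(x)}P\cap\ker\te_{\be(x)}=\g_{\be(x)}$, i.e.\ $\g(\Lin_\be(P,\te))=\be^*\g$. The main obstacle is the Leibniz computation combined with a clean treatment of $\mathrm{pr}_{\mathcal{N}}$: the subtlety is that this projection is canonical only along the zero section, so the argument must use holonomicity $\be^*\te=0$ to place the relevant curve at $0_{\be(x)}$ before differentiating, which is simultaneously what renders the derivative independent of all auxiliary choices.
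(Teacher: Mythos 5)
Your proposal is correct, but it takes a genuinely different route from the paper's. The remark you were asked to justify is deliberately heuristic: the paper works in the infinite-dimensional picture, viewing $\Theta:\mathcal{P}\to\mathcal{F}$, $\beta\mapsto\beta^*\theta$, as a section of a bundle over $\Gamma(P)$ and reading the linearisation off as the fibre component of $d_\beta\Theta$ at a zero, with no finite-dimensional verification supplied (the facts that $D^\beta$ is a relative connection with symbol $\theta|_{T^\pi P}$ and symbol space $\beta^*\g$ are only declared ``straightforward to check'' after Definition \ref{def_linearisation_pfaffian_bundle}). What you give instead is precisely that omitted verification, carried out rigorously in finite dimensions: you fix an auxiliary linear connection $\nabla$ on $\mathcal{N}\to P$, use that the zero section is $\nabla$-parallel so the connection splitting of $T_{0_p}\mathcal{N}$ agrees with the canonical splitting $T_pP\oplus\mathcal{N}_p$, rewrite $D^\beta$ as the covariant $t$-derivative of the curve $t\mapsto\theta_{\beta_t(x)}(d_x\beta_t(X_x))$, and exploit holonomicity $\beta^*\theta=0$ to make the Christoffel correction vanish at $t=0$ — giving independence of both $\nabla$ and the chosen family $\beta_t$ — after which $\RR$-linearity, the Leibniz identity \eqref{eq:Leibniz}, and $\g(D^\beta)=\beta^*\g$ follow by direct computation, with surjectivity of $\sigma=\theta|_{T^\pi P}$ correctly deduced from $\pi$-regularity via $\theta(T^\pi P)=\theta(T^\pi P+\ker\theta)=\theta(TP)=\mathcal{N}$. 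Your observation that this is the ``intrinsic derivative of a section at one of its zeroes'' is exactly the finite-dimensional shadow of the holonomator picture, so the two arguments are conceptually dual: the paper's route buys transparency about \emph{why} linearisation is the natural operation, at the price of rigor, while yours buys an actual proof of the claims surrounding Definition \ref{def_linearisation_pfaffian_bundle} and Proposition \ref{linear_Pfaffian_bundles_with_connections}, at the price of connection and chart bookkeeping.
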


\subsection{Examples}

%\begin{example}
%As anticipated, the $k$-jet space $\pi:J^kR\to M$ of a fibration $R\to M$, together with the canonical Cartan form $\te_{\mathrm{can}}$, turns out to be our main example of Pfaffian fibration. Moreover, given any PDE $P\subset J^kR$ of order $k$, the restriction of $\te_{\mathrm{can}}$ to $P$ is still $\pi$-regular and $\pi$-involutive, thus $(P,\te_{\mathrm{can}}|_P)$ is a Pfaffian fibration as well (under some regularity conditions on $P$).
%\end{example}

\begin{example}[\bf PDEs]
As we anticipated, jet bundles and PDEs are the prototypical examples of Pfaffian fibrations.

\begin{proposition}\label{PDE_is_Pfaffian_bundlle}
Let $R \arr M$ be a fibration; any PDE $P \subset J^k R$, together with the restriction of the Cartan form $\te_{\mathrm{can}}$, is a Pfaffian fibration on $M$. Moreover, its symbol space (Definition \ref{def_Pfaffian_bundle}) coincides with the symbol space of $P$ as a PDE (equation \eqref{eq:symbol_pde}).
\end{proposition}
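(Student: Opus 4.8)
The plan is to verify directly that the restriction $\theta := \te_{\mathrm{can}}|_P$ satisfies the three requirements of Definition \ref{def_Pfaffian_bundle}: pointwise surjectivity (as a form into its coefficient bundle), $\pi$-regularity, and $\pi$-involutivity, and then to match the two notions of symbol space. The key structural input is the short exact sequence \eqref{eq:jets} together with the regularity conditions imposed on a PDE (that $\mathrm{pr}(P)$ is a submanifold and that $P \arr \mathrm{pr}(P) \arr \pi(P)$ are submersions); these are exactly what make the restriction of $\te_{\mathrm{can}}$ behave well.

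First I would fix the coefficient bundle: the Cartan form lives in $\Om^1(J^kR, \mathrm{pr}^*(T^\pi J^{k-1}R))$, but on $P$ one should restrict the coefficients to the subbundle $\mathrm{pr}^*(T^\pi \mathrm{pr}(P)) \subset \mathrm{pr}^*(T^\pi J^{k-1}R)$; surjectivity onto this smaller bundle is then what must be checked, and it follows from the assumption that $P \arr \mathrm{pr}(P)$ is a submersion, which guarantees that $d\mathrm{pr}(T_pP)$ fills out the tangent space to $\mathrm{pr}(P)$. Next, for $\pi$-regularity I would show $T^\pi P + \ker(\theta) = TP$. The cleanest route is to use Lemma \ref{lemma:holonomic}: through a point $p = j^k_x\beta \in P$ the holonomic lift $d(j^k\beta)(T_xM)$ is a subspace of $\ker\te_{\mathrm{can}}$ lying in $T_pP$ (since holonomic sections of $P$ give solutions), and it projects isomorphically onto $T_xM$ under $d\pi$; hence every vector in $T_pP$ differs from a vertical vector by an element of this horizontal, $\theta$-annihilated subspace, giving the transversality decomposition. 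One must be slightly careful that such a holonomic lift through an arbitrary $p \in P$ need not exist before integrability is established, so instead I would argue pointwise at the linear-algebra level, using the sequence \eqref{eq:jets} restricted to $P$ and the submersivity of $P \arr M$.

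For $\pi$-involutivity I must show that $\g(\theta) = T^\pi P \cap \ker\theta$ is Frobenius-involutive. Here I would compute $\g(\theta)$ explicitly: by the same computation quoted just before Theorem \ref{Goldschmidt_criterion_PDE}, one has $\g(\theta) = T^\pi P \cap (S^k(T^*M) \otimes T^\pi R)$ inside $T^\pi J^kR$. This already settles the symbol-space matching claim of the proposition, since this is precisely \eqref{eq:symbol_pde}. The involutivity is then essentially the statement that the Cartan distribution on the full jet bundle $J^kR$ has involutive vertical part, because the symbol directions are spanned by coordinate vector fields $\partial/\partial u^\alpha$ with $|\alpha| = k$ in the standard jet coordinates, and these commute; restricting to $P$ preserves involutivity because $\g(\theta)$ is a subbundle cut out inside an involutive one. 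The main obstacle, and the step needing the most care, is establishing that $\g(\theta)$ is a genuine subbundle of constant rank and that its restriction to $P$ remains Frobenius-involutive rather than merely involutive inside $J^kR$; this is where the regularity hypotheses on $P$ enter decisively, ensuring $d\mathrm{pr}$ has constant rank on $T^\pi P$ so that the kernel $\g(\theta)$ has constant rank, after which the explicit coordinate description of the commuting symbol fields yields involutivity.
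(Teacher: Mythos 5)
Your treatment of $\pi$-involutivity and of the symbol space is essentially sound, but there is a genuine gap at the $\pi$-regularity step. You correctly diagnose the circularity in using holonomic sections through an arbitrary $p \in P$ (that would presuppose PDE-integrability), but your substitute --- ``argue pointwise at the linear-algebra level, using the sequence \eqref{eq:jets} restricted to $P$ and the submersivity of $P \arr M$'' --- is a placeholder rather than an argument, and the ingredients you name cannot deliver the conclusion. Submersivity of $\pi: P \arr M$ says $d\pi$ is surjective on all of $T_pP$, whereas $\pi$-regularity demands surjectivity on the smaller subspace $\ker(\te_p) \cap T_pP$, and the sequence \eqref{eq:jets} is a statement about vertical bundles that says nothing about how $\ker(\te_p)$ sits relative to $T_pP$. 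The observation you are missing --- and it is exactly how the paper argues --- is that no holonomic section is needed: for $p = j^k_x\be$, the linear map $d_x(j^{k-1}\be): T_xM \arr T_{\mathrm{pr}(p)}J^{k-1}R$ depends only on the point $p$ itself, since the $k$-jet of $\be$ at $x$ determines the first-order behaviour of its $(k-1)$-jet prolongation; it is thus available pointwise, without integrating anything. The paper then exploits the submersion $\mathrm{pr}: P \arr \mathrm{pr}(P)$ --- not $\pi: P \arr M$ --- by choosing a splitting $\xi: T\mathrm{pr}(P) \arr \mathrm{pr}^*TP$ of $d\mathrm{pr}$ and checking that $v \mapsto \xi\bigl(d_x(j^{k-1}\be)(v)\bigr)$ lands in $\ker(\te_p) \subset T_pP$ (indeed $\te_p$ of this vector equals $d_x(j^{k-1}\be)(v) - d_x(j^{k-1}\be)(v) = 0$) and splits $d_p\pi$ there. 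So your first instinct (holonomic lifts) was the right mechanism; what you failed to see is that the lift direction is jet-theoretically determined by $p$ alone, and that the submersion to be used is the one onto $\mathrm{pr}(P)$.

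The remaining parts are correct but more roundabout than the paper's. For involutivity you pass through constant rank of $d\mathrm{pr}$ on $T^\pi P$, the commuting coordinate fields $\partial/\partial u^\al$ with $|\al| = k$, and the fact that an involutive ambient distribution intersects $TP$ in an involutive distribution when the intersection has constant rank; this works, but the paper gets smoothness, constant rank and Frobenius involutivity in one stroke from the identity \eqref{symbol_space_in_proof}, namely $\ker(\te_{\mathrm{can}}) \cap \ker(d\pi) = \ker\bigl(d\mathrm{pr}: TP \arr T\mathrm{pr}(P)\bigr)$, which exhibits $\g(\te)$ as the tangent distribution to the fibres of the submersion $\mathrm{pr}: P \arr \mathrm{pr}(P)$; the same identity yields the comparison with the PDE symbol space \eqref{eq:symbol_pde} in one line, matching your conclusion. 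Finally, your restriction of the coefficient bundle to $\mathrm{pr}^*(T^\pi \mathrm{pr}(P))$ is a reasonable precision that the paper leaves implicit, but note that on non-vertical vectors it tacitly uses that $d_x(j^{k-1}\be)$ takes values in $T\mathrm{pr}(P)$ --- the same compatibility the paper's splitting argument relies on when it applies $\xi$ --- so this point deserves an explicit word rather than a bare assertion.
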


\begin{proof}
 By the regularity conditions asked on $P$ (see the discussion after equation \eqref{def_PDE_jet_bundle}), the projection $\pi: P \arr \pi(P) \subset M$ is a surjective submersion. Moreover, since also $\mathrm{pr}: P \arr \mathrm{pr}(P)$ is a submersion, we can choose a splitting $\xi: T \mathrm{pr}(P) \arr \mathrm{pr}^*TP$ of $d \mathrm{pr}$. It follows that, for every $p = j^k_x \phi \in P$, we can consider the map
 $$ T_x M \arr \ker(\te_p) \subset T_p P, \quad v \mapsto \xi (d_x (j^{k-1} \si) (v)), $$
which is a splitting of $d_p \pi \mid_{\ker(\te_p)}: \ker(\te_p) \arr d_p \pi (P)$; this proves that $\te$ is $\pi$-transversal.
 
 Moreover, one notices that the Cartan form $\te_{\mathrm{can}}$ restricted to $\ker(d\pi)$ is simply the differential of the projection $\mathrm{pr}: P \arr \mathrm{pr}(P) \subset J^{k-1} P$, hence
\begin{equation}\label{symbol_space_in_proof}
 \ker(\te_{\mathrm{can}}) \cap \ker(d\pi) = \ker (d \mathrm{pr}: T P \arr T \mathrm{pr} (P) ).
\end{equation}
 Since, by definition of PDE, we assume that $\mathrm{pr}: P \arr \mathrm{pr}(P)$ is a submersion, its kernel is a smooth submanifold and $\ker(\te_{\mathrm{can}}) \cap \ker(d\pi)$ is an involutive regular distribution on $P$, i.e.\ $\te_{\mathrm{can}}$ is $\pi$-involutive.
 
 We conclude that $(P,\te) \arr \pi(P)$ is a Pfaffian fibration. In particular, by equation \eqref{symbol_space_in_proof}, the symbol space of $(P,\te)$ as a Pfaffian fibration coincides with the symbol space of $P$ as a PDE.
\end{proof}

% Let us give a concrete example. Consider the following PDE of order 2 on $\RR^3$: blablabla. This defines a subbundle of 

Here is a partial converse of the previous result; any Pfaffian fibration which is ``nice enough" can be realised from a jet bundle.

\begin{proposition}\label{immersion_theorem_pfaffian_bundle}
Let $\pi: (P,\te) \arr M$ be a Pfaffian fibration, with $\te \in \Om^1 (P, \mathcal{N})$, and assume that the foliation on $P$ defined by the symbol space is simple, i.e.\ $\g(\te) = \ker(df)$ for some fibration $f: P \arr Q$.
Then there exist
\begin{itemize}
\item a fibration $\tau: Q \arr M$ such that $\tau \circ f = \pi$
\item a vector bundle isomorphism $\Phi: f^* (T^\tau Q) \arr \mathcal{N}$ 
\item a unique bundle map $(P, \te) \xrightarrow{i} (J^1 Q, \te_{\mathrm{can}})$ such that
$$\Phi \circ i^* \te_{\mathrm{can}} = \te,$$
for $\te_{\mathrm{can}} \in \Om^1 (J^1 Q, \mathrm{pr}^*T^\tau Q )$ the canonical Cartan form on $J^1 Q$.
\end{itemize}
\end{proposition}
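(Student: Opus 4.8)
The plan is to extract all three objects from the two surjections of $T^\pi P$ given by $df|_{T^\pi P}$ and $\te|_{T^\pi P}$, which I claim share the common kernel $\g(\te)$. First I would produce $\tau$: since by hypothesis $\g(\te)=\ker(df)\subset\ker(d\pi)=T^\pi P$, the submersion $\pi$ is constant along the connected fibres of the simple foliation $f$, hence descends to a smooth map $\tau\colon Q\arr M$ with $\tau\circ f=\pi$. That $\tau$ is a fibration follows from the identity $d\pi_p=d\tau_{f(p)}\circ df_p$ together with the surjectivity of both $df_p$ and $d\pi_p$.

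For $\Phi$, I would observe that $\pi$-regularity $T^\pi P+\ker\te=TP$ makes $\te|_{T^\pi P}\colon T^\pi P\arr\mathcal N$ pointwise surjective with kernel exactly $\g(\te)$, while $df$ restricts to a surjection $T^\pi P\arr f^*(T^\tau Q)$ (vertical vectors map to $\tau$-vertical ones, and surjectivity comes from that of $df_p$) again with kernel $T^\pi P\cap\ker(df)=\g(\te)$. Since $\g(\te)$ is an honest subbundle, both maps descend to isomorphisms out of $T^\pi P/\g(\te)$, so $\Phi:=\te\circ(df|_{T^\pi P})^{-1}$ is the desired vector bundle isomorphism $f^*(T^\tau Q)\arr\mathcal N$, characterised by $\Phi_p\big(df_p(\tilde X)\big)=\te_p(\tilde X)$ for $\tilde X\in T^\pi_pP$.

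Using the horizontal-subspace model \eqref{alternative_description_first_jet}, I define $i(p):=(f(p),H_p)$ with $H_p:=df_p(\ker\te_p)$. A dimension count shows $H_p$ is genuinely horizontal: from $\pi$-regularity one gets $\dim\ker\te_p=\dim M+\dim\g(\te)_p$, and since $\ker\te_p\cap\ker(df_p)=\g(\te)_p$ this forces $\dim H_p=\dim M$, which together with $H_p+T^\tau_{f(p)}Q=T_{f(p)}Q$ yields $T_{f(p)}Q=H_p\oplus T^\tau_{f(p)}Q$. Thus $i$ is a well-defined bundle map over $M$ with $\mathrm{pr}\circ i=f$ and $\pi_{J^1Q}\circ i=\pi$, smoothness being inherited from $f$ and the subbundle $\ker\te$. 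The crux — and the step I expect to carry the real content, where $\pi$-regularity and the involutivity hidden in the existence of $f$ come together — is the identity $\Phi\circ i^*\te_{\mathrm{can}}=\te$. Unwinding \eqref{eq:Cartan_form} at $i(p)=(f(p),H_p)$ gives $(i^*\te_{\mathrm{can}})_p(X)=df_p(X)-\zeta\big(d\pi_p(X)\big)\in T^\tau_{f(p)}Q$, where $\zeta\colon T_xM\arr H_p$ is the horizontal lift; choosing $Z\in\ker\te_p$ with $df_p(Z)=\zeta(d\pi_p(X))$ (possible since $\zeta(d\pi_p(X))\in H_p=df_p(\ker\te_p)$), the vector $\tilde X:=X-Z$ lies in $T^\pi_pP$, satisfies $df_p(\tilde X)=(i^*\te_{\mathrm{can}})_p(X)$, and has $\te_p(\tilde X)=\te_p(X)$ because $Z\in\ker\te_p$. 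Hence $\Phi_p\big((i^*\te_{\mathrm{can}})_p(X)\big)=\te_p(X)$, as required.

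Finally, for uniqueness, any competitor $i'$ with $\mathrm{pr}\circ i'=f$ and $\Phi\circ (i')^*\te_{\mathrm{can}}=\te$ writes as $i'(p)=(f(p),H'_p)$ with $H'_p$ horizontal, and satisfies $\ker\big((i')^*\te_{\mathrm{can}}\big)_p=\ker\te_p$ since $\Phi$ is an isomorphism. Feeding $X\in\ker\te_p$ into the same formula forces $df_p(X)=\zeta'(d\pi_p(X))\in H'_p$, i.e.\ $H_p=df_p(\ker\te_p)\subseteq H'_p$; equality of dimensions then gives $H'_p=H_p$ and $i'=i$.
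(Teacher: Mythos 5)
Your proposal is correct and follows essentially the same route as the paper's proof: descend $\pi$ to $\tau$ through the leaf space, build $\Phi$ from the two isomorphisms $T^\pi P/\g(\te) \cong f^*(T^\tau Q)$ (via $df$) and $T^\pi P/\g(\te) \cong \mathcal{N}$ (via $\te$), define $i$ by pushing $\ker\te$ forward along $df$, and verify $\Phi \circ i^*\te_{\mathrm{can}} = \te$ by choosing a lift in $\ker\te_p$ of the relevant vector. The only cosmetic difference is that you phrase $i(p)$ via the horizontal subspace $H_p = df_p(\ker\te_p)$ in the model \eqref{alternative_description_first_jet}, whereas the paper works directly with the equivalent splitting $\xi_{[p]}(v) = d_pf(\bar v)$, $\bar v \in \ker\te_p$ a lift of $v$.
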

\begin{proof}
We define $Q:= P/\mathord\sim$ as the leaf space of the foliation $\g(\te)$. Then the projection
$$ \tau: Q \arr M, \quad [p]=f(p) \mapsto \pi(p)$$
is well defined, since $d \pi$ vanishes on $\g(\te)$, hence $\pi$ is constant on each leaf. Moreover, $\tau$ is a fibration since $\pi$ is so.

The linear isomorphism $\Phi_p: T_{f(p)}^\tau Q \arr \mathcal{N}_p$ is defined as the composition of the inverse of the isomorphism
$$ d_p f: T^\pi_p P/\g(\te_p) \arr T^\pi_{f(p)} Q, \quad [v] \arr df(v)$$
with the isomorphism
$$ \te_p: T^\pi_p P/\g(\te_p) = T_p P /\ker(\te_p) \arr \mathcal{N}_p, \quad [v] \arr \te_p (v).$$

The bundle map $i$ is defined as
$$ i(p) := (f(p), \xi_{[p]} )$$
where we interpret $J^1 Q$ as in equation \eqref{alternative_description_first_jet}. Here $\xi_{[p]}$ is defined as the composition of the isomorphisms $T_{\pi(p)} M \cong \ker(\te_p)/\g(\te_p) \subset T_p P /\g (\te_p)$ and $T_p P/ \g(\te_p) \cong T_{[p]} Q$, i.e.\
$$ \xi_{[p]}: T_{\pi(p)} M \arr T_p P, \quad v \mapsto d_p f (\bar{v}), $$
where $\bar{v}$ is any vector in $\ker(\te_p)$ such that $d_p \pi(\bar{v}) = v$.

To prove that $\Phi \circ i^*\te_{\mathrm{can}} = \te$, we compute, for every $v \in T_p P$,
$$ \Phi \circ (i^*\te_{\mathrm{can}})_p (v) = \Phi \circ (\te_{\mathrm{can}})_{i(p)} (d_p i (v)) = \Phi \circ ( d (\mathrm{pr} \circ i) (v) - \xi_{[p]} (d (\pi \circ i) (v) ) ) = $$
$$ = \Phi \circ ( d_p f (v) - \xi_{[p]} (d_p \pi (v) ) ) = \Phi \circ  (d_pf (v) - d_p f (\bar{v}) ) = \Phi (d_p f (v - \bar{v})) = \te_p (v - \bar{v}) = \te_p (v).$$

Last, for the uniqueness of $i$, assume there is another bundle map $j: P \arr J^1 Q, \quad p \mapsto (f(p), \widetilde{\xi}_{[p]})$ with the same properties; then, for every $v \in TP$,
$$ (\te_{\mathrm{can}})_{i(p)} (di (v) ) = (\te_{\mathrm{can}})_{j(p)} (dj (v)).$$
The previous computations tells us that
$$ d_p f (v - \bar{v}) = d_p f (v) - \widetilde{\xi}_{[p]} (d_p \pi (v) ),$$
which implies that $\xi = \widetilde{\xi}$, i.e.\ that $j$ must coincide with $i$.
\end{proof}

Proposition \ref{immersion_theorem_pfaffian_bundle} will be improved in the next section (see Corollary \ref{corollary_immersion_theorem}). 
\end{example}

\begin{example}[\bf Linear PDEs]\label{example_linear_pfaffian_bundle}
Let $E \arr M$ be a vector bundle; any linear PDE $F \subset J^kE$, together with the restriction of the Cartan form $\te_{\mathrm{can}}$, is a linear Pfaffian fibration on $M$. Indeed, a simple computation shows the linearity of $\te_{\mathrm{can}}$. %In particular, the entire linear jet bundle $(J^k E, \te_{\mathrm{can}}) \arr M$ is the prototypical example of a linear Pfaffian fibration.

Note that the coefficient bundle of $\te_{\mathrm{can}}$ is $J^{k-1}E$ because we have the canonical identification $\mathrm{pr}^*T^\pi(J^{k-1}E)\cong\pi^*J^{k-1}E$, with $\mathrm{pr}$ the projection $J^kE\to J^{k-1}E$. This explains also why the Cartan form and the classical Spencer operator play the same role in the theory of linear PDEs. More precisely, the classical Spencer operator $D^{\mathrm{clas}}:\Gamma(J^kE)\to \Omega^1(M,J^{k-1}E)$ is just the connection relative to the projection $J^k E \arr J^{k-1} E$ and defined by equation \eqref{eq:relative_connection} via $\te_{\mathrm{can}}$:
$$D(s)=s^*\te_{\mathrm{can}}.$$
In other words, the Cartan form on a linear jet space is fully encoded by the classical Spencer operator (see also sections \ref{sec:Cartan_form} and \ref{sec:Spencer_operator}).

Note also that, applying Remark \ref{linearisation_of_linear_pfaffian_bundle}, the linearisation of the Cartan form on a linear jet bundle $J^k E$ is precisely the classical Spencer operator of $J^k E \arr M$.
\end{example}

\begin{example}[\bf Lie Pseudogroups]\label{pseudogroups_examples}
 An important source of examples of Pfaffian fibrations comes from Lie pseudogroups. Recall from \cite{Yud16} that a pseudogroup on a manifold $X$ is a set $\Ga \subset \Diff_{\mathrm{loc}}(X)$ of diffeomorphisms between opens of $X$, which is closed under composition, inversion, restriction and glueing. A Lie pseudogroup is a pseudogroup $\Ga$ satisfying further regularity conditions, namely the subspace
 $$ J^k \Ga := \{ j^k_x \phi \mid \phi \in \Ga, x \in \dom(\phi) \} \subset J^k (X, X):= J^k (\mathrm{pr}_1: X \times X \arr X)$$
must be a smooth submanifold for every $k$.
 
 In particular, $J^k \Ga$ is endowed with the restriction of the Cartan form $\te_{\mathrm{can}}$ of $J^k (X,X)$, denoted by $\te$, as well as with two fibrations:
 $$s: J^k \Ga \arr X, \quad j^k_x \phi \mapsto x,$$
 $$t: J^k \Ga \arr X, \quad j^k_x \phi \mapsto \phi(x).$$
 We claim that $(J^k \Ga, \te)$ is a Pfaffian fibration w.r.t.\ both fibrations.
 
 Indeed, $s: J^k \Ga \arr X$ is a PDE on the fibration $X \times X \arr X$, hence is a Pfaffian fibration by Proposition \ref{PDE_is_Pfaffian_bundlle}. On the other hand, it is easy to check that the two maps $s$ and $t$ are related to the Cartan form $\te$ by the folllowing equation:
\begin{equation}\label{lie_pfaffian_equation}
 \ker(\te) \cap \ker(ds) = \ker(\te) \cap \ker(dt)
\end{equation} 
The fact that $\te$ is $t$-transversal follows then by a dimensional argument: for every $g \in J^k \Ga$,
$$ \dim(T_g J^k \Ga) = \dim (\ker(d_gs)) + \dim(\te_g) - \dim (\ker(d_gs) \cap \ker (\te_g) ) = $$
$$ = \dim (\ker (d_gt)) + \dim (\te_g) - \dim (\ker (d_gt) \cap \ker (\te_g) ).$$
Moreover, since $\te$ is $s$-involutive and \eqref{lie_pfaffian_equation} holds, $\te$ is also $t$-involutive, hence $t: (J^k \Ga, \te) \arr X$ is a Pfaffian fibration as well.

Here is an important property of Pfaffian fibrations of the kind $J^k \Ga$: they are all PDE-integrable (Definition \ref{def_integrability}). Indeed, for every $g = j^k_x \phi \in J^k \Ga$, there exists the local section $j^k \phi \in \Ga_{\mathrm{loc}} (s)$, which is holonomic by construction and sends $x$ to $g$; similarly, the local section $j^k \phi \circ \phi^{-1} \in \Ga_{\mathrm{loc}} (t)$ is holonomic and sends $\phi(x)$ to $(j^k \phi) (\phi^{-1}(\phi(x))) = (j^k \phi) (x) = g$.

Last, we remark that equation \eqref{lie_pfaffian_equation} establishes a compatibility between the two structures of Pfaffian fibrations on $J^k \Ga$. This becomes more meaningful if we realise that $J^k \Ga$ possesses a Lie groupoid structure compatible with $\te$ in an appropriate sense, i.e.\ $J^k \Ga$ is an example of Pfaffian groupoid (see Remark \ref{obs_pfaffian_groupoids}). The fact that $\ker(\te) \cap \ker(ds) = \ker(\te) \cap \ker(dt)$ says that the Pfaffian groupoid $(J^k \Ga, \te)$ is of a special kind, called of {\it Lie type}; we will however not discuss here the consequence of this property, for which we refer to  \cites{Sal13, Cra20}.
% For instance, the PDE discussed in XX has an associated pseudogroup, given by its symmetries:
% $$\Ga := ...$$
% Then the Pfaffian fibration described there (in a concrete fashion) is the same Pfaffian fibration described (more conceptually) by $J^1 \Ga$.
\end{example}

\begin{example}[\bf $G$-structures]\label{example_G_structure}
Many geometric structures defines a Pfaffian fibration: this happens with Riemannian metrics, almost symplectic structures, almost complex structures, etc. More precisely, let $P \subset Fr(M)$ be any $G$-structure on $M^n$, i.e.\ $P$ is a reduction of the structure group of $Fr(M)$ to a Lie subgroup $G \subset GL(n,\RR)$; then $P$ defines a Pfaffian fibration over $M$ as follows. Consider
$$\smash{\widetilde{P}}:= \{ (x,y,\xi) \mid x,y \in M, \xi: T_x M \arr T_y M \text{ linear isomorphism preserving frames in } P \}, $$
and the projections $\pi_1$ and $\pi_2$ on the first and second component. Then $\pi_1: (\smash{\widetilde{P}}, \om) \arr M$ is Pfaffian fibration, where the form $\om \in \Om^1 (\smash{\widetilde{P}}, \pi_2^* TM )$ is defined by
$$ \om_{(x,y,\xi)} (v) := d\pi_2 (v) - \xi (d\pi_1 (v) ).$$
This follows easily by realising $\smash{\widetilde{P}}$ as a subbundle of $J^1 (M,M): = J^1 (\mathrm{pr}_1: M \times M \arr M)$ via equation \eqref{alternative_description_first_jet}, and noticing that $\om$ is the restriction of the Cartan form of $J^1 (M,M)$. Of course, swapping $\pi_1$ and $\pi_2$ and replacing $\xi$ with $\xi^{-1}$ would yield another form $\om' \in \Om^1 (\smash{\widetilde{P}}, \pi_1^* TM)$ which makes $\pi_2: (\smash{\widetilde{P}}, \om') \arr M$ a Pfaffian fibration.

Here is an interesting application: the PDE-integrability of $\smash{\widetilde{P}}$ as a Pfaffian fibration is a necessary condition for the integrability of $P$ as a $G$-structure (e.g.\ the flatness of a Riemannian metric, the closedness of an almost symplectic form, etc.). Recall that a $G$-structure $P$ is integrable if it admits an atlas of charts  ``adapted" to $P$, meaning that their induced diffeomorphisms between opens of $M$ preserve the frames of $P$. In particular, using such an atlas, for every $(x,y,\xi) \in \smash{\widetilde{P}}$ one finds adapted charts $\chi_x: U \arr \RR^n$ around $x$ and $\chi_y: V \arr \RR^n$ around $y$ such that $f:= (\chi_y)^{-1} \circ \chi_x$ is a local diffeomorphism of $M$, sending $x$ to $y$ and such that $d_x f = \xi$.

On the other hand, a section of $\smash{\widetilde{P}}$ is a function $\si: U \arr \smash{\widetilde{P}}$ of the type $\si(x) = (x, f(x), \xi_x)$, for $f: U \arr V$ some smooth map (not necessarily a diffeomorphism). By the definition of $\om$, the section $\si$ is holonomic precisely if and only if $\xi_x = d_x f$. It follows that, if $P$ is integrable, for every $(x,y, \xi) \in \smash{\widetilde{P}}$ there is a holonomic section through it, i.e.\ $\smash{\widetilde{P}}$ is PDE-integrable.

As for Example \ref{pseudogroups_examples}, one can also notice that $\smash{\widetilde{P}}$ has a structure of Lie groupoid; this is more transparent by establishing the isomorphism $\smash{\widetilde{P}} \cong (P \times P)/G$, where $P \times P$ is quotiented by the diagonal action of $G$ (this is also known as the {\it gauge groupoid} of the principal bundle $P$). Then $\smash{\widetilde{P}}$ is also a Pfaffian groupoid (see Remark \ref{obs_pfaffian_groupoids}), which is of Lie type since it clearly satisfies $\ker(\om) \cap \ker(d \pi_1) = \ker(\om) \cap \ker(d \pi_2)$.
\end{example}

%There is a strict relations between examples \ref{pseudogroups_examples} and \ref{example_G_structure}: given a $G$-structure $P$, one can consider the set $\Ga$ of its {\it symmetries}, i.e.\ diffeomorphisms $f:M \arr M$ whose lift to the frame bundle $Fr(f): Fr(M) \arr Fr(M)$ sends $P$ to $P$. It is easy to verify that $\Ga$ is a Lie pseudogroup, and that $J^1 \Ga \subset \tilde{P}$; the inclusion is strict since $\tilde{P}$ consists of jet of functions which do preserve the frames but are not necessarily diffeomorphisms. When $P$ is integrable, $J^1 \Ga = \tilde{P}$: this means that the Pfaffian fibration is given by the first-order data of the symmetries of $P$

\section{Prolongations}

The purpose of this section is to understand geometrically and intrinsically the notion of prolongation of a Pfaffian fibration and its fundamental properties. We start by exploring the type of morphisms between Pfaffian fibrations which induce maps on the set of holonomic sections, and then move forward to study morphisms with more specific requirements. These extra conditions extract, in a sense, all the fundamental properties of the prolongations of a PDE (see section \ref{sec:PDE_prolongation}), in the same way that the conditions of a Pfaffian fibration extract the fundamental properties of the solutions of a PDE.

\subsection{Morphisms of Pfaffian fibrations}

Given two Pfaffian fibrations over the same manifolds, the most natural notion of morphism between them consists of a bundle map preserving the two Pfaffian forms.

\begin{definition}\label{def_pfaffian_morphism}
A {\bf weak Pfaffian morphism} between two Pfaffian fibrations $(P',\theta')$, $(P,\theta)$ over $M$ is a smooth fibre bundle map $\phi:P'\to P$ with the property that 
\begin{equation}\label{eq:forms_commute}\phi^*\theta=\Phi\circ \theta'\end{equation}
for some vector bundle map $\Phi:\mathcal{N}'\to \phi^*\mathcal{N}$ between the coefficient bundles. \end{definition} 

Note that, since $\te'$ and $\te$ are surjective, the map $\Phi$ in the previous definition is unique.

\begin{observation}\label{properties_pfaffian_morphism}
It follows immediately from the definition that a weak Pfaffian morphism $\phi$ induces a map on the sections which preserves the holonomic ones:
\begin{equation}\label{eq:holonomic_holonomic}
\phi:\Gamma_{\loc}(P',\theta')\to \Gamma_{\loc}(P,\theta).
\end{equation} 

Moreover, since $\pi' = \pi \circ \phi$, the differential $d\phi$ maps the symbol space $\g(\te')$ to $\g(\theta)$.
\end{observation}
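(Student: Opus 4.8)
The plan is to read off both assertions directly from the defining identity \eqref{eq:forms_commute}, $\phi^*\te = \Phi \circ \te'$, using only that $\phi$ is a bundle map over $M$ (so that $\pi \circ \phi = \pi'$) and that the coefficient map $\Phi$ is fibrewise linear. Neither part requires the Pfaffian axioms; they are formal consequences of how $\phi$ interacts with the forms.

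First I would construct the induced map on sections as $\be' \mapsto \phi \circ \be'$ and check that it lands in sections of $\pi$: from $\pi \circ \phi = \pi'$ one gets $\pi \circ (\phi \circ \be') = \pi' \circ \be' = \mathrm{id}$, so $\phi \circ \be'$ is again a (local) section. To see that holonomicity is preserved, I would compute the pullback of $\te$ along $\phi \circ \be'$ by functoriality of the pullback and then invoke \eqref{eq:forms_commute}:
\begin{equation*}
(\phi \circ \be')^* \te = (\be')^* (\phi^* \te) = (\be')^* (\Phi \circ \te').
\end{equation*}
The key step is that, since $\Phi$ is a fibrewise-linear bundle map, it passes through the pullback of vector-bundle-valued forms, so the right-hand side equals $(\be')^*\Phi \circ (\be')^* \te'$. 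Hence if $\be'$ is holonomic, i.e.\ $(\be')^* \te' = 0$, then $(\phi \circ \be')^* \te = 0$, which establishes the map \eqref{eq:holonomic_holonomic}.

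For the symbol spaces I would argue pointwise: take $v \in \g(\te')_p = T^{\pi'}_p P' \cap \ker \te'_p$ and verify $d\phi(v) \in \g(\te) = T^\pi P \cap \ker \te$. Verticality transfers because $d\pi \circ d\phi = d(\pi \circ \phi) = d\pi'$, so that $d\pi(d\phi(v)) = d\pi'(v) = 0$; and membership in $\ker \te$ follows from the pointwise form of \eqref{eq:forms_commute}, namely $\te_{\phi(p)}(d\phi(v)) = (\phi^* \te)_p(v) = \Phi_p(\te'_p(v)) = \Phi_p(0) = 0$.

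The main (and essentially only) subtlety to handle with care is the commutation of the pullback with the coefficient map $\Phi$ in the first computation. Here one uses that $\Phi \colon \mathcal{N}' \arr \phi^* \mathcal{N}$ is a fibrewise-linear bundle map over $P'$, so that pulling back along $\be'$ produces a bundle map $(\be')^*\Phi$ over $M$ together with the canonical identification $(\be')^* \phi^* \mathcal{N} = (\phi \circ \be')^* \mathcal{N}$; the pullback of a bundle-valued form then factors as $(\be')^*(\Phi \circ \te') = (\be')^*\Phi \circ (\be')^* \te'$. Once the base-point bookkeeping is arranged this way, both claims are a direct unwinding of the definitions, so I expect no genuine obstacle.
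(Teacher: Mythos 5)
Your proof is correct and is precisely the routine unwinding of Definition \ref{def_pfaffian_morphism} that the paper leaves implicit when it says the remark ``follows immediately from the definition'': functoriality of pullback plus \eqref{eq:forms_commute} for holonomic sections, and the pointwise verticality/kernel check for symbol spaces. Your extra care with $\Phi$ commuting past the pullback of bundle-valued forms is exactly the right bookkeeping, and there is nothing to add.
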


\begin{example}\label{example_pfaffian_morphism}
An example of weak Pfaffian morphism is given by a PDE $P \xrightarrow{i} J^k R$: in this case, the form $\te = i^* \te_{\mathrm{can}}$ on $P$ is just the pullback of the Cartan form $\te_{\mathrm{can}}$ on $J^k R$ by the injection $i$.

Similarly, if a PDE $P \subset J^k R$ is integrable up to order $k+1$ (see section \ref{sec:PDE_prolongation}), the projection
$$\mathrm{pr}:(P^{(1)},\theta^{(1)})\to (P,\theta)$$
is a weak Pfaffian morphism, where $\theta^{(1)}$ is the restriction of the Cartan form of $J^{k+1}R$, and $\theta$ the restriction of the Cartan form of $J^kR$.

Note that, in both cases, $\Phi$ is the identity and the results from Remark \ref{properties_pfaffian_morphism} hold trivially.
\end{example}

\begin{example}\label{example_pfaffian_morphism_2}
Given any Pfaffian fibration $(P,\te)$ whose symbol space satisfies the hypothesis of Proposition \ref{immersion_theorem_pfaffian_bundle}, the induced bundle map
$$ i: (P,\te) \arr (J^1 Q, \te_{\mathrm{can}})$$
is a weak Pfaffian morphism, with $\Phi$ the inverse of the isomorphism between the coefficients.
\end{example}

However, there are a number of reasons to add some constraints to the above definition of weak Pfaffian morphism. First, such notion does not behave well with respect to important objects associated to Pfaffian fibrations, such as curvature or integral elements. Second, given a bundle map $\phi: P' \arr (P,\te)$, we cannot always produce a weak Pfaffian morphism by endowing $P'$ with the form $\phi^* \te$ (as we did in Example \ref{example_pfaffian_morphism} for PDEs), since $\phi^* \te$ might not be $\pi$-involutive, $\pi$-regular, or even pointwise surjective. In conclusion, even if Definition \ref{def_pfaffian_morphism} is very natural, it reveals to be too weak for our further study of prolongations of Pfaffian fibrations; we are therefore going to introduce the following notion.

\begin{definition}\label{def_pfaffian_fibration}
A {\bf Pfaffian morphism} between two Pfaffian fibrations $(P',\theta')$, $(P,\theta)$ over $M$ is a surjective submersion $\phi: P' \arr P$ which is also a weak Pfaffian morphism.
\end{definition}

A Pfaffian morphism satisfies many properties, which we list below for future reference:

\begin{proposition}\label{prop:curvature_commute}
Given a Pfaffian morphism $\phi:(P',\theta')\to (P,\theta)$,
\begin{enumerate}
\item $\phi$ sends holonomic sections of $(P', \te')$ to holonomic sections of $(P,\te)$.
\item $d \phi$ sends the symbol space $\g(\te')$ to the symbol space $\g(\theta)$.
\item If $(P',\te')$ is PDE-integrable, $(P,\te)$ is PDE-integrable.
\item The curvature maps $\kappa_{\te'}$ and $\kappa_{\te}$ are related by the equation
\begin{equation}\label{eq:curvature_commute_2}
\phi^*\kappa_\theta=\Phi\circ \kappa_{\theta'}.
\end{equation}
\item $d \phi$ sends (partial) integral elements of $(P',\theta')$ to (partial) integral elements $(P,\theta)$.
\end{enumerate}
\end{proposition}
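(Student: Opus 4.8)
The plan is to treat the five items in order, deriving the later (and harder) ones from the earlier ones. Items (1) and (2) require no new work: a Pfaffian morphism is in particular a weak Pfaffian morphism, so both are exactly the content of Remark \ref{properties_pfaffian_morphism}. Concretely, for (1), if $\beta'\in\Gamma_{\loc}(P',\theta')$ then $\phi\circ\beta'$ is a section of $P$ (because $\pi\circ\phi=\pi'$), and $(\phi\circ\beta')^*\theta=(\beta')^*(\phi^*\theta)=(\beta')^*(\Phi\circ\theta')=0$, the last equality because $(\beta')^*\theta'=0$; for (2), if $v\in\g(\theta')=T^{\pi'}P'\cap\ker\theta'$ then $d\pi(d\phi(v))=d\pi'(v)=0$ and $\theta(d\phi(v))=(\phi^*\theta)(v)=\Phi(\theta'(v))=0$, so $d\phi(v)\in\g(\theta)$.

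For (3) I would use, for the first time, that $\phi$ is surjective. Given $p\in P$, pick $p'\in P'$ with $\phi(p')=p$; since $(P',\theta')$ is PDE-integrable there is $\beta'\in\Gamma_{\loc}(P',\theta')$ with $\beta'(\pi'(p'))=p'$. Then $\beta:=\phi\circ\beta'$ is holonomic by (1), and $\beta(\pi(p))=\phi(\beta'(\pi'(p')))=\phi(p')=p$, so $\beta$ is a local holonomic section of $(P,\theta)$ through $p$.

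The heart of the matter is (4), and here the main obstacle is that the bracket formula $\kappa_\theta(U,V)=\theta([U,V])$ is computed with sections of $\ker\theta$, and lifting sections of $\ker\theta$ to $\phi$-projectable sections of $\ker\theta'$ is delicate (it would require $d\phi\colon\ker\theta'\to\ker\theta$ to be fibrewise surjective, which is not transparent from the axioms). I would avoid this entirely by using the connection-theoretic description recalled after \eqref{eq:curvature}, namely $\kappa_\theta=(d_\nabla\theta)|_{\ker\theta}$ for any linear connection $\nabla$, together with two standard facts: naturality of the covariant exterior derivative under pullback, $\phi^*(d_\nabla\theta)=d_{\phi^*\nabla}(\phi^*\theta)$, and the Leibniz rule for the bundle map $\Phi$. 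Fix $u',v'\in\ker(\theta')_{p'}$ and set $p=\phi(p')$. Since $d\phi$ maps $\ker\theta'$ into $\ker\theta$ (as in (2)), one gets
\begin{equation*}
(\phi^*\kappa_\theta)(u',v')=(d_\nabla\theta)_p(d\phi\,u',d\phi\,v')=\big(d_{\phi^*\nabla}(\phi^*\theta)\big)_{p'}(u',v')=\big(d_{\phi^*\nabla}(\Phi\circ\theta')\big)_{p'}(u',v').
\end{equation*}
Expanding the last term by the Leibniz rule produces a term $\Phi\circ d_{\nabla'}\theta'$ plus a term of the shape $(d\Phi)\wedge\theta'$; the latter, evaluated on $(u',v')$, involves $\theta'(u')$ and $\theta'(v')$ and hence vanishes because $u',v'\in\ker\theta'$. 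Since $d_{\nabla'}\theta'$ restricted to $\ker\theta'$ is exactly $\kappa_{\theta'}$, this yields $\Phi(\kappa_{\theta'}(u',v'))$, which is the claimed identity $\phi^*\kappa_\theta=\Phi\circ\kappa_{\theta'}$.

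Finally, (5) follows by combining (2) and (4) with elementary linear algebra, the key point being that $\ker(d\phi)\subseteq T^{\pi'}P'$ because $\phi$ is a morphism over $M$. Let $V'\subseteq T_{p'}P'$ be a partial integral element. Then $d\phi|_{V'}$ is injective, since $V'\cap\ker d\phi\subseteq V'\cap T^{\pi'}_{p'}P'=0$; hence $V:=d\phi(V')$ has dimension $\dim M$, it lies in $H_p$ by (2), and the same argument shows $V\cap T^\pi_p P=0$, so $T_pP=V\oplus T^\pi_pP$ by a dimension count. Thus $V$ is a partial integral element. If moreover $V'$ is an integral element, i.e.\ $\kappa_{\theta'}|_{V'\times V'}=0$, then (4) gives $\kappa_\theta(d\phi\,u',d\phi\,v')=\Phi(\kappa_{\theta'}(u',v'))=0$ for all $u',v'\in V'$, so $\kappa_\theta|_{V\times V}=0$ and $V$ is an integral element. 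The only genuinely new difficulty is in (4); once the correct tensorial formulation is in place, everything else is bookkeeping.
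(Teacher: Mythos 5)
Your proposal is correct. Items (1)--(3) match the paper's proof essentially verbatim: (1) and (2) are exactly Remark \ref{properties_pfaffian_morphism}, and (3) uses surjectivity of $\phi$ in the same way the paper does (pick a preimage, push a holonomic section forward). For (5) you spell out the linear algebra that the paper compresses into one sentence; your argument is sound, the key points being $\ker(d\phi)\subseteq T^{\pi'}P'$, injectivity of $d\phi$ on a partial integral element, and the dimension count (note that your appeal to item (2) for $d\phi(\ker\theta')\subseteq\ker\theta$ really rests on the identity $\theta(d\phi\,v)=\Phi(\theta'(v))$ established in that computation, which is exactly what is needed). The one genuine divergence is item (4). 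The paper first proves $\phi^*\kappa_\theta=\kappa_{\Phi\circ\theta'}$ by lifting to $\phi$-projectable vector fields --- which uses that $\phi$ is a submersion, and passes through the auxiliary form $\Phi\circ\theta'$ precisely because, as you correctly diagnose, lifts of sections of $\ker\theta$ can only be arranged to lie in $\ker(\Phi\circ\theta')=(d\phi)^{-1}(\ker\theta)$, not in $\ker\theta'$ --- and then compares $\kappa_{\Phi\circ\theta'}$ with $\Phi\circ\kappa_{\theta'}$ via connections on the coefficient bundles. You instead run a single pointwise, tensorial computation: $\kappa_\theta=(d_\nabla\theta)|_{\ker\theta}$, the naturality $\phi^*(d_\nabla\theta)=d_{\phi^*\nabla}(\phi^*\theta)$ (valid for any smooth map), and the Leibniz expansion of $d_{\phi^*\nabla}(\Phi\circ\theta')$, whose correction term of the shape $\big(\nabla^{\Hom}\Phi\big)\wedge\theta'$ dies on $\ker\theta'$. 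This avoids lifting vector fields altogether and in fact establishes \eqref{eq:curvature_commute_2} for any \emph{weak} Pfaffian morphism, so in your proof submersivity of $\phi$ is used only in item (3) --- a mild sharpening of the paper's statement. Your explicit correction term also makes transparent a point the paper's sketch glosses over: its displayed identity $d_{\phi^*\nabla}(\Phi\circ\theta')=\Phi\circ d_{\nabla'}(\theta')$ holds only after restriction to $\ker\theta'$, not as an identity of $\mathcal{N}$-valued 2-forms.
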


\begin{proof} 
The first two properties holds for any weak Pfaffian morphism, as we noticed in Remark \ref{properties_pfaffian_morphism}.

The third property requires the surjectivity of $\phi$. Indeed, under such assumption, consider any $p \in P$; then we can pick a point $p' \in \phi^{-1}(p) \subset P'$, around which there exists a holonomic section $\si'$ of $P'$, and check that $\si := \phi \circ \si'$ is a holonomic section of $P$ around $p$.

For the fourth property, we use equation \eqref{eq:forms_commute} and $\phi$-projectable vector fields to conclude that $\phi^*\kappa_\theta=\kappa_{\Phi\circ\theta'}$. Then we choose two linear connections $\nabla'$ and $\nabla$, respectively on the coefficient bundles $\mathcal{N}'$ and $\mathcal{N}$, and we show that $d_{\phi^*\nabla}(\Phi\circ \theta')=\Phi\circ d_{\nabla'}(\theta')$. Last, we argue that the restrictions of $\kappa_{\Phi\circ\theta'}$ and $\Phi\circ \kappa_{\theta'}$ to $\ker(\theta')$ coincide (see the discussion after equation\ \eqref{eq:curvature}).

For the fifth property, it is enough to use the relations \eqref{eq:forms_commute} and \eqref{eq:curvature_commute_2}, which imply that $d\phi$ preserves (partial) integral elements (Definition \ref{def:integral_elements}).
\end{proof}

\begin{example}[\bf Pullback Pfaffian fibration]
Let $\pi: (P,\te) \arr M$ be a Pfaffian fibration, $\pi': P' \arr M$ a fibration and $\phi: P' \arr P$ a surjective submersive bundle map. Then $P'$ can be endowed with the pullback
%The simplest example of a Pfaffian morphism which is not a weak Pfaffian morphism is given by a surjective submersive bundle map $\phi$ between the fibration $\pi: P \arr M$ and the map $\pi': P' \arr M$, where we endow $P$ with any Pfaffian form $\te$ and $P'$ with the 
$\te' := \phi^* \te$, so that $(P', \te')$ becomes a Pfaffian fibration (the {\it pullback Pfaffian fibration}) and $\phi$ becomes a Pfaffian morphism (where $\Phi$ is just the identity).

In order to prove this claim, as anticipated above, the hypothesis that $\phi$ is a submersion is crucial. One checks immediately that the pullback $\phi^*\theta$ is pointwise surjective. Moreover, $\phi^*\theta$ is $\pi'$-regular: indeed, for every $p \in P'$, the maps $d_p\phi:T_pP'\to T_{\phi(p)}P$ and $d_{\phi(p)} \pi: T_{\phi(p)} P \arr T_{\pi'(p)} M$ are surjective when restricted to $\ker(\phi^*\te)_p$ and $\ker(\te)_{\phi(p)}$, and the diagram
\begin{equation*}
\xymatrix{
\ker (\phi^*\theta)_p \ar[r]^{d\phi} \ar[d]_{d\pi'} & \ker(\theta)_{\phi(p)} \ar[ld]^{d\pi} \\
T_{\pi'(p)}M
}
\end{equation*}
commutes, hence $d_p \pi'$ is surjective as well when restricted to $\ker(\phi^*\te)_p$. Last, to prove the $\pi'$-involutivity of $\phi^*\theta$, consider any two vector fields $X,Y$ tangent to $\g(\phi^*\theta)$; then we have
$$\kappa_{\phi^*\theta}(X,Y)=\kappa_\theta(d\phi(X),d\phi(Y))=0$$
thanks to properties 2 and 4 of Proposition \ref{prop:curvature_commute} and because $\g(\theta)$ is Frobenius-involutive. This says on one hand that the bracket $[X,Y]$ belongs to $\ker(\phi^*\theta)$; on the other hand, since $T^{\pi'}P'$ is Frobenius-involutive, that the bracket $[X,Y]$ is also tangent to $T^{\pi'}P'$, hence to $\g(\phi^*\theta)$, proving that $\phi^*\theta$ is $\pi'$-involutive. 
\end{example}

\begin{example}\label{example_pfaffian_fibration}
A PDE $P \xhookrightarrow{i} J^k R$, which is a weak Pfaffian morphism by Example \ref{example_pfaffian_morphism}, is not a Pfaffian morphism, since $i$ is not a surjective submersion; similarly for the morphism from Example \ref{example_pfaffian_morphism_2}.

On the other hand, given a PDE $P \subset J^k R$ integrable up to order $k+1$, its prolongation $\mathrm{pr}:(P^{(1)},\theta^{(1)})\to (P,\theta)$ is a Pfaffian morphism. In fact, this projection has a richer geometrical structure, which is manifested in the properties of a {\it normalised prolongation} (see Definition \ref{def:prolongation} and Example \ref{example_normalised_prolongation} below).
\end{example}

\begin{observation}{\bf (weak Pfaffian morphisms between Pfaffian distributions)}\label{rmk:morphism_distribution} Paraphrasing this section in the language of Pfaffian distributions $H'\subset TP'$, $H\subset TP$, one obtains the corresponding conditions of weak Pfaffian morphisms only in terms of the distributions, when applied to the associated Pfaffian forms $\theta = \te_H$ and $\theta' = \te_{H'}$. First of all, \eqref{eq:forms_commute} corresponds to 
\begin{equation}\label{eq:distributions_commute}
d\phi(H')\subset H.
\end{equation}
The map $\Phi:TP'/H'\to\phi^* TP/H$ is forced to be $[u]\mapsto [d\phi(u)]$ and it is well defined by \eqref{eq:distributions_commute}; in this case we denote $\Phi$ by $[d\phi]$.
Hence, in this setting, a {\bf weak Pfaffian morphism} is a bundle map $\phi:P'\to P$ satisfying \eqref{eq:distributions_commute}; as in \eqref{eq:holonomic_holonomic}, $\phi$ preserves holonomic sections.

A weak Pfaffian morphism $\phi$ is called a {\bf Pfaffian morphism} when it is also a surjective submersion. Again, such condition will imply an equation on the curvatures analogous to \eqref{eq:curvature_commute_2}:
\begin{equation*}
\phi^*\kappa_H=[d\phi]\circ \kappa_{H'}.
\end{equation*}
Moreover, as in Proposition \ref{prop:curvature_commute}, $\phi$ sends (partial) integral elements to (partial) integral elements, and the PDE-integrability of $(P',H')$ implies the PDE-integrability of $(P,H)$.
% by realising any element of $P$ as the value of a holonomic section of the form $\phi(\beta),$ where $\beta$ is a holonomic section of $(P',H').$
\end{observation}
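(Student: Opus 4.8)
The plan is to read off each assertion of the remark from its form-picture counterpart via the dictionary of Proposition \ref{correspondence_pfaffian_forms_distribution}, taking $\te = \te_H$ and $\te' = \te_{H'}$ to be the canonical projections $TP \to TP/H$ and $TP' \to TP'/H'$, so that $\ker\te = H$, $\ker\te' = H'$, and the coefficient bundles are the normal bundles $\mathcal{N}_H = TP/H$, $\mathcal{N}_{H'} = TP'/H'$. Under this identification $\te_H([U,V]) = [U,V] \bmod H = \kappa_H(U,V)$, so curvatures in the two pictures agree.

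The first, and genuinely new, step is to prove that \eqref{eq:forms_commute} is equivalent to \eqref{eq:distributions_commute} and that $\Phi$ is forced to be $[d\phi]$. Evaluating gives $(\phi^*\te)_p(v) = [d\phi(v)] \in (TP/H)_{\phi(p)}$. If $\phi^*\te = \Phi\circ\te'$, then every $v \in H'_p = \ker(\te')_p$ yields $[d\phi(v)] = \Phi(0) = 0$, i.e.\ $d\phi(v) \in H$, which is \eqref{eq:distributions_commute}. Conversely, \eqref{eq:distributions_commute} makes the assignment $[v]' \mapsto [d\phi(v)]$ well defined (if $v - w \in H'$ then $d\phi(v-w) \in H$, so $[d\phi(v)] = [d\phi(w)]$); calling this map $[d\phi]$ and setting $\Phi := [d\phi]$ recovers \eqref{eq:forms_commute}. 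Surjectivity of $\te$ and $\te'$ then forces this $\Phi$ to be the unique one, settling the well-definedness claim.

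With the dictionary in place, the remaining assertions transport verbatim. Preservation of holonomic sections is immediate: $d\beta(TM) \subset H'$ gives $d(\phi\circ\beta)(TM) = d\phi(d\beta(TM)) \subset d\phi(H') \subset H$. The curvature identity $\phi^*\kappa_H = [d\phi]\circ\kappa_{H'}$ is exactly the translation of \eqref{eq:curvature_commute_2}, hence follows from property 4 of Proposition \ref{prop:curvature_commute}; proved directly, one extends $u', v' \in H'_p$ to $\phi$-projectable vector fields $U', V'$ tangent to $H'$ (using that $\phi$ is a submersion), so that $[U',V']$ is $\phi$-related to the bracket of the pushforwards $\phi_* U', \phi_* V'$ (tangent to $H$ by \eqref{eq:distributions_commute}), and then reduces both sides modulo the respective distributions. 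Finally, the statements on integral elements and PDE-integrability are the translations of properties 5 and 3 of Proposition \ref{prop:curvature_commute}: for a partial integral element $V \subset T_pP'$ the identity $d\pi\circ d\phi = d\pi'$ forces $d\phi|_V$ injective with $T_{\phi(p)}P = d\phi(V)\oplus T^\pi_{\phi(p)} P$, and the integral-element condition $\kappa_H|_{d\phi(V)\times d\phi(V)} = 0$ drops out of the curvature identity; PDE-integrability transfers by pulling a point back along the surjection $\phi$, taking a holonomic section through the preimage, and pushing it forward.

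I expect the only real obstacle to be the curvature identity, since it is the single place where one leaves pointwise linear algebra for an argument with Lie brackets: one must produce $\phi$-projectable representatives tangent to $H'$, and it is precisely here that surjectivity of $\phi$ — the feature separating a Pfaffian morphism from a merely weak one — is indispensable, exactly mirroring its role in the proof of Proposition \ref{prop:curvature_commute}.
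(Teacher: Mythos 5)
Your main line is exactly the paper's: the remark is established by transporting Definitions \ref{def_pfaffian_morphism} and \ref{def_pfaffian_fibration} through the dictionary of Remark \ref{rmk:correspondence} and Proposition \ref{correspondence_pfaffian_forms_distribution}, and your verification that \eqref{eq:forms_commute} is equivalent to \eqref{eq:distributions_commute}, with $\Phi$ forced to be $[d\phi]$ by surjectivity of $\te'$, is correct; so are the transports of properties 1, 3, 4 and 5 of Proposition \ref{prop:curvature_commute} for holonomic sections, curvatures, integral elements and PDE-integrability.

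The one genuine gap is in your ``direct'' proof of the curvature identity: in general one \emph{cannot} extend $u',v'\in H'_p$ to $\phi$-projectable vector fields tangent to $H'$. Tangency to $H'$ and projectability are incompatible in general, because $d\phi(H'_q)$ need not vary projectably. For instance, take $M=\RR$ with coordinate $t$, $P=\RR^2$ with coordinates $(t,z)$, $P'=\RR^3$ with coordinates $(t,z,w)$, $\phi(t,z,w)=(t,z)$, $H=TP$, and $H'=\mathrm{span}\(\del_t+a(t,z,w)\del_z,\ \del_w\)$ with $\del_w a\neq 0$: a projectable section $f(\del_t+a\del_z)+g\,\del_w$ of $H'$ requires both $f$ and $fa$ to depend on $(t,z)$ only, forcing $f\equiv 0$, so the vector $\del_t+a\,\del_z$ admits no projectable extension inside $H'$ (here the identity itself holds trivially since $\mathcal{N}_H=0$, but your construction breaks). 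This is precisely why the paper's proof of property 4 of Proposition \ref{prop:curvature_commute} takes a detour: projectable lifts are used only for the larger distribution $\ker(\phi^*\te_H)=(d\phi)^{-1}(H)$ --- where tangency is automatic once the projected field is tangent to $H$, so lifts always exist --- giving $\phi^*\kappa_\te=\kappa_{\Phi\circ\te'}$, and then a separate argument with linear connections, $d_{\phi^*\nabla}(\Phi\circ\te')=\Phi\circ d_{\nabla'}(\te')$, identifies $\kappa_{\Phi\circ\te'}$ with $\Phi\circ\kappa_{\te'}$ on $\ker(\te')$. Since your primary route is to cite property 4 anyway, the proposal as a whole stands; but the direct sketch, and your closing claim that it ``mirrors'' the paper's argument, should be repaired along these lines.
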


% In the following, since we deal with prolongations, we will actually always consider weak Pfaffian morphisms that are Pfaffian morphisms: accordingly, we will call them {\bf ``morphism"} by an abuse of language

\subsection{Abstract prolongations}

Going back to the definition of prolongation of a PDE $P\subset J^kR$ (see equation \eqref{eq:PDE_prolongation}), one finds that, for $P$ integrable up to order $k+1$, the projection $P^{(1)} \arr P$ maps $\ker(\theta^{(1)})$ at a given point $p\in P^{(1)}$ to a {\it single} integral element of $(P,\theta)$ (Definition \ref{def:integral_elements}), where both $\theta^{(1)}$ and $\te$ are restrictions of the Cartan forms of $J^{k+1}R$ and $J^k R$. This will be explained in Example \ref{example_normalised_prolongation}; the following definition extracts the right properties so that the phenomenon described above happens in general for a Pfaffian morphism:

\begin{definition}\label{def:prolongation}
An (abstract) {\bf prolongation} of a Pfaffian fibration $(P,\theta)$ over $M$ consists of a Pfaffian fibration $(P',\theta')$ over $M$ together with a Pfaffian morphism $\phi: (P',\te')\to (P,\te)$, such that 
\begin{equation}\label{eq:vert_2}
\g(\theta')\subset \ker(d\mathrm{\phi}),
\end{equation}
and, for any $u,v\in\ker(\theta')$,
\begin{equation}\label{eq:curv_2}
\kappa_{\theta}(d\mathrm{\phi}(u),d\mathrm{\phi}(v))=0.
\end{equation}
We say that $\phi:P'\to P$ is a {\bf normalised prolongation} if $\g(\theta')= \ker(d\mathrm{\phi})$.
\end{definition}

As already mentioned, we obtain a practical criterion to test when a Pfaffian morphism is a prolongation in terms of integral elements (Definition \ref{def:integral_elements}).

\begin{proposition}\label{criterion_abstract_prolongation}
A Pfaffian morphism $\phi: (P',\te') \arr (P,\te)$ is an abstract prolongation if and only if, for every point $p' \in P'$, the subspace $d\phi(\ker(\te'_{p'})) \subset T_{\phi(p')}P$ is an integral element.
\end{proposition}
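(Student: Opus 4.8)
The plan is to treat $\phi$ as already being a Pfaffian morphism, so that the only content to verify is the equivalence between the two defining conditions of an abstract prolongation, \eqref{eq:vert_2} and \eqref{eq:curv_2}, and the requirement that $V_{p'} := d\phi(\ker(\te'_{p'}))$ be an integral element at $p := \phi(p')$ for every $p'$. First I would unwind the definition of integral element (Definition \ref{def:integral_elements}) into its three ingredients: (i) $V_{p'} \subset \ker(\te_p)$; (ii) $T_p P = V_{p'} \oplus T^\pi_p P$; and (iii) the vanishing $\kappa_\te|_{V_{p'} \times V_{p'}} = 0$. The strategy is then to match each of these, point by point, against the conditions defining a prolongation, and finally quantify over all $p'$.

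Two of the three ingredients are essentially immediate. For (i), writing out the weak Pfaffian morphism condition \eqref{eq:forms_commute} gives $\te_p(d\phi(u)) = \Phi(\te'_{p'}(u)) = 0$ for every $u \in \ker(\te'_{p'})$, so $V_{p'} \subset \ker(\te_p)$ holds for free. For (iii), since $V_{p'}$ is spanned by the vectors $d\phi(u)$ with $u \in \ker(\te'_{p'})$, and since $V_{p'} \subset \ker(\te_p)$ by (i), the vanishing of $\kappa_\te$ on $V_{p'} \times V_{p'}$ is literally the assertion that $\kappa_\te(d\phi(u), d\phi(v)) = 0$ for all $u, v \in \ker(\te'_{p'})$, i.e.\ condition \eqref{eq:curv_2}.

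The heart of the argument, and the step I expect to require the most care, is showing that the transversality condition (ii) is equivalent to \eqref{eq:vert_2}. First I would observe that $d\pi(V_{p'}) = d\pi'(\ker(\te'_{p'})) = T_{\pi(p)} M$, using $\pi' = \pi \circ \phi$ together with the $\pi'$-regularity of $\te'$; a dimension count then shows that $V_{p'} + T^\pi_p P = T_p P$ holds automatically, so that (ii) reduces to $V_{p'} \cap T^\pi_p P = 0$, equivalently $\dim V_{p'} = \dim M$. The decisive observation is the inclusion $\ker(d\phi) \subset T^{\pi'} P'$ (again from $\pi' = \pi \circ \phi$), which forces $\ker(\te') \cap \ker(d\phi) \subseteq \g(\te')$ unconditionally. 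Since $\dim V_{p'} = \dim\ker(\te'_{p'}) - \dim(\ker(\te'_{p'}) \cap \ker(d\phi))$, and since the $\pi'$-regularity of $\te'$ pins down $\dim\g(\te') = \dim\ker(\te'_{p'}) - \dim M$, the equality $\dim V_{p'} = \dim M$ is equivalent to $\ker(\te'_{p'}) \cap \ker(d\phi) = \g(\te')$; combined with the inclusion just noted and with $\g(\te')\subseteq\ker(\te')$, this is in turn equivalent to $\g(\te') \subseteq \ker(d\phi)$, i.e.\ \eqref{eq:vert_2}. Assembling the three point-by-point equivalences and quantifying over all $p'$ yields that $\phi$ is an abstract prolongation precisely when every $V_{p'}$ is an integral element. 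The main subtlety to watch is keeping the dimension bookkeeping honest, in particular recognising that the span $V_{p'}+T^\pi_p P = T_p P$ is free and that all the information of \eqref{eq:vert_2} is concentrated in the intersection $\ker(\te'_{p'})\cap\ker(d\phi)$.
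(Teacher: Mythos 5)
Your proof is correct, but it takes a genuinely different route from the paper's. The paper argues by pushing forward auxiliary partial integral elements: it chooses a partial integral element $V \subset \ker(\te'_{p'})$ (available by $\pi'$-regularity, and giving the decomposition $\ker(\te'_{p'}) = V \oplus \g(\te')_{p'}$), invokes Proposition \ref{prop:curvature_commute} to conclude that $d\phi(V)$ is again a partial integral element, and then in one direction uses \eqref{eq:vert_2} to identify $d\phi(\ker(\te'_{p'}))$ with $d\phi(V)$, while in the converse direction it compares $d\phi(V)$ with $d\phi(\ker(\te'_{p'}))$ by dimension and combines the symbol-preservation property $d\phi(\g(\te')) \subset \g(\te)$ with transversality to force $d\phi(\g(\te')) = 0$. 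You instead run a self-contained pointwise linear-algebra computation: you note that $d\pi(V_{p'}) = T_{\pi(p)}M$ makes the spanning $V_{p'} + T^\pi_p P = T_p P$ automatic, and then reduce everything to rank--nullity for $d\phi|_{\ker(\te'_{p'})}$ together with the unconditional inclusion $\ker(\te'_{p'}) \cap \ker(d_{p'}\phi) \subseteq \g(\te')_{p'}$, which follows from $\ker(d\phi) \subset T^{\pi'}P'$. This buys you two things: you never need to choose an auxiliary partial integral element of $(P',\te')$, and you bypass the symbol-preservation item of Proposition \ref{prop:curvature_commute} entirely, isolating the whole content of \eqref{eq:vert_2} in the single intersection $\ker(\te'_{p'}) \cap \ker(d_{p'}\phi)$ --- arguably a cleaner accounting of where the dimension bookkeeping lives. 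What the paper's route buys in exchange is brevity (given that Proposition \ref{prop:curvature_commute} is already established) and geometric transparency: it exhibits the mechanism that Pfaffian morphisms push partial integral elements to partial integral elements, which is reused elsewhere in the paper. Both arguments ultimately rest on the same use of $\pi'$-regularity; yours simply performs the count directly, and all the individual steps you flag (the automatic spanning, the two rank--nullity identities, and the equivalence $\ker(\te'_{p'}) \cap \ker(d_{p'}\phi) = \g(\te')_{p'} \Leftrightarrow \g(\te')_{p'} \subseteq \ker(d_{p'}\phi)$) check out.
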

\begin{proof}
%Going back to the definition of prolongation of a PDE (seen as a Pfaffian morphism) one finds that, for a PDE $P\subset J^kR$ integrable up to order $k+1$, the projection $\mathrm{pr}$ from the prolongation $P^{(1)}$ to $P$ maps the kernel of $\theta^{(1)}$ at a given point $p$ to a {\it single} integral element of $(P,\theta)$ at $\mathrm{pr}(p)$. 
Assume that $\phi$ is an abstract prolongation and choose any partial integral element $V\subset \ker(\theta_{p'})$ of $P'$. By property 4 of Proposition \ref{prop:curvature_commute} $d\phi(V)\subset d\phi(\ker\theta'_{p'})$ is a partial integral element. Since $d\phi(V)$ is transversal to the $\pi$-fibres, then $d\phi(\ker\theta'_{p'})$ is also transversal. Condition \eqref{eq:vert_2} says that $d\phi(\ker\theta'_{p'})=d\phi(V\oplus\g(\te')_{p'})=d\phi(V)$, implying that $d\phi(\ker\theta'_{p'})$ is a partial integral element. With equation \eqref{eq:curv_2} we conclude that it is actually an integral element.

Conversely, if $d\phi(\ker\theta'_{p'})$ is an integral element then equation \eqref{eq:curv_2} follows. To show \eqref{eq:vert_2} we use that $ d\phi(\ker\theta'_{p'})$ is, in particular, a partial integral element. As before, choose any partial integral element $V\subset \ker(\theta_{p'})$; then we obtain by Proposition \ref{prop:curvature_commute} that $d\phi(V)\subset d\phi(\ker\theta'_{p'})$ is a partial integral element. By dimensional reasons $d\phi(V)= d\phi(\ker\theta'_{p'})$, hence 
$$d\phi(V)= d\phi(\ker\theta'_{p'})=d\phi(V\oplus\g(\te')_{p'}), \quad \text{and} \quad d\phi(\g(\te_{p'}))\subset \g(\te)_p$$
for $p=\phi(p')$. The last equation holds again by Proposition \ref{prop:curvature_commute}. This implies that $d\phi(\g(\te_{p'}))\subset \g(\te)_p\cap d\phi(V)\subset T^\pi P\cap d\phi(V)=\emptyset$, hence it shows \eqref{eq:vert_2}.
\end{proof}

\begin{example}\label{example_normalised_prolongation}
As anticipated in Example \ref{example_pfaffian_fibration}, given a PDE $P\subset J^kR$ integrable up to order $k+1$, the projection $d\mathrm{pr}: (P^{(1)}, \te^{(1)}) \arr (P,\te)$ is a normalised prolongation. In fact this is the content of Proposition \ref{prop:PDE_prolongation}, together with the discussion at the beginning of this section. Moreover, it is immediate to see that
\begin{equation*}
\ker(d\mathrm{pr})=\g(\theta^{(1)}).
\end{equation*}
Indeed, by definition of $\te^{(1)}$ as the restriction of the Cartan form \eqref{eq:Cartan_form}, we see that $\te^{(1)}|_{T^\pi P^{(1)}}=d\mathrm{pr}$; therefore $\g(\theta^{(1)})=\ker(\te^{(1)}|_{T^\pi P^{(1)}})=\ker(d\mathrm{pr})$.
\end{example}

\begin{observation}[\bf Cartan-Ehresmann connections]\label{rmk:Cartan-Ehresmann} %\francesco{this is used only once, in the long proof at the end that the torsions take values in the Spencer cohomology}
Consider an abstract prolongation $\phi:(P',\te') \to (P,\te)$; as a consequence of Proposition \ref{criterion_abstract_prolongation}, any section $\sigma:P\to P'$, induces the following distribution $H_\sigma\subset \ker(\theta)$ on $P$, which is made of integral elements of $\theta$ and is $\pi$-horizontal:
$$H_{\sigma,p}:=d_{\sigma(p)}\phi(\ker(\theta'_{\si(p)})) \quad \text{ for each } p\in P.$$
Such a distribution $H_\si$ is also called a {\bf Cartan-Ehresmann connection} of $(P,\te)$; in this paper it will be only used once as a technical tool (in the proof or Proposition \ref{prop:torsion}), so we refer to \cite{Yud16} for more details.
%That $H_{\sigma,p}$ is a horizontal vector space inside $\ker(\theta)$ is because equation \eqref{eq:vert_2} holds, and that the curvature on $H_{\sigma,p}$ is zero is of course implied by equation \eqref{eq:curv_2}.
\end{observation}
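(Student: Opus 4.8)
The plan is to treat the statement as a direct corollary of Proposition \ref{criterion_abstract_prolongation}, once one unwinds the implicit convention that $\sigma\colon P\to P'$ is a section of the Pfaffian morphism $\phi$, i.e.\ $\phi\circ\sigma=\mathrm{id}_P$. This convention is in fact forced: for $H_{\sigma,p}=d_{\sigma(p)}\phi(\ker(\theta'_{\sigma(p)}))$ to be a subspace of $T_pP$ (as required for $H_\sigma$ to be a distribution on $P$) we need $\phi(\sigma(p))=p$. With this in hand the base point works out automatically, and $H_{\sigma,p}$ is exactly the subspace produced by the criterion at the point $p'=\sigma(p)$.

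First I would apply Proposition \ref{criterion_abstract_prolongation}: since $\phi$ is an abstract prolongation, for every $p'\in P'$ the subspace $d\phi(\ker(\theta'_{p'}))\subset T_{\phi(p')}P$ is an integral element of $(P,\theta)$. Specialising to $p'=\sigma(p)$ identifies $H_{\sigma,p}$ as an integral element of $(P,\theta)$ at $p$, which is already the assertion that $H_\sigma$ is ``made of integral elements''. The remaining two claims are then read off from Definition \ref{def:integral_elements}: an integral element $V$ at $p$ satisfies $V\subset\ker(\theta)_p$, whence $H_\sigma\subset\ker(\theta)$, and $T_pP=V\oplus T^\pi_pP$, whence $H_{\sigma,p}$ is a complement of the vertical bundle, i.e.\ $H_\sigma$ is $\pi$-horizontal. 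No computation is needed here beyond transcribing the definition.

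The only point demanding a genuine (if routine) argument is that $H_\sigma$ is a bona fide distribution, that is, a smooth subbundle of $TP$ of constant rank, and not merely a pointwise family of subspaces; this is the step I expect to be the main obstacle. The approach is to exhibit $H_\sigma$ as the image of a vector bundle morphism: since $\theta'$ is $\pi$-regular, $\ker(\theta')$ is a smooth subbundle of $TP'$, and pulling it back along $\sigma$ and composing with $d\phi\colon TP'\to\phi^*TP$ yields a smooth bundle map $\sigma^*\ker(\theta')\to(\phi\circ\sigma)^*TP=TP$ over $P$ whose fibrewise image is $H_\sigma$. The image of a bundle map is a smooth subbundle precisely when the map has constant rank, and here the rank is constant and equal to $\dim M$, because each fibre $H_{\sigma,p}$ is an integral element and integral elements have dimension $\dim M$ by Definition \ref{def:integral_elements}. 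This constancy is exactly what promotes the pointwise prescription to a smooth distribution, completing the proof.
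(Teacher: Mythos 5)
Your proof is correct and follows the paper's own route: the remark is indeed just a direct specialisation of Proposition \ref{criterion_abstract_prolongation} to the points $p'=\sigma(p)$, with the convention $\phi\circ\sigma=\mathrm{id}_P$ that you rightly identify as forced, and with the containment $H_\sigma\subset\ker(\theta)$ and the $\pi$-horizontality read off from Definition \ref{def:integral_elements}. Your constant-rank bundle-map argument for the smoothness of $H_\sigma$ is a detail the paper leaves implicit (deferring to \cite{Yud16}), and it is valid precisely because each fibre $H_{\sigma,p}$ is an integral element and therefore has dimension $\dim M$, so the map $\sigma^*\ker(\theta')\to TP$ has constant rank.
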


\begin{observation}[\bf Alternative definition of prolongation]
Because $\phi$ is a Pfaffian morphism, the relation \eqref{eq:curvature_commute_2} between the curvatures of $\te$ and $\te'$ holds, hence we can replace condition \eqref{eq:curv_2} for the following equivalent one:
\begin{equation*}
\Phi(\kappa_{\theta'}(u,v))=0 \quad \forall u,v\in \ker(\theta^{(1)}). \qedhere
\end{equation*}
\end{observation}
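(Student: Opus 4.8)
The plan is to observe that the two conditions are not merely equivalent but literally the same equation, once we unpack the curvature relation \eqref{eq:curvature_commute_2}. First I would fix a point $p' \in P'$, set $p = \phi(p')$, and take arbitrary $u, v \in \ker(\theta')_{p'}$. Since $\phi$ is in particular a weak Pfaffian morphism, the relation \eqref{eq:forms_commute} forces $d\phi(\ker(\theta')) \subset \ker(\theta)$ (equivalently \eqref{eq:distributions_commute}); hence $d\phi(u), d\phi(v)$ lie in $\ker(\theta)_p$ and the left-hand side $\kappa_{\theta}(d\phi(u), d\phi(v))$ of \eqref{eq:curv_2} is well defined.

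The key step is then a direct application of property 4 of Proposition \ref{prop:curvature_commute}: by \eqref{eq:curvature_commute_2} the pullback curvature satisfies $(\phi^*\kappa_{\theta})(u,v) = \Phi(\kappa_{\theta'}(u,v))$, and by the very definition of the pullback the left-hand side is exactly $\kappa_{\theta}(d\phi(u), d\phi(v))$. Thus $\kappa_{\theta}(d\phi(u), d\phi(v)) = \Phi(\kappa_{\theta'}(u,v))$ holds pointwise for every choice of $u, v \in \ker(\theta')$. Since this identity is valid for all such $u, v$, condition \eqref{eq:curv_2} and the proposed condition $\Phi(\kappa_{\theta'}(u,v)) = 0$ agree term by term, so one vanishes identically if and only if the other does.

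There is essentially no obstacle here: the entire content of the statement is carried by \eqref{eq:curvature_commute_2}, already established in Proposition \ref{prop:curvature_commute}. The only point deserving a word of care is checking that the substitution is legitimate, i.e.\ that $d\phi$ maps $\ker(\theta')$ into $\ker(\theta)$ so that $\kappa_{\theta}$ can indeed be evaluated on $d\phi(u), d\phi(v)$; this is guaranteed by $\phi$ being a weak Pfaffian morphism. (I read the index set $\ker(\theta^{(1)})$ appearing in the statement as $\ker(\theta')$, matching the hypothesis of \eqref{eq:curv_2} in Definition \ref{def:prolongation}.)
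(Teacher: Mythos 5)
Your proposal is correct and is exactly the argument the paper intends: the observation's justification is precisely that property 4 of Proposition \ref{prop:curvature_commute} (equation \eqref{eq:curvature_commute_2}) identifies $\kappa_{\theta}(d\phi(u),d\phi(v))$ with $\Phi(\kappa_{\theta'}(u,v))$ pointwise on $\ker(\theta')\times\ker(\theta')$, so the two vanishing conditions coincide term by term. Your two side remarks --- that $d\phi(\ker(\theta'))\subset\ker(\theta)$ makes the substitution legitimate, and that $\ker(\theta^{(1)})$ in the statement should be read as $\ker(\theta')$ --- are both right, the latter correcting an evident notational slip in the paper.
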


Again, as in Remark \ref{rmk:morphism_distribution}, Definition $\ref{def:prolongation}$ can be reformulated using distributions instead of forms: we say that $\phi:(P',H')\to (P,H)$ is a Pfaffian prolongation if it is a Pfaffian morphism (i.e.\ $d\phi(H')\subset H$) and
\begin{equation}\label{eq:cond_prol}
\g(H')\subset \ker(d\phi), \quad \text{and } \quad \kappa_H(d\phi(u),d\phi(v))=0 \quad \text{ for all } u,v\in H'.
\end{equation}
The second equation can be equivalently written as $[d\phi](\kappa_{H'}(u,v))=0.$  The prolongation $\phi$ is normalised when 
\begin{equation}\label{eq:normalised}
\g(H')=\ker(d\phi). \end{equation}
where $[d\phi]:TP'/H'\to\phi^*(TP/H)$ is the induced map on the quotient. In this picture, the name {\it normalised} has a natural explanation: 

\begin{lemma}\label{lemma:normalised}
A Pfaffian prolongation $\phi:(P',H')\to (P,H)$ is normalised if and only if its differential $d\phi$ descends to an isomorphism between $TP'/H'$ and the pullback via $\phi$ of $T^\pi P$:
\begin{equation}\label{eq:iso_normalised}
T_pP'/H'_{p}\cong T^\pi_{\phi(p)}P, \quad [u]\mapsto d\phi(u-v),
\end{equation}
where $v\in H'_{p}$ is any vector with the property that $d\pi'(u)=d\pi'(v).$ 
\end{lemma}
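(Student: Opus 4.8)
The plan is to analyse the map in \eqref{eq:iso_normalised} head-on and reduce the whole statement to the computation of a single kernel. First I would check that the proposed assignment
$$\Psi_p : T_pP'/H'_p \to T^\pi_{\phi(p)}P, \qquad [u]\mapsto d\phi(u-v)$$
is well defined for \emph{every} Pfaffian prolongation, not just the normalised ones. Since $H'$ is $\pi'$-transversal, a vector $v\in H'_p$ with $d\pi'(v)=d\pi'(u)$ always exists, so $u-v\in T^{\pi'}_pP'$; and because $\phi$ is a bundle map over $M$, i.e.\ $\pi'=\pi\circ\phi$ (Remark \ref{properties_pfaffian_morphism}), one gets $d\pi(d\phi(u-v))=d\pi'(u-v)=0$, so the image lands in $T^\pi_{\phi(p)}P$ as claimed. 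Independence of the choice of $v$ and descent to the quotient $T_pP'/H'_p$ both follow from the prolongation condition $\g(H')\subset\ker(d\phi)$ of \eqref{eq:cond_prol}: two admissible choices $v,v'$ differ by an element of $H'_p\cap T^{\pi'}_pP'=\g(H')_p\subset\ker(d\phi)$, and for $u\in H'_p$ one may take $v=u$, giving image zero.

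Next I would trade $T_pP'/H'_p$ for a more convenient model. Using $\pi'$-transversality once more, every class admits a vertical representative, which yields the canonical identification
$$T_pP'/H'_p \;\cong\; T^{\pi'}_pP'/\g(H')_p,$$
the surjection $T^{\pi'}_pP'\to T_pP'/H'_p$ having kernel $T^{\pi'}_pP'\cap H'_p=\g(H')_p$. Under this identification $\Psi_p$ becomes simply the map induced by $d\phi|_{T^{\pi'}P'}$, that is $[w]\mapsto d\phi(w)$ for $w\in T^{\pi'}_pP'$. This is the form in which the equivalence becomes transparent.

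The core observation, which I expect to carry all the weight, is that $\ker(d\phi)$ is \emph{automatically} contained in the vertical bundle $T^{\pi'}P'$: if $d\phi(w)=0$ then $d\pi'(w)=d\pi(d\phi(w))=0$. Hence $T^{\pi'}_pP'\cap\ker(d\phi)=\ker(d_p\phi)$, so the kernel of $\Psi_p$ is exactly $\ker(d_p\phi)/\g(H')_p$. For surjectivity I would note that, $\phi$ being a submersion, $d\phi$ is onto, and any preimage of a $\pi$-vertical vector is itself $\pi'$-vertical by the same computation; thus $d\phi\colon T^{\pi'}_pP'\to T^\pi_{\phi(p)}P$ is already surjective and so is $\Psi_p$, for any prolongation. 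Combining the two, $\Psi_p$ is an isomorphism if and only if $\ker(d_p\phi)=\g(H')_p$, and imposing this at every $p$ is precisely the normalisation condition \eqref{eq:normalised}. The only genuinely delicate point is to keep the two distinct verticalities straight ($\pi'$ on $P'$ versus $\pi$ on $P$) and to use their compatibility through $\phi$ correctly; once the inclusion $\ker(d\phi)\subset T^{\pi'}P'$ is isolated, the equivalence is immediate.
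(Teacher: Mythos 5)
Your proof is correct and follows essentially the same route as the paper: you factor the map \eqref{eq:iso_normalised} through the transversality identification $T_pP'/H'_p \cong T^{\pi'}_pP'/\g(H')_p$ and the map induced by $d\phi$ on verticals, then reduce the equivalence to injectivity of the latter, which is exactly condition \eqref{eq:normalised}. The only difference is that you spell out details the paper leaves implicit — notably the automatic inclusion $\ker(d\phi)\subset T^{\pi'}P'$ and the surjectivity of $d\phi\colon T^{\pi'}_pP'\to T^\pi_{\phi(p)}P$ via $\pi'=\pi\circ\phi$ — which is a welcome tightening rather than a new argument.
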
 

\begin{proof}
 The $\pi'$-transversality of $H'$ implies that its normal bundle is isomorphic to $T^{\pi'}P'/\g(H')$:
\begin{equation}\label{eq:1}
TP'/H'\cong T^{\pi'}P'/\g(H), \quad [u]\mapsto [u-v],
\end{equation}
where $v\in H$ is as in the Lemma \ref{lemma:normalised}. On the other hand, $d\phi(\g(H'))=0$ implies that map $d\phi$ induces 
\begin{equation}\label{eq:2}
T^{\pi'}P'/\g(H') \to T^\pi P,  \quad [w]\mapsto d\phi(w).
\end{equation}
The fact that $\phi$ is a prolongation implies that the map \eqref{eq:2} is well defined and surjective. Then, the map \eqref{eq:iso_normalised} comes from composing the maps \eqref{eq:2} and \eqref{eq:1}, and it is an isomorphism if and only if \eqref{eq:2} is injective, which is equivalent to condition \eqref{eq:normalised}.
\end{proof}

The lemma above suggests that, if $\phi$ is not normalised, we could ``fatten'' $H'$ by $\ker(d\phi)\subset T^{\pi'}P'$ to a new distribution
\begin{equation}\label{eq:canonical_normalised}
\bar H':=H'+ \ker(d\phi).
\end{equation}

\begin{proposition}
Let $\phi:(P',H')\to (P,H)$ be a prolongation of Pfaffian fibrations; then $(P',\bar H')$, for $\bar H'$ as in equation \eqref{eq:canonical_normalised}, is a Pfaffian fibration which makes $\phi:P'\to P$ into a normalised prolongation. 
\end{proposition}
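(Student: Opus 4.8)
The plan is to verify the three defining conditions of a Pfaffian fibration for $(P',\bar H')$ and then to read off both the prolongation and the normalisation properties from a single computation of the symbol space $\g(\bar H')$. The crucial preliminary observation is that $\ker(d\phi)\subset T^{\pi'}P'$: since $\phi$ is a Pfaffian morphism we have $\pi'=\pi\circ\phi$, so $d\pi'=d\pi\circ d\phi$ annihilates every vector in $\ker(d\phi)$. Thus the direction in which we fatten $H'$ lies entirely inside the vertical bundle of $\pi'$, which is what makes the whole argument go through.

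First I would note that $\bar H'$ is $\pi'$-transversal, which is immediate because $H'\subset\bar H'$ and $H'$ is already transversal, so $TP'=H'+T^{\pi'}P'\subset\bar H'+T^{\pi'}P'$. Next I compute the symbol space. Using $\ker(d\phi)\subset T^{\pi'}P'$, a short fibrewise argument (decompose $w\in T^{\pi'}P'\cap\bar H'$ as $w=h+k$ with $h\in H'$, $k\in\ker(d\phi)$, and observe that $h=w-k\in T^{\pi'}P'$, hence $h\in\g(H')$) yields $\g(\bar H')=\g(H')+\ker(d\phi)$. Because $\phi$ is a prolongation, condition \eqref{eq:cond_prol} gives $\g(H')\subset\ker(d\phi)$, so this collapses to
$$\g(\bar H')=\ker(d\phi).$$
This identity is the heart of the proof. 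On one hand, $\ker(d\phi)$ is the distribution tangent to the fibres of the submersion $\phi$, hence Frobenius-involutive, so the symbol of $\bar H'$ is involutive and $(P',\bar H')$ is a genuine Pfaffian fibration. On the other hand, it is exactly the normalisation condition \eqref{eq:normalised}.

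It remains to confirm that $\bar H'$ is a smooth constant-rank subbundle and that $\phi$ is a prolongation for $\bar H'$. For smoothness, the sum of the two subbundles $H'$ and $\ker(d\phi)$ is a subbundle provided their intersection has constant rank; here $H'\cap\ker(d\phi)=H'\cap T^{\pi'}P'\cap\ker(d\phi)=\g(H')\cap\ker(d\phi)=\g(H')$, which has constant rank since $(P',H')$ is Pfaffian. To see $\phi$ is still a Pfaffian morphism, note $d\phi(\bar H')=d\phi(H')+d\phi(\ker(d\phi))=d\phi(H')\subset H$. The inclusion $\g(\bar H')=\ker(d\phi)\subset\ker(d\phi)$ is trivial, and the curvature condition persists: for $u,v\in\bar H'$ the $\ker(d\phi)$-components die under $d\phi$, so $\kappa_H(d\phi(u),d\phi(v))=\kappa_H(d\phi(h_1),d\phi(h_2))=0$ reduces to the condition already known for $H'$. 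The computation is essentially fibrewise linear algebra, so I anticipate no analytic obstacle; the one step demanding care is the constant-rank (hence smoothness) of $\bar H'$, where the prolongation hypothesis $\g(H')\subset\ker(d\phi)$ is precisely what forces the intersection $H'\cap\ker(d\phi)$ to equal $\g(H')$. Everything else follows formally once the identity $\g(\bar H')=\ker(d\phi)$ is established.
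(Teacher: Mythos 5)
Your proof is correct and follows essentially the same route as the paper's: both hinge on the identity $H'\cap\ker(d\phi)=\g(H')$ (via $\ker(d\phi)\subset T^{\pi'}P'$ and the prolongation hypothesis $\g(H')\subset\ker(d\phi)$), deduce constant rank of $\bar H'$ from it, get transversality from $H'\subset\bar H'$, and get involutivity from the Frobenius-involutivity of the $\phi$-fibre distribution $\ker(d\phi)=\g(\bar H')$. Your only deviations are cosmetic and in fact slightly more thorough: you check normalisation directly against the definition rather than via Lemma \ref{lemma:normalised}, and you verify explicitly that $\phi$ remains a Pfaffian morphism and that the curvature condition persists for $\bar H'$, points the paper leaves implicit.
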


We call $(P', \bar H')$ from the previous proposition the {\bf canonical normalised prolongation}.
 
 \begin{proof}
We prove first that $\g(H')=H'\cap\ker(d\pi')=H'\cap \ker(d\phi)$. Indeed, on the one hand, $\phi$ is a bundle morphism, hence $\ker(d\phi)\subset \ker(d\pi')$; on the other hand, the first condition for the prolongation $\phi$ is the inclusion $\g(H')\subset H'\cap \ker(d\phi).$

Then, the fact that $\bar H'$ has constant rank follows from dimension counting:
 \begin{align*}
 \rank(\bar H')=\rank(H')+\rank(\ker(d\phi))-\rank (H'\cap \ker(d\phi)) = \rank(H')+\rank(\ker(d\phi))-\rank(\g(H')).
 \end{align*} 
The $\pi'$-transversality of $\bar H'$ follows from the transversality of $H'\subset \bar H'$, and its $\pi'$-involutivity is just the Frobenius-involutivity of $\ker(d\phi)$.
 
Last, the prolongation is normalised by Lemma \ref{lemma:normalised}, since \eqref{eq:2} becomes injective when we replace $\g(H')$ by $\g(\bar{H'})=\ker(d\phi)$.
 \end{proof}

 \begin{observation}[\bf Normalised prolongations in terms of Pfaffian forms]\label{rmk:normalised}
 If we look at normalised prolongations in terms of 1-forms, we have various identifications that put us in the following case. Lemma \ref{lemma:normalised} identifies the quotient $TP'/\ker(\theta')$ with the pullback of $T^\pi P$ via $\phi$ on the one hand, and $\theta'$ identifies this quotient with its coefficient bundle $\mathcal{N}'$; hence, we can think that the coefficient bundle is $T^\pi P$:
 $$\mathcal{N}'=\phi^*(T^\pi P).$$
 Moreover, under this identification, the maps $d\phi: T^{\pi'}P'\to T^\pi P$ and $\theta': T^{\pi'}P'\to \mathcal{N}'$ coincide; it follows that a prolongation $\phi:(P',\theta')\to(P,\theta)$ is normalised if $\theta'$ takes values on $T^\pi P$, i.e.\
 $$\theta'\in\Omega^1(P',\phi^*(T^\pi P)),$$
 and the differential $d\phi$ coincides with $\theta'$, seen as a map on $T^{\pi'}P'$. The remaining conditions for a prolongations of course remain the same, namely 
 $$\phi^*\theta=\theta\circ \theta',\quad \textrm{and}\quad \theta(\kappa_{\theta'}(u,v))=\kappa_{\theta}(d\phi(u), d\phi(v))=0$$ 
 for all $u,v \in \ker(\theta')$.
  \end{observation}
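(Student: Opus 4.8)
The plan is to produce the identification $\mathcal{N}' \cong \phi^*(T^\pi P)$ as a composite of two isomorphisms of the common quotient $TP'/\ker(\theta')$, and then to read off each asserted coincidence of maps by tracing elements through this composite. Throughout I write $H' := \ker(\theta')$, so that the hypothesis that $\phi$ is a \emph{normalised} prolongation becomes $\g(H') = \ker(d\phi)$; this is exactly what is needed to apply Lemma \ref{lemma:normalised}.

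First I would record the two isomorphisms of $TP'/H'$. Since $\theta'$ is pointwise surjective with kernel $H'$, it induces fibrewise $TP'/H' \xrightarrow{\sim} \mathcal{N}'$, $[u]\mapsto \theta'(u)$; and Lemma \ref{lemma:normalised} supplies $TP'/H' \xrightarrow{\sim} \phi^*(T^\pi P)$, $[u]\mapsto d\phi(u-v)$, where $v\in H'_p$ is any vector with $d\pi'(u)=d\pi'(v)$. Composing the inverse of the first with the second yields
$$\Psi:\mathcal{N}'\xrightarrow{\sim}\phi^*(T^\pi P),\qquad \Psi(\theta'(u))=d\phi(u-v),$$
which is the claimed identification $\mathcal{N}'=\phi^*(T^\pi P)$.

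Next I would check the coincidence of $\theta'$ and $d\phi$ on the vertical bundle, which is the heart of the remark. For $w\in T^{\pi'}_pP'$ we have $d\pi'(w)=0$, so in the formula for $\Psi$ we may take the admissible choice $v=0$; hence $\Psi(\theta'(w))=d\phi(w)$. This says precisely that, under $\Psi$, the restriction $\theta'|_{T^{\pi'}P'}$ agrees with $d\phi|_{T^{\pi'}P'}$. The one point requiring care — and the main, if modest, obstacle — is the interplay of the choice of $v$ with the two identifications: one must use that $\Psi$ is well defined (two admissible $v$ differ by an element of $H'\cap T^{\pi'}P'=\g(H')=\ker(d\phi)$, a fact already built into Lemma \ref{lemma:normalised}) in order to legitimately substitute $v=0$ for vertical inputs.

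Finally I would observe that the remaining conditions are the general properties of a Pfaffian prolongation rewritten under $\Psi$. By Definition \ref{def_pfaffian_morphism} there is a unique $\Phi:\mathcal{N}'\to\phi^*\mathcal{N}$ with $\phi^*\theta=\Phi\circ\theta'$; writing a general element as $\theta'(u)$ and using $\theta'(v)=0$, one computes $\theta(\Psi(\theta'(u)))=\theta(d\phi(u-v))=(\phi^*\theta)(u-v)=\Phi(\theta'(u))$, so that $\Phi$ becomes the restriction $\theta|_{T^\pi P}$, which is the content of $\phi^*\theta=\theta\circ\theta'$. For the curvature, property 4 of Proposition \ref{prop:curvature_commute} reads $\Phi\circ\kappa_{\theta'}=\phi^*\kappa_\theta$, i.e.\ $\theta(\kappa_{\theta'}(u,v))=\kappa_\theta(d\phi(u),d\phi(v))$ under $\Psi$, and this vanishes for $u,v\in\ker(\theta')$ by the prolongation condition \eqref{eq:curv_2}. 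Assembling these statements gives the reformulation asserted in the remark.
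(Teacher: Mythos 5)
Your proof is correct and follows exactly the route the paper sketches inside the remark itself: you compose the isomorphism $TP'/\ker(\theta')\cong\mathcal{N}'$ induced by the pointwise surjective form $\theta'$ with the isomorphism of Lemma \ref{lemma:normalised}, verify that $\theta'$ and $d\phi$ agree on $T^{\pi'}P'$ via the admissible choice $v=0$ (with the well-definedness point $H'\cap T^{\pi'}P'=\g(H')=\ker(d\phi)$ correctly isolated), and transport the Pfaffian morphism and curvature conditions through $\Psi$ so that $\Phi$ becomes $\theta|_{T^\pi P}$, yielding $\phi^*\theta=\theta\circ\theta'$ and the vanishing of $\theta(\kappa_{\theta'}(u,v))=\kappa_\theta(d\phi(u),d\phi(v))$ by condition \eqref{eq:curv_2}. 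The only point you leave tacit is the converse reading of the remark (a prolongation with $\theta'\in\Omega^1(P',\phi^*(T^\pi P))$ and $d\phi=\theta'$ on $T^{\pi'}P'$ is automatically normalised), but that is immediate from your setup: since $\ker(d\phi)\subset\ker(d\pi')=T^{\pi'}P'$, one gets $\g(\theta')=\ker\bigl(\theta'|_{T^{\pi'}P'}\bigr)=\ker(d\phi)$.
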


\subsection{The partial prolongation}

To simplify the exposition, we will adopt from now on the point of view of distributions; at the end of the next section (Remark \ref{rk_classical_prol_for_forms}), we will make the appropriate comments about how this picture is adapted using 1-forms.

\

In analogy with the classical notion of prolongation of a PDE (Section \ref{sec:PDE_prolongation}), the {\it classical prolongation} of a Pfaffian fibration $\pi: (P,H) \arr M$ may be thought of as the space of its first order differential consequences; more precisely, the prolongation consists of all the integral elements of $(P,H)$ (Definition \ref{def:integral_elements}). Those can be reinterpreted, using equation \eqref{alternative_description_first_jet}, as the images of all linear splittings $\zeta: T_{\pi(p)}M\to T_pP$ of $d_p\pi$ such that
$$\mathrm{Im}(\zeta)\subset H_p,\quad \zeta^*(\kappa_H)=0.$$
The {\it partial} prolongation of $(P,H)$ takes care of the first condition.

\begin{definition}\label{def_partial_prolongation}
\index{Pfaffian fibration!partial prolongation}
 The {\bf partial prolongation of a Pfaffian fibration $\pi: (P,H) \arr M$}, denoted by $J^1_H P$, is the set of all its partial integral elements. In other words, modulo the identification \eqref{alternative_description_first_jet}, it is the subset of $J^1 P$ defined by 
 \begin{equation*}
 J^1_H P:=\{(p,\zeta) \in J^1 P \mid \zeta (T_{\pi(p)}M) \subset H_p\}. \qedhere
  \end{equation*}
\end{definition}

The classical prolongation of $(P,H)$ will sit inside $J^1_HP$, hence many of its properties are inherited from $J^1_HP$. In particular, we will prove later that both the partial and the classical prolongation can be seen as universal, the first in the world of Pfaffian morphisms (Proposition \ref{prop:classical_universal}), and the second in the world of Pfaffian prolongations (Proposition \ref{prop:partial_universal}). 

%The classical prolongation of a Pfaffian fibration $(P,H)$ may be thought as the space of first order consequences of the Pfaffian fibration, in analogy with the notion of prolongation of a PDE (section \ref{sec:PDE_prolongation}). As such, the classical prolongation consists of integral elements of $(P,H)$ (see equation \eqref{eq:integral_element}), i.e.\ linear subspaces $V\subset T_pP$ which projects isomorphically to $T_{\pi(p)}M$ via $d\pi$, and satisfying the two conditions \eqref{eq:integral_element}. Identifying $V$ with the image of a linear splitting $\zeta: T_{\pi(p)}M\to H_p\subset T_pP$ (think of $\zeta$ as the differential $d_x\beta$ for some $\beta\in\Gamma_{\loc}(P)$), the two conditions are
%$$\mathrm{Im}(\zeta)\subset H_p,\quad \zeta^*(\kappa_H)=0.$$

%The {\it partial prolongation} of $(P,H)$ takes care of the first condition, and identifying jets $j^1_x\beta$, $\beta\in\Gamma_{\loc}(P)$, with the differential of $\beta$ at $x$, 

%the partial prolongation can be thought as the biggest submanifold of $J^1P$ with the property that when endowed with the restriction of the Cartan distribution, the projection is a Pfaffian morphism (Theorem \ref{thm:partial_prolongation}).
We begin by discussing the bundle structure of $J_H^1 P$.

\begin{proposition}
The partial prolongation $J^1_H P$ from Definition \ref{def_partial_prolongation} is a smooth manifold and $\mathrm{pr}:J^1_HP\to P$ is an affine bundle modelled on $\textrm{Hom}(\pi^*TM, \mathfrak{g}(H))$.
\end{proposition}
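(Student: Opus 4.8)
The plan is to realise $J^1_H P$ as the zero locus of an affine bundle morphism whose linear part is a surjective vector bundle map, and then invoke the standard fact that such a preimage is a smooth affine subbundle modelled on the kernel of the linear part. First I would recall, as stated earlier for $J^kR \to J^{k-1}R$, that $\mathrm{pr}: J^1 P \to P$ is an affine bundle modelled on $S^1(T^*M) \otimes T^\pi P \cong \Hom(\pi^* TM, T^\pi P)$: by the description \eqref{alternative_description_first_jet}, its fibre over $p$ consists of the linear splittings $\zeta: T_{\pi(p)} M \to T_p P$ of $d_p \pi$, and the difference of two such splittings lies in $\ker(d_p\pi) = T^\pi_p P$. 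Writing $q: TP \to TP/H = \mathcal{N}_H$ for the quotient projection, the defining condition $\zeta(T_{\pi(p)}M) \subset H_p$ is equivalent to $q_p \circ \zeta = 0$, so $J^1_H P = \Psi^{-1}(0)$ for the bundle map over $P$
\begin{equation*}
\Psi: J^1 P \to \Hom(\pi^* TM, \mathcal{N}_H), \quad (p,\zeta) \mapsto q_p \circ \zeta.
\end{equation*}

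Next I would compute the fibrewise linear part of $\Psi$, which is $\bar\Psi_p: \Hom(T_{\pi(p)}M, T^\pi_p P) \to \Hom(T_{\pi(p)}M, (\mathcal{N}_H)_p)$, $\eta \mapsto q_p \circ \eta$. The key input is $\pi$-transversality, $T^\pi P + H = TP$, which gives that the restriction $q_p|_{T^\pi_p P}: T^\pi_p P \to T_p P/H_p$ is surjective with kernel $T^\pi_p P \cap H_p = \g(H)_p$ (this is exactly the isomorphism $\mathcal{N}_H \cong T^\pi P/\g(H)$ of Remark \ref{rmk:correspondence}). Post-composition then makes $\bar\Psi$ a surjective vector bundle morphism whose kernel is $\Hom(\pi^* TM, \g(H))$. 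Since $\pi$-regularity forces $\g(H)$ to be a subbundle of constant rank (see the remark after Definition \ref{def_Pfaffian_bundle}), $\bar\Psi$ has constant rank and its kernel is a genuine vector subbundle.

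Finally I would check that the fibres of $\mathrm{pr}: J^1_H P \to P$ are nonempty: again by $\pi$-transversality the map $d_p\pi|_{H_p}: H_p \to T_{\pi(p)}M$ is surjective, so it admits a linear right inverse $\zeta$, which is a splitting of $d_p\pi$ with image in $H_p$, i.e.\ a point of $(J^1_H P)_p$. With fibrewise nonemptiness and $\bar\Psi$ a surjective constant-rank morphism in hand, the standard result on preimages of the zero section under affine bundle morphisms yields that $J^1_H P \subset J^1 P$ is a smooth affine subbundle over $P$ modelled on $\ker(\bar\Psi) = \Hom(\pi^* TM, \g(H))$. The main obstacle is ensuring smoothness and constant rank throughout: this hinges entirely on $\g(H)$ having constant rank, which is precisely what $\pi$-regularity guarantees, and on $\bar\Psi$ being fibrewise surjective, which is precisely $\pi$-transversality; without these two defining conditions of a Pfaffian fibration the argument would break down.
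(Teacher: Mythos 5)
Your proposal is correct and takes essentially the same route as the paper's proof: your map $\Psi$ is exactly the paper's affine bundle map $e: J^1P \to \Hom(\pi^*TM, \mathcal{N}_H)$, whose underlying linear map is post-composition with the quotient $T^\pi P \to \mathcal{N}_H$, surjective by $\pi$-transversality and with kernel $\Hom(\pi^*TM, \g(H))$, so that $J^1_HP = e^{-1}(0)$ is an affine subbundle modelled on that kernel. Your separate verification that the fibres of $\mathrm{pr}: J^1_HP \to P$ are nonempty is harmless but redundant, since an affine map from a nonempty affine space whose linear part is surjective is itself surjective, so the fibrewise surjectivity of the linear part already guarantees nonemptiness.
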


\begin{proof}
As explained above and in equation \eqref{alternative_description_first_jet}, we represents the points of $J^1P$ as pairs $(p, \xi)$ with $p\in P$ and $\xi: T_xM\rightarrow T_pP$ splitting of $d_p \pi$, where $x= \pi(p)$. Recall from Section \ref{sec:Cartan_form} that $\textrm{pr}: J^1P\rightarrow P$ is an affine bundle over $P$ with underlying vector bundle $\textrm{Hom}(\pi^*TM, T^{\pi}P)$. Indeed, any two points $(p, \xi)$ and $(p, \xi')$ in the same fibre of $J^1 P$ above $p\in P$ differ by 
$$ \overrightarrow{\xi'\, \xi}:= \xi'- \xi: T_xM\rightarrow T^{\pi}_{p}P,$$
which can be arbitrary. We remark also that $J^{1}_HP$ is the kernel of the map 
\begin{equation*}\label{prol-c1} 
e: J^1P\to \Hom(\pi^*{TM},\mathcal{N}_H),\quad e(j^1_x\beta): v\mapsto d_x\beta(v)\mod H_{\beta(x)},
\end{equation*}
and that $e$ is an affine map with underlying vector bundle map
$$ \overrightarrow{e}:  \textrm{Hom}(\pi^*TM,  T^{\pi}P)\rightarrow \textrm{Hom}(\pi^*TM,  \mathcal{N}_H), \quad \xi \mapsto \xi \mod H. $$
Since $H$ is $\pi$-transversal and therefore $pr: T^{\pi}P\rightarrow T^\pi P/\g(H)=\mathcal{N}_H, \ v\mapsto v\mod H$ is surjective, it follows that $\textrm{pr}: J^{1}_{H}P\rightarrow P$ is an affine bundle with underlying vector bundle
\begin{equation*}
 \textrm{ker}(\overrightarrow{e})= \textrm{Hom}(\pi^*TM, \mathfrak{g}(H)). \qedhere
\end{equation*}
\end{proof}

We study now the ``Pfaffian structure'' of $J^1_HP$, as well as its main properties.

\begin{theorem}\label{thm:partial_prolongation} 
The partial prolongation $J^1_HP$ of a Pfaffian fibration $(P,H)$ is the largest subbundle of $J^1P$ such that, when endowed with the restriction of the Cartan distribution
\begin{equation}\label{eq:partial_distribution}
H^{(1)}:=\mathcal{C}\cap TJ^1_HP,\  (\text{where } \mathcal{C} \text{ is the kernel of }\te_{\mathrm{can}} \text{ of equation \eqref{eq:Cartan_form}}  )
\end{equation}
the restriction of the projection $\mathrm{pr}:J^1_HP\to P$ becomes a Pfaffian morphism (Definition \ref{def_pfaffian_fibration}).
\end{theorem}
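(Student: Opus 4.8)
The plan is to prove the three assertions contained in the statement in turn: that $(J^1_HP,H^{(1)})$ is a Pfaffian fibration, that the restricted projection $\mathrm{pr}$ is then a Pfaffian morphism onto $(P,H)$, and finally that $J^1_HP$ is maximal with this property. Everything reduces to two elementary facts about the Cartan distribution $\mathcal{C}=\ker(\te_{\mathrm{can}})$ on $J^1P$, which I would record first by unwinding the defining formula \eqref{eq:Cartan_form} (with $P$ in the role of $R$); writing points of $J^1P$ as pairs $(p,\zeta)$ as in \eqref{alternative_description_first_jet} and $\pi=\pi_P\circ\mathrm{pr}$ for the projection to $M$: \textbf{(i)} the $\mathrm{pr}$-vertical bundle $T^{\mathrm{pr}}J^1P=\ker(d\mathrm{pr})$ lies inside $\mathcal{C}$, since on a $\mathrm{pr}$-vertical vector $X$ one has $\te_{\mathrm{can}}(X)=d\mathrm{pr}(X)-\zeta(d\pi(X))=0-\zeta(0)=0$; and \textbf{(ii)} every $X\in\mathcal{C}_{(p,\zeta)}$ satisfies $d\mathrm{pr}(X)=\zeta(d\pi(X))\in\mathrm{Im}(\zeta)$, while conversely, since $\mathrm{pr}$ is a submersion, every vector of $\mathrm{Im}(\zeta)$ is hit, so $d\mathrm{pr}(\mathcal{C}_{(p,\zeta)})=\mathrm{Im}(\zeta)$.

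With these at hand I would analyse $H^{(1)}=\mathcal{C}\cap TJ^1_HP$ fibrewise. For $(p,\zeta)\in J^1_HP$ one has $\mathrm{Im}(\zeta)\subseteq H_p$ by definition, and since $\mathrm{pr}\colon J^1_HP\to P$ is an affine bundle, hence a submersion, the argument of (ii) applied inside $J^1_HP$ gives $d\mathrm{pr}(H^{(1)}_{(p,\zeta)})=\mathrm{Im}(\zeta)$. Combined with (i) this identifies the symbol space: $\g(H^{(1)})=H^{(1)}\cap T^\pi J^1_HP=\mathcal{C}\cap T^\pi J^1_HP=T^{\mathrm{pr}}J^1_HP$, i.e.\ the full vertical bundle of the fibration $\mathrm{pr}\colon J^1_HP\to P$ (concretely $\mathrm{Hom}(\pi^*TM,\g(H))$). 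This makes $\pi$-involutivity immediate, as the vertical bundle of any submersion is Frobenius-involutive. For $\pi$-transversality I would argue directly: given $Y\in T_{(p,\zeta)}J^1_HP$, set $w=d\pi(Y)$ and choose $Y_1\in H^{(1)}$ with $d\mathrm{pr}(Y_1)=\zeta(w)$ using $d\mathrm{pr}(H^{(1)})=\mathrm{Im}(\zeta)$; then $d\pi(Y_1)=w$, so $Y-Y_1$ is $\pi$-vertical and $Y=Y_1+(Y-Y_1)$ realises the splitting $TJ^1_HP=H^{(1)}+T^\pi J^1_HP$. Thus $(J^1_HP,H^{(1)})$ is a Pfaffian fibration.

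The morphism claim is then short. The projection $\mathrm{pr}\colon J^1_HP\to P$ is a surjective submersion, being an affine bundle projection, and $d\mathrm{pr}(H^{(1)})=\mathrm{Im}(\zeta)\subseteq H_p$ is exactly the weak Pfaffian condition $d\mathrm{pr}(H^{(1)})\subseteq H$ in the distribution formulation of Remark \ref{rmk:morphism_distribution}; hence $\mathrm{pr}$ is a Pfaffian morphism in the sense of Definition \ref{def_pfaffian_fibration}.

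Finally, for maximality I would take any subbundle $B\subseteq J^1P$ fibred over $P$ for which $(B,\mathcal{C}\cap TB)$ is a Pfaffian fibration and $\mathrm{pr}|_B$ is a weak Pfaffian morphism onto $(P,H)$, and show $B\subseteq J^1_HP$ pointwise. Fix $(p,\zeta)\in B$: the $\pi$-transversality required of $\mathcal{C}\cap TB$ means $d\pi$ is onto $T_xM$ when restricted to $\mathcal{C}\cap T_{(p,\zeta)}B$, so fact (ii) gives $d\mathrm{pr}(\mathcal{C}\cap T_{(p,\zeta)}B)=\zeta(d\pi(\mathcal{C}\cap T_{(p,\zeta)}B))=\zeta(T_xM)=\mathrm{Im}(\zeta)$; the weak Pfaffian condition forces this into $H_p$, whence $\mathrm{Im}(\zeta)\subseteq H_p$ and $(p,\zeta)\in J^1_HP$. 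This yields $B\subseteq J^1_HP$, so together with the first part $J^1_HP$ is the largest such subbundle. The only point demanding care throughout is the bookkeeping with the two projections $\mathrm{pr}$ and $\pi=\pi_P\circ\mathrm{pr}$ that produces the identity $d\mathrm{pr}(X)=\zeta(d\pi(X))$ on $\mathcal{C}$; once facts (i)--(ii) are secured, transversality, involutivity, and maximality all follow from this identity together with the defining inclusion $\mathrm{Im}(\zeta)\subseteq H_p$ of $J^1_HP$.
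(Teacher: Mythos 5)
Your proof is correct and follows essentially the same route as the paper's: both rest on the identity $\theta_{\mathrm{can}}(X)=d\mathrm{pr}(X)-\zeta(d\pi(X))$ on $\mathcal{C}$, the identification $\g(H^{(1)})=\ker(d\mathrm{pr})\cong\Hom(\pi^*TM,\mathrm{pr}^*\g(H))$ (whence involutivity), and the same maximality argument --- where you in fact make explicit the $\pi$-transversality step that the paper's maximality paragraph leaves implicit. The only cosmetic divergences are that you obtain $\pi$-transversality by a direct lift rather than the paper's rank count via the exact sequences \eqref{eq:res_sec}--\eqref{eq:ex_sec}, and that it would be worth stating explicitly that your fibrewise computation ($d\mathrm{pr}$ maps $H^{(1)}_{(p,\zeta)}$ onto $\mathrm{Im}(\zeta)$ with kernel exactly $\ker(d\mathrm{pr})$) already shows $H^{(1)}$ has constant rank, hence is a genuine smooth distribution --- a point the paper secures via the pointwise surjectivity of $\theta_{\mathrm{can}}|_{TJ^1_HP}$.
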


\begin{proof} 
Let us prove first that $(J^1_H P, H^{(1)})$ is a Pfaffian fibration. To see that $H^{(1)}$ is $\pi$-transversal, we compute its vertical part $H^{(1)}\cap T^\pi J^1_HP$, which is the same as the kernel of the Cartan form $\theta_{\mathrm{can}}$ when restricted to $T^\pi J^1_HP$. From the explicit definition \eqref{eq:Cartan_form} of $\theta_{\mathrm{can}}$, we see that the Cartan form restricted to $T^\pi J^1_HP$ is precisely $d\mathrm{pr}:T^\pi J^1_HP\to T^\pi P$. However, the kernel of $d\mathrm{pr}$ is the first term of the exact sequence over $J^1_HP$,
\begin{equation}\label{eq:res_sec}
0\to\g(H^{(1)})=\Hom(\pi^*TM,\mathrm{pr}^*(\g(H)))\to T^{\pi}J^1_HP\overset{d\mathrm{pr}}{\to} \mathrm{pr}^*(T^\pi P)\to 0,
\end{equation}
where this sequence comes from restricting
\begin{equation}\label{eq:ex_sec}
0\to\Hom(\pi^*TM,\mathrm{pr}^*(T^\pi P))\to T^\pi J^1P\overset{d\mathrm{pr}}{\to} \mathrm{pr}^*(T^\pi P)\to 0.
\end{equation}
 to $TJ^1_HP$. This also shows that, since $d\mathrm{pr}:T^\pi J^1_HP\to T^\pi P$ is pointwise surjective, $\theta_{\mathrm{can}}$ on $TJ^1_HP\supset T^\pi J^1_HP$ is surjective as well; hence $H^{(1)}=\ker(\theta_{\mathrm{can}}|_{T J^1_HP})$ is a distribution and 
\begin{equation}\label{eq:rank}
\rank(H^{(1)})=\rank(TJ^1_HP)-\rank(T^\pi P).
\end{equation}
The $\pi$-transversality of $H^{(1)}$ follows from dimension counting using \eqref{eq:res_sec} and \eqref{eq:rank}:
\begin{align*}
\rank(H^{(1)}+T^{\pi}J^1_HP)=\rank(H^{(1)})+\rank(T^{\pi}J^1_HP)
-\rank(\Hom(\pi^*TM,\g(H)))=\rank(TJ^1_HP).
\end{align*} 
The Frobenius-involutivity of the vertical part of $H^{(1)}$ is immediate as it is the intersection of the tangent space of a submanifold with the Frobenius-involutive distribution $\mathcal{C} \cap T^\pi J^1P.$

We have proved that $(J^1_H P, H^{(1)})$ is a Pfaffian fibration; now we see that it is also the biggest submanifold of $J^1P$ so that $\mathrm{pr}$ becomes a Pfaffian morphism. Indeed, a vector $v\in T_{j^1_x\beta}J^1 P$ belongs to the Cartan distribution if and only if
$$0=\theta_{\mathrm{can}}(v)=d\mathrm{pr}(v)-d_x\beta(d\pi(v)).$$
As the image of $d_x \be$ is in $H_{\be(x)}$ by definition of $J^1_H P$, then $d\mathrm{pr}(v)=d_x\beta(d\pi(v))\in H_{\beta(x)}$; hence $d\mathrm{pr} (H^{(1)}) \subset H$, i.e.\ $\mathrm{pr}$ is a Pfaffian morphism.

Conversely, if  $P' \subset J^1 P$ is a Pfaffian morphism over $P$, with $H':= \mathcal{C} \cap T P'$, then any $v\in H'_{j^1_x \beta}$ satisfies
$$  d_x \be (d \pi (v) ) = d\mathrm{pr} (v) \in H_{\be(x)}.$$
This implies that $j^1_x \be \in J^1_H P$, hence $P' \subset J^1_H P$.
\end{proof}

\begin{observation}\label{rmk:kernel}
From the proof above we see that the symbol space $\g(H^{(1)})$ of the partial prolongation is precisely the kernel of the differential of the projection $\mathrm{pr}: J^1_HP\to P$,
$$\g(H^{(1)})=\ker(d\mathrm{pr})=\Hom(\pi^*TM,\mathrm{pr}^*(\g(H))).$$
This condition is shared with normalised prolongations (see Definition \ref{def:prolongation}) and it means that we have an isomorphism for each $p\in J^1_HP$,
$$T_pJ^1_HP/H^{(1)}_p\cong T^\pi_{\mathrm{pr}(p)} P,\quad [u]\mapsto d\mathrm{pr}(u-v),$$
where $v\in H^{(1)}_p$ is any vector with $d\pi(u)=d\pi(v)$; compare this with Lemma \ref{lemma:normalised}.
\end{observation}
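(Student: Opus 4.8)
The plan is to read both assertions directly off the exact sequence \eqref{eq:res_sec} and the $\pi$-transversality of $H^{(1)}$, both of which were already established in the proof of Theorem \ref{thm:partial_prolongation}; no genuinely new computation is needed, so the work is essentially to assemble the pieces in the right order.

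First I would prove the identity $\g(H^{(1)})=\ker(d\mathrm{pr})$. By definition the symbol space is $\g(H^{(1)})=T^\pi J^1_HP\cap H^{(1)}$, and since $H^{(1)}=\ker(\te_{\mathrm{can}}|_{TJ^1_HP})$ (see \eqref{eq:partial_distribution}), this equals $T^\pi J^1_HP\cap\ker(\te_{\mathrm{can}})$. In the proof of Theorem \ref{thm:partial_prolongation} it was observed, via the explicit formula \eqref{eq:Cartan_form}, that the restriction of $\te_{\mathrm{can}}$ to the vertical bundle $T^\pi J^1_HP$ is precisely $d\mathrm{pr}:T^\pi J^1_HP\to\mathrm{pr}^*(T^\pi P)$; hence the intersection above is $\ker(d\mathrm{pr}|_{T^\pi J^1_HP})$. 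Conversely, since the projection of $J^1_HP$ onto $M$ factors as $\pi\circ\mathrm{pr}$, any vector killed by $d\mathrm{pr}$ is also killed by $d(\pi\circ\mathrm{pr})$, i.e.\ lies in $T^\pi J^1_HP$; thus $\ker(d\mathrm{pr})\subset T^\pi J^1_HP$ and the two kernels coincide. The identification with $\Hom(\pi^*TM,\mathrm{pr}^*(\g(H)))$ is then nothing but the leftmost term of \eqref{eq:res_sec}.

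Next I would establish the isomorphism $T_pJ^1_HP/H^{(1)}_p\cong T^\pi_{\mathrm{pr}(p)}P$. Because $H^{(1)}$ is $\pi$-transversal (proved in Theorem \ref{thm:partial_prolongation}), the general fact recorded in Remark \ref{rmk:correspondence} gives a canonical isomorphism $TJ^1_HP/H^{(1)}\cong T^\pi J^1_HP/\g(H^{(1)})$, realised exactly as in \eqref{eq:1} by $[u]\mapsto[u-v]$ for any $v\in H^{(1)}$ with $d\pi(u)=d\pi(v)$. Using the first claim, the exactness of \eqref{eq:res_sec} identifies $T^\pi J^1_HP/\g(H^{(1)})$ with $\mathrm{pr}^*(T^\pi P)$ through $d\mathrm{pr}$. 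Composing these two isomorphisms yields the stated map $[u]\mapsto d\mathrm{pr}(u-v)$, in complete parallel with the normalised case treated in Lemma \ref{lemma:normalised}.

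I do not expect a real obstacle here, as everything sits on top of the theorem's proof; the only point deserving a line of care is the well-definedness of $[u]\mapsto d\mathrm{pr}(u-v)$, namely its independence of the choice of $v$. This follows because any two admissible choices of $v$ differ by a vector of $\g(H^{(1)})=\ker(d\mathrm{pr})$, on which $d\mathrm{pr}$ vanishes; the same identity $\g(H^{(1)})=\ker(d\mathrm{pr})$ is precisely what makes the induced map injective, hence an isomorphism.
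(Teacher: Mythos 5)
Your proposal is correct and follows essentially the same route the paper intends: the remark is stated as a direct consequence of the proof of Theorem \ref{thm:partial_prolongation}, and you assemble exactly its ingredients --- the identification of $\te_{\mathrm{can}}|_{T^\pi J^1_HP}$ with $d\mathrm{pr}$, the exact sequence \eqref{eq:res_sec}, and the transversality argument parallel to Lemma \ref{lemma:normalised} (via \eqref{eq:1} and \eqref{eq:2}). You even make explicit two small points the paper leaves implicit, namely the inclusion $\ker(d\mathrm{pr})\subset T^\pi J^1_HP$ (via $\pi\circ\mathrm{pr}$) and the well-definedness and bijectivity of $[u]\mapsto d\mathrm{pr}(u-v)$, both of which check out.
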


\begin{observation}\label{rmk:solutions}
Being a Pfaffian morphism, the projection $\mathrm{pr}:J^1_HP\to P$ induces a map between holonomic sections (Proposition \ref{def_pfaffian_fibration})
$$\Gamma(J^1_HP,H^{(1)})\to \Gamma(P,H),\quad \xi\mapsto \mathrm{pr}^*(\xi).$$
In fact, this map defines a 1-1 correspondence with inverse given by $\Gamma(P,H)\ni \beta\mapsto j^1\beta$. Indeed, by Lemma \ref{lemma:holonomic}, $j^1\beta$ is a section of $J^1P$ tangent to the Cartan distribution $\mathcal{C}$. Moreover, since $\beta$ is holonomic, $d_x\beta(T_xM)\subset H_{\beta(x)}$ for all $x\in\mathrm{dom}(\beta)$, i.e.\ $j^1\beta$ actually takes values in $J^1_HP$, and therefore it is tangent to $H^{(1)}=TJ^1_HP\cap \mathcal{C}$.
\end{observation}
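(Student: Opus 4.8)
The plan is to produce the two maps explicitly and verify that they are mutually inverse, with essentially all of the content delegated to Lemma~\ref{lemma:holonomic} applied to the fibration $P\arr M$ (in the role of $R$) at jet order $k=1$.

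First I would confirm that $\xi\mapsto \mathrm{pr}\circ\xi$ really lands in $\Gamma(P,H)$; this is immediate from Theorem~\ref{thm:partial_prolongation}, since $\mathrm{pr}\colon (J^1_HP,H^{(1)})\arr (P,H)$ is a Pfaffian morphism and such morphisms carry holonomic sections to holonomic sections (property~1 of Proposition~\ref{prop:curvature_commute}, or Remark~\ref{properties_pfaffian_morphism}).

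Next I would check that $\beta\mapsto j^1\beta$ is well defined as a map into $\Gamma(J^1_HP,H^{(1)})$, arguing in two stages. Because $\beta$ is holonomic for $(P,H)$ we have $d_x\beta(T_xM)\subset H_{\beta(x)}$ for every $x$, so under the identification \eqref{alternative_description_first_jet} the jet $j^1_x\beta=(\beta(x),d_x\beta)$ satisfies the defining condition of the partial prolongation; hence $j^1\beta$ is a section of $J^1_HP$. Independently, Lemma~\ref{lemma:holonomic} guarantees that $d(j^1\beta)$ takes values in the Cartan distribution $\mathcal{C}$ of $J^1P$. Combining the two, $d(j^1\beta)$ takes values in $\mathcal{C}\cap TJ^1_HP=H^{(1)}$, so $j^1\beta$ is holonomic for $(J^1_HP,H^{(1)})$.

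Finally I would close the correspondence by composing the two maps. That $\mathrm{pr}\circ j^1\beta=\beta$ is trivial, as $\mathrm{pr}$ sends $j^1_x\beta$ to its base point $\beta(x)$. The reverse identity $j^1(\mathrm{pr}\circ\xi)=\xi$ is where the substance lies, and I expect it to be the main step: given a holonomic $\xi\in\Gamma(J^1_HP,H^{(1)})$, I would view it as a section of the ambient $J^1P$ whose differential takes values in $H^{(1)}\subset\mathcal{C}$. Tangency to $\mathcal{C}$ is exactly the hypothesis of the converse direction of Lemma~\ref{lemma:holonomic}, which forces $\xi=j^1\beta$ for a (necessarily unique) $\beta\in\Gamma(P)$; applying $\mathrm{pr}$ identifies $\beta=\mathrm{pr}\circ\xi$, giving $\xi=j^1(\mathrm{pr}\circ\xi)$. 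The only thing requiring care is keeping the two distributions apart---$H^{(1)}$ on $J^1_HP$ versus the full Cartan distribution $\mathcal{C}$ on $J^1P$---but since $H^{(1)}$ is by definition $\mathcal{C}\cap TJ^1_HP$, tangency to $H^{(1)}$ for a section valued in $J^1_HP$ is literally tangency to $\mathcal{C}$, so Lemma~\ref{lemma:holonomic} applies without modification.
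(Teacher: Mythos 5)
Your proof is correct and follows essentially the same route as the paper: the Pfaffian morphism property handles $\xi\mapsto \mathrm{pr}\circ\xi$, while the defining condition of $J^1_HP$ together with Lemma \ref{lemma:holonomic} shows $\beta\mapsto j^1\beta$ lands in $\Gamma(J^1_HP,H^{(1)})$. Your only addition is to spell out the composite identities---in particular invoking the converse direction of Lemma \ref{lemma:holonomic} to get $\xi=j^1(\mathrm{pr}\circ\xi)$---which the paper leaves implicit but which rests on exactly the same observation that tangency to $H^{(1)}=\mathcal{C}\cap TJ^1_HP$ is tangency to $\mathcal{C}$.
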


As anticipated above, another possible characterisation of the partial prolongation $J_H P$ is that it is ``universal'' among the world of Pfaffian morphisms with target $(P,H)$.

\begin{proposition}\label{prop:partial_universal}
Any Pfaffian morphism $\phi:(P',H')\to (P,H)$ with the property that $\g(H')\subset \ker(d\phi)$ factors through a unique bundle morphism $\varphi:P'\to J^1_HP$ over $P$ so that
$$d\varphi(H')\subset H^{(1)} \text{ and  }\ [d\mathrm{pr}]\circ \varphi^*\kappa_{H^{(1)}}=[d\phi]\circ \kappa_{H'},$$
where $[d\mathrm{pr}]:\mathcal{N}_{H^{(1)}}\to \mathrm{pr}^*\mathcal{N}_H,\ [u]\mapsto [d\mathrm{pr}(u)]$ and $[d\phi]:\mathcal{N}_{H'}\to \phi^*\mathcal{N}_H,\ [u]\mapsto [d\phi(u)]$ are the induced maps on the normal bundles.
\end{proposition}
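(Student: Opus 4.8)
The plan is to build $\varphi$ by hand as the assignment sending a point $p'\in P'$ to the partial integral element of $(P,H)$ obtained by pushing $H'_{p'}$ forward through $d\phi$, and then to extract every claimed property directly from the defining formula \eqref{eq:Cartan_form} of the Cartan form.

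First I would construct $\varphi$. Given $p'\in P'$, write $x:=\pi'(p')$ and $p:=\phi(p')$, and define a linear map $\zeta_{p'}\colon T_xM\to T_pP$ by $\zeta_{p'}(v):=d\phi(w)$, where $w\in H'_{p'}$ is any vector with $d\pi'(w)=v$; such a $w$ exists because $H'$ is $\pi'$-transversal. This is well defined exactly because of the hypothesis $\g(H')\subset\ker(d\phi)$: two admissible choices of $w$ differ by an element of $H'_{p'}\cap\ker(d\pi')=\g(H')_{p'}\subset\ker(d\phi)$. Using $\pi\circ\phi=\pi'$ one checks $d\pi\circ\zeta_{p'}=\mathrm{id}$, and $\mathrm{Im}(\zeta_{p'})=d\phi(H'_{p'})\subset H_p$ since $\phi$ is a Pfaffian morphism; hence $(p,\zeta_{p'})\in J^1_HP$ by Definition \ref{def_partial_prolongation}. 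Setting $\varphi(p'):=(p,\zeta_{p'})$ gives a map over $P$, i.e.\ $\mathrm{pr}\circ\varphi=\phi$. Smoothness is obtained by replacing the pointwise choice of $w$ with a local smooth splitting $\rho$ of $d\pi'\!\mid_{H'}$ (which exists by $\pi'$-transversality) and writing $\zeta_{p'}=d\phi\circ\rho_{p'}$.

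Next I would verify $d\varphi(H')\subset H^{(1)}$. As $\varphi$ takes values in $J^1_HP$, automatically $d\varphi(H')\subset TJ^1_HP$, so it suffices to show $d\varphi(H')\subset\mathcal{C}=\ker(\theta_{\mathrm{can}})$. Evaluating \eqref{eq:Cartan_form} at $\varphi(p')$ on $d\varphi(w)$ for $w\in H'_{p'}$, and using $\mathrm{pr}\circ\varphi=\phi$ together with $\pi\circ\varphi=\pi'$, gives
$$(\theta_{\mathrm{can}})_{\varphi(p')}(d\varphi(w))=d\mathrm{pr}(d\varphi(w))-\zeta_{p'}(d\pi(d\varphi(w)))=d\phi(w)-\zeta_{p'}(d\pi'(w))=0,$$
the last equality being the defining property of $\zeta_{p'}$. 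The very same computation yields uniqueness: any bundle morphism $\psi$ over $P$ with $d\psi(H')\subset H^{(1)}$ has underlying splitting $\eta$ forced to satisfy $\eta(d\pi'(w))=d\phi(w)$ for all $w\in H'$; since $d\pi'\!\mid_{H'}$ is fibrewise surjective, this determines $\eta=\zeta$, so $\psi=\varphi$. In particular $\varphi$ is the unique bundle morphism over $P$ satisfying the first condition, a fortiori the unique one satisfying both.

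Finally, the curvature identity is a formal consequence of the curvature-naturality of Pfaffian morphisms (property 4 of Proposition \ref{prop:curvature_commute}, equivalently Remark \ref{rmk:morphism_distribution}) applied to the two Pfaffian morphisms $\mathrm{pr}$ and $\phi$. For $u,v\in H'_{p'}$, using $d\varphi(u),d\varphi(v)\in H^{(1)}$ and $\mathrm{pr}\circ\varphi=\phi$,
$$[d\mathrm{pr}](\kappa_{H^{(1)}}(d\varphi(u),d\varphi(v)))=\kappa_H(d\mathrm{pr}(d\varphi(u)),d\mathrm{pr}(d\varphi(v)))=\kappa_H(d\phi(u),d\phi(v))=[d\phi](\kappa_{H'}(u,v)),$$
which is exactly $[d\mathrm{pr}]\circ\varphi^*\kappa_{H^{(1)}}=[d\phi]\circ\kappa_{H'}$. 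I expect the only genuinely delicate points to be bookkeeping: confirming smoothness of $\varphi$ (handled by the local splitting $\rho$) and keeping the normal-bundle identifications $[d\mathrm{pr}]$ and $[d\phi]$ anchored at the correct base points $\varphi(p')$ and $p'$. All the conceptual content sits in the Cartan-form computation above, so this is where I would focus care.
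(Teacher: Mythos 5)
Your proposal is correct and follows essentially the same route as the paper's proof: the same formula $\varphi(p')(v)=d\phi(w)$ for $w\in H'_{p'}$ lifting $v$, well-definedness from $\g(H')\subset\ker(d\phi)$, uniqueness forced by the Cartan-form identity at points of $J^1_HP$, and the curvature relation obtained by combining $\phi^*\kappa_H=[d\phi]\circ\kappa_{H'}$ with $\mathrm{pr}^*\kappa_H=[d\mathrm{pr}]\circ\kappa_{H^{(1)}}$ via $\mathrm{pr}\circ\varphi=\phi$. The only differences are presentational (you construct first and verify after, where the paper derives the forced formula, and you add an explicit smoothness argument via a local splitting of $d\pi'\mid_{H'}$, which the paper leaves implicit).
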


\begin{proof} 
The condition $d\varphi(H')\subset H^{(1)}$ forces the definition of $\varphi$ to be as follows: for $v\in H'_{p}$, $d\varphi(v)$ is an element of $H^{(1)}_{\varphi(p)}$. This means that for $j^1_{\pi'(p)}\beta=\varphi(p)$,
$$0=d\mathrm{pr}(d\varphi(v))-d_{\pi'(p)}\beta(d\pi(d\varphi(v)))=d\phi(v)-d_{\pi'(p)}\beta(d\pi'(v)),$$
where in the second equality we are using that $\varphi$ is a bundle map over $P$ (and hence, over $M$), thus $\mathrm{pr}\circ\varphi=\phi$ and $\pi\circ\varphi=\pi'$. This defines uniquely $\varphi(p)$ as the linear splitting $\varphi(p):T_{\pi'(p)}M \to H_{\phi(p)}$ of $d\pi$ given by $X\mapsto d\phi(v)$, where $v$ is any vector tangent to $H'_{p}$ with the property that $d\pi'(v)=X$. Of course, we still need to check that $\varphi$ is indeed well-defined, but this is a direct consequence of $\g(H')\subset \ker(d\phi)$, as one can see easily.

The equality involving the curvatures is a direct consequence of the relations between the curvatures of the Pfaffian morphisms $\phi$ and $\mathrm{pr}$, with the curvature of $H$ (Remark \ref{rmk:morphism_distribution}):
$$\phi^*\kappa_{H}=[d\phi]\circ \kappa_{H'}, \text{ and }\ \mathrm{pr}^*\kappa_H=[d\mathrm{pr}]\circ \kappa_{H^{(1)}}.$$ 
We apply then $\varphi^*$ to the second equation and use $\mathrm{pr}\circ\varphi=\phi$ to substitute in the first equation.
\end{proof}

\subsection{The classical prolongation}

Recall that the classical prolongation of a Pfaffian fibration $(P,H)$ may be thought as the space of first order consequences of the Pfaffian fibration, in analogy with the notion of prolongation of a PDE. More precisely, it is defined as the set of integral elements of $(P,H)$ (Definition \ref{def:integral_elements}), and hence it sits inside the partial prolongation $J^1_HP\subset J^1P$ (Definition \ref{def_partial_prolongation}) as the subset where the second part of condition \eqref{eq:cond_prol} holds, i.e.\
$$\mathrm{pr}^*\kappa_H=0.$$ 
Indeed, if $j^1_x\beta$ is an element of $J^1_HP$ such that for any $u,v\in H^{(1)}_{j^1_x\beta}$, $\kappa_H(d\mathrm{pr}(u),\mathrm{pr}(v))=0$, then  
$$\kappa_H(d_x\beta(d\pi(u)),d_x\beta(d\pi(v)))=0$$
because $d\mathrm{pr}(u)-d_x\beta(d\pi(u))=0$ (i.e.\ $u\in H^{(1)}_{j^1_x\beta}$), and analogously for $v$. This is exactly saying that $j_x^1\beta$ is an integral element.

\begin{definition}\label{def:classical_prolongation}
The {\bf classical prolongation} of a Pfaffian fibration $(P,H)$, denoted by $\mathrm{Prol}(P,H)$, is the set of all its integral elements. In other words, it is the subset of the partial prolongation $J^1_HP$ (Definition \ref{def_partial_prolongation}) given by 
\begin{equation*}
\mathrm{Prol}(P,H):=\{(p,\zeta) \in J^1_H P \mid \zeta^*(\kappa_H)=0\},
\end{equation*}
where $\zeta^*(\kappa_H) (u,v) := \kappa_H (\ze(u), \ze(v)) \quad \forall u, v \in T_{\pi(p)}M.$ 
\end{definition}

Studying the smooth structure of $\mathrm{Prol}(P,H)$ is a bit more subtle than in the case of the partial prolongation. The classical prolongation is the zero-set of the map
\begin{equation}\label{prol-c2} 
\widetilde{\kappa}_H: J^{1}_{H}P\rightarrow \textrm{Hom}(\pi^*\wedge^2TM, \mathcal{N}_H), \quad
(p,\zeta) \mapsto \zeta^*\kappa_{H},
\end{equation}
hence the smoothness of 
$\textrm{Prol}(P, H)$ can be studied by understanding $\widetilde{\kappa}_H$. Indeed, $\widetilde{\kappa}_H$ is an affine map, and a simple computation reveals that the underlying vector bundle morphism is precisely the map 
\begin{equation*}\label{eq:delta}
\delta_H:\Hom(\pi^*TM,\g(H))\to\Hom(\pi^*(\wedge^2TM),\mathcal{N}_H)
\end{equation*}
$$\delta_H(\eta_p)(X,Y)=\partial_H(\eta_p(X))(Y)-\partial_H(\eta_p(Y))(X).$$ 
Here $\partial_H$, called the {\bf symbol map} of $(P,H)$, is given by 
\begin{equation}\label{eq:symbol_map}
\partial_H:\g(H)\to\Hom(\pi^*TM,\mathcal{N}_H),\quad \partial_H(v)(Y)=\kappa_H(v,\bar Y)
\end{equation} 
with $\bar Y$ any vector tangent to $H_p$ that projects to $Y$, i.e.\ $d\pi(\bar Y)=Y$. One can check that $\partial_H$ is well-defined because $\g(H)$ is Frobenius-involutive. 
We deduce that:

\begin{lemma}\label{lemma:smooth} $\mathrm{Prol}(P, H)
$ is a smooth affine subbundle of $J^1P$ if and only if: 
\begin{itemize}
\item[(1)] $\delta_H$ has constant rank, and
\item[(2)] $\mathrm{pr}: \mathrm{Prol}(P,H) %\textrm{Prol}(R, \theta)
\rightarrow P$ is surjective.
\end{itemize}
\end{lemma}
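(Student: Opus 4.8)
The plan is to regard $\mathrm{Prol}(P,H)$ as the zero locus of the affine bundle map $\widetilde{\kappa}_H \colon J^1_H P \to \mathrm{Hom}(\pi^*\wedge^2 TM, \mathcal{N}_H)$ over $P$ and to read off its smoothness from its linear part $\delta_H$. Recall from the preceding proposition that $\mathrm{pr}\colon J^1_H P \to P$ is an affine bundle modelled on $\mathrm{Hom}(\pi^*TM, \g(H))$, and that $\delta_H$ is the underlying vector bundle morphism of $\widetilde{\kappa}_H$. The first step is the fibrewise observation: over each $p \in P$, the map $(\widetilde{\kappa}_H)_p$ is an affine map between affine spaces with linear part $(\delta_H)_p$, so its zero set is either empty or an affine subspace modelled on $\ker(\delta_H)_p$. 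Thus surjectivity of $\mathrm{pr}\colon \mathrm{Prol}(P,H) \to P$ records exactly the non-emptiness of every fibre, while the constancy of $\dim \ker(\delta_H)_p$ (equivalently, since the model bundle has constant rank, the constancy of $\mathrm{rank}(\delta_H)$) records the constancy of the fibre dimension.

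For the direction ``smooth affine subbundle $\Rightarrow$ (1) and (2)'' I would argue as follows. If $\mathrm{Prol}(P,H)$ is a smooth affine subbundle of $J^1 P$, then by definition it fibres over $P$, so $\mathrm{pr}$ is surjective, giving (2). Moreover its fibres have constant dimension; by the fibrewise computation above this dimension equals $\dim \ker(\delta_H)_p$, which is therefore constant, and since $\mathrm{Hom}(\pi^*TM, \g(H))$ has constant rank this forces $\delta_H$ to have constant rank, giving (1).

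For the converse, assume (1) and (2). Constant rank of $\delta_H$ makes both $\ker(\delta_H)$ and $C := \mathrm{Im}(\delta_H)$ into smooth vector subbundles of $\mathrm{Hom}(\pi^*TM, \g(H))$ and of the target bundle $B := \mathrm{Hom}(\pi^*\wedge^2 TM, \mathcal{N}_H)$ respectively. Composing $\widetilde{\kappa}_H$ with the quotient projection $B \to B/C$ yields an affine bundle map whose linear part is $\delta_H$ followed by $B \to B/C$, hence zero; an affine bundle map over $P$ with vanishing linear part is constant on fibres and thus descends to a smooth section $\bar{\sigma}\colon P \to B/C$. Condition (2) provides, over each $p$, a point where $\widetilde{\kappa}_H$ vanishes and in particular lies in $C$, so $\bar{\sigma} \equiv 0$, which means $\widetilde{\kappa}_H$ actually takes values in the subbundle $C$. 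Fibrewise, $\widetilde{\kappa}_H \colon J^1_H P \to C$ is then a surjective affine map with surjective linear part $\delta_H$, hence a submersion onto $C$; therefore $\mathrm{Prol}(P,H) = \widetilde{\kappa}_H^{-1}(0_C)$ is the preimage of the embedded zero section and is a smooth submanifold, whose fibres are affine spaces modelled on $\ker(\delta_H)$ of constant dimension, i.e.\ a smooth affine subbundle of $J^1_H P$ and hence of $J^1 P$.

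The main obstacle, and the reason the classical prolongation is subtler than the partial one, is precisely the interplay between the affine and linear parts in the converse: one must convert the pointwise non-emptiness supplied by (2) into the global statement that $\widetilde{\kappa}_H$ factors through $C = \mathrm{Im}(\delta_H)$, and only then does the constant-rank hypothesis (1) upgrade $\widetilde{\kappa}_H$ to a genuine submersion onto a vector subbundle. Checking that the descended map $\bar\sigma$ is smooth and that the zero locus inherits a bona fide affine bundle structure (rather than merely a smooth fibration) are the points requiring care; both follow from the constant-rank theorem together with the fact that $\ker(\delta_H)$ and $\mathrm{Im}(\delta_H)$ are smooth subbundles.
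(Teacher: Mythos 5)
Your proof is correct and follows essentially the route the paper takes (and leaves largely implicit): realise $\mathrm{Prol}(P,H)$ as the zero locus of the affine bundle map $\widetilde{\kappa}_H$ with linear part $\delta_H$, and deduce the lemma from the standard fact about zero sets of affine maps with constant-rank linear part. In fact, your descended section $\bar\sigma$ of $B/\mathrm{Im}(\delta_H)$ is precisely the torsion $\tors$ that the paper introduces in equation \eqref{eq:higher_curvature} immediately after the lemma, so your argument makes explicit exactly the mechanism the authors invoke.
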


Related to (1) in the previous lemma, we see that the kernel of $\delta_H$ is the first prolongation 
$$\g(H)^{(1)}:= \g^{(1)}(\partial)$$
 of the generalised tableau bundle $\partial_H:\g(H)\to\Hom(\pi^*TM,\mathcal{N}_H)$, in the sense of equation \eqref{eq:prolongation_tableau}. Accordingly, $\g(H)^{(1)}\subset \Hom(\pi^*TM,\g(H))$ is a bundle of vector spaces whose rank may vary; of course, $\delta_H$ has constant rank if and only if $\g(H)^{(1)}$ is of constant rank.
 
Now, related to (2), we see that for any two $(p,\zeta),(p,\zeta')\in J^1_HP$, the difference $\eta:=\zeta-\zeta'$ lies in $\Hom(T_{\pi(p)}M,\g(H)_p)$ and 
$$\zeta^*{\kappa}_H-\zeta'^*\kappa_H=\delta_H(\eta).$$ 
Therefore, $\widetilde{\kappa}_H$ descends to the following map, called \textbf{the torsion of $(P,H)$}:
\begin{equation}\label{eq:higher_curvature}
\tors:P \to \Hom(\pi^*(\wedge^2TM),\mathcal{N}_H)/\mathrm{Im}(\delta_H),\quad p\mapsto [ \widetilde{\kappa}_H (p, \zeta)]
\end{equation}
It is now a simple exercise to check that the zero-set of $\tors$ is precisely the image of $\textrm{pr}: \mathrm{Prol}(P,H) \rightarrow P$. In particular:

\begin{theorem}\label{thm:classical_prolongation} For any Pfaffian fibration $\pi: (P,H)\rightarrow M$, the following are equivalent: 
\begin{enumerate}
\item The prolongation $\mathrm{Prol}(P,H)$ 
is a smooth affine subbundle of $J^1R$.
\item The prolongation $\g(H)^{(1)}$ of $\mathfrak{g}(H)$ is of constant rank, and $\tors=0$ (or, equivalently, $\mathrm{pr}: \mathrm{Prol}(P,H)\to P$ is surjective). 
\end{enumerate}
Moreover, in this case:
\begin{itemize}
\item the vector bundle underling the affine bundle $\mathrm{Prol}(P,H)$ 
is precisely $\g(H)^{(1)}$.
\item if we denote the restriction of the Cartan distribution $\mathcal{C}=\ker(\theta_{\mathrm{can}})$ (see equation \eqref{eq:Cartan_form}) of $J^1P$ to $\mathrm{Prol}(P,H)$ by
\begin{equation}\label{eq:classical_distribution}
H^{(1)}:=\mathcal{C}\cap T\mathrm{Prol}(P,H),
\end{equation}
then $(\mathrm{Prol}(P,H),H^{(1)})$ becomes a Pfaffian fibration over $M$ with symbol space 
$\mathrm{pr}^*\g(H)^{(1)}\subset \mathrm{Hom}(\pi^*TM, \mathrm{pr}^*\mathfrak{g}(H))$. 
%\item  the projection from $(\mathrm{Prol}(P,H),H^{(1)})$ to $(P,H)$ is a normalised prolongation.
\item $\mathrm{Prol}(P,H)$ is the biggest submanifold of $J^1P$ such that, when endowed with the restriction of the Cartan distribution $\mathcal{C}$, the projection $\mathrm{pr}$ becomes a normalised prolongation.
\end{itemize}
\end{theorem}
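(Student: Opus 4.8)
The plan is to assemble the statement from the machinery already in place, since the two genuinely analytic points --- the smoothness of $\mathrm{Prol}(P,H)$ and the identification of its model bundle --- are exactly what Lemma \ref{lemma:smooth} and the discussion preceding Definition \ref{def:classical_prolongation} supply. First I would settle the equivalence of (1) and (2). By Lemma \ref{lemma:smooth}, $\mathrm{Prol}(P,H)$ is a smooth affine subbundle of $J^1P$ precisely when $\delta_H$ has constant rank and $\mathrm{pr}\colon\mathrm{Prol}(P,H)\to P$ is surjective. Since $\ker(\delta_H)=\g(H)^{(1)}$ and $\delta_H$ is a bundle map out of the constant-rank bundle $\Hom(\pi^*TM,\g(H))$, constancy of the rank of $\delta_H$ is equivalent to constancy of the rank of $\g(H)^{(1)}$; and since the zero-set of $\tors$ coincides with the image of $\mathrm{pr}$, the condition $\tors=0$ is equivalent to surjectivity of $\mathrm{pr}$. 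This yields (1) $\iff$ (2). The same affine description identifies the model (underlying vector) bundle of $\mathrm{Prol}(P,H)$: as $\widetilde{\kappa}_H$ is affine with linear part $\delta_H$, its zero locus over each $p$ is an affine subspace with direction $\ker(\delta_H)_p=\g(H)^{(1)}_p$, so the model bundle is $\g(H)^{(1)}$.

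Next I would prove that $(\mathrm{Prol}(P,H),H^{(1)})$ is a Pfaffian fibration, following the blueprint of Theorem \ref{thm:partial_prolongation} but with $\mathrm{Prol}(P,H)$ in place of $J^1_HP$. Restricting the exact sequence \eqref{eq:res_sec} along $\mathrm{Prol}(P,H)\hookrightarrow J^1_HP$ and using that the model is now $\g(H)^{(1)}$, I obtain
$$0\to \mathrm{pr}^*\g(H)^{(1)}\to T^\pi\mathrm{Prol}(P,H)\xrightarrow{d\mathrm{pr}} \mathrm{pr}^*(T^\pi P)\to 0 .$$
As in Theorem \ref{thm:partial_prolongation}, $\te_{\mathrm{can}}$ restricted to the $\pi$-vertical bundle is exactly $d\mathrm{pr}$, so the sequence shows that $\te_{\mathrm{can}}$ is pointwise surjective on $T\mathrm{Prol}(P,H)$ and that the symbol space is
$$\g(H^{(1)})=\ker\bigl(d\mathrm{pr}\colon T^\pi\mathrm{Prol}(P,H)\to \mathrm{pr}^*(T^\pi P)\bigr)=\mathrm{pr}^*\g(H)^{(1)}\subset\Hom(\pi^*TM,\mathrm{pr}^*\g(H)) .$$
The $\pi$-transversality of $H^{(1)}$ then follows from the identical rank count as in Theorem \ref{thm:partial_prolongation}, and $\pi$-involutivity is again immediate, since $\g(H^{(1)})=\mathcal{C}\cap T^\pi J^1P\cap T\mathrm{Prol}(P,H)$ is the intersection of the Frobenius-involutive distribution $\mathcal{C}\cap T^\pi J^1P$ with the tangent bundle of the submanifold $\mathrm{Prol}(P,H)$.

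Finally I would verify the normalised-prolongation claim and its maximality. The symbol computation above already gives $\g(H^{(1)})=\ker(d\mathrm{pr})$, which is precisely the normalisation condition \eqref{eq:normalised}; moreover $\mathrm{pr}$ is a surjective submersion (the affine bundle projection) and, since $\mathrm{Prol}(P,H)\subset J^1_HP$, the distribution $H^{(1)}=\mathcal{C}\cap T\mathrm{Prol}(P,H)$ is contained in the partial-prolongation distribution $\mathcal{C}\cap TJ^1_HP$, which maps into $H$ under $d\mathrm{pr}$ by Theorem \ref{thm:partial_prolongation}; hence $\mathrm{pr}$ is a Pfaffian morphism. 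That it is a prolongation is exactly Proposition \ref{criterion_abstract_prolongation}: for $p'=(p,\zeta)\in\mathrm{Prol}(P,H)$ the image $d\mathrm{pr}(H^{(1)}_{p'})=\zeta(T_{\pi(p)}M)$ is an integral element precisely because $\zeta^*\kappa_H=0$. The same criterion gives maximality: if $P'\subset J^1P$ carries the restricted Cartan distribution and makes $\mathrm{pr}$ a normalised prolongation, then the Pfaffian-morphism property forces $P'\subset J^1_HP$ by Theorem \ref{thm:partial_prolongation}, while Proposition \ref{criterion_abstract_prolongation} forces every point of $P'$ to be an integral element, i.e. $P'\subset\mathrm{Prol}(P,H)$.

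The bulk of the argument is bookkeeping built on the preparatory results; I expect the only real subtlety to be the smoothness of $\mathrm{Prol}(P,H)$, which is why the equivalence with the constant-rank hypothesis on $\g(H)^{(1)}$ (through Lemma \ref{lemma:smooth}) must be secured first: without it, the exact sequence and rank counts underlying the second step would not even be available, and the ``affine subbundle'' language in the later claims would be meaningless.
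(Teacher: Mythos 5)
Your proposal is correct and follows essentially the same route as the paper's proof: the equivalence of (1) and (2) via Lemma \ref{lemma:smooth} and the torsion discussion, the Pfaffian structure and the identification $\g(H^{(1)})=\mathrm{pr}^*\g(H)^{(1)}$ by restricting the exact sequence from Theorem \ref{thm:partial_prolongation} together with the affine-model description, and normalisation plus maximality from the integral-element characterisation. The only cosmetic difference is that you channel the last two points through Proposition \ref{criterion_abstract_prolongation}, whereas the paper verifies $\mathrm{pr}^*\kappa_H=0$ and the maximality inclusion by the equivalent direct computation with conditions \eqref{eq:vert_2} and \eqref{eq:curv_2}.
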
 

\begin{proof} From Lemma \ref{lemma:smooth} and the discussion thereafter we know that the first two items are equivalent. Checking that $H^{(1)}$ as in \eqref{eq:classical_distribution} is a Pfaffian distribution is completely analogous to the proof given for the partial prolongation (see Theorem \ref{thm:partial_prolongation}). 

Let us prove that $\ker(d\mathrm{pr})$ restricted to the vertical tangent of the classical prolongation $T^\pi\mathrm{Prol}(P,H)$ coincides with $\g(H)^{(1)}$. We know that $\g(H)^{(1)}\subset \mathrm{Hom}(\pi^*TM,\g(H))$ is the vector bundle that models the affine bundle $\mathrm{pr}:\mathrm{Prol}(P,H)\to P$, and hence it can be computed as the kernel of
$$d\mathrm{pr}:T^\pi\mathrm{Prol}(P,H)\to T^\pi P$$
(see sequences \eqref{eq:res_sec} and \eqref{eq:ex_sec}). On the other hand, $$\g(H^{(1)})=\ker(d\mathrm{pr}:T^\pi\mathrm{Prol}(P,H)\to T^\pi P)$$
by the very definition of $H^{(1)}$ as the kernel of the Cartan form $\theta_{\mathrm{can}}$ when restricted to $\mathrm{Prol}(P,H)$. In conclusion, $\g(H^{(1)})=\mathrm{pr}^*\g(H)^{(1)}$.

To prove that $\mathrm{pr}:(\mathrm{Prol}(P,H),H^{(1)})\to (P,H)$ is a normalised prolongation, note that $\mathrm{Prol}(P,H)$ is a subbundle of $J^1_HP$ and recall from Theorem \ref{thm:partial_prolongation} that the projection from $(J^1_H P,H^{(1)})$ to $(P,H)$ is a Pfaffian morphism. The only thing left to see is that $\mathrm{pr}^*\kappa_H=0$, which holds by construction of $\mathrm{Prol}(P,H)$ (see the discussion previous to the Definition \ref{def:classical_prolongation}).

Last, if $P' \subset J^1 P$ is another normalised prolongation over $(P,H)$, together with $H' := \mathcal{C} \cap T P'$, then $P' \subset J^1_H P$ by Theorem \ref{thm:partial_prolongation}. Moreover, since $(P',H')$ is a Pfaffian fibration, for any $j^1_x \be \in P'$ and $u_1, u_2 \in T_x M$, there exist $v_1, v_2 \in H'_{j^1_x \be}$ such that $d \pi (v_i) = u_i$. In particular, $v_i \in \mathcal{C}$, so that $d_x \be (d \pi (v_i)) = d \mathrm{pr} (v_i)$; we conclude therefore that
$$ (d_x \be)^* \kappa_H (u_1, u_2) = \kappa_H ( d_x \be (d \pi (v_1)), d_x \be (d \pi (v_2)) ) = \kappa_H (d \mathrm{pr}(v_1), d \mathrm{pr}(v_2) ) = 0 \quad \forall u_1, u_2 \in T_x M,$$
where the last equality holds by condition \eqref{eq:curv_2}. This implies that $j^1_x \be \in \mathrm{Prol} (P,H)$, i.e.\ $P' \subset \mathrm{Prol} (P,H)$.
\end{proof}

\begin{observation}\label{bijection_solutions_prolongations} A Remark analogous to \ref{rmk:solutions} goes here. More precisely, whenever $\mathrm{pr}:\mathrm{Prol}(P,H)\to P$ is a smooth bundle map, there is 1-1 correspondence between holonomic sections
$$\Gamma(\mathrm{Prol}(P,H),H^{(1)})\to \Gamma(P,H),\quad \xi\mapsto \mathrm{pr}^*(\xi),$$
with inverse $\Gamma(P,H)\ni\beta\mapsto j^1\beta$.

To check this, recall from Remark \ref{rmk:solutions} that $j^1\beta\in\Gamma(J^1_HP,H^{(1)})$. As $\beta$ is tangent to $H$ and
$$[\smash{\widetilde{d\beta(X)}},\smash{\widetilde{d\beta(Y)}}]=d\beta([X,Y])\subset H|_{\beta(M)} \quad \text{ for } X,Y\in\X(M)$$
(where the tildes indicate  $\pi$-projectable extensions of the vectors), then $(d\beta)^*\kappa_H(X,Y)=d\beta([X,Y])\mod H =0.$ This implies that $j^1\beta$ is a section of $\mathrm{Prol}(P,H)$.
\end{observation}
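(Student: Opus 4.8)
The plan is to exhibit the two stated assignments, check that each is well defined and lands among holonomic sections, and then verify that they are mutually inverse. For the forward map, recall that under the smoothness hypothesis Theorem \ref{thm:classical_prolongation} guarantees that $\mathrm{pr}:(\mathrm{Prol}(P,H),H^{(1)})\to (P,H)$ is a (normalised) Pfaffian morphism. By property 1 of Proposition \ref{prop:curvature_commute} (equivalently, Remark \ref{rmk:morphism_distribution}), a Pfaffian morphism sends holonomic sections to holonomic sections; hence for every $\xi\in\Gamma(\mathrm{Prol}(P,H),H^{(1)})$ the composition $\mathrm{pr}\circ\xi$ is a holonomic section of $(P,H)$, so $\xi\mapsto \mathrm{pr}^*(\xi)$ is well defined into $\Gamma(P,H)$.

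For the inverse map, let $\beta\in\Gamma(P,H)$. By Remark \ref{rmk:solutions} the jet prolongation $j^1\beta$ is already a holonomic section of the partial prolongation, i.e.\ $j^1\beta\in\Gamma(J^1_HP,H^{(1)})$; in particular it takes values in $J^1_HP$ and is tangent to the Cartan distribution $\mathcal{C}$. The only thing to upgrade is that $j^1_x\beta$ actually lies in the classical prolongation $\mathrm{Prol}(P,H)$, i.e.\ that it is an \emph{integral} element (Definition \ref{def:integral_elements}), which amounts to the vanishing of $(d_x\beta)^*\kappa_H$. This is the crux: since $\beta$ is tangent to $H$, for $X,Y\in\X(M)$ the vectors $d\beta(X),d\beta(Y)$ lie in $H\mid_{\beta(M)}$, and choosing $\pi$-projectable extensions one computes
$$\kappa_H(d\beta(X),d\beta(Y))=[\widetilde{d\beta(X)},\widetilde{d\beta(Y)}]\mod H=d\beta([X,Y])\mod H=0,$$
the last equality again because $\beta$ is holonomic (so $d\beta([X,Y])\in H$). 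Thus $j^1_x\beta\in\mathrm{Prol}(P,H)$ for every $x$, and since $H^{(1)}=\mathcal{C}\cap T\mathrm{Prol}(P,H)$ by \eqref{eq:classical_distribution}, the tangency of $j^1\beta$ to $\mathcal{C}$ yields $j^1\beta\in\Gamma(\mathrm{Prol}(P,H),H^{(1)})$.

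It remains to check that the two maps are mutually inverse. One composition is immediate from the definition of jets: $\mathrm{pr}\circ j^1\beta=\beta$. For the other, let $\xi\in\Gamma(\mathrm{Prol}(P,H),H^{(1)})$ and set $\beta:=\mathrm{pr}\circ\xi$. Since $H^{(1)}\subset\mathcal{C}$, the section $\xi$ of $J^1P\to M$ is tangent to the Cartan distribution, so by Lemma \ref{lemma:holonomic} it is holonomic, i.e.\ $\xi=j^1\gamma$ for some $\gamma\in\Gamma(P)$; applying $\mathrm{pr}$ and using $\mathrm{pr}\circ j^1\gamma=\gamma$ gives $\gamma=\mathrm{pr}\circ\xi=\beta$, whence $\xi=j^1\beta$. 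This closes the loop and establishes the bijection.

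I expect the main obstacle to be the well-definedness of the inverse map, specifically the curvature computation showing that $j^1\beta$ lands in $\mathrm{Prol}(P,H)$ rather than merely in the partial prolongation $J^1_HP$; the care here is in computing the bracket via $\pi$-projectable extensions so that $\kappa_H$ is evaluated intrinsically. Everything else reduces to Remark \ref{rmk:solutions}, the Pfaffian-morphism property of $\mathrm{pr}$ from Theorem \ref{thm:classical_prolongation}, and the characterisation of holonomic sections of $J^1P$ via the Cartan distribution (Lemma \ref{lemma:holonomic}).
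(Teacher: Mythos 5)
Your proof is correct and follows essentially the same route as the paper: the key step in both is the curvature computation with $\pi$-projectable extensions showing $(d\beta)^*\kappa_H=0$, so that $j^1\beta$ upgrades from a holonomic section of $(J^1_HP,H^{(1)})$ (Remark \ref{rmk:solutions}) to one of $(\mathrm{Prol}(P,H),H^{(1)})$. Your explicit verification that the two maps are mutually inverse via Lemma \ref{lemma:holonomic} merely spells out what the paper delegates to Remark \ref{rmk:solutions}, so it is an elaboration rather than a different argument.
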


Again, the classical prolongation can be thought as ``universal'' among prolongations. Let us assume that $\mathrm{pr}:\mathrm{Prol}(P,H)\to P$ is a (smooth) bundle map.

\begin{proposition}\label{prop:classical_universal} Any Pfaffian prolongation $\phi:(P',H')\to (P,H)$ factors through a unique bundle map $\varphi:P'\to \mathrm{Prol}(P,H)$ over $P$, so that 
\begin{equation}\label{eq:distributions}
d\varphi(H')\subset H^{(1)},\text{ and  }\ [d\mathrm{pr}]\circ \varphi^*\kappa_{H^{(1)}}=[d\phi]\circ \kappa_{H'}=0,
\end{equation}
where $[d\mathrm{pr}]:\mathcal{N}_{H^{(1)}}\to \mathrm{pr}^*\mathcal{N}_H,\ [u]\mapsto [d\mathrm{pr}(u)]$, and $[d\phi]:\mathcal{N}_{H'}\to \phi^*\mathcal{N}_H,\ [u]\mapsto [d\phi(u)]$, are the induced maps on the normal bundles.
\end{proposition}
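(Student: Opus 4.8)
The plan is to reduce the statement entirely to the universal property of the partial prolongation already established in Proposition \ref{prop:partial_universal}, and then to observe that the extra curvature condition built into the definition of a Pfaffian prolongation is precisely what forces the resulting factoring map to take values inside $\mathrm{Prol}(P,H)$. The only genuinely new input is a one-line curvature computation; everything else is inherited.

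First I would note that a Pfaffian prolongation $\phi:(P',H')\to (P,H)$ is in particular a Pfaffian morphism satisfying $\g(H')\subset\ker(d\phi)$, this being the first half of condition \eqref{eq:cond_prol}. Hence Proposition \ref{prop:partial_universal} applies verbatim and yields a unique bundle map $\varphi:P'\to J^1_HP$ over $P$ (so that $\mathrm{pr}\circ\varphi=\phi$) with $d\varphi(H')\subset H^{(1)}$ and $[d\mathrm{pr}]\circ\varphi^*\kappa_{H^{(1)}}=[d\phi]\circ\kappa_{H'}$. Recall from the proof of that proposition that $\varphi(p)$ is the splitting $X\mapsto d\phi(v)$, where $v\in H'_p$ is any vector with $d\pi'(v)=X$. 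To locate the image of $\varphi$, I would check, via Definition \ref{def:classical_prolongation}, that $\zeta^*\kappa_H=0$ for $\zeta=\varphi(p)$: for $X,Y\in T_{\pi'(p)}M$ one has $\zeta(X)=d\phi(v_X)$ and $\zeta(Y)=d\phi(v_Y)$ with $v_X,v_Y\in H'_p$, so that
$$\kappa_H(\zeta(X),\zeta(Y))=\kappa_H(d\phi(v_X),d\phi(v_Y))=0$$
by the second half of \eqref{eq:cond_prol}. Thus $\mathrm{Im}(\varphi)\subset\mathrm{Prol}(P,H)$. Since we are assuming $\mathrm{pr}:\mathrm{Prol}(P,H)\to P$ is a smooth bundle map, $\mathrm{Prol}(P,H)$ is an embedded submanifold of $J^1_HP$, and a smooth map whose image lies in it is automatically smooth into it; hence $\varphi$ is a smooth bundle map $P'\to\mathrm{Prol}(P,H)$ over $P$.

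Finally I would verify the two displayed properties \eqref{eq:distributions} and uniqueness. The inclusion $d\varphi(H')\subset H^{(1)}$, now with $H^{(1)}=\mathcal{C}\cap T\mathrm{Prol}(P,H)$ as in \eqref{eq:classical_distribution}, follows because $d\varphi(H')$ lies both in $\mathcal{C}\cap TJ^1_HP$ (by Proposition \ref{prop:partial_universal}) and in $T\mathrm{Prol}(P,H)$; this matching of the two \emph{a priori} distinct meanings of $H^{(1)}$ on $J^1_HP$ and on $\mathrm{Prol}(P,H)$ is the only point requiring mild care. For the curvature identity, $[d\phi]\circ\kappa_{H'}=0$ is exactly the prolongation condition rewritten (the remark following \eqref{eq:cond_prol}), so combining it with the equality from Proposition \ref{prop:partial_universal} gives $[d\mathrm{pr}]\circ\varphi^*\kappa_{H^{(1)}}=[d\phi]\circ\kappa_{H'}=0$. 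Uniqueness is inherited directly: any bundle map $P'\to\mathrm{Prol}(P,H)$ over $P$ with $d\varphi(H')\subset H^{(1)}$ becomes, after composition with the inclusion $\mathrm{Prol}(P,H)\hookrightarrow J^1_HP$, a factoring map of the type characterised uniquely in Proposition \ref{prop:partial_universal}. I do not expect a serious obstacle here, precisely because the partial prolongation already carries the universal property; the substance of the argument is the short curvature computation pinning the image inside the classical prolongation.
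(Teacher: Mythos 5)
Your proof is correct and takes essentially the same route as the paper's: invoke the map $\varphi$ furnished by Proposition \ref{prop:partial_universal}, use the curvature half of condition \eqref{eq:cond_prol} to show $\varphi$ takes values in $\mathrm{Prol}(P,H)$, and inherit the conditions \eqref{eq:distributions} and uniqueness from the partial prolongation. The only cosmetic differences are that you verify $\varphi(p)^*\kappa_H=0$ by direct evaluation where the paper cites the integral-element criterion of Proposition \ref{criterion_abstract_prolongation}, and that your submanifold argument uses the standing smoothness assumption on $\mathrm{Prol}(P,H)$, which the paper deliberately avoids so that its proof also covers the non-smooth setting of Remark \ref{rmk:non_smooth}.
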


\begin{observation}\label{rmk:non_smooth}
Actually the above proposition can be stated in a slightly greater generality. Even if $\mathrm{Prol}(P,H)$ is not smooth, any prolongation factors through the map $\varphi:P'\to J^1_HP$ given in Proposition \ref{prop:partial_universal}. We can slightly modify the above statement by saying that this map takes values in the subset $\mathrm{Prol}(P,H)$, and that the relations with the distributions, and the curvatures hold when we take $H^{(1)}$ as the Pfaffian distribution \eqref{eq:partial_distribution} of $J^1_HP.$

As a consequence we obtain that when $(P,H)$ admits a prolongation then the projection $\mathrm{pr}:\mathrm{Prol}(P,H)\to P$ is surjective. Accordingly, we will give the proof of the above proposition without the smoothness assumption.
\end{observation}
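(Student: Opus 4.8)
The plan is to bootstrap from the universality of the partial prolongation (Proposition \ref{prop:partial_universal}), so that almost no new work is needed beyond one integral-element check. A Pfaffian prolongation $\phi:(P',H')\to(P,H)$ is in particular a Pfaffian morphism satisfying $\g(H')\subset\ker(d\phi)$, which is the first condition in \eqref{eq:cond_prol}; hence Proposition \ref{prop:partial_universal} already produces a unique bundle map $\varphi:P'\to J^1_HP$ over $P$ with $d\varphi(H')\subset H^{(1)}$, where on the right $H^{(1)}$ denotes the partial-prolongation distribution \eqref{eq:partial_distribution}. Concretely, $\varphi(p)$ is the splitting $X\mapsto d\phi(v)$, with $v\in H'_p$ any vector satisfying $d\pi'(v)=X$ (well-defined precisely because $\g(H')\subset\ker(d\phi)$). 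It then remains only to upgrade $\varphi$ so that it lands in $\mathrm{Prol}(P,H)$, and to verify the two curvature identities.

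First I would check that $\varphi$ takes values in the classical prolongation, i.e.\ that each $\varphi(p)$ is an \emph{integral} element and not merely a partial one. Unwinding the defining condition $\zeta^*\kappa_H=0$ of $\mathrm{Prol}(P,H)$ (Definition \ref{def:classical_prolongation}), for $X,Y\in T_{\pi'(p)}M$ I would pick $v,w\in H'_p$ with $d\pi'(v)=X$, $d\pi'(w)=Y$ and compute
\[
\varphi(p)^*\kappa_H(X,Y)=\kappa_H\big(d\phi(v),d\phi(w)\big)=0,
\]
the last equality being exactly the second condition in \eqref{eq:cond_prol}. Thus $\varphi(p)\in\mathrm{Prol}(P,H)$ for every $p\in P'$, so $\varphi$ factors through $\mathrm{Prol}(P,H)\subset J^1_HP$. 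Uniqueness is inherited directly from the uniqueness clause of Proposition \ref{prop:partial_universal}, since any candidate factorisation over $P$ is forced to take the same pointwise form.

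For the curvature relations \eqref{eq:distributions} I would invoke the two instances of the Pfaffian-morphism curvature identity from Remark \ref{rmk:morphism_distribution}, namely $\phi^*\kappa_H=[d\phi]\circ\kappa_{H'}$ and $\mathrm{pr}^*\kappa_H=[d\mathrm{pr}]\circ\kappa_{H^{(1)}}$. The prolongation condition \eqref{eq:cond_prol} says precisely that $\phi^*\kappa_H$ vanishes on $H'$, whence $[d\phi]\circ\kappa_{H'}=0$; and since $\mathrm{Prol}(P,H)$ consists of integral elements, $\mathrm{pr}^*\kappa_H=0$ there, whence $[d\mathrm{pr}]\circ\kappa_{H^{(1)}}=0$. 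Applying $\varphi^*$ to the latter and using $\mathrm{pr}\circ\varphi=\phi$ gives $[d\mathrm{pr}]\circ\varphi^*\kappa_{H^{(1)}}=0$, so both sides of \eqref{eq:distributions} vanish, as required.

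Finally, following Remark \ref{rmk:non_smooth}, I would run the whole argument without assuming $\mathrm{Prol}(P,H)$ smooth: the map $\varphi:P'\to J^1_HP$ of Proposition \ref{prop:partial_universal} exists regardless of smoothness, and the integral-element computation above shows \emph{set-theoretically} that its image lies in the subset $\mathrm{Prol}(P,H)$, with $H^{(1)}$ read throughout as the partial-prolongation distribution restricted to this subset. The main obstacle I anticipate is bookkeeping rather than conceptual: keeping the two meanings of $H^{(1)}$ (partial \eqref{eq:partial_distribution} versus classical \eqref{eq:classical_distribution}) consistent, and making sure that the well-definedness of $\varphi$ rests only on $\g(H')\subset\ker(d\phi)$ and never silently on smoothness of the target. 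As a byproduct, the existence of any prolongation of $(P,H)$ forces $\mathrm{pr}:\mathrm{Prol}(P,H)\to P$ to be surjective, since $\mathrm{pr}\circ\varphi=\phi$ and $\phi$ is a surjective submersion.
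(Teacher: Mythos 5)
Your proof is correct and follows essentially the same route as the paper: you factor through the universal map $\varphi:P'\to J^1_HP$ of Proposition \ref{prop:partial_universal}, verify pointwise that $\varphi(p)$ lands in $\mathrm{Prol}(P,H)$ (the paper cites Proposition \ref{criterion_abstract_prolongation}, while you unwind the same curvature computation $\kappa_H(d\phi(v),d\phi(w))=0$ directly), and deduce the curvature identities by pulling back $\mathrm{pr}^*\kappa_H=[d\mathrm{pr}]\circ\kappa_{H^{(1)}}$ along $\varphi$ together with $\phi^*\kappa_H=[d\phi]\circ\kappa_{H'}=0$ --- all set-theoretically, so smoothness of $\mathrm{Prol}(P,H)$ is never used. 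Your surjectivity byproduct via $\mathrm{pr}\circ\varphi=\phi$ is exactly the paper's conclusion as well.
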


\begin{proof} We let $\varphi: P'\to J^1_HP$ defined as in the proof of Proposition \ref{prop:partial_universal}, and we show that it takes values in $\mathrm{Prol}(P,H)$. A closer look to $\varphi(p):T_{\pi'(p)}M\to H_p$ shows that its image $\varphi(p)(T_{\pi'(p)}M)$ coincides with $d\phi(H'_{p})$, because $d\phi(\g(H'))=0$. By Proposition \ref{criterion_abstract_prolongation}$, d\phi(H'_{p})$ is an integral element, hence $\varphi(p)$ belongs to $\mathrm{Prol}(P,H).$

The left hand side condition \eqref{eq:distributions} for the distributions is immediately implied by the same condition in Proposition \ref{prop:partial_universal} for the partial prolongation, and the right hand side condition \eqref{eq:distributions} also follows from the commutativity of the curvatures in the same proposition taking into account that on $J^1_HP$, $\mathrm{pr}^*\kappa_H=[d\mathrm{pr}]\circ \kappa_{H^{(1)}}$ is zero at points of $\mathrm{Prol}(P,H)$, and that $\phi$ satisfies $\phi^*\kappa_{H}=[d\phi]\circ \kappa_{H'}=0.$
\end{proof}

Again, the motivating and inspiring example comes from the classical definition \eqref{eq:PDE_prolongation} of prolongation of a PDE $P\subset J^kR$; the next result states that it coincides with our definition of classical prolongation.

\begin{proposition}\label{prop:PDE_prolongation}
Let $P\subset J^kR$ be a $PDE$, so that $(P,H)$ is a Pfaffian fibration by Proposition \ref{PDE_is_Pfaffian_bundlle}, for $H=\mathcal{C}\cap TP$. Then,
$$\mathrm{Prol}(P,H)=P^{(1)}:=J^1P\cap J^{k+1}R,\quad \g(H)^{(1)} = \g^{(1)},$$
where $\g^{(1)}$ is as in Theorem \ref{Goldschmidt_criterion_PDE}. Moreover, if $P$ is integrable up to order $k+1$, then $\mathrm{pr}:(P^{(1)},H^{(1)})\to (P,H)$ is a normalised prolongation with $H^{(1)}=\mathcal{C}\cap TP^{(1)},$ and $\mathrm{pr}:P^{(1)}\to P$ is an affine subbundle modelled on $\g^{(1)}$.
\end{proposition}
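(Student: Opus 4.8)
The plan is to compare the two notions of prolongation \emph{pointwise} inside $J^1 P$ and then hand the result to Theorem \ref{thm:classical_prolongation}. Throughout I represent a point of $J^1 P$ as a pair $(p,\zeta)$ with $p\in P$ and $\zeta : T_{\pi(p)}M \to T_p P$ a splitting of $d\pi$, as in \eqref{alternative_description_first_jet}, and I view both $J^1 P$ and $J^{k+1}R$ as subsets of $J^1(J^kR)$ via $P\subset J^kR$. The single classical input I would rely on is the standard description of the holonomic jet space as the transversal integral elements of the Cartan distribution: $(p,\zeta)\in J^{k+1}R$ if and only if $\mathrm{Im}(\zeta)\subset\mathcal{C}_p$ and $\zeta^*\kappa_{\mathcal{C}}=0$, where $\mathcal{C}=\ker(\te_{\mathrm{can}})$ on $J^kR$ (see \cite{Kra86,Olv86,Sau89}; for $k=1$ this is the computation $\kappa_{\mathcal{C}}(\zeta(\partial_{x^i}),\zeta(\partial_{x^j}))=(c^a_{ij}-c^a_{ji})[\partial_{u^a}]$ for $\zeta(\partial_{x^i})=D_i+c^a_{ij}\partial_{u^a_j}$).

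For the first equality I would show $\mathrm{Prol}(P,H)=P^{(1)}$ by relating $\kappa_H$ to $\kappa_{\mathcal{C}}$. Since $H=\mathcal{C}\cap TP$, the inclusions induce a natural \emph{injection} $T_pP/H_p\hookrightarrow T_p(J^kR)/\mathcal{C}_p$, and evaluating both curvatures on vectors of $H_p$ using the same extensions tangent to $H$ shows that $\kappa_{\mathcal{C}}(u,v)$ is the image of $\kappa_H(u,v)$ under this injection; hence $\zeta^*\kappa_H=0\iff\zeta^*\kappa_{\mathcal{C}}=0$ for $(p,\zeta)\in J^1_H P$. If $(p,\zeta)\in P^{(1)}$, then $(p,\zeta)$ is the $1$-jet of a holonomic section $\alpha=j^k\beta$, so $\mathrm{Im}(\zeta)\subset\mathcal{C}_p\cap T_pP=H_p$ by Lemma \ref{lemma:holonomic}, and $\zeta^*\kappa_{\mathcal{C}}=0$ by the tensorial computation $\kappa_{\mathcal{C}}(d\alpha(X),d\alpha(Y))=d\alpha([X,Y])\bmod\mathcal{C}=0$ (the argument of Remark \ref{bijection_solutions_prolongations} applied to $\mathcal{C}$ on $J^kR$); thus $(p,\zeta)\in\mathrm{Prol}(P,H)$. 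Conversely a point of $\mathrm{Prol}(P,H)$ satisfies $\mathrm{Im}(\zeta)\subset H_p\subset\mathcal{C}_p$ and $\zeta^*\kappa_H=0$, hence $\zeta^*\kappa_{\mathcal{C}}=0$; by the classical input $(p,\zeta)\in J^{k+1}R$, and since $\mathrm{Im}(\zeta)\subset T_pP$ already, $(p,\zeta)\in J^1P\cap J^{k+1}R=P^{(1)}$.

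For the symbol equality $\g(H)^{(1)}=\g^{(1)}$ I would compute the symbol map $\partial_H$ of \eqref{eq:symbol_map} on $\g=\g(H)=T^\pi P\cap(S^kT^*M\otimes T^\pi R)$ (Proposition \ref{PDE_is_Pfaffian_bundlle}). Using $\partial_H(v)(Y)=\kappa_H(v,\bar Y)$ and the injection above, $\partial_H$ is identified with the restriction of the Spencer contraction $\delta:S^kT^*M\otimes T^\pi R\to T^*M\otimes S^{k-1}T^*M\otimes T^\pi R$ tensored with $\mathrm{id}_{T^\pi R}$ (in coordinates, $\kappa_{\mathcal{C}}(\partial_{u^a_\beta},D_j)=[\partial_{u^a_{\beta-j}}]$). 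Consequently the defining condition of $\g(H)^{(1)}=\g^{(1)}(\partial_H)$ in \eqref{eq:prolongation_tableau}, namely $\iota_Y(\eta(X))=\iota_X(\eta(Y))$, cuts out exactly $\mathrm{Hom}(TM,\g)\cap(S^{k+1}T^*M\otimes T^\pi R)=\g^{(1)}$, as defined in Theorem \ref{Goldschmidt_criterion_PDE}.

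Finally, under the hypothesis that $P$ is integrable up to order $k+1$, the space $P^{(1)}=\mathrm{Prol}(P,H)$ is a smooth affine subbundle of $J^1P$ and $\mathrm{pr}:P^{(1)}\to P$ is a surjective submersion, which is exactly condition (1) (equivalently (2)) of Theorem \ref{thm:classical_prolongation}. Applying that theorem yields at once that $(P^{(1)},H^{(1)})$ with $H^{(1)}=\mathcal{C}\cap TP^{(1)}$ is a Pfaffian fibration, that $\mathrm{pr}$ is a normalised prolongation, and that it is an affine bundle modelled on $\g(H)^{(1)}=\g^{(1)}$. I expect the main obstacle to be the classical input on $J^{k+1}R$ together with the coordinate identification $\partial_H=\delta$: both rest on the explicit structure of the Cartan distribution, and care is needed to match the coefficient bundles $\mathcal{N}_H$ and $T^\pi(J^{k-1}R)$ under the injection $T_pP/H_p\hookrightarrow T_p(J^kR)/\mathcal{C}_p$.
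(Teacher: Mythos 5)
Your proof is correct and follows essentially the same route as the paper's: both identify $P^{(1)}$ with $\mathrm{Prol}(P,H)$ via the classical fact that $J^{k+1}R\subset J^1(J^kR)$ consists of the splittings tangent to $\mathcal{C}$ with vanishing $\kappa_{\mathcal{C}}$, both identify $\partial_H$ with the restriction of the contraction $\eta\mapsto\iota_X\eta$ on $S^kT^*M\otimes T^\pi R$ to get $\g(H)^{(1)}=\g^{(1)}$, and both then invoke Theorem \ref{thm:classical_prolongation} for the normalised prolongation and the affine bundle structure. Your explicit comparison of $\kappa_H$ with $\kappa_{\mathcal{C}}$ through the injection $T_pP/H_p\hookrightarrow T_p(J^kR)/\mathcal{C}_p$ merely spells out a step the paper compresses into ``putting all these conditions together.''
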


\begin{proof}
We first recall that $J^{k+1}R$ sits inside $J^1(J^kR)$ as the splitting $\sigma:T_xM\to T_qJ^kR$ of $d\pi$ tangent to the Cartan distribution $\mathcal{C}\subset T(J^kR)$. It follows that (it can be checked in local coordinates) that  
$$\kappa_\mathcal{C}(\sigma(X),\sigma(Y))=0, \quad \text{ for all }X,Y\in T_xM.$$
Since $P^{(1)}$ is the intersection of $J^1(J^kR)$ with $J^1P$, then the splittings $\sigma$ that belong to $P^{(1)}$ are the ones satisfying the previous conditions plus the fact that its image $\sigma(T_xM)$ lies in $T_qP$. Putting all these conditions together, we see that $\sigma$ is an element of $P^{(1)}$ if and only if it belongs to the classical prolongation $\mathrm{Prol}(P,H)$.

To conclude, we observe that the definition of integrability up to order $k+1$ is saying precisely that $\mathrm{pr}:P^{(1)}\to P$ is a bundle map, hence, by Theorem \ref{thm:classical_prolongation}, $\mathrm{pr}$ is a normalised prolongation. Moreover, in this case, we have the inclusion $\g(H)\subset \ker(d\mathrm{pr}:T^\pi J^kR\to J^{k-1}R)\cong S^kT^*M\otimes T^\pi R$ (see the exact sequence \eqref{eq:jets}), and $\partial_H$ is precisely the restriction of
$$\partial_{\mathcal{C}}:S^kT^*M\otimes T^\pi R\to \mathrm{Hom}(TM,S^{k-1}T^*M\otimes T^\pi R), \ \eta\mapsto \partial_{\mathcal{C}}(\eta)(X)=\iota_X\eta.$$
Therefore, $\g(H)^{(1)}=\g^{(1)}$, and the rest follows from Theorem \ref{thm:classical_prolongation}.
\end{proof}

Coming back to Pfaffian fibrations using the language of forms we have the following remark:

\begin{observation}[\bf Classical prolongation for forms]\label{rk_classical_prol_for_forms}
Let us go back to the picture of Pfaffian fibrations $(P,\theta)$ in terms of 1-forms: all the definitions related to the partial and classical prolongation can be written directly in terms of $\te$. For example, instead of considering the distribution $H^{(1)}$ as in \eqref{eq:partial_distribution} and \eqref{eq:classical_distribution}, we look at the dual 1-form denoted by $\theta^{(1)}$, given by the restriction of the Cartan form $\te_{\mathrm{can}}$ on $J^1P$ to the partial or classical prolongation. Similarly, all the results go through in this setting with the appropriate modifications. For Theorems \ref{thm:partial_prolongation} and \ref{thm:classical_prolongation}, since the projection $\mathrm{pr}$ in both cases is a weak Pfaffian morphism, then the forms $\theta^{(1)}$ and $\theta$ are related by 
$$\mathrm{pr}^*\theta=\bar\theta\circ\theta^{(1)},$$
where $\bar\theta:\mathrm{pr}^*(T^\pi P)\to \mathrm{pr}^*\mathcal{N},\ v\mapsto \theta(v)$ is the vector bundle map between the coefficient bundle of $\theta^{(1)}$, and $\theta$.
In Propositions \ref{prop:partial_universal} and \ref{prop:classical_universal}, the condition for the distributions translate into
$$\varphi^*\theta^{(1)}=[d\phi]\circ\theta',$$ 
where $[d\phi]:\mathcal{N}'\to \varphi^*T^{\pi}P$ is the composition between the identification $T^{\pi'}P'/\g(H')$ with $\mathcal{N}'$ via $\theta'$ and the map $T^{\pi'}/\g(H') \to T^\pi P,\ [v]\mapsto [d\phi(v)]$. In the same Propositions, the relation between the curvatures becomes
$$\bar\theta\circ\varphi^* \kappa_{\theta^{(1)}}=\Phi\circ \kappa_{\theta'},$$
where $\Phi:\mathcal{N}'\to\phi^*\mathcal{N}$ is the vector bundle map between the coefficient bundles, associated to the Pfaffian morphism $\phi$ (see $\Phi$ in Definition \ref{def_pfaffian_morphism}). Of course, in Proposition \ref{prop:classical_universal} this last expression is equal to zero.
\end{observation}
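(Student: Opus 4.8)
The strategy is to transcribe each displayed form-level identity from the corresponding distribution-level statement already established, passing through the correspondence of Proposition \ref{correspondence_pfaffian_forms_distribution} and the dictionary of Remark \ref{rmk:morphism_distribution}. The one piece of genuine input is the explicit Cartan formula \eqref{eq:Cartan_form}, together with the bookkeeping of coefficient bundles: throughout I would use that $\theta^{(1)}$ is by construction the restriction of $\theta_{\mathrm{can}}$ to the (partial or classical) prolongation, so that $\ker(\theta^{(1)})=H^{(1)}$ as in \eqref{eq:partial_distribution} and \eqref{eq:classical_distribution}, and that its coefficient bundle is $\mathrm{pr}^*(T^\pi P)$, exactly as recorded in Remark \ref{rmk:normalised}. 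Under this identification the quotient map $[d\mathrm{pr}]$ of Remark \ref{rmk:morphism_distribution} becomes the map $\bar\theta$, and the coefficient map of $\phi$ becomes the $\Phi$ of Definition \ref{def_pfaffian_morphism}.

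First I would establish $\mathrm{pr}^*\theta=\bar\theta\circ\theta^{(1)}$ by a pointwise computation with \eqref{eq:Cartan_form}. For $u\in T_{j^1_x\beta}(J^1_H P)$ the formula reads $\theta^{(1)}(u)=d\mathrm{pr}(u)-d_x\beta(d\pi(u))$, and membership $j^1_x\beta\in J^1_H P$ forces $d_x\beta(T_xM)\subset H=\ker(\theta)$; hence applying $\theta$ annihilates the second summand and leaves $\theta(\theta^{(1)}(u))=\theta(d\mathrm{pr}(u))=(\mathrm{pr}^*\theta)(u)$, which is precisely $\bar\theta\circ\theta^{(1)}=\mathrm{pr}^*\theta$. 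This is nothing but equation \eqref{eq:forms_commute} for the weak Pfaffian morphism $\mathrm{pr}$, with $\bar\theta$ recognised as the induced coefficient map, and it settles the statement attached to Theorems \ref{thm:partial_prolongation} and \ref{thm:classical_prolongation}.

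Next I would verify $\varphi^*\theta^{(1)}=[d\phi]\circ\theta'$ by evaluating on $w\in T_pP'$. Using $\mathrm{pr}\circ\varphi=\phi$ and $\pi\circ\varphi=\pi'$ (as $\varphi$ is a bundle map over $M$), together with the explicit description of $\varphi(p)$ as the splitting $X\mapsto d\phi(v)$, $v\in H'_p$, $d\pi'(v)=X$, from the proof of Proposition \ref{prop:partial_universal}, the Cartan formula gives $\theta^{(1)}(d\varphi(w))=d\phi(w)-d\phi(\bar v)=d\phi(w-\bar v)$ for any $\bar v\in H'_p$ with $d\pi'(\bar v)=d\pi'(w)$. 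Since $w-\bar v\in T^{\pi'}P'$ and $\theta'(\bar v)=0$, one has $\theta'(w)=\theta'(w-\bar v)$, so $[w-\bar v]$ is the class representing $\theta'(w)$ under $T^{\pi'}P'/\g(H')\cong\mathcal{N}'$; applying the descended map $[v]\mapsto d\phi(v)$, well defined because $\g(H')\subset\ker(d\phi)$ (equation \eqref{eq:2} of Lemma \ref{lemma:normalised}), yields $d\phi(w-\bar v)$, which is exactly $([d\phi]\circ\theta')(w)$.

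Finally, the curvature relation $\bar\theta\circ\varphi^*\kappa_{\theta^{(1)}}=\Phi\circ\kappa_{\theta'}$ I would read off from the distribution identity $[d\mathrm{pr}]\circ\varphi^*\kappa_{H^{(1)}}=[d\phi]\circ\kappa_{H'}$ of Propositions \ref{prop:partial_universal} and \ref{prop:classical_universal}, transported through the normal-bundle-to-coefficient-bundle isomorphisms: since $\kappa_\theta$ is by definition $\kappa_H$ post-composed with $[u]\mapsto\theta(u)$ (see the discussion after \eqref{eq:curvature}), and since under the identifications $[d\mathrm{pr}]$ becomes $\bar\theta$ and $[d\phi]$ becomes $\Phi$, the identity transcribes verbatim; in the classical case of Proposition \ref{prop:classical_universal} the right-hand side vanishes because $\phi^*\kappa_\theta=\Phi\circ\kappa_{\theta'}=0$ for a prolongation. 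The only genuinely delicate point in the whole argument is this bookkeeping of coefficient-bundle identifications---keeping straight that the coefficient bundle of $\theta^{(1)}$ is $\mathrm{pr}^*(T^\pi P)$ and that $[d\mathrm{pr}]$ and $\bar\theta$ agree under it---after which each identity collapses either to a one-line use of \eqref{eq:Cartan_form} or to the corresponding distribution statement proved earlier.
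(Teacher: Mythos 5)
Your proposal is correct and takes essentially the same route as the paper: the remark is justified there precisely by transporting the distribution-level statements through the form--distribution dictionary (Proposition \ref{correspondence_pfaffian_forms_distribution}, Remarks \ref{rmk:morphism_distribution} and \ref{rmk:normalised}) together with the pointwise Cartan formula \eqref{eq:Cartan_form}, which is exactly what you do. Your pointwise verifications (e.g.\ that $\bar\theta\circ\theta^{(1)}=\mathrm{pr}^*\theta$ because $d_x\beta$ lands in $\ker(\theta)$, and that $\theta^{(1)}(d\varphi(w))=d\phi(w-\bar v)$ matches $[d\phi]\circ\theta'$ under the identification $T^{\pi'}P'/\g(H')\cong\mathcal{N}'$) simply make explicit the coefficient-bundle bookkeeping the paper leaves implicit, so there is nothing to fix.
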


\subsubsection*{Other results about prolongations}

There are some other nice consequences about the Pfaffian distributions and the prolongations involving the curvature and the prolongation of the symbol space; we list some of them.

\begin{corollary}\label{cor_prolongation_iff_kappa_zero}
Assume that $\g(H)^{(1)}$ has constant rank; then $(P,H)$ admits a Pfaffian prolongation if and only if the torsion $\tors$ vanishes.
\end{corollary}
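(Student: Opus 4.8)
The plan is to reduce the statement to two facts already in place: the smoothness criterion of Theorem \ref{thm:classical_prolongation}, and the identification (the ``simple exercise'' recorded just before that theorem) of the zero-set of $\tors$ with the image of $\mathrm{pr}: \mathrm{Prol}(P,H) \to P$. Under this identification, $\tors = 0$ means exactly that $\mathrm{pr}$ is surjective. Since $\g(H)^{(1)}$ is assumed to have constant rank, condition (2) of Theorem \ref{thm:classical_prolongation} collapses to precisely this surjectivity; so the real content of the corollary is to match ``$(P,H)$ admits a Pfaffian prolongation'' with ``$\mathrm{pr}: \mathrm{Prol}(P,H) \to P$ is surjective''.

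For the forward implication I would assume that $(P,H)$ admits some Pfaffian prolongation $\phi: (P',H') \to (P,H)$. By the universal property in the general (not necessarily smooth) form of Remark \ref{rmk:non_smooth}, building on Proposition \ref{prop:classical_universal}, this $\phi$ factors as $\phi = \mathrm{pr} \circ \varphi$ through a bundle map $\varphi: P' \to \mathrm{Prol}(P,H)$ over $P$. Since $\phi$ is a Pfaffian morphism it is in particular surjective, so for each $p \in P$ a preimage $p' \in P'$ yields $\varphi(p') \in \mathrm{Prol}(P,H)$ with $\mathrm{pr}(\varphi(p')) = \phi(p') = p$; hence $\mathrm{pr}: \mathrm{Prol}(P,H) \to P$ is surjective, i.e.\ $\tors = 0$. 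This is exactly the consequence already noted in Remark \ref{rmk:non_smooth}.

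For the converse I would assume $\tors = 0$, so that $\mathrm{pr}: \mathrm{Prol}(P,H) \to P$ is surjective. Together with the constant-rank hypothesis on $\g(H)^{(1)}$, this is condition (2) of Theorem \ref{thm:classical_prolongation}; that theorem then guarantees that $\mathrm{Prol}(P,H)$ is a smooth affine subbundle of $J^1P$ and that $(\mathrm{Prol}(P,H), H^{(1)})$, with $H^{(1)}$ as in \eqref{eq:classical_distribution}, is a Pfaffian fibration for which $\mathrm{pr}$ is a normalised Pfaffian prolongation of $(P,H)$. Thus $(P,H)$ admits a Pfaffian prolongation, closing the equivalence.

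The step I expect to require the most care is the forward direction, precisely because one cannot assume a priori that $\mathrm{Prol}(P,H)$ is smooth: here one must invoke Proposition \ref{prop:classical_universal} in the sharpened form of Remark \ref{rmk:non_smooth}, checking that $\varphi$ genuinely takes values in the integral-element locus $\mathrm{Prol}(P,H) \subset J^1_H P$ and that surjectivity of $\phi$ transfers to surjectivity of $\mathrm{pr}$ through the factorisation $\phi = \mathrm{pr} \circ \varphi$. The converse is then essentially a direct application of Theorem \ref{thm:classical_prolongation}.
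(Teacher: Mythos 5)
Your proposal is correct and follows the paper's own proof in every essential respect: the forward implication via the non-smooth universal property (Remark \ref{rmk:non_smooth}, building on Proposition \ref{prop:classical_universal}) to get surjectivity of $\mathrm{pr}:\mathrm{Prol}(P,H)\to P$ and hence $\tors=0$, and the converse as a direct application of Theorem \ref{thm:classical_prolongation}. The paper states this more tersely, but the ingredients and their order are identical, and your elaboration of the factorisation $\phi = \mathrm{pr}\circ\varphi$ fills in exactly the details the paper leaves implicit.
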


\begin{proof}
If $(P,H)$ admits a prolongation, then Remark \ref{rmk:non_smooth} says that the projection $\mathrm{pr}:\mathrm{Prol}(P,H)\to P$ is surjective, hence $\tors=0$ by part of Theorem \ref{thm:classical_prolongation}. The converse is Theorem \ref{thm:classical_prolongation}.
\end{proof}

\begin{corollary}\label{cor:3}
The Pfaffian distribution $H\subset TP$ is Frobenius-involutive if and only if $\mathrm{Prol}(P,H)$ coincides with $J^1_HP$ and the symbol map $\partial_H$ from equation \eqref{eq:symbol_map} vanishes. 
\end{corollary}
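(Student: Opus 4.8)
The plan is to prove the two implications separately, the forward one being essentially immediate and the reverse one carrying all the content. Throughout I use that the curvature map is $C^\infty(P)$-bilinear, so every claim reduces to pointwise linear algebra on $H_p \times H_p$, and that $\kappa_H \equiv 0$ is by definition (see the discussion around equation \eqref{eq:curvature}) equivalent to Frobenius-involutivity of $H$.

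For the forward direction, suppose $H$ is Frobenius-involutive. Then $\kappa_H(U,V) = [U,V] \bmod H = 0$ for all sections $U,V$ tangent to $H$, i.e.\ $\kappa_H \equiv 0$. Both conclusions follow at once: the symbol map $\partial_H(v)(Y) = \kappa_H(v,\bar Y)$ vanishes identically by equation \eqref{eq:symbol_map}, and for any $(p,\zeta) \in J^1_H P$ one has $\zeta^*\kappa_H(X,Y) = \kappa_H(\zeta(X),\zeta(Y)) = 0$ since $\zeta(X),\zeta(Y) \in H_p$. Hence $J^1_H P \subseteq \mathrm{Prol}(P,H)$, and as the reverse inclusion holds by Definition \ref{def:classical_prolongation}, the two sets coincide.

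For the reverse direction, the goal is to deduce $\kappa_H \equiv 0$ from the two hypotheses. I would organise the pointwise argument using the $\pi$-transversal decomposition, recalling that $d\pi$ restricts to a surjection $H_p \arr T_{\pi(p)}M$ with kernel $\g(H)_p$. First, $\kappa_H$ already vanishes on $\g(H)_p \times \g(H)_p$: this is built into the definition of a Pfaffian fibration, namely Frobenius-involutivity of the symbol space. Next, the hypothesis $\partial_H = 0$ upgrades this to the mixed case: for $u \in \g(H)_p$ and arbitrary $v \in H_p$, taking $Y = d\pi(v)$ and the lift $\bar Y = v$ in \eqref{eq:symbol_map} gives $\kappa_H(u,v) = \partial_H(u)(d\pi(v)) = 0$. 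Thus $\kappa_H$ vanishes whenever one of its arguments is vertical. It then remains to treat a pair $u,v \in H_p$ with $X := d\pi(u)$, $Y := d\pi(v)$ both nonzero, and here I would invoke $\mathrm{Prol}(P,H) = J^1_H P$: when $X,Y$ are linearly independent, extend $\{X,Y\}$ to a basis of $T_{\pi(p)}M$ and use surjectivity of $d\pi|_{H_p}$ to construct a splitting $\zeta: T_{\pi(p)}M \arr H_p$ of $d\pi$ with $\zeta(X)=u$, $\zeta(Y)=v$; then $(p,\zeta) \in J^1_H P = \mathrm{Prol}(P,H)$ forces $\kappa_H(u,v) = \zeta^*\kappa_H(X,Y) = 0$. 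When $X,Y$ are linearly dependent, some nontrivial combination $au+bv$ lies in $\g(H)_p$, and bilinearity together with antisymmetry and the already-established vanishing on vertical arguments reduces $\kappa_H(u,v)$ to zero. Combining the three cases gives $(\kappa_H)_p \equiv 0$ for every $p$, hence Frobenius-involutivity of $H$.

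The main obstacle I anticipate is the bookkeeping in the reverse direction: one must verify that each hypothesis is used exactly where needed (involutivity of $\g(H)$ for the purely vertical part, $\partial_H = 0$ for the mixed part, and $\mathrm{Prol}(P,H) = J^1_H P$ for the horizontal part), and the interpolation producing $\zeta$ with prescribed values requires care. In particular the degenerate subcase where $d\pi(u)$ and $d\pi(v)$ are linearly dependent cannot be handled by a single splitting separating $u$ and $v$, and one must instead fall back on the vanishing established for vertical arguments; isolating this subcase cleanly is the delicate point of the argument.
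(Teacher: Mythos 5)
Your proof is correct and follows essentially the same route as the paper: both directions hinge on the decomposition of $H_p$ via $\pi$-transversality, using the hypothesis $\mathrm{Prol}(P,H)=J^1_HP$ to kill the curvature on (partial) integral elements, $\partial_H=0$ for the mixed vertical--horizontal terms, and the Frobenius-involutivity of $\g(H)$ for the purely vertical ones. The only cosmetic difference is that the paper fixes a single complement $V\oplus\g(H)_p=H_p$ with $V$ one integral element and expands $\kappa_H$ by bilinearity in one stroke, whereas you run a three-case analysis with splittings adapted to each pair $(u,v)$ --- including the degenerate dependent subcase, which you correctly reduce to the vertical case.
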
 

\begin{proof}
If $H$ is Frobenius-involutive then all partial integral elements are integral elements, hence $\mathrm{Prol}(P,H)=J^1_HP$; moreover, $\partial_H$ vanishes trivially.

Conversely, if we let $p\in P$, we can split $H_p$ as a direct sum $V\oplus \g(H)_p$, where $V$ is a partial integral element. Because $\mathrm{Prol}(P,H)=J^1_HP$, $V$ is actually an integral element. In conclusion, we compute the bracket modulo $H$ using the direct sum: for $v+u,v'+u'\in V\oplus \g(H)_p$,
\begin{align*}
\kappa_H(v+u,v'+u')&=\kappa_H(v,v')+\kappa_H(v,u')+\kappa_H(u,v')+\kappa_H(u,u')\\
&=-\partial_H(u')(d\pi(v))-\partial_H(u)(d\pi(v'))=0,
\end{align*}
where we used the Frobenius-involutivity of $\g(H).$
\end{proof}

\begin{corollary}
Let $H\subset TP$ be a Pfaffian distribution whose curvature $\tors$ vanishes; then, if two of the following three conditions hold, the third holds as well:
\begin{enumerate}
\item $\mathrm{pr}:\mathrm{Prol}(P,H)\to P$ is a bijection;
\item $\g(H)$ is zero;
\item $H$ is Frobenius-involutive.
\end{enumerate}
\end{corollary}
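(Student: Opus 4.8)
The plan is to translate all three conditions into statements about the symbol space $\g(H)$ and its first prolongation $\g(H)^{(1)}$, and then to reduce the whole corollary to a single genuine implication. First I would exploit the hypothesis $\tors = 0$: by the discussion preceding Theorem \ref{thm:classical_prolongation}, the zero-set of the torsion equals the image of $\mathrm{pr}: \mathrm{Prol}(P,H) \to P$, so $\tors = 0$ makes this projection surjective. Over each $p \in P$ its (now nonempty) fibre is, as a set, an affine space modelled on $\g(H)^{(1)}_p$: any two points over $p$ differ by an element $\eta \in \Hom(\pi^* TM, \g(H))_p$ (the model of the fibre of $J^1_H P$), and since $\widetilde{\kappa}_H$ from \eqref{prol-c2} is affine with linear part $\delta_H$, both points lie in $\mathrm{Prol}(P,H)$ exactly when $\delta_H(\eta) = 0$, i.e.\ $\eta \in \ker(\delta_H) = \g(H)^{(1)}_p$. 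Hence the surjection $\mathrm{pr}$ is injective, equivalently bijective, precisely when $\g(H)^{(1)} = 0$; this gives the key reformulation that (1) holds if and only if $\g(H)^{(1)} = 0$. I also recall that, by Corollary \ref{cor:3}, condition (3) is equivalent to the conjunction $\mathrm{Prol}(P,H) = J^1_H P$ and $\partial_H = 0$, while (2) is simply $\g(H) = 0$.

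With this dictionary in place, I would first dispose of the two cases in which (2) is assumed by showing that (2) implies both (1) and (3). If $\g(H) = 0$, then $\g(H)^{(1)} \subseteq \Hom(\pi^* TM, \g(H)) = 0$, which is (1); moreover $\partial_H$ vanishes trivially, and $J^1_H P$, being an affine bundle over the zero bundle $\Hom(\pi^* TM, \g(H))$, projects bijectively onto $P$, so the surjective subset $\mathrm{Prol}(P,H) \subseteq J^1_H P$ must coincide with $J^1_H P$; by Corollary \ref{cor:3} this yields (3). This settles the pairs $\{(1),(2)\}$ and $\{(2),(3)\}$, since in each of them (2) alone already forces the third condition.

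The only remaining case is the implication from (1) and (3) to (2), and this is where the real content lies. From (3) I extract $\partial_H = 0$; substituting into the formula $\delta_H(\eta)(X,Y) = \partial_H(\eta(X))(Y) - \partial_H(\eta(Y))(X)$ gives $\delta_H = 0$, hence $\g(H)^{(1)} = \ker(\delta_H) = \Hom(\pi^* TM, \g(H))$. On the other hand (1) forces $\g(H)^{(1)} = 0$ by the reformulation above, so $\Hom(\pi^* TM, \g(H)) = 0$; since $\pi^* TM$ has positive rank (that is, $\dim M \geq 1$), this is possible only if $\g(H) = 0$, which is (2). Combined with the previous paragraph, this proves the corollary. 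I expect the main obstacle to be the first step, namely pinning down the equivalence of (1) with the vanishing of $\g(H)^{(1)}$ at the purely set-theoretic level, since no smoothness or constant-rank hypothesis is available here and one must argue fibrewise using the affine structure of $\mathrm{Prol}(P,H)$ together with the surjectivity supplied by $\tors = 0$.
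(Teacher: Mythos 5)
Your proof is correct, and while it draws on the same toolkit as the paper --- Corollary \ref{cor:3}, the affine structure of $J^1_HP \to P$ modelled on $\Hom(\pi^*TM, \g(H))$, and the fact that the zero-set of $\tors$ is the image of $\mathrm{pr}: \mathrm{Prol}(P,H) \to P$ --- your decomposition of the cases is genuinely different. The paper proves the three implications separately: for $(1)+(2) \Rightarrow (3)$ it invokes ``a computation similar to that of Corollary \ref{cor:3}'' (splitting $H_p$ and expanding $\kappa_H$ on an integral element), for $(1)+(3) \Rightarrow (2)$ it combines $\mathrm{Prol}(P,H) = J^1_HP$ with $\ker(d\mathrm{pr}) = \Hom(\pi^*TM, \mathrm{pr}^*\g(H))$, and for $(2)+(3) \Rightarrow (1)$ it uses the horizontality of $H$. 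You instead observe that under the standing hypothesis $\tors = 0$, condition (2) \emph{alone} already implies both (1) and (3) --- so the only implication genuinely needing two hypotheses is $(1)+(3) \Rightarrow (2)$ --- and you carry that one out through the fibrewise torsor reformulation ``(1) $\iff$ $\g(H)^{(1)} = 0$'', which, as you rightly stress, needs no smoothness or constant-rank assumptions, since it only uses that each nonempty fibre of $\mathrm{Prol}(P,H)$ over $P$ is an affine space modelled on $\g(H)^{(1)}_p$ (and $\tors = 0$ guarantees nonemptiness). This buys you a treatment of the $\{(1),(2)\}$ case that invokes only the \emph{statement} of Corollary \ref{cor:3} rather than redoing its curvature computation, at the cost of a slightly longer setup; your route to $(1)+(3) \Rightarrow (2)$, via $\partial_H = 0 \Rightarrow \delta_H = 0 \Rightarrow \g(H)^{(1)} = \Hom(\pi^*TM,\g(H))$, lands at the same final step as the paper's, namely $\Hom(\pi^*TM,\g(H)) = 0 \Rightarrow \g(H) = 0$. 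Note that this last step requires $\dim M \geq 1$, an assumption the paper uses silently (its $\ker(d\mathrm{pr})$ argument has the same dependence) and which you, commendably, make explicit; the flag is not pedantic, since for $\dim M = 0$ one has $J^1P \cong P$, so (1), (3) and $\tors = 0$ hold automatically while $\g(H) = H$ may be nonzero, and the implication $(1)+(3) \Rightarrow (2)$ genuinely fails.
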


\begin{proof} That (1) and (2) imply (3) follows from a computation similar to that of Corollary \ref{cor:3}. Assuming (1) and (3), we have that (3) implies that $\mathrm{Prol}(P,H)=J^1_HP$ by Corollary \ref{cor:3}, and by (1) we have that for the fibre bundle $\mathrm{pr}:J^1_H P\to P$, the kernel $\ker(d\mathrm{pr})=\Hom(\pi^*TM, \mathrm{pr}^*\g(H))$ (Remark \ref{rmk:kernel}) is zero because $\mathrm{pr}$ is a bijection, hence (2). Last, to show that (2) and (3) imply (1), we see that $H$ is a horizontal distribution if and only if $\g(H)$ is zero; in this case $\mathrm{pr}:J^1_H P\to P$ is a bijection. If, moreover, $H$ is Frobenius-involutive, then $J^1_H P=\mathrm{Prol}(P,H)$ by Corollary \ref{cor:3}.
 \end{proof}

 \begin{corollary}\label{corollary_immersion_theorem}
In the setting of Proposition \ref{immersion_theorem_pfaffian_bundle}, assume that the symbol map $\g(\te) \arr \Hom(\pi^*TM,\mathcal{N})$ from equation \eqref{eq:symbol_map} is injective; then the bundle map $i: P \arr J^1 Q$ is an immersion.
\end{corollary}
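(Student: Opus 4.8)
The goal is to prove that the differential $d_p i \colon T_p P \arr T_{i(p)} J^1 Q$ is injective at every $p \in P$. The plan is to first pin down $\ker(d_p i)$, and then show — using the injectivity of the symbol map — that it is trivial. First I would reduce the problem to the symbol space. Since $i(p) = (f(p), \xi_{[p]})$ in the description \eqref{alternative_description_first_jet}, the projection $\mathrm{pr} \colon J^1 Q \arr Q$ satisfies $\mathrm{pr} \circ i = f$, whence $d\mathrm{pr} \circ d_p i = d_p f$. Thus any $w \in \ker(d_p i)$ lies in $\ker(d_p f) = \g(\te)_p$, using the standing hypothesis $\g(\te) = \ker(df)$ from Proposition \ref{immersion_theorem_pfaffian_bundle} (which also forces $\g(\te) \subset T^\pi P$). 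It then remains to show that $d_p i$ is injective on $\g(\te)_p$.

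Next I would compute $d_p i$ explicitly on $\g(\te)_p$. Fix $w \in \g(\te)_p$ and a vector field $W$ tangent to $\g(\te) = \ker(df)$ extending it; since $f$ is constant along the flow $\phi^W_t$ of $W$, the curve $p_t := \phi^W_t(p)$ satisfies $f(p_t) = f(p)$ for all $t$, so $i(p_t) = (f(p), \xi_{[p_t]})$ moves only in the fibre of $\mathrm{pr}$ over $f(p)$. Hence $d_p i(w)$ lies in the vertical bundle $\ker(d\mathrm{pr}) \cong \Hom(\pi^* TM, T^\tau Q)$ and is the map $v \mapsto \frac{d}{dt}\big|_{t=0} \xi_{[p_t]}(v)$. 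Writing $\xi_{[p_t]}(v) = d_{p_t} f(\bar V_{p_t})$ for a $\pi$-projectable vector field $\bar V$ tangent to $H = \ker(\te)$ with $d\pi(\bar V) = v$, and using $df \circ d\phi^W_t = df$ (from $f \circ \phi^W_t = f$), a Lie-derivative computation gives
$$\frac{d}{dt}\Big|_{t=0} d_{p_t} f(\bar V_{p_t}) = d_p f([W, \bar V]_p),$$
which lands in $T^\tau_{f(p)} Q$ because $[W, \bar V]$ is $\pi$-vertical.

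Finally I would identify this with the symbol map through the isomorphism $\Phi$. By its definition in Proposition \ref{immersion_theorem_pfaffian_bundle}, $\Phi_p$ satisfies $\Phi_p(d_p f(u)) = \te_p(u)$ for every $u \in T^\pi_p P$; applying this to $u = [W, \bar V]_p$ and recalling $\kappa_\te(U,V) = \te([U,V])$ for vectors tangent to $\ker(\te)$, I obtain
$$\Phi_p\big(d_p i(w)(v)\big) = \te_p([W,\bar V]_p) = \kappa_\te(w, \bar V_p) = \partial_\te(w)(v),$$
i.e.\ $\Phi_p \circ d_p i(w) = \partial_\te(w)$ as elements of $\Hom(T_{\pi(p)}M, \mathcal{N}_p)$, where $\partial_\te$ is the symbol map of \eqref{eq:symbol_map}. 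Since $\Phi_p$ is an isomorphism, $d_p i(w) = 0$ forces $\partial_\te(w) = 0$, and the assumed injectivity of the symbol map then yields $w = 0$. Combined with the first step, this gives $\ker(d_p i) = 0$, so $i$ is an immersion.

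The main obstacle is the Lie-derivative computation of $\frac{d}{dt}\big|_{0} \xi_{[p_t]}$ together with the verification that the resulting formula $v \mapsto d_p f([W,\bar V]_p)$ is independent of the chosen extensions $W$ and $\bar V$; the latter follows from the $C^\infty$-bilinearity of the curvature $\kappa_\te$, which simultaneously guarantees that this formula descends to a well-defined element of $\Hom(T_{\pi(p)}M, T^\tau_{f(p)} Q)$ depending only on $w$ and $v$.
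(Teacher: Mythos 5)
Your proof is correct and follows essentially the same route as the paper's: reduce to showing $di$ is injective on $\ker(df)=\g(\te)$, then identify $d_pi|_{\g(\te)}$ (up to the isomorphism $\Phi$) with the symbol map $\partial_\te$, whose injectivity is the hypothesis. The paper merely asserts this identification in one line, while your flow/Lie-derivative computation $\frac{d}{dt}\big|_{t=0}\,\xi_{[p_t]}(v)=d_pf([W,\bar V]_p)$ and the relation $\Phi_p(d_pf(u))=\te_p(u)$ supply the verification; the details check out.
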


\begin{proof}
It is enough to show that $di$ is injective when restricted to $\ker(df) = \g(\te)$. In turn, this follows after noticing that $di_{\mid \g(\te)}$ coincides with the symbol map, which is injective by hypothesis.
\end{proof}

\subsection{Abstract prolongations in the linear case}

In this section we discuss the theory of abstract prolongations for linear Pfaffian fibrations (introduced in Section \ref{sec:linear_Pfaffian}). In order to do that, we will use the equivalent approach using relative connections (see Proposition \ref{linear_Pfaffian_bundles_with_connections}).

\

Let $(\Ftemp',D')$, $(\Ftemp,D)$ be linear Pfaffian fibrations over $M$, with $(D',\sigma')$ a relative connection taking values in $\Ftemp$, and $(D,\sigma)$ a relative connection taking values in $\Etemp$:
\begin{equation}\label{eq:compatible}D':\Gamma(\Ftemp')\to \Omega^1(M,\Ftemp),\quad D:\Gamma(\Ftemp)\to\Omega^1(M,\Etemp).\end{equation}
The following definition will play the role of normalised prolongations between Pfaffian fibrations in the non-linear case.

\begin{definition}\label{def:compatible} The relative connections $(D',\sigma')$ and $(D,\sigma)$ as in \eqref{eq:compatible} are {\bf compatible} if 
\begin{enumerate}
\item $D\circ \sigma'=\sigma\circ D'$;
\item $D_X\circ D'_{Y}-D_Y\circ D'_{X}-\sigma\circ D'_{[X,Y]}=0$ for all $X,Y\in \mathfrak{X}(M).$
\end{enumerate}
\end{definition}

The two conditions of Definition \ref{def:compatible} above have a clear cohomological interpretation, which appeared already in \cite{Gol76a, Ngo}. For a relative connection $(D,\sigma)$ there exists a linear operator, denoted by the same letter $D$,
\begin{equation}\label{eq:higher}
D:\Omega^*(M,\Ftemp)\to \Omega^{*+1}(M,\Etemp),
\end{equation}
uniquely defined by the following two properties: it coincides with the connection $D$ on $\Gamma(\Ftemp)=\Omega^0(M,\Ftemp)$, and it satisfies the Leibniz identity relative to $\sigma$,
$$D(\omega\otimes s)=d\omega\otimes \sigma(s)+(-1)^k\omega\wedge D(s),$$
for any $k$-form $\omega\in\Omega^k(M)$, and any section $s\in\Gamma(\Ftemp)$. This operator $D$ can be given explicitly by the Koszul formula
\begin{align*}
D \eta (X_0,\ldots, X_k) =&\sum_i(-1)^i D_{X_i}(\eta(X_0,\ldots,\hat{X}_i,\ldots, X_k))\\
&+\sum_{i<j}(-1)^{i+j}\sigma(\eta([X_i,X_j],X_0,\ldots,\hat{X}_i,\ldots,\hat{X}_j,\ldots,X_k))
\end{align*}
for any $\eta\in\Omega^k(M,\Ftemp)$. A direct check shows the following lemma:

\begin{lemma}\label{lemma:composition} Let $(D',\sigma')$, and $(D,\sigma)$ be relative connections as in \eqref{eq:compatible}. If $\dim M>0$, then the relative connections are compatible if and only if the composition
$$\Omega^*(M,\Ftemp')\overset{D'}{\longrightarrow}\Omega^{*+1}(M,\Ftemp)\overset{D}{\longrightarrow}\Omega^{*+2}(M,\Etemp)$$
is zero.
\end{lemma}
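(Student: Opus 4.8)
The plan is to reduce both compatibility conditions to the vanishing of $D\circ D'$ on decomposable forms, exploiting the fact that the composition becomes tensorial precisely when condition (1) holds. First I would record the Leibniz rule for $D$ against scalar forms: extending the defining identity from sections to all of $\Omega^*(M,\Ftemp)$ gives
$$D(\omega\wedge\eta)=d\omega\wedge\sigma(\eta)+(-1)^k\omega\wedge D\eta$$
for every $\omega\in\Omega^k(M)$ and $\eta\in\Omega^*(M,\Ftemp)$, where $\sigma(\eta)$ denotes post-composition of the values of $\eta$ with $\sigma$ (and similarly for $D'$ with $\sigma'$). Writing $\tau(s):=\sigma(D's)-D(\sigma' s)\in\Omega^1(M,\Etemp)$, so that condition (1) is exactly $\tau=0$, a direct application of the two Leibniz rules to a decomposable element $\omega\otimes s$ with $\omega\in\Omega^k(M)$, $s\in\Gamma(\Ftemp')$, together with $d\circ d=0$, produces the key identity
$$D\big(D'(\omega\otimes s)\big)=(-1)^k\,d\omega\wedge\tau(s)+\omega\wedge D(D's).$$

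Next I would identify the two pieces of this identity with the two compatibility conditions. Evaluating $D\circ D'$ on $\Omega^0(M,\Ftemp')=\Gamma(\Ftemp')$ and reading off the Koszul formula in degree one gives $D(D's)(X,Y)=D_X(D'_Y s)-D_Y(D'_X s)-\sigma(D'_{[X,Y]}s)$, which is precisely the left-hand side of condition (2); hence $D\circ D'$ vanishes on $\Omega^0$ if and only if (2) holds. For the forward implication I would assume compatibility: condition (1) means $\tau=0$, so the cross term in the key identity drops out, while condition (2) gives $D(D's)=0$; since every element of $\Omega^k(M,\Ftemp')$ is a finite sum of decomposables $\omega\otimes s$, it follows that $D\circ D'=0$ identically.

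For the converse I would suppose $D\circ D'=0$. Taking $\omega=1$ (equivalently, restricting to $\Omega^0$) yields $D(D's)=0$ for all $s$, which is condition (2). Feeding this back into the key identity leaves $d\omega\wedge\tau(s)=0$ for all $\omega$ and $s$; specializing to $\omega=f\in C^\infty(M)$ gives $df\wedge\tau(s)=0$ for every $f$. This is where the hypothesis $\dim M>0$ enters: since the differentials $df$ span $T^*_pM$ at each point, no nonzero $\Etemp$-valued one-form can be annihilated by wedging with all of them, forcing $\tau(s)=0$, i.e.\ condition (1). I expect this last step — recovering condition (1) from the tensorial remainder of the composition — to be the delicate point of the argument, since it is the only place that genuinely uses the positivity of $\dim M$; by contrast, the forward direction and the recovery of (2) are purely formal consequences of the Koszul formula and the two Leibniz rules.
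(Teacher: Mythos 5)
Your computation is, in substance, exactly the ``direct check'' the paper alludes to (the paper gives no proof of this lemma), and most of it is correct: the extended Leibniz rule, the key identity
$$D\bigl(D'(\omega\otimes s)\bigr)=(-1)^k\,d\omega\wedge\tau(s)+\omega\wedge D(D's),\qquad \tau(s):=\sigma(D's)-D(\sigma's),$$
the identification of $D\circ D'$ on $\Omega^0$ with condition (2) via the Koszul formula in degree one, the forward implication, and the reduction of the converse to $df\wedge\tau(s)=0$ for all $f\in C^\infty(M)$ are all sound.

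The one genuine flaw sits precisely at the step you yourself flagged as delicate, and it is not merely delicate but false in dimension one. That the differentials $df$ span $T^*_pM$ does \emph{not} imply that wedging with all of them detects a nonzero one-form: when $\dim M=1$ one has $\wedge^2 T^*_pM=0$, so $df\wedge\tau(s)=0$ holds vacuously and carries no information. (For $\dim M\geq 2$ your argument is fine: if $\tau(s)_p\neq 0$, expand it in a basis of the coefficient fibre and use two linearly independent covectors $\xi,\xi'$; a covector annihilated by wedging with both must vanish.) Worse, the lemma as stated fails for $\dim M=1$: take $M=\RR$ with all bundles trivial of rank one, $\sigma=\sigma'=\mathrm{id}$, $D'(s)=ds$ and $D(u)=du+u\,dt$. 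Both are relative connections; the composition $D\circ D'$ vanishes identically because $\Omega^{k}(\RR)=0$ for $k\geq 2$, and condition (2) is automatic in dimension one (the expression $D_XD'_Y-D_YD'_X-\sigma D'_{[X,Y]}$ is $C^\infty(M)$-bilinear and skew in $X,Y$, hence a two-form, hence zero); yet $D(\sigma'(s))-\sigma(D'(s))=s\,dt\neq 0$, so condition (1) fails. So the hypothesis should read $\dim M>1$: under that hypothesis your proof is complete and correct (and $\dim M=0$ is trivial on both sides), but as a justification of the statement with $\dim M>0$ the last step has a gap that cannot be repaired, because the statement itself is wrong in dimension one — an inaccuracy inherited from the paper rather than introduced by you.
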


For compatible relative connections $(D',\sigma')$ and $(D,\sigma)$ as above, the first condition of Definition \ref{def:compatible} implies that $\sigma'$ preserves holonomic sections. In general, the resulting map
$$\Gamma(\Ftemp',D')\to \Gamma(\Ftemp,D),\quad s\mapsto \sigma'(s) $$
is not necessarily surjective; its surjectivity is measured, in the sense of Proposition \ref{prop:surjective} below, by some map $S$ which we now present.

Denote by $\partial':\g'\to \Hom(TM,\Ftemp)$ the map given by the restriction of $D'$ to its symbol space $\g'=\ker(\sigma')$; it is linear by equation \eqref{eq:Leibniz}. Condition (1) of Definition \ref{def:compatible} implies that the image of $\partial'$ lies inside $\Hom(TM,\g)$, $\g=\ker(\sigma)$, hence $\partial'$ takes the form
$$\partial':\g'\to \Hom(TM,\g),\quad \partial'=D'|_{{\g'}}.$$
By the very definition of the operators \eqref{eq:higher} we get that at higher order $\partial'(\omega\otimes s)=(-1)^k\omega\wedge\partial'(s)$, for any $\omega\in\Omega^k(M)$ and any section $s\in\Gamma(\g')$; hence, together with Lemma \ref{lemma:composition}, this implies that the composition
$$\wedge^kT^*M\otimes\g'\overset{\partial'}{\longrightarrow}\wedge^{k+1}T^*M\otimes\g\overset{\partial}{\longrightarrow}\wedge^{k+2}T^*M\otimes \Etemp$$ 
of vector bundles over $M$ is zero. Interpreting $\g'$ as the ``prolongation" of $\g$, we consider the following quotient
$$H^{0,1}(\g):=\frac{\ker\{\partial:T^*M\otimes\g\to\wedge^2T^*M\otimes \Etemp\}}{\mathrm{Im}\{\partial':\g'\to T^*M\otimes \g\}}.$$

\begin{lemma} 
The following map is well defined:
$$S:\Gamma(\Ftemp,D)\to H^{0,1}(\g),\ s\mapsto [D'(\bar s)],$$
where $\bar s$ is a section of $\Ftemp'$ such that $\sigma'(\bar s)=s.$
\end{lemma}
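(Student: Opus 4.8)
The plan is to unpack what ``well defined'' requires and check each piece: first that a lift $\bar s$ with $\sigma'(\bar s)=s$ exists, then that $D'(\bar s)$ legitimately represents a class in $H^{0,1}(\g)$ (that is, $D'(\bar s)$ is a section of $\ker\partial$ and in particular is $\g$-valued), and finally that this class is independent of the chosen lift. The existence of a lift is immediate: since $\sigma'$ is a surjective vector bundle map (Definition \ref{def_relative_connection}), it admits a right splitting, and composing that splitting with $s$ produces a global section $\bar s\in\Gamma(\Ftemp')$ with $\sigma'(\bar s)=s$.

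Next I would verify that $D'(\bar s)$ takes values in $\g$. Applying $\sigma$ and using condition (1) of Definition \ref{def:compatible} together with the holonomicity $D(s)=0$ gives
\begin{equation*}
\sigma(D'(\bar s))=D(\sigma'(\bar s))=D(s)=0,
\end{equation*}
so $D'(\bar s)\in\Omega^1(M,\g)$. To see that it lies in $\ker\partial$, I would invoke the key observation, implicit in the definition of the operators \eqref{eq:higher}, that on $\g$-valued forms the operator $\partial$ coincides with the extended operator $D$: since $\sigma$ vanishes on $\g$, the Leibniz identity $D(\omega\otimes v)=d\omega\otimes\sigma(v)+(-1)^{k}\omega\wedge D(v)$ collapses to $(-1)^{k}\omega\wedge\partial(v)$, which is exactly the rule defining $\partial$ on $\wedge^{k}T^*M\otimes\g$. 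Consequently
\begin{equation*}
\partial(D'(\bar s))=D(D'(\bar s))=(D\circ D')(\bar s)=0,
\end{equation*}
where the last equality is Lemma \ref{lemma:composition} (equivalently, it is condition (2) of Definition \ref{def:compatible} read off from the Koszul formula for $D$ applied to the $1$-form $D'(\bar s)$). Hence $D'(\bar s)$ is a section of $\ker\{\partial:T^*M\otimes\g\to\wedge^2T^*M\otimes\Etemp\}$ and defines a class $[D'(\bar s)]\in H^{0,1}(\g)$.

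It then remains to show independence of the lift. If $\bar s_1,\bar s_2\in\Gamma(\Ftemp')$ both satisfy $\sigma'(\bar s_i)=s$, then $\bar s_1-\bar s_2\in\ker\sigma'=\g'$, and, since $\partial'$ is by definition the restriction of $D'$ to $\g'$, the $\RR$-linearity of $D'$ yields
\begin{equation*}
D'(\bar s_1)-D'(\bar s_2)=D'(\bar s_1-\bar s_2)=\partial'(\bar s_1-\bar s_2)\in\mathrm{Im}\{\partial':\g'\to T^*M\otimes\g\}.
\end{equation*}
Therefore $[D'(\bar s_1)]=[D'(\bar s_2)]$ in $H^{0,1}(\g)$, which completes the argument.

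The only genuinely non-formal point is the identification of $\partial$ on $\g$-valued forms with the extended operator $D$ on $\Ftemp$-valued forms; once this is in place, the vanishing of the composition $D\circ D'$ forces $D'(\bar s)$ into $\ker\partial$ without further computation. Everything else --- the lifting, the reduction $\sigma\circ D'=D\circ\sigma'$, and the difference computation --- is routine bookkeeping with the Leibniz identity \eqref{eq:Leibniz} and the definitions of the symbol spaces $\g'=\ker\sigma'$ and $\g=\ker\sigma$.
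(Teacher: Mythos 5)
Your proof is correct and follows essentially the same route as the paper's: $\g$-valuedness of $D'(\bar s)$ via $\sigma\circ D'=D\circ\sigma'$ and $D(s)=0$, closedness via condition (2) of Definition \ref{def:compatible} (which the paper verifies by the Koszul formula --- exactly the computation packaged in Lemma \ref{lemma:composition} that you invoke), and lift-independence via $\partial'=D'|_{\g'}$. Your explicit remarks on the existence of a lift $\bar s$ and on the identification of $\partial$ with the extended operator $D$ on $\g$-valued forms are correct details that the paper leaves implicit.
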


\begin{proof} If $s'\in\Gamma(\Ftemp')$ is another section with the same property as $\bar{s}$, then $\alpha:=\bar s-s'$ belongs to $\g'$ and $\partial'(\alpha)=D'(\bar s)-D'(\bar s')$. This means that $D'(\bar s)$ and $D'(s')$, which are a priori sections of $\Hom(TM,\g)$ (since $\sigma(D'(\bar s))=D(\sigma' (\bar s))=D(s)=0$, and the same for $s'$), represent the same class on the quotient by $\mathrm{Im}(\partial')$. Moreover, for vector fields $X,Y\in\mathfrak{X}(M)$, 
$$\partial(D'(\bar s)) (X,Y) =D_XD'_{Y}(\bar s)-D_YD'_{X}(\bar s)-\sigma D'_{[X,Y]}(\bar s),$$
which is zero by condition (2) of Definition \ref{def:compatible}. Hence, $S$ is indeed well defined. 
\end{proof}

\begin{proposition}\label{prop:surjective}
For compatible connections as in \eqref{eq:compatible}, the following sequence is exact
\begin{equation*}
\Gamma(\Ftemp',D')\overset{\sigma'}{\longrightarrow}\Gamma(\Ftemp,D)\overset{S}{\longrightarrow}H^{0,1}(\g). \qedhere
\end{equation*} 
\end{proposition}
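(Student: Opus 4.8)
The plan is to unwind the assertion of exactness at $\Gamma(\Ftemp, D)$ into the single set-theoretic equality $\ker(S) = \mathrm{Im}(\sigma')$ and to verify the two inclusions separately. The one observation that makes both of them transparent is already recorded just before the Lemma defining $S$: the map $\partial'$ is precisely the restriction of $D'$ to sections of $\g'$, so that for any $\alpha \in \Gamma(\g')$ one has $\partial'(\alpha) = D'(\alpha) \in \Omega^1(M,\g)$, the Leibniz term in \eqref{eq:Leibniz} dropping out because $\sigma'(\alpha) = 0$. Consequently an element of $\Omega^1(M,\g)$ lies in $\mathrm{Im}(\partial')$ exactly when it is of the form $D'(\alpha)$ for some $\alpha \in \Gamma(\g')$, and this is precisely the quantity that appears in the definition of $S$.

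For the inclusion $\mathrm{Im}(\sigma') \subseteq \ker(S)$ I would begin from $s = \sigma'(t)$ with $t \in \Gamma(\Ftemp', D')$, i.e.\ $D'(t) = 0$. Condition (1) of Definition \ref{def:compatible} gives $D(s) = D(\sigma'(t)) = \sigma(D'(t)) = 0$, so that $s$ genuinely belongs to $\Gamma(\Ftemp, D)$ and $S(s)$ is defined. Since $S$ is independent of the chosen lift (established in the Lemma preceding this Proposition), I may evaluate it on the lift $\bar s = t$ itself, obtaining $S(s) = [D'(t)] = [0] = 0$.

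For the converse inclusion $\ker(S) \subseteq \mathrm{Im}(\sigma')$ I would take $s \in \Gamma(\Ftemp, D)$ with $S(s) = 0$ together with any lift $\bar s \in \Gamma(\Ftemp')$, $\sigma'(\bar s) = s$. The vanishing of $S(s) = [D'(\bar s)]$ in $H^{0,1}(\g)$ means $D'(\bar s) \in \mathrm{Im}(\partial')$, so by the observation above $D'(\bar s) = \partial'(\alpha) = D'(\alpha)$ for some $\alpha \in \Gamma(\g')$. Linearity of $D'$ then yields $D'(\bar s - \alpha) = 0$, so $\bar s - \alpha$ is a holonomic section of $\Ftemp'$, while $\sigma'(\bar s - \alpha) = \sigma'(\bar s) = s$ since $\alpha \in \g' = \ker(\sigma')$. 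This exhibits $s = \sigma'(\bar s - \alpha)$ with $\bar s - \alpha \in \Gamma(\Ftemp', D')$, as required.

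I do not expect a genuine obstacle: both directions are a correction-of-lift argument, the algebraic content being entirely supplied by the identity $\partial' = D'|_{\g'}$ and by compatibility condition (1), with no constant-rank or smoothness hypothesis on $\partial'$ needed. The only point deserving care is the reading of the quotient $H^{0,1}(\g)$ used in the second inclusion: the argument requires $\mathrm{Im}(\partial')$ to be taken at the level of global sections, so that the vanishing of the class of $D'(\bar s)$ furnishes an honest global $\alpha \in \Gamma(\g')$ rather than a merely pointwise one. Under this intended interpretation the proof is complete.
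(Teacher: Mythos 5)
Your proof is correct and follows essentially the same route as the paper's: the inclusion $S\circ\sigma'=0$ is checked by evaluating $S$ on the holonomic lift itself, and the converse by correcting an arbitrary lift $\bar s$ with a section $\beta\in\Gamma(\g')$ satisfying $\partial'(\beta)=D'(\bar s)$, exactly as in the paper. Your closing caveat about reading the vanishing of $[D'(\bar s)]$ as furnishing a genuine global section of $\g'$ is well taken, since the paper's proof makes the same implicit reading when it asserts ``there is a section $\beta$ of $\g'$''.
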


\begin{proof}
If $\alpha$ is a holonomic section of $D'$, then $S(\sigma'(\alpha))$ is equal to the class of $D'(\alpha)=0$, so $S \circ \si' = 0$. Moreover, if $S(s)=[D'(\bar s)]=0$, then there is a section $\beta$ of $\g'$ so that $D'(\bar s)=\partial'(\beta)=D'(\beta)$. In particular, the section $s':=\bar s-\beta$ of $\Ftemp'$ is holonomic and is such that $\sigma'(s')=\sigma'(\bar s)=s$, so the sequence is exact.
\end{proof}

When looking at linear Pfaffian fibrations in terms of the linear Pfaffian forms, we realise that the definition of compatible connections coincides with the linear counterpart of normalised prolongations (see Remark \ref{rmk:normalised}). Let $\theta'$ and $\theta$ be linear forms, and let $D'$ associated to $\theta'$ as in \eqref{eq:relative_connection}:
$$D': \Gamma(\Ftemp')\to \Omega^1(M,\Ftemp),\quad s\mapsto s^*\theta',$$
and $D$ associated to $\theta$ in the same way: $D(u)=u^*\theta$, $u\in\Gamma(\Ftemp)$.

\begin{lemma}
Two relative connections $(D',\sigma')$ and $(D,\sigma)$ as in equation \eqref{eq:compatible} are compatible (Definition \ref{def:compatible}) if and only if $\sigma':(\Ftemp',\theta')\to(\Ftemp,\theta)$ is a normalised prolongation. Moreover, any other normalised prolongation $\phi:(\Ftemp',\theta')\to(\Ftemp,\theta)$ with $\phi$ linear is, up to automorphisms of $\Ftemp$, of the form $\phi=\sigma'=\theta'|_{\g'}$. %with $(D',\sigma')$ compatible with $(D,\sigma)$.
\end{lemma}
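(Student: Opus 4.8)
The plan is to work throughout with the 1-form description of the two linear Pfaffian fibrations and to unpack the notion of a normalised prolongation through Remark \ref{rmk:normalised}. Let $\theta'$ and $\theta$ be the linear forms associated to $(D',\sigma')$ and $(D,\sigma)$ via \eqref{eq:relative_connection}, so that $s^*\theta'=D'(s)$ and $\theta'(v)=\sigma'(v)$ on vertical vectors, and likewise for $\theta$; both $(\Ftemp',\theta')$ and $(\Ftemp,\theta)$ are then automatically Pfaffian fibrations, so the notion of normalised prolongation applies. By Remark \ref{rmk:normalised}, that $\sigma':(\Ftemp',\theta')\to(\Ftemp,\theta)$ be a normalised prolongation amounts to three things: (a) $\sigma'$ is a Pfaffian morphism, i.e.\ a surjective submersion with $(\sigma')^*\theta=\sigma\circ\theta'$; (b) the normalisation $\g(\theta')=\ker(d\sigma')$; and (c) the curvature condition $\sigma(\kappa_{\theta'}(u,v))=0$ for all $u,v\in\ker(\theta')$. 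I would match (a) with condition (1) of Definition \ref{def:compatible}, show that (b) holds automatically, and match (c) with condition (2); reading the same chain backwards yields the converse.

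First I would dispose of (a) and (b). Since $\sigma'$ is a surjective vector bundle map over $M$ it is automatically a surjective submersion, so the content of (a) is the identity $(\sigma')^*\theta=\sigma\circ\theta'$. A linear form is determined by its vertical restriction together with its pullbacks along sections (Proposition \ref{linear_Pfaffian_bundles_with_connections}): on vertical vectors both sides equal $\sigma\circ\sigma'$, while pulling back along an arbitrary $s\in\Ga(\Ftemp')$ gives $s^*(\sigma')^*\theta=(\sigma'\circ s)^*\theta=D(\sigma'(s))$ on the left and $\sigma\circ D'(s)$ on the right. Hence (a) is exactly $D\circ\sigma'=\sigma\circ D'$, i.e.\ condition (1). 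For (b), the coefficient bundle of $\theta'$ is $\pi'^*\Ftemp\cong(\sigma')^*T^\pi\Ftemp$, and $\theta'$ restricted to $T^{\pi'}\Ftemp'$ is $\sigma'=d\sigma'|_{T^{\pi'}\Ftemp'}$; since $\ker(d\sigma')\subset T^{\pi'}\Ftemp'$, this gives $\g(\theta')=\ker(\theta'|_{T^{\pi'}\Ftemp'})=\ker(d\sigma')$, so the prolongation is automatically normalised.

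The core of the argument is the equivalence of (c) with condition (2). Assuming (1), I would first reduce (c) to a statement about horizontal vectors: writing $u=u_h+u_g$, $v=v_h+v_g$ according to a (non-canonical) splitting $\ker(\theta')=\mathrm{horizontal}\oplus\g'$, the purely vertical term $\kappa_{\theta'}(u_g,v_g)$ vanishes because $\g'=\g(\theta')$ is Frobenius-involutive, while the mixed terms equal values $\pm\partial'(u_g)(d\pi'(v_h))$ of the symbol map, which lie in $\g=\ker(\sigma)$ by condition (1) and are therefore killed by $\sigma$. Thus $\sigma(\kappa_{\theta'}(u,v))$ depends only on $X=d\pi'(u)$ and $Y=d\pi'(v)$. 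The key identity I must then establish is that, for a local section $s$ of $\Ftemp'$ with $s(x)=p$ and horizontal lifts $\bar X,\bar Y\in\ker(\theta'_p)$ of $X,Y$,
\begin{equation*}
\sigma(\kappa_{\theta'}(\bar X,\bar Y))=D(D's)(X,Y)=\bigl(D_X D'_Y-D_Y D'_X-\sigma\circ D'_{[X,Y]}\bigr)(s),
\end{equation*}
the precise sign being immaterial. Granting this, (c) and (2) are equivalent, since every triple $(p,X,Y)$ arises from such an $s$, and conversely the left-hand side depends only on $p$. I expect this identity to be the main obstacle: it is a direct but delicate bracket computation, obtained by writing $\bar X,\bar Y$ as $ds(X),ds(Y)$ corrected by vertical lifts, using Observation \ref{rmk:operator_distribution} to recognise the connection terms as $D'_X s$ and $D'_Y s$, using that vertical vector fields constant along the fibres commute to drop the vertical--vertical bracket, recognising $[ds(X),ds(Y)]=ds([X,Y])$ as the source of the $\sigma\circ D'_{[X,Y]}$ term, and finally applying $\sigma\circ\theta'=\theta\circ d\sigma'$ (condition (1)) to convert the outcome into the $D$-terms.

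Finally, for the \emph{moreover} part, let $\phi:(\Ftemp',\theta')\to(\Ftemp,\theta)$ be any linear normalised prolongation. By Remark \ref{rmk:normalised} the differential $d\phi$ coincides with $\theta'$ on $T^{\pi'}\Ftemp'$; restricting to the vertical bundle and using the canonical identifications $T^{\pi'}\Ftemp'\cong\pi'^*\Ftemp'$ and $T^\pi\Ftemp\cong\pi^*\Ftemp$, the fibrewise-linear map underlying $\phi$ equals $\theta'|_{T^{\pi'}\Ftemp'}=\sigma'$, the symbol of $\theta'$. Since a linear bundle map over $M$ is determined by its fibrewise action, $\phi$ is forced to equal $\sigma'$; the only remaining freedom is the identification of the coefficient bundle $\phi^*T^\pi\Ftemp$ with $\pi'^*\Ftemp$, which is precisely an automorphism of $\Ftemp$. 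Hence $\phi=\sigma'$ up to such an automorphism, as claimed.
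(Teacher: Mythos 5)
Your proposal is correct and follows essentially the same route as the paper: you translate the three ingredients of a normalised prolongation into exactly the paper's steps --- the form identity $(\sigma')^*\theta=\sigma\circ\theta'$ matched with condition (1) via Proposition \ref{linear_Pfaffian_bundles_with_connections}, the normalisation $\g(\theta')=\ker(d\sigma')$ observed to hold automatically because $d\sigma'$ coincides fibrewise with $\sigma'$ on verticals, and the identification of $\sigma\circ\kappa_{\theta'}$ along sections with $D_XD'_Y-D_YD'_X-\sigma\circ D'_{[X,Y]}$, which the paper packages through the De Rham-type operator $d_D\theta'$ and the pullback connection $D^{\pi'}$ while you run the equivalent bracket computation (as in Lemma \ref{eq:curvaturas}) directly, with the same reduction to horizontal lifts. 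Your uniqueness argument for the ``moreover'' part, via Remark \ref{rmk:normalised} and the fact that a linear bundle map coincides fibrewise with its vertical differential, is likewise the paper's.
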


\begin{proof}
First of all, as $\sigma'$ is by definition the restriction of $\theta'$ to $\g(\theta')=\ker(\theta') \cap T^{\pi'} E'$, and as $\sigma'$ is linear, its differential $d \si'$ coincides with $\sigma'$ when restricted to $T^\pi_v \Ftemp'= \Ftemp'_{\pi'(v)}$  for any $v\in \Ftemp'$ (we are using the canonical identification of these vector spaces). From this we get for free the condition that 
$$\g(\theta')=\pi'^*\g(D)=\pi'^*\ker(\sigma')=\ker d\sigma'.$$
It follows that the coefficient bundle of $\theta'$ (which is, up to isomorphism, the normal bundle $TP'/\ker(\te')\cong T^{\pi'}P/\g(\theta')$ by $\pi'$-regularity of $\theta'$) is precisely $\pi'^*\Ftemp$ (see also Remark \ref{rmk:normalised}).  

From the correspondence \eqref{eq:relative_connection}, the relation $\sigma'^*\theta=\sigma\circ\theta'$ between the Pfaffian forms is translated into the equivalent condition (1) of Definition \ref{def:compatible}, i.e.\ $D\circ\sigma'=\sigma\circ D'$ in terms of the relative connections.

To see that the condition on the curvatures of $\theta'$ and $\theta$ is the same as condition (2) of Definition \ref{def:compatible} for compatible connections, we write $\sigma\circ \kappa_{\theta'}$ as the restriction to $\ker(\theta')$ of the skew-symmetric bilinear map 
$$T\Ftemp'\times T\Ftemp'\to {\pi'}^*\Etemp, \quad (u,v)\mapsto -d_{D}\theta'(u,v).$$
Here $d_D\theta\in\Omega^2(\Ftemp',\pi'^*(\Etemp))$ at $U,V\in\mathfrak{X}(\Ftemp')$ is defined by the De-Rham-type formula 
$$d_D\theta(U,V)=D^{\pi'}_U(\theta'(V))-D^{\pi'}_V(\theta'(U))-\sigma(\theta'[U,V]),$$
with $D^{\pi'}:\mathfrak{X}(\Ftemp')\times \Gamma({\pi'}^*\Ftemp)\to \Gamma(\pi'^*\Etemp)$ the pullback of $D$ via $\pi':\Ftemp'\to M$; of course, when $U,V$ belong to $\ker(\theta')$, $-d_D\theta(U,V)$ coincides with $\sigma(\kappa_{\theta'}(U,V))$. As $\sigma\circ \kappa_{\theta'}=\sigma'^*\kappa_\theta$, and $d\sigma'$ is zero on the vertical part $\ker(\theta') \cap T^{\pi'} E'$ because it coincides with $\sigma'$ on $\g(D')=\ker\sigma'$, then a straightforward check shows that $\sigma\circ \kappa_{\theta'}$ is zero if and only if $s^*(\sigma\circ \kappa_{\theta'})_x=0$ for any $x\in M$ and any $s\in\Gamma(\Ftemp')$ such that $s^*(\theta')_x=0$. However,
\begin{equation*}\label{eq:linear_curvature}
s^*(\sigma\circ \kappa_{\theta'})_x(X,Y)=s^*(d_D\theta')_x(X,Y)=D_X\circ D'_{Y}(s)(x)-D_Y\circ D'_{X}(s)(x)-\sigma\circ D'_{[X,Y]}(s)(x),
\end{equation*}
so we conclude that $\sigma\circ \kappa_{\theta'}$ is zero if and only if condition (2) of Definition \ref{def:compatible} holds.\\

Last, consider a normalised prolongation $\phi:(\Ftemp',\theta')\to(\Ftemp,\theta)$ between linear Pfaffian fibrations and assume that $\phi$ is also linear; then, in view of Remark \ref{rmk:normalised}, 
we can assume that $\theta'$ takes values on $\phi^*T^{\pi}\Ftemp$, which, in turn, is isomorphic to $\phi^*\pi^*(\Ftemp)=\pi'^*\Ftemp$ (again we use the canonical isomorphism of $T^\pi \Ftemp$ with $\pi^*(\Ftemp)$). We also assume that, under these isomorphisms, $d\phi$ coincides with $\theta'$ on $T^{\pi'}\Ftemp'$.  Again, as $\phi$ is linear its differential $d\phi$ when restricted to the vertical vector bundle $T^{\pi'} \Ftemp'=\pi'^*\Ftemp'$ coincides with $\phi$; hence, on $\Ftemp'=T^{\pi'}\Ftemp'|_M$
\begin{equation*}
\phi=d\phi=\theta'=\sigma'. 	\qedhere
\end{equation*}
\end{proof}

\subsection{Partial and classical prolongations in the linear case}

Let us continue the discussion on prolongations for linear Pfaffian fibrations; we will find again that many objects, which were in general over $\Ftemp$, become linear objects over $M$ described in terms of relative connections.

\begin{definition}\label{partial_prolongation_linear_pfaffian_bundle}
The {\bf partial prolongation of a linear Pfaffian fibration} $(\Ftemp,D)$ is
$$J^1_D\Ftemp:=\{j^1_xs \in J^1 \Ftemp \mid D(s)(x)=0\}.$$
\end{definition}

Since the linear form $\te$ associated to $D$ is characterised by $s^*\theta=D(s)$ and $\theta|_\Ftemp=\sigma$, it is immediate to check that the partial prolongation of $(E,D)$ as a linear Pfaffian fibration from Definition \ref{partial_prolongation_linear_pfaffian_bundle} coincides with the partial prolongation of $(E,\te)$ as a Pfaffian fibration from Definition \ref{def_partial_prolongation}, i.e.\ $J^1_D\Ftemp = J^1_\theta \Ftemp$. Similarly to Theorem \ref{thm:partial_prolongation} (together with the fact that the $J^1 \Ftemp$ is a linear Pfaffian fibration), we can characterise $J^1_D\Ftemp$ as the largest vector subbundle of $J^1\Ftemp$ over $M$, with the property that the projection $\mathrm{pr}:J^1_D\Ftemp\to \Ftemp$ is a Pfaffian morphism. In this language, this means that $J^1_D\Ftemp$ is the largest subbundle so that condition (1) of Definition \ref{def:compatible},
$$\sigma\circ D^{(1)}=D\circ\mathrm{pr},$$
 holds for the restriction 
$D^{(1)}:\Gamma(J^1_D\Ftemp)\to \Omega^1(M,\Ftemp)$ of the classical Spencer operator from equation \eqref{eq:Spencer_operator}.

At the level of sections, the partial prolongation can be also described as follows
\begin{proposition}
Let $(P,D)$ be a linear Pfaffian fibration; then
\begin{equation}\label{eq:spencer_decomposition}
\Gamma(J^1_D\Ftemp)=\{(\alpha,\omega)\in \Gamma(\Ftemp)\oplus \Omega^1(M,\Ftemp)\mid D(\alpha)=\sigma\circ \omega\}.
\end{equation}
\end{proposition}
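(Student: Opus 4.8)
The plan is to reduce the differential condition defining $J^1_D\Ftemp$ to a pointwise (algebraic) condition on $1$-jets, and then to read it off through the Spencer decomposition \eqref{eq:Spencer_decomposition}. The starting observation is that the Leibniz identity \eqref{eq:Leibniz} forces $D$ to be a genuine first-order operator: if $j^1_x s = j^1_x s'$ then $D(s)(x) = D(s')(x)$. (To check this, set $t := s-s'$; in a local frame $(e_a)$ of $\Ftemp$ write $t = \sum_a f_a e_a$, so that $t(x)=0$ and $d_x t = 0$ give $f_a(x)=0$ and $df_a|_x=0$, whence by $\RR$-linearity and \eqref{eq:Leibniz} every term of $D(t)(x)$ vanishes.) Consequently $D$ descends to a vector bundle map
$$\widetilde D : J^1\Ftemp \arr \Hom(TM,\Etemp), \qquad \widetilde D(j^1_x s) := D(s)(x),$$
and, directly from Definition \ref{partial_prolongation_linear_pfaffian_bundle}, one has $J^1_D\Ftemp = \ker(\widetilde D)$. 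Hence a section $\phi \in \Gamma(J^1\Ftemp)$ belongs to $\Gamma(J^1_D\Ftemp)$ if and only if $\widetilde D \circ \phi \equiv 0$.

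Next I would substitute the Spencer decomposition into this criterion. Writing $\phi = j^1\alpha + i(\omega)$ with $\alpha = \mathrm{pr}\circ\phi \in \Gamma(\Ftemp)$ and $\omega \in \Omega^1(M,\Ftemp)$, fibrewise linearity of $\widetilde D$ gives $\widetilde D(\phi) = \widetilde D(j^1\alpha) + \widetilde D(i(\omega))$, and the first summand is by definition $D(\alpha)$. The heart of the argument is the computation of $\widetilde D \circ i$: evaluating on the decomposable generators $df\otimes s$ of $\Omega^1(M,\Ftemp)$ via the formula $i(df\otimes s) = f\,j^1 s - j^1(fs)$ from \eqref{eq:exact_sec} together with the Leibniz identity \eqref{eq:Leibniz}, one gets the one-line computation
$$\widetilde D\bigl(i(df\otimes s)\bigr) = f\,D(s) - D(fs) = -\,df\otimes\sigma(s) = -\,\sigma\circ(df\otimes s).$$
Since $i$ is a morphism of vector bundles over $M$, hence $C^\infty(M)$-linear, this extends to $\widetilde D\circ i = -\,\sigma\circ(\,\cdot\,)$ on all of $\Omega^1(M,\Ftemp)$.

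Combining the two previous steps gives $\widetilde D(\phi) = D(\alpha) - \sigma\circ\omega$, so that the condition $\widetilde D\circ\phi \equiv 0$ is exactly $D(\alpha) = \sigma\circ\omega$. Under the Spencer identification $\Gamma(J^1\Ftemp)\cong\Gamma(\Ftemp)\oplus\Omega^1(M,\Ftemp)$ this is precisely the asserted description of $\Gamma(J^1_D\Ftemp)$. The only point requiring genuine care is the well-definedness of $\widetilde D$, i.e.\ the first-order nature of $D$; once that is in place the rest is bookkeeping with the splitting, the minus sign in $\widetilde D\circ i$ moving $\sigma\circ\omega$ to the other side and producing exactly the clean relation $D(\alpha)=\sigma\circ\omega$ with the correct sign.
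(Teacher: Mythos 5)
Your proof is correct, but it takes a genuinely different route from the paper's. The paper argues through the associated linear Pfaffian form $\theta$ of \eqref{eq:relative_connection}: under the Spencer decomposition, the section $(\alpha,\omega)$ is, at each $x$, the splitting $d_x\alpha-\omega_x: T_xM\to T_{\alpha(x)}\Ftemp$, and membership in $J^1_D\Ftemp=J^1_\theta\Ftemp$ is exactly the condition that this splitting lands in $\ker(\theta)$; applying $\theta$ and using its linearity, $\alpha^*\theta=D(\alpha)$ and $\theta|_{T^\pi\Ftemp}=\sigma$, the paper gets $D_X(\alpha)-\sigma(\omega(X))=0$ in one line. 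You never touch $\theta$, splittings, or tangent vectors: you observe that the Leibniz identity \eqref{eq:Leibniz} makes $D$ a first-order operator, so that it descends to a vector bundle map $\widetilde D: J^1\Ftemp \arr \Hom(TM,\Etemp)$ with $J^1_D\Ftemp=\ker(\widetilde D)$ by Definition \ref{partial_prolongation_linear_pfaffian_bundle}, and then you compute $\widetilde D\circ i=-\sigma\circ(\cdot)$ directly from the formula $i(df\otimes s)=f\,j^1s-j^1(fs)$ of \eqref{eq:exact_sec}; your sign bookkeeping is consistent with the paper's conventions (the pair $(\alpha,\omega)$ corresponds to $j^1\alpha+i(\omega)$, which is the paper's splitting $d_x\alpha-\omega_x$), so both computations produce the same relation $D(\alpha)=\sigma\circ\omega$. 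Your route buys something the paper gets only from the general theory: since $\widetilde D\circ i=-\sigma\circ(\cdot)$ is fibrewise surjective ($\sigma$ being surjective), $\widetilde D$ is a surjective bundle map, so $J^1_D\Ftemp$ is automatically a smooth vector subbundle of $J^1\Ftemp$ of constant corank, a fact the paper instead inherits from the partial-prolongation machinery (Theorem \ref{thm:partial_prolongation}). Conversely, the paper's splitting argument is the one that survives unchanged in the nonlinear setting and is reused verbatim in the curvature computation of Lemma \ref{eq:curvaturas}, and it makes the identification $J^1_D\Ftemp=J^1_\theta\Ftemp$ transparent, whereas your argument is confined to the linear/relative-connection world. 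One small point to make explicit: evaluating $D$ on $t=\sum_a f_a e_a$ in a local frame presupposes that $D$ is a local operator; this follows from \eqref{eq:Leibniz} by the standard bump-function argument, but it deserves a sentence before your frame computation.
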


\begin{proof}
Using the decomposition \eqref{eq:Spencer_decomposition}, a section $(\alpha,\omega)$ of $J^1\Ftemp$ at $x$ is precisely the splitting 
\begin{equation}\label{eq:splitting}
d_x\alpha-\omega_x:T_xM\to T_{\alpha(x)}\Ftemp,
\end{equation}
 where $\omega_x$ is viewed as a map from $T_xM$ to $T_{\alpha(x)}^\pi \Ftemp$, when canonically identifying $T_{\alpha(x)}^\pi \Ftemp$ with $\Ftemp_x$. Therefore, the image of $(\alpha,\omega)_x$ belongs to $\ker(\theta)$ if and only if for all $X\in T_xM$
\begin{equation*}
0=\theta(d_x\alpha(X)-\omega(X))=\theta(d_x\alpha(X))-\theta(\omega(X))=\alpha^*\theta_x(X)-\sigma(\omega(X))=D_X(\alpha)-\sigma(\omega(X)). \qedhere
\end{equation*}
\end{proof}

Let us repeat the same discussion for the classical prolongation.

\begin{definition}\label{classical_prolongation_linear_pfaffian_bundle}
The {\bf classical prolongation of a linear Pfaffian fibration} $(\Ftemp, D)$ is
$$\mathrm{Prol}(\Ftemp,D):=\ker(K),$$ 
where $K$ is the vector bundle map
\begin{equation}\label{eq:K}K:J^1_D\Ftemp\to \Hom(\wedge^2 TM,\Etemp)\end{equation}
defined at the level of sections, for any $X,Y\in\mathfrak{X}(M)$, as
\begin{equation*}
K(\alpha,\omega)(X,Y)=D_X(\omega(Y))-D_Y(\omega(X))-\sigma(\omega[X,Y]). \qedhere
\end{equation*}
\end{definition}

As a consequence of the Lemma \ref{eq:curvaturas} below, one sees that the classical prolongation of $(E,D)$ as a linear Pfaffian fibration from Definition \ref{classical_prolongation_linear_pfaffian_bundle} coincides with the classical prolongation of $(E,\te)$ as a Pfaffian fibration from Definition \ref{def:classical_prolongation}, i.e.\ $\mathrm{Prol}(\Ftemp,D) = \mathrm{Prol}(\Ftemp,\theta)$. As the relative connection $D^{(1)}$ of $J^1_D\Ftemp$ is the projection to the second component of $\Gamma(J^1_D\Ftemp)\subset \Gamma(\Ftemp)\oplus \Omega^1(M,\Ftemp),$ the classical prolongation can be alternatively written as 
$$\mathrm{Prol}(\Ftemp,D)=\{j^1_xs \in J^1_D E \mid D_X\circ D^{(1)}_{Y}(s)(x)-D_Y\circ D^{(1)}_{X}(s)(x) = \sigma\circ D^{(1)}_{[X,Y]}(s)(x)\},$$
i.e.\ $\mathrm{Prol}(\Ftemp,D)$ is the largest bundle of vector spaces of $J^1_D\Ftemp$, where the condition (2) of Definition \ref{def:compatible} holds. 

\begin{lemma}\label{eq:curvaturas}
Let $(\Ftemp,\theta)$ be a linear Pfaffian fibration, with  $\theta\in\Omega^1(\Ftemp,\pi^*\Etemp)$, and let $(D,\sigma)$ be the associated relative connection. Then the map $\widetilde{\kappa}_H:J^1_D\Ftemp\to  \Hom(\pi^*\wedge^2 TM,\pi^*\Etemp)$ from equation \eqref{prol-c2} is precisely $-\pi^*K$, with $K$ as in \eqref{eq:K}.
\end{lemma}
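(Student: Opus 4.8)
The plan is to compute the pulled-back curvature $\ze^*\kappa_\te$ directly and to recognise $K$ as the relative-connection curvature of the Spencer $1$-form. I work pointwise: fix $j^1_x\al\in J^1_D\Ftemp$, write its Spencer data as a pair $(\al,\om)$ with $D(\al)=\si\circ\om$ via \eqref{eq:spencer_decomposition}, and recall from \eqref{eq:splitting} that the associated splitting is $\ze=d_x\al-\om_x$. Since $\widetilde\kappa_H(j^1_x\al)=\ze^*\kappa_\te$ by \eqref{prol-c2}, it suffices to prove $\ze^*\kappa_\te(X,Y)=-K(\al,\om)(X,Y)$ for all $X,Y\in T_xM$.

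First I would set up the ambient $2$-form. Fixing an auxiliary linear connection $\nabla^{\Etemp}$ on $\Etemp\arr M$ and pulling it back to $\nabla:=\pi^*\nabla^{\Etemp}$, the curvature $\kappa_\te$ is, by the discussion after \eqref{eq:curvature}, (up to sign) the restriction to $\ker\te$ of the genuine $2$-form $\Omega:=d_\nabla\te\in\Om^2(\Ftemp,\pi^*\Etemp)$; with the standard covariant exterior derivative one has $\kappa_\te=-\Omega|_{\ker\te}$. As $\Omega$ is tensorial, I expand $\ze=d_x\al-\om_x$ and evaluate $\Omega(\ze(X),\ze(Y))$ as a sum of four pieces $\Omega(d\al(X),d\al(Y))$, $\Omega(d\al(X),\om(Y))$, $\Omega(\om(X),d\al(Y))$ and $\Omega(\om(X),\om(Y))$, choosing for each the most convenient extension of the two tangent vectors.

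The piece quadratic in $d\al$ is clean: by naturality of the covariant exterior derivative and \eqref{eq:relative_connection}, $\Omega(d\al(X),d\al(Y))=(\al^*\Omega)(X,Y)=d_{\nabla^{\Etemp}}(\al^*\te)(X,Y)=d_{\nabla^{\Etemp}}(D\al)(X,Y)$. The pure vertical piece $\Omega(\om(X),\om(Y))$ vanishes, since the constant-along-the-fibre lifts of vertical vectors commute (as used in the proof that a linear Pfaffian fibration is a Pfaffian fibration), while $\nabla$ differentiates the pullback sections $\te(\overline{\om(\cdot)})=\pi^*(\si\circ\om(\cdot))$ trivially along vertical directions. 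The heart of the argument is the mixed piece: extending $d\al(X)$ to a $\pi$-projectable field whose $\te$-value is a pullback (a sum of an $H$-tangent lift and a constant-fibre vertical lift, using surjectivity of $\si$) and $\om(Y)$ to its constant-fibre lift, the three terms of $\Omega=d_\nabla\te$ collapse — via $\te|_{\ker\te}=0$, $\te|_{T^\pi\Ftemp}=\si$, and the bracket formula for $D$ of Remark \ref{rmk:operator_distribution} — to $\Omega(d\al(X),\om(Y))=\nabla^{\Etemp}_X(\si\circ\om(Y))-D_X(\om(Y))$.

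Finally I would assemble the four contributions and invoke the constraint $D\al=\si\circ\om$: the $d_{\nabla^{\Etemp}}(D\al)$ terms from the first piece cancel exactly the $\nabla^{\Etemp}$-terms coming from the two mixed pieces, leaving
$$\Omega(\ze(X),\ze(Y))=D_X(\om(Y))-D_Y(\om(X))-\si(\om([X,Y])),$$
which is $K(\al,\om)(X,Y)$ after comparison with Definition \ref{classical_prolongation_linear_pfaffian_bundle} (equivalently, recognising $K=D\om$ through the Koszul formula \eqref{eq:higher}). Since $\kappa_\te=-\Omega$ on $\ker\te$, this gives $\ze^*\kappa_\te=-K$, i.e.\ $\widetilde\kappa_H=-\pi^*K$. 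I expect the only genuinely delicate point to be this mixed term together with the sign bookkeeping throughout — the orientation of the bracket in Remark \ref{rmk:operator_distribution} and the sign relating $\kappa_\te$ to $d_\nabla\te$ — where a single misplaced sign turns $-K$ into a wrong expression; everything else is formal once the extensions are chosen so that every $\te$-value in sight is either zero or a pullback from $M$.
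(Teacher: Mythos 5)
Your proof is correct, and while it lands on the same skeleton as the paper's, it is organized through genuinely different machinery. Both arguments start from identical data: the Spencer decomposition \eqref{eq:spencer_decomposition}, the splitting $\ze = d_x\al - \om_x$ of \eqref{eq:splitting}, the commutation of constant-along-the-fibre vertical lifts, and the bracket formula for $D$ from Remark \ref{rmk:operator_distribution}. But the paper never introduces an auxiliary connection: it extends $\ze(X)$ to a $\pi$-projectable field tangent to $\ker\te$ and $\om(X)$ to its constant vertical lift, then computes $\kappa_\te(\ze(X),\ze(Y)) = \te([\cdot,\cdot])$ by expanding the bracket directly, using $\te([d\al(X),d\al(Y)]) = \al^*\te([X,Y]) = D_{[X,Y]}(\al)$ and invoking $D(\al)=\si\circ\om$ only at the end. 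You instead route everything through the tensorial form $\Om = d_\nabla\te$ for a pullback connection $\nabla = \pi^*\nabla^{F}$; this buys you manifest pointwise-ness (each of the four pieces can use its own convenient extensions) and lets naturality, $\al^*(d_\nabla\te) = d_{\nabla^{F}}(D\al)$, dispatch the $d\al$--$d\al$ piece in one stroke, at the price of extra $\nabla^{F}$-terms that must cancel across pieces --- a cancellation you correctly carry out via $D\al = \si\circ\om$. The paper's version is leaner; yours trades a careful single choice of extensions for more bookkeeping but more robust tensorial reasoning.

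Your flagged worry about signs is justified, and your resolution is the internally consistent one. With the standard Koszul convention one indeed has $\kappa_\te = -\,d_\nabla\te|_{\ker\te}$ (for $U,V$ tangent to $\ker\te$ the $\nabla$-terms vanish, leaving $d_\nabla\te(U,V) = -\te([U,V])$), whereas the parenthetical remark after \eqref{eq:curvature} omits this sign; likewise, a direct check in a trivialisation (say $E = M\times\RR$ with $\te = du$, so $D = d/dx$) shows the bracket formula should be read as $D_X(s) = \te([\bar X, \bar s])$, which is exactly the orientation your mixed-term computation uses. With these two mutually consistent choices your assembly gives $\Om(\ze(X),\ze(Y)) = K(\al,\om)(X,Y)$ on the nose, hence $\widetilde{\kappa}_H = -\pi^*K$ as stated --- in fact your writeup is more careful on this point than the paper's own, which mixes the two bracket orientations in consecutive lines.
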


\begin{proof}
Using the Spencer decomposition \eqref{eq:spencer_decomposition}, let $(\alpha,\omega)\in\Gamma(J^1_D\Ftemp)$; in terms of the form $\theta$, this means that $\alpha^*\theta=\theta\circ\omega$. Following \eqref{eq:splitting}, for $X,Y\in\X(M)$ we regard $d\alpha(X)-\omega(X)$ as a $\pi$-projectable vector field on $\ker(\theta)$, so that $\omega(X)$ is the vector field constant along the fibres of $\Ftemp$ and extending $\omega(X)$ (strictly speaking, we choose a $\pi$-projectable extension inside $\ker(\theta)$ so that it coincides with $d\alpha(X)-\omega(X)$ along $\alpha(M)\subset T\Ftemp$); we do the same for $d\alpha(Y)-\omega(Y)$. With this,
\begin{align*}
(\alpha,\omega)^*\kappa_{\theta}(X,Y)&=\theta([d\alpha(X)-\omega(X),d\alpha(Y)-\omega(Y)])\\
&=\theta[d\alpha(X),d\alpha(Y)]-\theta([d\alpha(X),\omega(Y)])-\theta([\omega(X),d\alpha(Y)-\omega(Y)])\\
&=\alpha^*\theta([X,Y])-\theta([d\alpha(X),\omega(Y)])+D_Y(\omega(X))\\
&=D_{[X,Y]}(\alpha)-\theta([d\alpha(X),\omega(Y)])+D_Y(\omega(X)),
\end{align*}
where in third line we use Remark \ref{rmk:operator_distribution} saying that $D_{Y}(\omega(X))$ is precisely $\theta([\omega(X),d\alpha(Y)-\omega(Y)])$; recall also that $(\alpha,\omega)$ belonging to $J^1_D\Ftemp=J^1_\theta \Ftemp$ means precisely that $d\alpha(X)-\omega(X)\in\ker(\theta)$ for all $X\in\X(M)$. Now, using the fact that vector fields constant along the fibres of $\Ftemp$ commute, we get that $[\omega(X),\omega(Y)]=0$, and therefore $\theta([d\alpha(X),\omega(Y)])$ can be computed as
\begin{align*}\theta([d\alpha(X),\omega(Y)])&=\theta([d\alpha(X),\omega(Y)])-\theta([\omega(X),\omega(Y)])+\theta([\omega(X),\omega(Y)])\\
&=\theta([d\alpha(X)-\omega(X),\omega(Y)])=-D_X(\omega).
\end{align*} 
Putting the two equations above together and using that $D(\alpha)=\sigma(\omega)$, we conclude the proof.
\end{proof}

As pointed out in the general discussion, $\mathrm{Prol}(\Ftemp,D)$ might fail to be a (smooth) fibre bundle over $\Ftemp$, the reasons being the lack of surjectivity of the projection $\mathrm{pr}:\mathrm{Prol}(\Ftemp,D)\to \Ftemp$, and that the rank over $M$ might vary. However, in this linear picture things simplify and the exact sequence \eqref{eq:exact_sec} for $J^1\Ftemp$ restricts to the exact sequence of vector bundles over $M$,
\begin{equation}\label{exact_sequence}
0\to\g(D)^{(1)}\to \mathrm{Prol}(\Ftemp,D)\overset{\mathrm{pr}}{\to}\Ftemp.
\end{equation}
Here $\g(D)^{(1)}$ is the first prolongation of the symbol space $\g(D)$, viewed as a tableau in the sense of equation \eqref{eq:prolongation_tableau}, with
$$\partial_D:\g(D)\to \Hom(TM, \Etemp),\quad \partial_D(v)(X)\mapsto D_X(v);$$
using $\g(D)=\ker(\sigma)$ and the Leibniz identity of $D$ w.r.t.\ $\sigma$, one can easily verify that $\partial_D$ is a well-defined linear map. One checks that the sequence \eqref{exact_sequence} is exact by considering a section of $J^1_D\Ftemp$ that belongs to $\mathrm{Prol}(\Ftemp,D)$, which lives inside $\ker(\mathrm{pr})$, i.e.\ its second component in the decomposition \eqref{eq:spencer_decomposition} is zero. 

Now, the surjectivity of $\mathrm{pr}:\mathrm{Prol}(\Ftemp,D)\to \Ftemp$ is of course related to the map $K$ of equation \eqref{eq:K}. Indeed, letting  
$$\delta_D:\mathrm{Hom}(TM,\g(D))\to\mathrm{Hom}(\wedge^2TM,\Etemp),$$
defined by $\delta_D(\eta)(X,Y)=\partial_D(\eta(X))(Y)-\partial_D(\eta(Y))(X)$, we see that $K$ descends to a vector bundle map 
$$T:\Ftemp\to\Hom(\wedge^2TM,\Etemp)/\mathrm{Im}(\delta_D),\ p\mapsto[K(\xi)],$$
where $\xi\in J^1_D\Ftemp$ is any element that projects to $p$; it is a straightforward computation using the decomposition \eqref{eq:spencer_decomposition} that $T$ is well defined. It is now a simple exercise to check that the zero-set of $T$ is precisely the image of $\mathrm{pr}:\mathrm{Prol}(\Ftemp,D)\to \Ftemp$. Thus, we have just proved the following:

\begin{proposition} The classical prolongation $\mathrm{Prol}(\Ftemp,D)$ is a (smooth) subbundle of $J^1\Ftemp\to \Ftemp$ if and only if $T=0$ and the prolongation $\g(D)^{(1)}$ has constant rank. In this case, the restriction of the Spencer operator
$$D^{(1)}:\Gamma(\mathrm{Prol}(\Ftemp,D))\to \Omega^1(M,\Ftemp),$$
is compatible with D.
\end{proposition}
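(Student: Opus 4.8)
The plan is to treat this statement as the linear incarnation of Lemma~\ref{lemma:smooth} together with Theorem~\ref{thm:classical_prolongation}, assembling the ingredients just set up: the left-exact sequence \eqref{exact_sequence}, the vector bundle map $K$ of \eqref{eq:K}, and the induced torsion $T:\Ftemp\arr \Hom(\wedge^2 TM,\Etemp)/\mathrm{Im}(\delta_D)$ whose zero-set is the image of $\mathrm{pr}:\mathrm{Prol}(\Ftemp,D)\arr\Ftemp$. Since Lemma~\ref{eq:curvaturas} identifies $\mathrm{Prol}(\Ftemp,D)=\mathrm{Prol}(\Ftemp,\theta)$, one could alternatively deduce the equivalence straight from Theorem~\ref{thm:classical_prolongation}, after checking that the symbol prolongation $\g(\theta)^{(1)}$ and the torsion $\tors$ of $(\Ftemp,\theta)$ correspond to $\g(D)^{(1)}$ and $T$ via $\g(\theta)\cong\pi^*\g(D)$. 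I will however give the self-contained argument, using only the objects living over $M$.

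For the equivalence I would first observe that $\g(D)=\ker(\sigma)$ has constant rank, being the kernel of the surjective symbol map $\sigma$ (Definition~\ref{def_relative_connection}); hence $\Hom(TM,\g(D))$ is an honest vector bundle and $\g(D)^{(1)}=\ker(\delta_D)$. Next I unwind what it means for $\mathrm{Prol}(\Ftemp,D)=\ker(K)$ to be a subbundle of the affine bundle $J^1\Ftemp\arr\Ftemp$. Since $\mathrm{pr}$ is linear over $M$ with $\ker(\mathrm{pr}\mid_{\mathrm{Prol}})=\g(D)^{(1)}$ by \eqref{exact_sequence}, the fibre of $\mathrm{pr}$ over $e\in\Ftemp_x$, when non-empty, is an affine space modelled on $\g(D)^{(1)}_x$; thus $\mathrm{Prol}(\Ftemp,D)$ is a smooth affine subbundle over $\Ftemp$ exactly when (a) every such fibre is non-empty, i.e.\ $\mathrm{pr}$ is surjective, and (b) $\g(D)^{(1)}$ has constant rank. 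Finally (a) is equivalent to $T=0$: the zero-set of $T$ is the image of $\mathrm{pr}$ and $T$ is linear over $M$, so this image is all of $\Ftemp$ precisely when $T$ vanishes. This yields both directions at once; conversely, given $T=0$ and constant rank of $\g(D)^{(1)}$, the sequence \eqref{exact_sequence} closes up to a short exact sequence $0\to\g(D)^{(1)}\to\mathrm{Prol}(\Ftemp,D)\xrightarrow{\mathrm{pr}}\Ftemp\to 0$ of vector bundles over $M$, so $\mathrm{Prol}(\Ftemp,D)$ is smooth of constant rank and fibres nicely over $\Ftemp$.

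For the compatibility of $D^{(1)}$ with $D$, I would simply read off the two conditions of Definition~\ref{def:compatible}. Condition (1), $\sigma\circ D^{(1)}=D\circ\mathrm{pr}$, holds already on all of $J^1_D\Ftemp$, as the latter is by construction the largest subbundle of $J^1\Ftemp$ on which it holds; moreover $T=0$ makes $\mathrm{pr}:\mathrm{Prol}(\Ftemp,D)\arr\Ftemp$ surjective, so that $(D^{(1)},\mathrm{pr})$ is a genuine relative connection. Condition (2) is exactly the defining equation of $\mathrm{Prol}(\Ftemp,D)$ in its alternative description (the largest subbundle of $J^1_D\Ftemp$ on which $D_X\circ D^{(1)}_Y-D_Y\circ D^{(1)}_X-\sigma\circ D^{(1)}_{[X,Y]}$ vanishes), hence holds on $\mathrm{Prol}(\Ftemp,D)$ by definition. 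Both conditions of Definition~\ref{def:compatible} being satisfied, the restricted $D^{(1)}$ is compatible with $D$.

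The proof is essentially bookkeeping: the analytic content sits in Lemma~\ref{eq:curvaturas} and in the identification of $T$ as the obstruction to surjectivity of $\mathrm{pr}$, both already in hand. The only point requiring care is the simultaneous juggling of the two bundle structures on $J^1\Ftemp$ — as a vector bundle over $M$ and as an affine bundle over $\Ftemp$ — and the attendant dimension count through \eqref{exact_sequence}; this is the step I expect to be the most delicate to phrase cleanly, although it is not deep.
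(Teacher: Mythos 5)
Your proposal is correct and follows essentially the same route as the paper, which proves this proposition by the discussion preceding it: the exact sequence \eqref{exact_sequence} identifying $\g(D)^{(1)}$ as the kernel of $\mathrm{pr}$, the identification of the zero-set of $T$ with the image of $\mathrm{pr}:\mathrm{Prol}(\Ftemp,D)\to\Ftemp$, and the characterisations of $J^1_D\Ftemp$ and $\mathrm{Prol}(\Ftemp,D)$ as the loci where conditions (1) and (2) of Definition \ref{def:compatible} hold. Your added remarks (constancy of $\rk\g(D)$ via surjectivity of $\sigma$, and surjectivity of $\mathrm{pr}$ being needed for $(D^{(1)},\mathrm{pr})$ to be a genuine relative connection) are accurate and consistent with the paper's argument.
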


As in Remark \ref{rmk:solutions}, even not assuming any smoothness condition on $\mathrm{Prol}(\Ftemp,D)$, the map 
$$\Gamma(\mathrm{Prol}(\Ftemp,D), D^{(1)})\to\Gamma(\Ftemp,D),\quad \xi\mapsto \mathrm{pr}\circ \xi$$
defines a bijection, with inverse $s\in \Gamma(\Ftemp,D)\mapsto j^1s$. Moreover, $D^{(1)}$ is universal among the connections compatible to $D$ in the following sense:

\begin{proposition} If $(\Ftemp',D')$ is a relative connection compatible with $(\Ftemp,D)$, then there exists a unique vector bundle map $j: \Ftemp'\to \mathrm{Prol}(\Ftemp,D)$ so that 
$$D'=D^{(1)}\circ j.$$
\end{proposition}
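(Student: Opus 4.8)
The plan is to write $j$ down explicitly on sections through the Spencer decomposition, to observe that the two clauses of compatibility (Definition \ref{def:compatible}) are exactly what force the image into $\mathrm{Prol}(E,D)$, and then to argue that the resulting section-level assignment is tensorial and unique.

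First I would set, for each $s'\in\Gamma(E')$,
$$ j(s') := \bigl(\sigma'(s'),\, D'(s')\bigr)\in\Gamma(E)\oplus\Omega^1(M,E), $$
regarded inside $\Gamma(J^1E)$ via the Spencer decomposition \eqref{eq:Spencer_decomposition}; recall that under this identification $\mathrm{pr}$ reads off the first slot and $D^{(1)}$ (the restriction of the classical Spencer operator \eqref{eq:Spencer_operator}) reads off the second. Compatibility condition $(1)$ gives $D(\sigma'(s'))=\sigma\circ D'(s')$, which is precisely the constraint \eqref{eq:spencer_decomposition} cutting out $\Gamma(J^1_DE)$, so $j(s')\in\Gamma(J^1_DE)$. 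Feeding $(\sigma'(s'),D'(s'))$ into the map $K$ of \eqref{eq:K} yields
$$ K\bigl(\sigma'(s'),D'(s')\bigr)(X,Y)=D_X\bigl(D'_Y(s')\bigr)-D_Y\bigl(D'_X(s')\bigr)-\sigma\bigl(D'_{[X,Y]}(s')\bigr), $$
which vanishes by compatibility condition $(2)$; hence $j(s')$ is a section of $\mathrm{Prol}(E,D)=\ker(K)$, and $D^{(1)}\circ j=D'$ because $D^{(1)}$ is the projection onto the second Spencer component.

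Next I would check that $s'\mapsto j(s')$ descends to a genuine bundle map. It is $C^\infty(M)$-linear for the twisted module structure carried by $\Gamma(J^1E)$, namely $f\cdot(\alpha,\omega)=(f\alpha,\,f\omega+df\otimes\alpha)$: indeed $\sigma'(fs')=f\sigma'(s')$ since $\sigma'$ is a bundle map, while $D'(fs')=fD'(s')+df\otimes\sigma'(s')$ by the Leibniz identity \eqref{eq:Leibniz}, so $j(fs')=f\cdot j(s')$. A $C^\infty(M)$-linear map of sections is induced by a unique vector bundle map $j\colon E'\to J^1E$, and by the previous step its image lies in $\mathrm{Prol}(E,D)$; if the latter is not smooth one simply reads $j$ as landing in the subset $\mathrm{Prol}(E,D)\subset J^1_DE$, exactly as in Remark \ref{rmk:non_smooth}.

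For uniqueness I would use that a bundle map $E'\to J^1E$ is pinned down by the pair $(\mathrm{pr}\circ j,\ D^{\mathrm{clas}}\circ j)$: two such maps with the same $\mathrm{pr}$-component differ by a map into $\ker(\mathrm{pr})=\Hom(TM,E)$, on which $D^{\mathrm{clas}}$ restricts to the identity (via the inclusion $i$ of \eqref{eq:exact_sec}), so equal composites force the maps to agree. Since our maps land in $\mathrm{Prol}(E,D)\subset J^1_DE$ we have $D^{\mathrm{clas}}\circ j=D^{(1)}\circ j$, and the composite $D^{\mathrm{clas}}\circ j$ is a relative connection whose symbol is $\mathrm{pr}\circ j$; imposing $D^{(1)}\circ j=D'$ therefore fixes the composite and forces $\mathrm{pr}\circ j=\sigma'$ (the symbol of $D'$), so both components are prescribed and $j$ is unique. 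The only genuinely delicate point is this bookkeeping with the twisted $C^\infty(M)$-structure on $J^1E$ and the identification of the induced connection's symbol; the ``analytic'' content of the statement is trivial, amounting to nothing more than the unpacking of conditions $(1)$ and $(2)$.
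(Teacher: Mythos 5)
Your proposal is correct, and it reaches the paper's conclusion by a genuinely more self-contained route. The formula you write down, $j(s')=(\sigma'(s'),D'(s'))$ in the Spencer decomposition \eqref{eq:Spencer_decomposition}, is exactly the one the paper records, and your observation that condition (1) of Definition \ref{def:compatible} is the defining constraint \eqref{eq:spencer_decomposition} of $\Gamma(J^1_D\Ftemp)$ while condition (2) is precisely the vanishing of $K$ from \eqref{eq:K} matches the paper's remark verbatim. The difference is in how the rest is justified: the paper treats the proposition as an immediate consequence of the nonlinear universal property of the classical prolongation (Proposition \ref{prop:classical_universal}), routed through the lemma identifying compatible connections with linear normalised prolongations, so existence, the landing in $\mathrm{Prol}(\Ftemp,D)$, and uniqueness are all inherited from the nonlinear statement. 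You instead stay entirely inside the linear formalism and supply the details the paper leaves implicit: the tensoriality check via the twisted module structure $f\cdot(\alpha,\omega)=(f\alpha,f\omega+df\otimes\alpha)$, which is the correct description of fibrewise scalar multiplication on $\Gamma(J^1\Ftemp)$ in Spencer components, and the uniqueness argument, where you correctly note that $D^{\mathrm{clas}}\circ j$ is a relative connection with symbol $\mathrm{pr}\circ j$, so that $D^{(1)}\circ j=D'$ forces $\mathrm{pr}\circ j=\sigma'$ and then both Spencer components of $j$ are pinned down (using $D^{\mathrm{clas}}\circ i=\mathrm{id}$ on $\Omega^1(M,\Ftemp)$). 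This buys a proof independent of the nonlinear machinery (integral elements, curvature identities, Proposition \ref{criterion_abstract_prolongation}), at the cost of redoing work the paper gets for free; conversely the paper's route is shorter and exhibits the linear statement as a shadow of the nonlinear one. Two small caveats, neither fatal: your symbol-determination step needs $\dim M>0$ (choose $f$ with $d_xf\neq 0$), an assumption the paper itself makes in Lemma \ref{lemma:composition}; and your handling of a possibly non-smooth $\mathrm{Prol}(\Ftemp,D)$ by reading $j$ as landing in the subset is exactly the paper's own convention from Remark \ref{rmk:non_smooth}.
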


Of course the above proposition is consequence of Proposition \ref{prop:classical_universal} for non-linear prolongations. We only remark that, in this case, $j=\varphi$ is defined in terms of $D'$, and at the level 
of sections is given by 
$$j(s)=(\sigma'(s),D'(s))\in\Gamma(\Ftemp)\oplus \Omega^{1}(M,\Ftemp).$$
The conditions for compatible connections mean that $j(s)$ actually lands in $\mathrm{Prol}(\Ftemp,D).$

\begin{observation}
As we had remarked on \ref{rmk:operator_distribution}, in the linear case many of the objects associated to a Pfaffian fibration sit on top of $M$. Of course, for any linear distribution $H$, the symbol map $\partial_H$ of equation \eqref{eq:symbol_map}, the prolongation $\g^{(1)}(H):= \g^{(1)}(\partial)$ of equation \eqref{eq:prolongation_tableau}, and the torsion map $\tors$ of equation \eqref{eq:higher_curvature}, are just pullbacks of the analogous objects for the associated relative connection $D$. In fact, from Remark \ref{rmk:operator_distribution} we know that $\g(H)\cong\pi^*\g(D)$ and this isomorphism comes from the canonical identification of $T^\pi \Ftemp$ with $\pi^*\Ftemp$ by translating vertical vectors to the zero section, Therefore, using the description of $D$ in terms of $H$ as in Remark \ref{rmk:operator_distribution} we have
\begin{equation*}
\partial_H=\pi^*\partial_D,\quad \g^{(1)}(H)\cong\pi^*\g^{(1)}(D), \quad \tors=\pi^*T. \qedhere
\end{equation*}
%This is the case of the prolongation of the symbol space w.r.t.\ $\partial$ \eqref{eq:symbol_map}, defined for the linear form $\theta$ associated to $D$. Using the flow of the vertical vector field $\bar s$ constant along the fibres induced by a section $s\in\Gamma(\Ftemp)$, to compute the Lie bracket with a vector field $\bar X$ tangent to $\ker(\theta)$, one sees that linearity of $\theta$ implies that $\theta([\bar s, \bar X])_p=\theta([\bar s, \bar X])_{\pi(p)}$, for any $p\in \Ftemp$; hence, translating any vertical vector $v_p$ on the symbol space $\g(\theta)\subset T^\pi \Ftemp$, to $v=v_{\pi(p)}$ on $\g(D)\subset \Ftemp$ (see \eqref{eq:symbol_spaces}), we see that $\partial(v_p)(X_{\pi(p)})=\partial(v)(X_{\pi(p)})$. By remark \ref{rmk:operator_distribution},
%$$\partial(v_p)(X)=D_X(v)=\partial_D(v)(X),$$
%so we conclude that translation of vertical vectors gives an isomorphism
%$$\g(\theta)^{(1)}\cong\pi^*(\g(D)^{(1)}).$$ 
\end{observation}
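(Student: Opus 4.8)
The plan is to reduce all three identities to the single statement $\partial_H=\pi^*\partial_D$; once that is in place, the other two follow formally, since both the prolongation of a (generalised) tableau and the torsion are constructed fibrewise out of the symbol map. I would first fix the identifications implicit in the claim: the isomorphism $\g(H)\cong\pi^*\g(D)$ from Remark \ref{rmk:operator_distribution}, obtained by translating vertical vectors to the zero section via \eqref{eq:translation}, together with the isomorphism $\mathcal{N}_H\cong\pi^*\Etemp$ of Lemma \ref{rmk:linear}. Under these identifications every bundle occurring in $\partial_H$ and in $\pi^*\partial_D$ has the same fibres, so the claim is a genuine equality of linear maps.

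For $\partial_H=\pi^*\partial_D$ I would unwind the two definitions and compare. By \eqref{eq:symbol_map}, for $v\in\g(H)_p$ and $Y\in T_xM$ (with $x=\pi(p)$) we have $\partial_H(v)(Y)=\kappa_H(v,\bar Y)=[\bar v,\bar Y]_p\bmod H$, where $\bar Y$ is a $\pi$-projectable extension of $Y$ tangent to $H$ and $\bar v$ is the vertical field constant along the fibres extending $v$. By the formula for $D$ in Remark \ref{rmk:operator_distribution}, if $w\in\g(D)_x$ corresponds to $v$ and $s\in\Gamma(\g(D))$ is a section through $w$, then $\partial_D(w)(Y)=D_Y(s)(x)=[\bar s,\bar Y]_{\mathbf{0}(x)}\bmod H$. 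Since $\bar v$ and $\bar s$ are the \emph{same} constant-along-the-fibre field, the two expressions differ only in the fibre point at which the bracket is reduced modulo $H$.

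The remaining two identities are then formal. The prolongation $\g^{(1)}(H)=\g^{(1)}(\partial_H)$ is cut out of $\Hom(\pi^*TM,\g(H))$ by the symmetry condition $\partial_H(\eta(X))(Y)=\partial_H(\eta(Y))(X)$ of \eqref{eq:prolongation_tableau}; as $\g(H)\cong\pi^*\g(D)$ and $\partial_H=\pi^*\partial_D$, this is exactly the $\pi$-pullback of the condition defining $\g^{(1)}(D)$, whence $\g^{(1)}(H)\cong\pi^*\g^{(1)}(D)$. The same equality gives $\delta_H=\pi^*\delta_D$, hence $\mathrm{Im}(\delta_H)=\pi^*\mathrm{Im}(\delta_D)$; combining this with Lemma \ref{eq:curvaturas}, which identifies the affine map $\widetilde{\kappa}_H$ of \eqref{prol-c2} with $-\pi^*K$, and descending modulo these images as in \eqref{eq:higher_curvature}, one obtains $\tors=\pi^*T$ (the overall sign being immaterial in the quotient).

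The step I expect to require the most care is the base-point issue in establishing $\partial_H=\pi^*\partial_D$: one must show that $[\bar s,\bar Y]\bmod H$, which defines $D$ along the zero section, agrees after applying the translation $da_e$ of \eqref{eq:translation} with the value of $\kappa_H$ at an arbitrary point $p$ of the fibre. This is precisely where linearity of $H$ enters — it is the same mechanism (constant-along-fibre fields commute and $da_e$ preserves $\g(H)$) used to prove $\pi$-involutivity of a linear distribution and to establish Lemma \ref{rmk:linear} — and it is what makes the reduction modulo $H$ constant along each fibre, identifying the value at $p$ with the one at $\mathbf{0}(x)$ and thus yielding $\partial_H(v)(Y)=\partial_D(w)(Y)$.
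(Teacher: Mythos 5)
Your proposal is correct and takes essentially the same route as the paper: the Observation is justified there precisely by the translation-to-the-zero-section identification of Remark \ref{rmk:operator_distribution} together with the bracket formula $D_X(s)=[\bar s,\bar X]\bmod H$, from which the symbol identity $\partial_H=\pi^*\partial_D$ is read off and the prolongation and torsion identities follow formally (the latter via Lemma \ref{eq:curvaturas}), exactly as you argue. One small caveat: the sign in $\widetilde{\kappa}_H=-\pi^*K$ is not literally ``immaterial in the quotient'' (quotienting by $\mathrm{Im}(\delta_H)$ does not identify $[a]$ with $[-a]$), but it matters only up to the automorphism $-\mathrm{id}$ of the quotient bundle — a sign slack already present in the paper's own statement $\tors=\pi^*T$.
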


\begin{observation}[\bf Linearisation of Pfaffian prolongations along holonomic sections]
As we did for Pfaffian fibrations (Section \ref{sec:linearisation}), we can linearise Pfaffian normalised prolongations 
$$\phi:(P',\theta')\to (P,\theta)$$
along a holonomic section $\xi \in \Gamma(P',\theta')$ and its image $\phi(\xi)\in\Gamma(P,\theta)$, and obtain compatible connections
$$ \mathrm{Lin}_{\xi}(P', \theta') \overset{D'^\xi}{\longrightarrow} \mathrm{Lin}_{\phi(\xi)}(P, \theta)\overset{D^{\phi(\xi)}}{\longrightarrow}\phi(\xi)^*\mathcal{N}.$$
%where $(\mathrm{Lin}_{\xi}(P', \theta') ,D'^\xi)$ is the linearisation of $(P',\theta')$ along $\xi$, and $(\mathrm{Lin}_{\phi(\xi)}(P, \theta),D^{\phi(\xi)})$ the linearisation of $(P,\theta)$ along $\phi(\xi)\in\Gamma(P,\theta)$.

As a particular case, if $P' = \mathrm{Prol} (P,\te)$, $\phi = \mathrm{pr}$ and $\xi = j^1 \beta$, for $\beta$ a holonomic section of $(P,\te)$ (so that $\mathrm{pr}(\xi) = \beta$), the 
%In particular, the classical (and the partial) prolongation of $(P,\theta)$ can be linearised along a holonomic section $\beta$ of $\theta$ to a connection compatible with $D$. The
 functoriality of linearisation implies that 
$$\mathrm{Prol}(\mathrm{Lin}_{\beta}(P, \theta),D^\beta)=\mathrm{Lin}_{j^1\beta}(\mathrm{Prol}(P,\te)),\quad D^{(1)}=D^{j^1\beta}.$$
%Similarly, objects such as the symbol space, the prolongation space, etc., are linearised along $\beta$ to the analogous objects on $(P^\beta,D^\beta)$. Of course all the properties of $(P,\theta)$ pass to its linearisation.

This linearisation becomes particularly nice when applied to Pfaffian groupoids along the unit section, where the multiplicativity allows us to translate properties of the linearisation to the analogous properties of the Pfaffian groupoid (see Remark \ref{obs_pfaffian_groupoids}).
\end{observation}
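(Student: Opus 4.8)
The plan is to establish the two assertions of the remark separately: first, the general fact that linearising a normalised prolongation $\phi:(P',\te')\to(P,\te)$ along a holonomic $\xi$ produces compatible relative connections; and second, the functoriality identity in the special case $P'=\mathrm{Prol}(P,\te)$. For the first, I would verify directly conditions (1) and (2) of Definition \ref{def:compatible} for the pair $(D'^\xi,\si'^\xi)$, $(D^{\phi(\xi)},\si^{\phi(\xi)})$ by linearising along $\xi$ the two relations that characterise a normalised prolongation in the form picture (Remark \ref{rmk:normalised}), namely $\phi^*\te=\te\circ\te'$ and $\te(\kappa_{\te'}(u,v))=\kappa_\te(d\phi(u),d\phi(v))=0$ for $u,v\in\ker(\te')$. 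I write $\be:=\phi(\xi)$, which is holonomic for $\te$ (Proposition \ref{prop:curvature_commute}), and use the normalised identifications $\xi^*\mathcal{N}'=\xi^*\phi^*(T^\pi P)=\be^*(T^\pi P)=\mathrm{Lin}_{\be}(P,\te)$ and $\si'^\xi=\xi^*(d\phi|_{T^{\pi'}P'})$.

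For condition (1), fix $s\in\Ga(\xi^*T^{\pi'}P')$ with variation $\xi_t$ ($\xi_0=\xi$, $\frac{d}{dt}|_0\xi_t=s$), and set $\be_t:=\phi\circ\xi_t$, a variation of $\be$ with velocity $d\phi(s)=\si'^\xi(s)$. Pulling $\phi^*\te=\te\circ\te'$ back along $\xi_t$ gives $\be_t^*\te=\te\circ(\xi_t^*\te')$; I would differentiate at $t=0$ and project to $\mathcal{N}$. By Definition \ref{def_linearisation_pfaffian_bundle} the left side yields $D^{\be}_X(\si'^\xi(s))$. On the right side the curve $t\mapsto\xi_t^*\te'(X)$ starts at the zero vector, since $\xi^*\te'=0$; hence in the chain rule for the fibrewise-linear bundle map $\te$ the contribution of the moving base point $\be_t$ is multiplied by the vanishing value and drops out, leaving only $\te(D'^\xi_X(s))=\si^{\be}(D'^\xi_X(s))$. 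This is exactly $D^{\be}\circ\si'^\xi=\si^{\be}\circ D'^\xi$.

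For condition (2), Lemma \ref{lemma:composition} reduces it to the vanishing of the composite $D^{\be}\circ D'^\xi$, which by the Koszul formula is $D^{\be}_XD'^\xi_Y(s)-D^{\be}_YD'^\xi_X(s)-\si^{\be}D'^\xi_{[X,Y]}(s)$. The strategy is to realise this as the linearisation along $\xi$ of $(u,v)\mapsto\te(\kappa_{\te'}(u,v))$ evaluated on $\pi'$-projectable lifts of $X,Y$ inside $\ker(\te')$ — the same form computation as in the proof of the lemma characterising compatible relative connections as linear normalised prolongations — and then to invoke $\te\circ\kappa_{\te'}\equiv 0$ on $\ker(\te')$ from Remark \ref{rmk:normalised}. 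This is the main obstacle: it needs a careful second-order variation, checking that all lower-order terms are killed by the holonomicity $\xi^*\te'=0$ and by the commutation of vertical vector fields constant along the fibres. Equivalently, one may package (1) and (2) together by proving that the linear symbol map $\si'^\xi$ is itself a \emph{linear} normalised prolongation and quoting the said lemma.

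For the functoriality identity, recall that $\mathrm{pr}:(\mathrm{Prol}(P,\te),\te^{(1)})\to(P,\te)$ is a normalised prolongation (Theorem \ref{thm:classical_prolongation}) and that $j^1\be$ is the holonomic lift of $\be$ with $\mathrm{pr}(j^1\be)=\be$ (Remark \ref{bijection_solutions_prolongations}), so $\mathrm{Lin}_{\mathrm{pr}(j^1\be)}(P,\te)=\mathrm{Lin}_\be(P,\te)$. Applying the first part to $\phi=\mathrm{pr}$, $\xi=j^1\be$ shows that $(D^{j^1\be},D^\be)$ is a compatible pair with $D^{j^1\be}$ living on $\mathrm{Lin}_{j^1\be}(\mathrm{Prol}(P,\te))=(j^1\be)^*T^\pi\mathrm{Prol}(P,\te)$. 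To identify this with $\mathrm{Prol}(\mathrm{Lin}_\be(P,\te),D^\be)$ I would first match underlying vector bundles: since $\mathrm{pr}:\mathrm{Prol}(P,\te)\to P$ is affine modelled on $\g(\te)^{(1)}$ (Theorem \ref{thm:classical_prolongation}), its vertical bundle sits in $0\to\mathrm{pr}^*\g(\te)^{(1)}\to T^\pi\mathrm{Prol}(P,\te)\to\mathrm{pr}^*T^\pi P\to 0$; pulling back along $j^1\be$ and using $\g(D^\be)=\be^*\g(\te)$ together with $\partial_{D^\be}=\be^*\partial_H$ (the symbol map linearises to the symbol map), whence $\g(D^\be)^{(1)}=\be^*\g(\te)^{(1)}$, produces the same extension $0\to\be^*\g(\te)^{(1)}\to\,\cdot\,\to\be^*T^\pi P\to 0$ as the sequence \eqref{exact_sequence} for $\mathrm{Prol}(\mathrm{Lin}_\be(P,\te),D^\be)$. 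Finally, by the universal property of the linear classical prolongation (the unlabelled proposition factoring any compatible connection uniquely through $D^{(1)}$), the compatible connection $D^{j^1\be}$ induces a map to $D^{(1)}$; universality of $\mathrm{pr}$ on the nonlinear side (Proposition \ref{prop:classical_universal}) forces this to be an isomorphism compatible with the two extensions, giving $\mathrm{Prol}(\mathrm{Lin}_\be(P,\te),D^\be)=\mathrm{Lin}_{j^1\be}(\mathrm{Prol}(P,\te))$ and $D^{(1)}=D^{j^1\be}$. The delicate point here is to check that the two universal factorisations are mutually inverse, i.e.\ that linearisation genuinely transports the nonlinear universal property to the linear one rather than merely yielding abstractly isomorphic bundles.
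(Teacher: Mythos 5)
Your proposal supplies an actual argument for a statement the paper only asserts: the remark carries no proof in the text, being justified implicitly by the machinery of Sections 3.2--3.3 and the linear prolongation section. Your architecture is the natural one, and the paper's, insofar as it has one: linearise the two identities of Remark \ref{rmk:normalised} characterising a normalised prolongation, then compare universal objects. Your treatment of condition (1) of Definition \ref{def:compatible} is sound: since $\xi^*\te'=0$, the curve $t\mapsto \xi_t^*\te'(X)$ starts at the zero section, so in differentiating $\bar\te_{\be_t(x)}\bigl(\xi_t^*\te'(X)\bigr)$ the base-variation term is paired with a vanishing value and only $\si^{\be}\circ D'^{\xi}$ survives; this is a correct use of the standard splitting $T_{0}\mathcal{N}\cong T_pP\oplus \mathcal{N}_p$ underlying Definition \ref{def_linearisation_pfaffian_bundle}.

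There are, however, two genuine gaps, both of which you flag but neither of which you close. First, condition (2) of Definition \ref{def:compatible} is only a plan: the second-order variation realising $D^{\be}_XD'^{\xi}_Y-D^{\be}_YD'^{\xi}_X-\si^{\be}D'^{\xi}_{[X,Y]}$ as the linearisation of $\te(\kappa_{\te'}(\cdot,\cdot))$ is named but not performed, and your fallback --- prove that $\si'^{\xi}$ is a \emph{linear} normalised prolongation and quote the lemma characterising those as compatible pairs --- is circular as stated, since verifying that the linearised morphism annihilates the linearised curvature \emph{is} condition (2); what is needed is to redo, along the variation $\xi_t$, the $d_D\te'$-computation of that lemma with $\pi'$-projectable lifts, checking that the terms not killed by $\te\circ\kappa_{\te'}\equiv 0$ on $\ker(\te')$ vanish at $t=0$ by holonomicity of $\xi$. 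Second, in the functoriality step you assert $\partial_{D^{\be}}=\be^*\partial_H$ (with $\partial_H$ as in \eqref{eq:symbol_map}) without proof --- true, but requiring the same flow computation --- and the closing move ``universality forces an isomorphism'' is not an argument: the linear universal property produces one map $j(s)=(\si'^{j^1\be}(s),\,D^{j^1\be}(s))$ into $\mathrm{Prol}(\Lin_{\be}(P,\te),D^{\be})$, while Proposition \ref{prop:classical_universal} lives on the nonlinear side and hands you no inverse. The honest finish is elementary: $j$ intertwines the pullback along $j^1\be$ of the vertical sequence of the affine bundle $\mathrm{pr}:\mathrm{Prol}(P,\te)\arr P$ of Theorem \ref{thm:classical_prolongation}, namely $0\arr \be^*\g^{(1)}\arr \Lin_{j^1\be}(\mathrm{Prol}(P,\te))\arr \be^*T^\pi P\arr 0$, with the sequence \eqref{exact_sequence} for $(\Lin_{\be}(P,\te),D^{\be})$; it restricts to the identity on kernels once $\g(D^{\be})^{(1)}=\be^*\g^{(1)}$ is in place, and since the left-hand sequence is exact with surjective right arrow, exactness and the five lemma force $j$ to be an isomorphism (in particular showing the linearised torsion vanishes along $\be$, which your proposal never addresses). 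With these two computations supplied, your proof would be complete and would be the natural expansion of what the paper leaves implicit.
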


\section{Integrability of Pfaffian fibrations}

Informally speaking, when we prolong a Pfaffian fibration $(P,H)$, we are trying to determine if an element of $(P,H)$ comes from a section which is ``holonomic up to order 1''; if we prolong again then we are looking for sections which are ``holonomic up to order 2'', etc. If we can repeat this process indefinitely, we find a formal holonomic section of the Pfaffian fibration i.e.\ a Taylor series of a potential holonomic section of $(P,H)$.

Let us be more specific. To simplify the notation, denote by
$$ P^{(1)} := \mathrm{Prol}(P,H)$$
the classical prolongation of $(P,H)$ from Definition \ref{def:classical_prolongation}. Under the conditions of Theorem \ref{thm:classical_prolongation}, the projection $P^{(1)} \arr P$ is a fibration and the prolongation is in turn a smooth Pfaffian fibration over $M$. We could therefore build the classical prolongation of $P^{(1)}$ and denote it by $(P^{(2)}, H^{(2)})$; this sits inside a jet bundle, as $ P^{(2)} \subset J^1_{H^{(1)}} P^{(1)} \subset J^1 P^{(1)}$, but may not be a smooth submanifold, and the projection over $P^{(1)}$ may not be a fibration. However, if we apply again Theorem \ref{thm:classical_prolongation}, we find conditions under which also $P^{(2)}$ is a Pfaffian fibration over $M$. When this process can be carried out up to ``infinity'' we say that $(P,H)$ is {\it formally integrable}. The goal of this section is to formalise this procedure and describing precisely the obstructions to formal integrability.

\subsection{Integrability up to finite order}

%We begin with the precise definition of the properties we want to consider:
\begin{definition}
A Pfaffian fibration $(P,H) = (P^{(0)}, H^{(0)})$ is called {\bf integrable up to order $k \geq 1$} when, for all $i = 1,\ldots,k$, the classical prolongations
$$ P^{(i)} := \mathrm{Prol}(P^{(i-1)}, H^{(i-1)}) \subset J^1_{H^{(i-1)}} P^{(i-1)}$$
are smooth submanifolds, and the projections $P^{(i)} \arr P^{(i-1)}$ are surjective submersions.
\end{definition}

In particular, if $(P,H)$ is integrable up to order $k$, it follows from Theorem \ref{thm:classical_prolongation} that each $P^{(i)}$ is a Pfaffian fibration over $M$, when endowed with the distribution $H^{(i)} := (H^{(i-1)})^{(1)}$, and $\mathrm{pr}:(P^{(i)},H^{(i)})\to (P^{(i-1)},H^{(i-1)})$ is precisely the classical prolongation of the Pfaffian fibration $(P^{(i-1)}, H^{(i-1)})$. We call $(P^{(i)},H^{(i)})$ the {\bf $i^{th}$ classical prolongation} of the Pfaffian fibration $(P,H)$, for $i = 1, \ldots, k$.

\begin{observation}\label{cor_integrability_order_k}
Let $(P,H)$ be a Pfaffian fibration integrable up to order $k$. Then, for every integers $i,l \leq k$ with $i+l \leq k$,
\begin{itemize}
 \item $(P,H)$ is also integrable up to order $i$.
 \item The Pfaffian fibration $(P^{(i)}, H^{(i)})$ is integrable up to order $l$, and its $l^{th}$-prolongation $(P^{(i)})^{(l)}$ coincide with the $(i+l)^{th}$-prolongation $P^{(i+l)}$ of $(P,H)$.
 \item The holonomic sections of $(P,H)$ are in bijections with the holonomic sections of $(P^{(i)}, H^{(i)})$.
\end{itemize}
Properties 1 and 3 are immediate from the definition and from Remark \ref{bijection_solutions_prolongations}. For the second property, note that $P^{(i)} \subset J^i P$ is a PDE, and recall from Proposition \ref{prop:PDE_prolongation} that prolongations of Pfaffian fibrations and PDEs coincide. Our claim becomes then precisely \cite[Theorem 7.2]{Gol67b}.
\end{observation}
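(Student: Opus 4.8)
The plan is to treat the three listed properties separately, following the hints recorded after the statement.

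The first property is purely definitional: integrability up to order $k$ asks that each $P^{(j)}$ ($1 \leq j \leq k$) be a smooth submanifold and each projection $P^{(j)} \arr P^{(j-1)}$ be a surjective submersion, and restricting this list of conditions to $1 \leq j \leq i$ with $i \leq k$ is exactly the definition of integrability up to order $i$. For the third property I would iterate the correspondence of Remark \ref{bijection_solutions_prolongations}: since $(P,H)$ is integrable up to order $k$, each $\mathrm{pr} \colon P^{(j)} = \mathrm{Prol}(P^{(j-1)}, H^{(j-1)}) \arr P^{(j-1)}$ is a smooth surjective submersion for $1 \leq j \leq i$, so that remark supplies a bijection $\Gamma(P^{(j)}, H^{(j)}) \cong \Gamma(P^{(j-1)}, H^{(j-1)})$ with inverse $\beta \mapsto j^1 \beta$; composing these $i$ bijections yields the claimed correspondence between $\Gamma(P^{(i)}, H^{(i)})$ and $\Gamma(P, H)$.

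The real work is the second property, and the key preliminary step I would carry out is to show, by induction on $i \geq 1$, that $P^{(i)}$ is a genuine PDE of order $i$ inside the holonomic jet bundle $J^i P$, carrying the induced Cartan distribution $H^{(i)} = \mathcal{C} \cap T P^{(i)}$. The base case is immediate, since $P^{(1)} = \mathrm{Prol}(P,H) \subset J^1 P$ is by construction a first-order PDE on the fibration $P \arr M$. For the inductive step, I would view $P^{(i-1)} \subset J^{i-1} P$ as a PDE (taking $R = P$, $k = i-1$ in Proposition \ref{prop:PDE_prolongation}) and apply that proposition to obtain
$$ P^{(i)} = \mathrm{Prol}(P^{(i-1)}, H^{(i-1)}) = J^1 P^{(i-1)} \cap J^i P \subset J^i P, $$
again with the restricted Cartan distribution. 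I expect this step to be the main obstacle: a priori $P^{(i)}$ only sits inside the iterated (non-holonomic) space $J^1(\cdots J^1 P)$, and it is exactly the integral-element (curvature) condition built into the classical prolongation, as encoded in Proposition \ref{prop:PDE_prolongation}, that forces it into the holonomic bundle $J^i P$ and matches the two descriptions of the Cartan structure.

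Once this identification is in place, the rest follows by reduction to the PDE theory. The Pfaffian fibration $(P^{(i)}, H^{(i)})$ now arises from the PDE $P^{(i)} \subset J^i P$, and Proposition \ref{prop:PDE_prolongation}, applied repeatedly, identifies its successive Pfaffian prolongations with the PDE prolongations of $P^{(i)}$; under this dictionary the desired equality $(P^{(i)})^{(l)} = P^{(i+l)}$ is precisely the additivity of PDE prolongations of \cite[Theorem 7.2]{Gol67b}. Finally, the integrability up to order $l$ of $(P^{(i)}, H^{(i)})$ amounts to the smoothness and submersivity of the projections among $P^{(i+1)}, \ldots, P^{(i+l)}$, which hold because $(P,H)$ is integrable up to order $k \geq i + l$. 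This completes the plan.
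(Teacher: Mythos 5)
Your proposal is correct and follows essentially the same route as the paper: properties 1 and 3 by definition and by iterating Remark \ref{bijection_solutions_prolongations}, and property 2 by identifying $P^{(i)} \subset J^i P$ as a PDE via Proposition \ref{prop:PDE_prolongation} and invoking Goldschmidt's Theorem 7.2. Your inductive argument showing $P^{(i)} = J^1 P^{(i-1)} \cap J^i P$ with matching Cartan structures is simply a correct fleshing-out of the step the paper leaves implicit in the phrase ``note that $P^{(i)} \subset J^i P$ is a PDE''.
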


\begin{example}
If $P \subset J^l R$ is a PDE, the notion of integrability up to order $k$ in the sense of Pfaffian fibrations coincides with the notion of integrability up to order $k$ in the sense of PDEs (see Section \ref{sec:PDE_prolongation}); this follows directly from Proposition \ref{prop:PDE_prolongation}. 
\end{example}

We describe now the main obstructions for integrability up to finite orders. The first step, which takes care of the first prolongation $P^{(1)}$, was already discussed in Theorem \ref{thm:classical_prolongation}. In particular, one needs two conditions:
\begin{enumerate}
\item the projection $\mathrm{pr}:P^{(1)}\to P$ is surjective, which, in turn, was shown to be equivalent to the vanishing of the torsion map \eqref{eq:higher_curvature}. 
\item the prolongation $\g^{(1)}= \g(H)^{(1)}$ of the symbol space $\g = \g(H)$ is of constant rank, where $\g^{(1)}$ is given by \eqref{eq:prolongation_tableau}, applied to $\partial_H:\g=\g(H)\to \Hom(\pi^*TM,\mathcal{N}_H)$.
\end{enumerate}
Under these conditions, $P^{(1)}$ becomes an affine bundle over $P$ modelled on $\g^{(1)}$, as well as a smooth Pfaffian fibration (over $M$). Moving one step upwards, we
unravel now these conditions 1 and 2 when applied to the prolongation of $P^{(1)}$, $\mathrm{pr}:P^{(2)}\to P^{(1)}$, and then we continue this analysis inductively.
First of all, the (higher) prolongations that are relevant in condition 2 will be precisely the ones from Section \ref{section:tableaux}: 
$$\mathfrak{g}^{(i)}= \pi^*S^iT^*M\otimes \g\cap\mathrm{Hom}(\pi^*TM, \g^{(i-1)})=\ker(\delta_i),\quad \textrm{for $i>1$},$$
with $\delta_i$ as in \eqref{eq:differential}. This can also be rewritten using the following inductive lemma (see also Lemma 6.3 of \cite{Gol67b}):
%One can inductively check that $(\g^{(j)})^{(r)}=\g^{(j+r)}$ for all $j+r>1$ and then, indeed: 
 
 \begin{lemma}\label{lemma:up_to_k} If a Pfaffian fibration $(P,H)$ is integrable up to order $k\geq 1$, then we  have the following canonical isomorphisms of bundles of vector spaces over $P^{(i)}$, $1\leq i\leq k$
 \begin{equation}\label{eq:equality_tableau} \mathrm{pr}^*\g^{(i+1)} \cong \mathrm{pr}^*\g(H^{(1)})^{(i)} \cong \ldots \cong \mathrm{pr}^*\g(H^{(i-1)})^{(2)} \cong \g(H^{(i)})^{(1)}.\end{equation}
 Moreover, for every $i \leq k-1$, $\g^{(i)}$ is a vector bundle, whose pullback $\mathrm{pr}^*\g^{(i)}$ over $P^{(i-1)}$ models the affine bundle $\mathrm{pr}:P^{(i)}\to P^{(i-1)}$.
%  is an affine bundle modelled on the vector bundle $\mathrm{pr}^*\g^{(i)}$ over $P^{(i-1)}$ (where $P^{0}:=P$).
\end{lemma}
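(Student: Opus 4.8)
The plan is to reduce the entire chain to a single ``one-step'' statement and then to isolate the one genuine computation it requires. Writing the entries of the chain over $P^{(i)}$ as $\mathrm{pr}^*\g(H^{(m)})^{(i+1-m)}$ for $m=0,\dots,i$ (so that $m=0$ gives $\mathrm{pr}^*\g^{(i+1)}$ and $m=i$ gives $\g(H^{(i)})^{(1)}$), it suffices to prove, for any Pfaffian fibration $(Q,K)$ integrable to the needed order with classical prolongation $\mathrm{pr}:(Q',K')\to(Q,K)$, the canonical isomorphism of bundles of vector spaces over $Q'$
\begin{equation}\label{eq:onestep}
\g(K')^{(j)}\;\cong\;\mathrm{pr}^*\g(K)^{(j+1)},\qquad j\ge 0.
\end{equation}
Indeed, applying \eqref{eq:onestep} to $(Q,K)=(P^{(m)},H^{(m)})$ with $j=i-m$ and pulling back along $P^{(i)}\to P^{(m+1)}$ identifies the $m$-th and $(m+1)$-st entries of the chain; composing these identifications gives the statement. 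For $j=0$, equation \eqref{eq:onestep} is exactly the last clause of Theorem \ref{thm:classical_prolongation}, which identifies the symbol of the prolongation as $\g(K')=\mathrm{pr}^*\g(K)^{(1)}$, realised as a (classical) tableau inside $\Hom(\pi^*TM,\mathrm{pr}^*\g(K))$.

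The heart of the matter is the case $j=1$. Here I must show that the first prolongation $\g(K')^{(1)}=\g^{(1)}(\partial_{K'})$ of the \emph{generalised} tableau $(\g(K'),\partial_{K'})$, in the sense of \eqref{eq:prolongation_tableau}, coincides with the \emph{classical} first prolongation of $\g(K')$ regarded as an honest tableau on $(\pi^*TM,\mathrm{pr}^*\g(K))$; the latter is $\mathrm{pr}^*(\g(K)^{(1)})^{(1)}=\mathrm{pr}^*\g(K)^{(2)}$ by Definition \ref{def_tableau}. This reduces to computing the symbol map $\partial_{K'}$ of the normalised prolongation: under the identifications $\g(K')\subset\Hom(\pi^*TM,\mathrm{pr}^*\g(K))$ and $\mathcal{N}_{K'}\cong\mathrm{pr}^*T^\pi Q$ of Remark \ref{rmk:normalised}, together with the inclusion $\g(K)\hookrightarrow T^\pi Q$, I claim that
$$\partial_{K'}(v)(Y)=v(Y)\qquad(v\in\g(K'),\ Y\in T_xM),$$
i.e.\ $\partial_{K'}$ is evaluation followed by this inclusion. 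Granting this, the defining condition $\partial_{K'}(\eta(X))(Y)=\partial_{K'}(\eta(Y))(X)$ for $\eta\in\g^{(1)}(\partial_{K'})$ becomes $\eta(X)(Y)=\eta(Y)(X)$ (the inclusion being injective), which is the classical symmetry condition; hence $\g(K')^{(1)}=\mathrm{pr}^*\g(K)^{(2)}$. For $j\ge 2$ no further input is needed, since $\g(K')^{(j)}=(\g(K')^{(1)})^{(j-1)}$ and $\g(K)^{(j+1)}=(\g(K)^{(2)})^{(j-1)}$ are ordinary iterated prolongations of classical tableaux; applying the prolongation construction $(j-1)$ times to the isomorphism $\g(K')^{(1)}\cong\mathrm{pr}^*\g(K)^{(2)}$ yields \eqref{eq:onestep}.

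The identity $\partial_{K'}(v)(Y)=v(Y)$ is the single point at which real work is needed, and it is the abstract analogue of the computation in Proposition \ref{prop:PDE_prolongation} identifying the symbol map of a prolonged PDE with contraction. Using $Q'=\mathrm{Prol}(Q,K)\subset J^1 Q$ and $K'=\mathcal{C}\cap TQ'$, I would take $v\in\g(K')=\ker(d\mathrm{pr})$ and $\bar Y\in K'$ with $d\pi(\bar Y)=Y$, extend them to vector fields, and evaluate $\kappa_{K'}(v,\bar Y)=\te_{\mathrm{can}}([\,\cdot\,,\,\cdot\,])$ using the explicit Cartan form $\te_{\mathrm{can}}(X)=d\mathrm{pr}(X)-\ze(d\pi(X))$ of \eqref{eq:Cartan_form}; in adapted coordinates on $J^1 Q$ the bracket collapses to the contraction $\iota_Y$, giving $v(Y)$. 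Alternatively, since integrability up to order $k$ realises each $P^{(i)}\subset J^iP$ as a genuine PDE, and Proposition \ref{prop:PDE_prolongation} matches Pfaffian and PDE symbols together with their prolongations, \eqref{eq:onestep} may be deduced from Lemma 6.3 of \cite{Gol67b}. I expect this coordinate bracket computation to be the main obstacle; everything else is bookkeeping with the definitions of the (classical and generalised) prolongation operators.

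Finally, for the ``moreover'' clause, Theorem \ref{thm:classical_prolongation} applied to $(P^{(i-1)},H^{(i-1)})$ shows that the affine bundle $\mathrm{pr}:P^{(i)}\to P^{(i-1)}$ is modelled on $\g(H^{(i-1)})^{(1)}$, which by the chain of isomorphisms already proved equals $\mathrm{pr}^*\g^{(i)}$ over $P^{(i-1)}$. Since $(P,H)$ is integrable up to order $k$, for every $i\le k$ this modelling bundle is a smooth vector bundle of constant rank; hence $\g^{(i)}$ is a vector bundle of constant rank for all such $i$, and in particular for $i\le k-1$ as stated.
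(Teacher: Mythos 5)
Your proposal is correct, and it takes a recognisably different route from the paper's proof, even though both rest on the same computational core. The paper argues extrinsically: it regards each $(P^{(i)},H^{(i)})$ as a PDE inside $J^iP$ endowed with the restriction of the Cartan distribution, identifies the Spencer differential $\delta_i$ with the restriction of the symbol map $\delta_{\mathcal{C}}$ of $(J^iP,\mathcal{C})$, and then runs the induction by citing Proposition \ref{prop:PDE_prolongation} (and, implicitly, Lemma 6.3 of \cite{Gol67b}). You argue intrinsically: you isolate the one-step statement $\g(K')^{(j)}\cong\mathrm{pr}^*\g(K)^{(j+1)}$ for a single classical prolongation $\mathrm{pr}:(Q',K')\to(Q,K)$ and reduce it, correctly, to the identity $\partial_{K'}(v)(Y)=v(Y)$ under the identifications of Remark \ref{rmk:normalised}: since the inclusion $\g(K)\hookrightarrow T^\pi Q$ is injective, the generalised symmetry condition of \eqref{eq:prolongation_tableau} for $(\g(K'),\partial_{K'})$ collapses to the classical one of Definition \ref{def_tableau}, after which the cases $j\geq 2$ are symmetric-algebra bookkeeping that the paper's inductive definition of higher generalised prolongations (as classical prolongations of the first one) already builds in. The underlying fact is identical in both proofs --- the curvature of the Cartan distribution pairs tableau directions with horizontal directions by contraction, which is exactly what the paper records as $\partial_{\mathcal{C}}(\eta)(X)=\iota_X\eta$ in the proof of Proposition \ref{prop:PDE_prolongation}, and your coordinate bracket computation on $J^1Q$ does go through (any sign ambiguity is uniform and does not affect the symmetry condition) --- but your organisation never leaves the Pfaffian category, uses only first jets $J^1P^{(m)}$ rather than the higher realisations $P^{(i)}\subset J^iP$, and makes explicit the injectivity mechanism by which generalised prolongations become classical ones, a point the paper leaves implicit; your fallback via Proposition \ref{prop:PDE_prolongation} is, in effect, the paper's own proof. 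What the paper's route buys is brevity (two citations in place of your $j=1$ curvature computation); what yours buys is a self-contained lemma valid for any normalised prolongation. Your handling of the ``moreover'' clause also matches the paper's, with one point worth stating explicitly rather than leaving in the ``hence'': constancy of the rank of the modelling bundle $\g(H^{(i-1)})^{(1)}\cong\mathrm{pr}^*\g^{(i)}$ over $P^{(i-1)}$ descends to $\g^{(i)}$ over $P$ because the composite projection $P^{(i-1)}\to P$ is a surjective submersion.
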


\begin{proof} First of all, we regard $\g^{(i)}$ sitting inside of $\pi^*S^iT^*M \otimes \g\subset \pi^*(S^iT^*M)\otimes T^\pi P$. Having in mind the exact sequence \eqref{eq:jets} of vector bundles over $J^{i}P$, and recalling that the symbol space of $(J^{i}P,\mathcal{C})$ is precisely $\ker(d\mathrm{pr}:T^\pi J^kR\to TJ^{k-1}R)\cong \pi^*S^{i-1}T^*M\otimes\mathrm{pr}^*T^\pi P$, one can check that $\delta_i$ coincides with the restriction of the symbol map 
$$\delta_\mathcal{C}:\textrm{Hom}(\pi^*TM, \pi^*S^{i-1}T^*M\otimes\mathrm{pr}^*T^\pi P)\to \textrm{Hom}(\pi^*\wedge^2TM,  \pi^*S^{i-2}T^*M\otimes\mathrm{pr}^*T^\pi P)$$
 (see also the proof of Proposition \ref{prop:PDE_prolongation}, where we look at this $\partial_{\mathcal{C}}$).
Also, we can regard $(P^{(i)},H^{(i)})$, for $i=1,\ldots,k$, as a PDE endowed with the restriction $H^{(i)}$ of the Cartan distribution $\mathcal{C}\subset TJ^i P$. Having all these in mind, and using the equality of the prolongations from Proposition \ref{prop:PDE_prolongation}, we can prove inductively the canonical isomorphisms \eqref{eq:equality_tableau}. Moreover, $\mathrm{pr}:P^{(i)}\to P^{(i-1)}$ is an affine bundle modelled on the vector bundle $\mathrm{pr}^*\g^{(i)}=(\g^{(i-1)})^{(1)}$ (we set $\g^{(0)}=\g$). 
\end{proof}

We now move to the condition 1. For a Pfaffian fibration $(P,H)$ integrable up to order $k$, the discussion after  Definition \ref{def:classical_prolongation} tells us that the prolongation $(P^{(k)},H^{(k)})$ is the kernel of the map \eqref{prol-c2}
\begin{equation}\label{eq:curvature_prol} 
\begin{split}
\widetilde{\kappa}_{H^{(k)}}: J^1_{H^{(k)}} P^{(k)} \arr \Hom (\pi^*\wedge^2 TM, \mathrm{pr}^*T^{\pi}P^{(k-1)}) \\
j^1_x \si \mapsto (\si^*\kappa_{H^{(k)}})_x = (\kappa_{H^{(k)}})_x (d_x \si (\cdot), d_x \si (\cdot) ).
\end{split}
\end{equation}
In the last $\Hom$-space we have used the identification of the normal bundle $\mathcal{N}_{H^{(k)}}$ with $\mathrm{pr}^*T^{\pi}P^{(k-1)}$ (via the differential $d\mathrm{pr}$) because $\mathrm{pr}:(P^{(k)},H^{(k)})\to (P^{(k-1)},H^{(k-1)})$ is a normalised prolongation (see Remark \ref{rmk:normalised}). Also, $\widetilde{\kappa}_{H^{(k)}}$ is an affine map of affine bundles over $P^{(k)}$, where $J^1_{H^{(k)}} P^{(k)}\to P^{(k)}$ is modelled on $\Hom (\pi^*TM,\g(H^{(k)}))$, with
 $$\g(H^{(k)})=\mathrm{pr}^*\g(H^{(k-1)})^{(1)}\cong \mathrm{pr}^*\g^{(k)}$$ 
 where the first equality is by (part of) Theorem \ref{thm:classical_prolongation}, and the second by Lemma \ref{lemma:up_to_k}. Thus, the underlying vector bundle morphism of $ \widetilde{\kappa}_{H^{(k)}}$ is of the form
 $$\overrightarrow{\widetilde{\kappa}_{H^{(k)}}}:\Hom(\pi^*TM,\mathrm{pr}^*\g^{(k)})\to \Hom (\pi^* \wedge^2 TM, \mathrm{pr}^*T^{\pi}P^{(k-1)}),$$
and a computation reveals that it is precisely the pullback via $\mathrm{pr}$ of the Spencer differential $\delta_k$ from equation \eqref{eq:differential} (see the proofs of Lemma \ref{lemma:up_to_k} and Proposition \ref{prop:PDE_prolongation}). Thus, $P^{(k+1)}:=\mathrm{Prol}(P^{(k)},H^{(k)})$ is a smooth affine subbundle of $J^1_{H^{(k)}} P^{(k)}\to P^{(k)}$ if and only if 
\begin{itemize}
\item[1'.]  $P^{(k+1)}\to P^{(k)}$ is surjective;
\item [2'.] $\delta_k$ has constant rank, i.e.\ $\ker(\delta_k)=\g^{(k+1)}$ has constant rank.
\end{itemize}
 Related to 1', this discussion also implies that $\widetilde{\kappa}_{H^{(k)}}$ descends to the following map: 

\begin{definition}
 Let $(P,H)$ be a Pfaffian fibration integrable up to order $k\geq 1$.
 The {\bf torsion of order $k+1$} of $(P,H)$ is defined to be the torsion \eqref{eq:higher_curvature} of $(P^{(k)},H^{(k)})$, i.e.\ the map
 \begin{equation*} 
 \tors^{k+1} := \tors (P^{(k)}) : P^{(k)} \arr \frac{\Hom (\pi^* \wedge^2 TM, \mathrm{pr}^*T^{\pi}P^{(k-1)})}{\de(\Hom(\pi^*TM, \mathrm{pr}^*\g^{(k)}))}, \quad p \mapsto [ \sigma^*(\kappa_{H^{(k)}})_x ] = [ \widetilde{\kappa}_{H^{(k)}} (j^1_x \si) ],
 \end{equation*}
 where $j^1_x \si$ is any element of the partial prolongation $J^1_{H^{(k)}} P^{(k)}$ s.t. $\si(x) = p.$
By definition we set $P^{(0)} = P$ and $\tors^1=\tors$.
 \end{definition}

From the general discussion of the classical prolongation, we know already that the zero-set of $\tors^k$ is precisely the image of $P^{(k+1)}\to P^{(k)}$. Hence, from Theorem \ref{thm:classical_prolongation} we obtain:

 \begin{proposition}\label{integrability_one_step_higher}
Let $(P,H)$ be a Pfaffian fibration integrable up to order $k$. Then $(P,H)$ is integrable up to order $k+1$ if and only if
\begin{itemize}
 \item the torsion $\tors^{k+1}$ vanishes
 \item the prolongation $\g^{(k+1)}$ is smooth
\end{itemize}
Moreover, the classical prolongation $$\mathrm{pr}:(P^{(k+1)},H^{(k+1)})\to (P^{(k)},H^{(k)})$$ has symbol $\g (H^{(k+1)}) = \mathrm{pr}^*\g^{(k+1)}$, and it is an affine bundle over $P^{(k)}$ modelled on $\mathrm{pr}^*\g^{(k+1)}$.
\end{proposition}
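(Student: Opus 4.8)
The plan is to deduce the proposition directly from Theorem \ref{thm:classical_prolongation} applied to the Pfaffian fibration $(P^{(k)}, H^{(k)})$, reading off the two conditions via the definition of $\tors^{k+1}$ and the chain of canonical isomorphisms recorded in Lemma \ref{lemma:up_to_k}. Since all the genuine work has already been done in Theorem \ref{thm:classical_prolongation} and Lemma \ref{lemma:up_to_k}, this proposition is essentially a bookkeeping corollary of those results.

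First I would observe that, because $(P,H)$ is integrable up to order $k$, the $k$-th classical prolongation $(P^{(k)}, H^{(k)})$ is itself a smooth Pfaffian fibration over $M$ (this is exactly what integrability up to order $k$ grants, via Theorem \ref{thm:classical_prolongation}). Unwinding the definition of integrability up to order $k+1$, the only requirement beyond order $k$ is that $P^{(k+1)} = \mathrm{Prol}(P^{(k)}, H^{(k)})$ be a smooth submanifold of $J^1 P^{(k)}$ whose projection onto $P^{(k)}$ is a surjective submersion. I would then argue that this coincides with the hypothesis ``$\mathrm{Prol}(P^{(k)}, H^{(k)})$ is a smooth affine subbundle of $J^1 P^{(k)}$'' appearing in Theorem \ref{thm:classical_prolongation}: an affine subbundle structure automatically yields a surjective submersion onto $P^{(k)}$, while conversely the smooth-submanifold-plus-submersion condition forces the fibres (the affine subspaces cut out by $\widetilde{\kappa}_{H^{(k)}}$, modelled on $\ker \delta_k$) to have locally constant dimension, which is precisely the affine subbundle condition.

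Next I would invoke the equivalence in Theorem \ref{thm:classical_prolongation}, now read for $(P^{(k)}, H^{(k)})$: the prolongation $P^{(k+1)}$ is a smooth affine subbundle if and only if $\g(H^{(k)})^{(1)}$ has constant rank and the torsion $\tors(P^{(k)})$ vanishes. The two conditions translate into the stated ones by definition and by Lemma \ref{lemma:up_to_k}. Indeed $\tors^{k+1} := \tors(P^{(k)})$ by definition, so the torsion condition is verbatim; and the isomorphisms of Lemma \ref{lemma:up_to_k} taken at level $i = k$ give $\g(H^{(k)})^{(1)} \cong \mathrm{pr}^*\g^{(k+1)}$, so $\g(H^{(k)})^{(1)}$ has constant rank exactly when $\g^{(k+1)}$ is smooth. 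This establishes the ``if and only if''.

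For the ``Moreover'' part I would transport the extra conclusions of Theorem \ref{thm:classical_prolongation} along the same isomorphism. That theorem identifies the symbol space of $(\mathrm{Prol}(P^{(k)}, H^{(k)}), H^{(k+1)})$ with $\mathrm{pr}^*\g(H^{(k)})^{(1)}$ and the modelling vector bundle of the affine projection $P^{(k+1)} \to P^{(k)}$ with $\g(H^{(k)})^{(1)}$; substituting $\g(H^{(k)})^{(1)} \cong \mathrm{pr}^*\g^{(k+1)}$ yields $\g(H^{(k+1)}) = \mathrm{pr}^*\g^{(k+1)}$ and the affine model $\mathrm{pr}^*\g^{(k+1)}$, as claimed. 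I expect the only genuinely delicate point to be the matching described in the second paragraph, between the definitional notion of integrability (smooth submanifold plus surjective submersion) and the ``smooth affine subbundle'' hypothesis of Theorem \ref{thm:classical_prolongation}; once that identification is made, the rest is a direct citation of Theorem \ref{thm:classical_prolongation} together with Lemma \ref{lemma:up_to_k}.
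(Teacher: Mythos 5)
Your proposal is correct and follows essentially the same route as the paper: the authors likewise obtain the proposition as a direct consequence of Theorem \ref{thm:classical_prolongation} applied to $(P^{(k)},H^{(k)})$, using the identification $\g(H^{(k)})^{(1)}\cong \mathrm{pr}^*\g^{(k+1)}$ from Lemma \ref{lemma:up_to_k} (via the observation that the vector bundle map underlying $\widetilde{\kappa}_{H^{(k)}}$ is the pullback of $\delta_k$) and the definition of $\tors^{k+1}$ as $\tors(P^{(k)})$. Your second paragraph, matching the definitional condition (smooth submanifold plus surjective submersion) with the ``smooth affine subbundle'' hypothesis, makes explicit a point the paper passes over silently, which is a welcome addition rather than a deviation.
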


\begin{obsx}[\bf Pfaffian fibrations and geometric structures]
The name \textit{torsion} originates from the theory of $G$-structures. More precisely, given a $G$-structure $P$, its torsions are objects defined recursively, whose vanishing are obstructions to the integrability of $P$. In particular, the torsion of $P$ are the same thing as the torsions of the Pfaffian fibration $\tilde{P}$ associated to $P$ (see Example \ref{example_G_structure}).

%In a similar way, one can define the notion of formally integrable $G$-structure (for which all these torsions vanish): this turns out to be a necessary but not sufficient condition for the integrability of a $G$-structure, i.e.\ the existence of an atlas on $M$ whose charts are ``adapted" to the structure. %The concepts of structures of finite and infinite type arise also from this field.

%Take a Pfaffian fibration $(P,H)$ describing a $G$-structure on $M$, i.e.\a reduction of the structure group of the frame bundle $Fr(M) \arr M$ to the subgroup $G \subset GL(n,\RR)$.
%For instance, when the symbol space is a trivial vector bundle $\g(H) = \RR^n \times \g \arr \RR^n$, for a Lie algebra $\g = Lie(G)$, then the $k^{th}$-prolongation is also a trivial vector bundles $\RR^n \times \g^{(k)}$, hence is always smooth.

More generally, one can revise the theory of Pfaffian fibrations by taking into account the presence of a symmetry group(oid), in order to define more refined obstructions to integrability, called {\it intrinsic torsions}. These can be used to study (formal) integrability of a large class of geometric structures (which includes $G$-structures as a particular case), namely those described by any Lie pseudogroup: see \cite{Cat19}.
\end{obsx}

To understand better $\tors^{k+1}$ we look at its image; at the end of the section we will prove the following:

\begin{proposition}\label{prop:torsion}
 Let $(P,H)$ be a Pfaffian fibration integrable up to order $k \geq 1$. Then its torsion $\tors^{k+1}$ takes values in the Spencer cohomology groups \eqref{eq:Spencer_cohomology} of the tableau bundle $\g=\g^{(0)}=\g(H)$
 $$H^{k-1,2}(\g) = \frac{\ker (\de: \Hom (\pi^* \wedge^2 TM, \g^{(k-1)} ) \arr \Hom (\pi^* \wedge^3 TM, \g^{(k-2)}) ) }{\Ima(\de: \Hom(\pi^*TM, \g^{(k)}) \arr \Hom (\pi^* \wedge^2 TM, \g^{(k-1)}))} $$
 where we set $\g^{(-1)}=\mathcal{N}_H$, and we regard the prolongations $\g^{(i)}$ sitting on top of $P^{(k)}$ via the pullback by $\mathrm{pr}.$
 \end{proposition}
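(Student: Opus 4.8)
The plan is to compute a representative of $\tors^{k+1}$ by a local lift, to show that this representative is $\delta$-closed with coefficients in $\g^{(k-1)}$, and that changing the lift alters it by a $\delta$-coboundary; its class then lives in $H^{k-1,2}(\g)$. First I would record the identifications supplied by Lemma \ref{lemma:up_to_k} and Remark \ref{rmk:normalised}: since $\mathrm{pr}\colon(P^{(k)},H^{(k)})\to(P^{(k-1)},H^{(k-1)})$ is a normalised prolongation, its coefficient bundle is $\mathcal{N}_{H^{(k)}}\cong\mathrm{pr}^*T^\pi P^{(k-1)}$ and its symbol is $\g(H^{(k)})\cong\mathrm{pr}^*\g^{(k)}$, which is exactly how the target of $\tors^{k+1}$ is written in its definition. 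The iterated affine-bundle structure $\mathrm{pr}\colon P^{(i)}\to P^{(i-1)}$ modelled on $\mathrm{pr}^*\g^{(i)}$ produces short exact sequences
$$0\to\mathrm{pr}^*\g^{(i)}\to T^\pi P^{(i)}\xrightarrow{d\mathrm{pr}}\mathrm{pr}^*T^\pi P^{(i-1)}\to 0,$$
which together with the surjection $T^\pi P\to\mathcal{N}_H=\g^{(-1)}$ filter $T^\pi P^{(k-1)}$ with graded pieces the prolongations $\g^{(k-1)},\dots,\g^{(-1)}$.

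The crucial reduction is that $\tors^{k+1}$ actually takes values in the subbundle with coefficients $\g^{(k-1)}=\ker(d\mathrm{pr})\subset\mathrm{pr}^*T^\pi P^{(k-1)}$. Given a point $p\in P^{(k)}=\mathrm{Prol}(P^{(k-1)},H^{(k-1)})$ and a lift $j^1_x\sigma\in J^1_{H^{(k)}}P^{(k)}$ with $\sigma(x)=p$, the definition \eqref{eq:Cartan_form} of the Cartan form gives $d\mathrm{pr}(d_x\sigma(X))=d_x\beta(X)$, where $p=j^1_x\beta$; hence the $d\mathrm{pr}$-projection of the representative is
$$d\mathrm{pr}\bigl(\sigma^*\kappa_{H^{(k)}}\bigr)(X,Y)=\kappa_{H^{(k-1)}}\bigl(d_x\beta(X),d_x\beta(Y)\bigr)=0,$$
because $p$ being an integral element of $(P^{(k-1)},H^{(k-1)})$ means precisely $(d_x\beta)^*\kappa_{H^{(k-1)}}=0$ (here I use property 4 of Proposition \ref{prop:curvature_commute} to push the curvature forward). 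Thus $\sigma^*\kappa_{H^{(k)}}$ lands in $\g^{(k-1)}$. Under the identification of the tower maps $d\mathrm{pr}$ with the Spencer differential $\delta$ already established in Lemma \ref{lemma:up_to_k} and Proposition \ref{prop:PDE_prolongation}, the denominator $\delta(\Hom(\pi^*TM,\mathrm{pr}^*\g^{(k)}))$ appearing in the definition of $\tors^{k+1}$ coincides with the coboundary space $B^{k-1,2}(\g)$.

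It remains to check that $\sigma^*\kappa_{H^{(k)}}$ is a cocycle, i.e.\ $\delta(\sigma^*\kappa_{H^{(k)}})=0$ in $\Hom(\pi^*\wedge^3TM,\g^{(k-2)})$, and that the class is independent of the lift. For the cocycle condition I would use the Cartan--Ehresmann connection of Remark \ref{rmk:Cartan-Ehresmann} attached to a local section of $\mathrm{pr}\colon P^{(k)}\to P^{(k-1)}$: it furnishes $\pi$-projectable, integral-element-valued extensions of tangent vectors along which $\kappa_{H^{(k)}}=\theta_{\mathrm{can}}([\,\cdot\,,\,\cdot\,])$ can be evaluated, and expanding $\delta(\sigma^*\kappa_{H^{(k)}})(X,Y,Z)$ by the Jacobi identity for the Lie bracket (the involutivity of the symbol space guaranteeing that the correction terms are well defined) yields the Bianchi-type cancellation placing the representative in $Z^{k-1,2}(\g)$. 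Independence of the lift is the general formula $\zeta^*\kappa-\zeta'^*\kappa=\delta_H(\zeta-\zeta')$ recorded before equation \eqref{eq:higher_curvature}: two lifts of $p$ differ by an element of $\Hom(\pi^*TM,\g(H^{(k)}))=\Hom(\pi^*TM,\mathrm{pr}^*\g^{(k)})$, and their curvature pullbacks differ by its $\delta$-coboundary, i.e.\ by an element of $B^{k-1,2}(\g)$. Combining the three points, $\tors^{k+1}(p)$ descends to a well-defined class in $Z^{k-1,2}(\g)/B^{k-1,2}(\g)=H^{k-1,2}(\g)$.

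The main obstacle is the middle two paragraphs: one has to track the filtration of $T^\pi P^{(k-1)}$ carefully enough to see simultaneously that the $d\mathrm{pr}$-image of the curvature drops the coefficients from $T^\pi P^{(k-1)}$ down to $\g^{(k-1)}$ and that the remaining bracket identity is exactly Spencer $\delta$-closedness; the bookkeeping that identifies the tower differentials with $\delta$ uniformly across all levels is where the argument is most delicate, while the cocycle (Bianchi) computation is the least formal step.
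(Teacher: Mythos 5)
Your proposal is correct and takes essentially the same route as the paper's proof: reduction of the coefficients to $\g^{(k-1)}$ via the normalised-prolongation/integral-element property (the paper phrases this as $\theta(\kappa_{\theta^{(1)}}(d_x\sigma(X),d_x\sigma(Y)))=0$ and carries out the argument explicitly only for $k=1$), the Spencer-cocycle (Bianchi) identity proved with a Cartan--Ehresmann connection, simultaneously $\pi$- and $\mathrm{pr}$-projectable extensions, and the Jacobi identity on brackets of $H$-valued fields, with well-definedness modulo $\delta(\Hom(\pi^*TM,\mathrm{pr}^*\g^{(k)}))$ coming from the affine structure, exactly as the paper builds into the definition of $\tors^{k+1}$. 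The only items the paper adds to your sketch are technical devices for the Bianchi computation --- an auxiliary torsion-free pullback connection $\nabla$ to evaluate $\kappa_{\theta^{(1)}}$ as $d_\nabla\theta^{(1)}$ on $\ker(\theta^{(1)})$, the local normal form $\bullet(X)=\mathrm{pr}^*C(X)+f\,\mathrm{pr}^*\eta(X)$ with $\eta\in\Gamma(\g^{(1)})$ and $f(\sigma(x))=0$, and the cancellation of the $\eta$-terms in the cyclic sum via the symmetry defining $\g^{(1)}$ --- plus the cosmetic caveat that at $k=1$ the subbundle $\g^{(k-1)}$ must be described as $\ker(\theta)\cap T^\pi P$ rather than $\ker(d\mathrm{pr})$, since no lower projection exists there.
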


If we assume that some prolongation $\g^{(i)}$ of the symbol space has rank 0, then the Spencer cohomology group $H^{i,2}(\g)$ vanishes. In particular, by Proposition \ref{prop:torsion}, the torsion $\tors^{i+2}$ is zero; this suggests that for certain types of Pfaffian fibrations, Proposition \ref{integrability_one_step_higher} becomes simpler.

This leads us to the following definition:

\begin{definition}\label{def:finite}
 A Pfaffian fibration $(P,H)$ is of {\bf finite type $l$} if $l$ is the smallest integer $l\geq 0$ such that $\g^{(l)} = 0$. 
 We say that $(P,H)$ is of {\bf infinite type} if $\g^{(l)} \neq 0$ $\forall l$.
 \end{definition}
 
With this, it follows from Proposition \ref{prop:torsion} that

\begin{corollary}\label{integrability_finite_type_case}
 Let $(P,H)$ be a Pfaffian fibration of finite type $l$. If $(P,H)$ is integrable up to order $k$ and $l < k$, then it is integrable up to order $k+i$, $i\geq 0$. Moreover, $\mathrm{pr}:P^{(j)}\to P^{(j-1)}$ is a bijection for all $j\geq l$.
 %in particular, all the higher prolongations $P^{(k+i)}$ are in bijection (and thus have the same dimensions) of $P^{(k)}$, for $i \geq 0$.
\end{corollary}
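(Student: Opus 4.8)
The plan is to reduce everything to the one-step integrability criterion (Proposition \ref{integrability_one_step_higher}) and to exploit the fact that, for a Pfaffian fibration of finite type $l$, all sufficiently high prolongations of the symbol space vanish. First I would record the elementary but crucial observation that $\g^{(l)}=0$ forces $\g^{(j)}=0$ for every $j\geq l$, since $\g^{(j)}=(\g^{(j-1)})^{(1)}$ and the prolongation of the zero tableau is zero. In particular each such $\g^{(j)}$ is trivially a vector bundle of constant rank $0$, so the smoothness condition in Proposition \ref{integrability_one_step_higher} will be automatic at every order strictly above $l$.

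Next I would set up an induction on $i\geq 0$ to prove that $(P,H)$ is integrable up to order $k+i$. The base case $i=0$ is the hypothesis. For the inductive step, assuming integrability up to order $k+i$ (note $k+i\geq k>l\geq 0$, so $k+i\geq 1$), I would apply Proposition \ref{integrability_one_step_higher} with $k$ replaced by $k+i$: the two things to verify are the smoothness of $\g^{(k+i+1)}$ and the vanishing of the torsion $\tors^{k+i+1}$. The first holds because $k+i+1>l$ gives $\g^{(k+i+1)}=0$.

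The only real content is the vanishing of the torsion, and here the decisive tool is Proposition \ref{prop:torsion}, which places $\tors^{k+i+1}$ inside the Spencer cohomology group $H^{k+i-1,2}(\g)$ of the \emph{original} symbol space $\g=\g(H)$. Since $k>l$ we have $k+i-1\geq k-1\geq l$, hence $\g^{(k+i-1)}=0$; but the cocycle space $Z^{k+i-1,2}(\g)$ is by definition a subspace of $\wedge^2 T^*M\otimes\g^{(k+i-1)}=0$, so $H^{k+i-1,2}(\g)=0$ and therefore $\tors^{k+i+1}=0$. Proposition \ref{integrability_one_step_higher} then yields integrability up to order $k+i+1$, closing the induction and giving integrability up to every order.

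Finally, for the bijectivity of $\mathrm{pr}\colon P^{(j)}\to P^{(j-1)}$, I would use that by Proposition \ref{integrability_one_step_higher} this projection is an affine bundle modelled on $\mathrm{pr}^*\g^{(j)}$; for $j\geq l$ (with $j\geq 1$, so that $P^{(j-1)}$ is defined) the model bundle $\mathrm{pr}^*\g^{(j)}=0$, so the affine bundle has single-point fibres and the projection is a bijection. I do not expect a genuine obstacle here: the substantive work has already been carried out in Propositions \ref{integrability_one_step_higher} and \ref{prop:torsion}, and the only thing requiring care is the index bookkeeping, namely checking that the hypothesis $l<k$ pushes the relevant prolongation index $k+i-1$ to at least $l$ at every stage of the induction, which is precisely what makes the Spencer cohomology obstruction vanish.
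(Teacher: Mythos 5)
Your proof is correct and follows essentially the same route as the paper: an induction on $i$ using Proposition \ref{integrability_one_step_higher}, where the torsion $\tors^{k+i+1}$ vanishes because $k+i-1\geq l$ forces $\g^{(k+i-1)}=0$ and hence $H^{k+i-1,2}(\g)=0$ by Proposition \ref{prop:torsion}, and where the bijectivity of $\mathrm{pr}:P^{(j)}\to P^{(j-1)}$ for $j\geq l$ follows from the affine bundle structure modelled on $\mathrm{pr}^*\g^{(j)}=0$. The only cosmetic difference is that the paper invokes Lemma \ref{lemma:up_to_k} for the affine-bundle statement where you cite Proposition \ref{integrability_one_step_higher}; both give the same model bundle, so the arguments coincide.
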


\begin{proof} Because $i\geq 0$, then the finite type condition says that $\g^{(k+i-1)}=0$ (as $k+i-1\geq l$), and therefore $\tors^{k+i+1}$ vanishes (see the discussion before Definition \ref{def:finite}). Also $\g^{(k+i+1)}$ has obviously constant rank equal to 0, and we can apply Proposition \ref{integrability_one_step_higher} inductively on $i$ to conclude that $(P,H)$ is integrable up to order $k+i$. Now, Lemma \ref{lemma:up_to_k} tells us that $P^{(j)}\to P^{(j-1)}$ is an affine bundle modelled on $\mathrm{pr}^*\g^{(j)}$, so if $j\geq l$, then $\g^{(j)}=0$, and therefore $P^{(j)}\to P^{(j-1)}$ is a bijection.
\end{proof}

%The first thing to check is that $T^P_k$ is well defined: for a different $j^1_x \si' \in J^1_{H^{(k)}} P^{(k)}$, we have to check that  get something in the quotient. The proof is in the same spirit of that of theorem

%The second thing to check is that $\tors^k (j^1_x \si) \in \Hom (\pi^* TM, \g = \g^{(0)} )$ is actually in $\Hom(\wedge^2 TM, \g^{(k-1)})$, and, more specifically, in $\ker(\de: \Hom(\wedge^2 TM, \g^{(k-1)}) \arr \Hom (\wedge^3 TM, \g^{(k-2)}))$ (so that, when taking the quotient, we get the Spencer cohomology $H^{2,k-1}(\g)$).

%IT SHOULD MEAN THAT for $k \geq 2$, $P^{(k)} \arr P^{(k-1)}$ is surjective $\iff$ it is $T^P_{k-1} = 0$
%indeed, for $k=1$, $P^{(1)} \arr P$ is surjective $\iff$ it is $T^P_0 = \tors = 0$

%start with $P$, we have the torsion $k$ as obstruction to get $P^1$, i.e.\ for integrability up to order 1

%then take $P^1$, we have the new torsion $T^1$, living in $H^{2,0}$, as obstruction for integrability up to order 2

%in general, take $P^k$ (i.e.\ $P$ integrable up to order $k$), define $T_k$, living in $H^{2,k-1}$, as obstruction for integrability up to order $k+1$

 \begin{proof}[Proof of Proposition \ref{prop:torsion}]
 We check the case $k=1$, using the Pfaffian form $\theta$ associated to $H\subset TP$, and the Pfaffian form $\theta^{(1)}$ associated to $H^{(1)}\subset TP^{(1)}$. The general case $k\geq 1$ follows similarly.  
 
  First of all, we check that the map $\widetilde{\kappa}_{H^{(1)}}=\widetilde{\kappa}_{\te^{(1)}}$ of equation \eqref{eq:curvature_prol} takes values in
  $$\Hom (\pi^*\wedge^2TM, \g) \subset \Hom (\pi^* \wedge^2 TM, \mathrm{pr}^* T^\pi P).$$
  Indeed, an element $j^1_x\sigma$ belongs to $ J^1_{H^{(1)}}P^{(1)}$ if $d_x\sigma(T_xM)\subset H^{(1)}_{\si(x)}$; thus, since the classical prolongation $\mathrm{pr}:(P^{(1)},H^{(1)})\to (P,H)$ is normalised (Theorem\ref{thm:classical_prolongation}), we have
 $$\theta(\kappa_{\theta^{(1)}}(d_x\sigma(X),d_x\sigma(Y)))=0,$$ 
 for any $X,Y\in T_xM$ (see Remark \ref{rmk:normalised}). In conclusion, $\widetilde{\kappa}_{H^{(1)}}(j^1_x\si)(X,Y)\in\ker(\theta)$, therefore it is in $\g = \ker(\te) \cap T^\pi P$.
  
  \
 
 Now, we check that $\widetilde{\kappa}_{\te^{(1)}}$ takes values in the kernel of $$\de_\theta=\de_H: \Hom (\pi^* \wedge^2 TM, \g = \g^{(0)} ) \arr \Hom (\pi^* \wedge^3 TM, \mathcal{N}_H = \g^{(-1)} ).$$
 
 In order to do that, let $j^1_x\sigma\in J^1_{H^{(1)}}P^{(1)}$ and $X,Y,Z$ vector field on $M$; we need to compute
\begin{multline}\label{eq:beginning}
 \del_H (\kappa_{H^{(1)}} (j^1_x \si) (X, Y) ) (Z) =\del_H ( \kappa_{H^{(1)}} (d_x \si (X), d_x \si (Y) ) ) (Z) \\= \kappa_H ( \kappa_{H^{(1)}} (d_x \si (X), d_x \si (Y) ), \si(X)(Z) ).
\end{multline}

 First, we extend $d\si(X),d\si(Y),d\si(Z)\in TP^{(1)}$ to local vector fields $\bar X,\bar Y,\bar Z$ on $P^{(1)}$ which are simultaneously $\pi$- and $\mathrm{pr}$-projectable; in particular, this means $d\pi(\bar X)=X$, and similarly for $\bar Y$ and $\bar Z$. These extensions are always possible as $\mathrm{pr}$ is a submersion and a fibre bundle map over $M$, hence one can simultaneously trivialise $P^{(1)}$ around $\si(x)$ as $\mathbb{R}^{k+n+m}$, $P$ around $\mathrm{pr}(\si(x))$ as $\mathbb{R}^{n+m}$, and $M$ around $x$ as $\mathbb{R}^n$, so that $\mathrm{pr}$ and the two maps to $M$ become standard projections.

 Moreover, consider the pullback via $\mathrm{pr}:P^{(1)}\to P$ of some torsion-free linear connection $\nabla:\X(P)\times\X(P)\to\X(P)$ (e.g.\ the Levi-Civita connection of some fixed Riemannian metric on $P$); in the following we will use the same notation $\nabla$ also for the pullback connection on $\mathrm{pr}^*TP$. We can now compute the term $\kappa_{H^{(1)}} (d_x \si (X), d_x \si (Y) )$ in equation \eqref{eq:beginning} using $\na$ (see the discussion after equation \eqref{eq:curvature}):
 \begin{multline}\label{eq:d}
 \kappa_{\te^{(1)}} (d_x \si (X), d_x \si (Y) ) = d_\nabla\theta^{(1)}(d_x\si(X),d_x\si(Y)) = \\ (\nabla_{\bar X}\theta^{(1)}(\bar Y))_{\si(x)} - (\nabla_{\bar Y}\theta^{(1)}(\bar X))_{\si(x)} - \theta^{(1)} ( [\bar X,\bar Y]_{\si(x)}).
 \end{multline}

From the the definition \eqref{eq:Cartan_form} of $\te^{(1)}$ as Cartan form, we see that the last term vanishes:
\begin{equation*}\label{eq:eq}
\begin{aligned}
\theta^{(1)}_{\si(x)} ([\bar X,\bar Y]_{\si(x)}) = \theta^{(1)}_{\sigma(x)}(d_x\sigma([X,Y])) = 0.
\end{aligned}
\end{equation*}

Note that we use $\si(x)$ also to denote the splitting $\si(x):T_xM\to T_{\mathrm{pr}(\si(x))}P$. In the second equality we also used that $[\bar X,\bar Y]_{\si(x)}=d\si ([X, Y]_{x})$ because $\bar X,\bar Y$ are $\pi$-projectable and $\sigma$ is a section of $\pi$. In the second equality we used the fact that $j^1_x\si$ is an element of $ J^1_{H^{(1)}}P^{(1)}$, therefore $\theta^{(1)}\circ d_x\sigma=0$.

%From the the definition of $\te^{(1)}$ as Cartan form, we see that the last addendum vanishes
%\begin{equation*}\label{eq:eq}
%\begin{aligned}
%\theta^{(1)}_{\si(x)} ([\bar X,\bar Y]_{\si(x)}) = d\mathrm{pr} ([\bar X,\bar Y]_{\si(x)}) - \si(x) (d\pi[\bar X,\bar Y]) = d\mathrm{pr} (d\si[X, Y]_{x}) -\si(x) ([X, Y]_x) = \\
% = \theta^{(1)}_{\sigma(x)}(d_x\sigma([X,Y])) = 0
%\end{aligned}
%\end{equation*}
%Note that we use $\si(x)$ also to denote the splitting $\si(x):T_xM\to T_{\mathrm{pr}(\si(x))}P$. In the first line we also used that $[\bar X,\bar Y]_{\si(x)}=d\si ([X, Y]_{x})$ because $\bar X,\bar Y$ are $\pi$-projectable and $\sigma$ is a section of $\pi$. In the second line we used the fact that $j^1_x\si$ is an element of $ J^1_{H^{(1)}}P^{(1)}$, therefore $\theta^{(1)}\circ d_x\sigma=0$ and $\theta^{(1)}[\bar X,\bar Y]_{\si(x)}=0$. 

%\francesco{isn't enough here to say that $[\bar X,\bar Y]_{\si(x)}=d\si ([X, Y]_{x})$ and go immediately from the first to the last term?}

On the other hand, in order to rewrite the other two terms of \eqref{eq:d} we use
$$\theta^{(1)}_p(\bar X)=d_p\mathrm{pr}(\bar X)-p(X) = d_p\mathrm{pr}(\bar X)- \bullet(X)(p) ,\quad \forall p\in P^{(1)}$$
where we write again $p$ for the induced splitting $p:T_{\pi(p)}M\to T_{\mathrm{pr}(p)}P$, and we denote by $\bullet (\bar X)$ the section of $\mathrm{pr}^*(TP)\to P^{(1)}$ defined by $\bullet (X)(p)=p(X)$. We have therefore written $\theta^{(1)}(\bar X)$ as the sum of two sections of $\mathrm{pr}^*(TP)$; doing the same also for $Y$ we get
\begin{equation}\label{eq:uff}
\begin{aligned}
\nabla_{\bar X}(\theta^{(1)}&(\bar Y)) -\nabla_{\bar Y} (\theta^{(1)}(\bar X)) = \nabla_{d\mathrm{pr}(\bar X)}(d\mathrm{pr}(\bar Y))-\nabla_{d\mathrm{pr}(\bar Y)}(d\mathrm{pr}(\bar X))-\nabla_{\bar X}(\bullet(Y))+\nabla_{\bar Y}(\bullet( X)) \\
&=[d\mathrm{pr}(\bar X), d\mathrm{pr}(\bar Y)]-\nabla_{\bar X}(\bullet(Y))+\nabla_{\bar Y}(\bullet (X))
=d\mathrm{pr}[\bar X, \bar Y]-\nabla_{\bar X}(\bullet(Y))+\nabla_{\bar Y}(\bullet (X)).
\end{aligned}
\end{equation}

Here we used in the first line the definition of pullback connection via $\mathrm{pr}$, i.e.\ $\nabla_{\bar X}(d\mathrm{pr}(Y))=\nabla_{d\mathrm{pr}(\bar X)}(d\mathrm{pr}(\bar Y))$, because the section $d\mathrm{pr}(\bar Y)\in\Gamma(\mathrm{pr}^*TP)$ is already the pullback of the section $\mathrm{pr}^*(d\mathrm{pr}(\bar Y))\in \X(P)$ (recall that they are $\mathrm{pr}$-projectable vector fields). The first equality of the second line follows from the fact that $\nabla$ is torsion-free. For the last equality, as $d_x\si$ takes values in $H^{(1)}_{\si(x)}$, we have $d\mathrm{pr}(\bar X_{\si(x)})=\sigma(x)(X)$; in particular, $d\mathrm{pr}[\bar X, \bar Y]_{\si(x)}=d\mathrm{pr}d_x\si[X, Y]=\si(x)[X, Y]$.

\

We compute the last two terms of \eqref{eq:uff} at $\sigma(x)\in P^{(1)}$: since $\bullet(X)_{\si(x)}=\si(x)(X)=\bar X_{\sigma(x)}$, and similarly for $Y$, we have
\begin{equation}\label{eq:calculo}
-(\nabla_{\bar X}(\bullet(Y)))_{\si(x)} + (\nabla_{\bar Y}(\bullet (X)))_{\si(x)} = -\nabla_{\bullet(X)_{\si(x)}}(\bullet(Y)) + \nabla_{\bullet (Y)_{\si(x)}}(\bullet (X)).
\end{equation} 

Now, choose a local Cartan-Ehresmann connection $C\subset H$ extending $\si(x)(T_xM)=d\mathrm{pr}(H^{(1)}_{\si(x)})\subset H_{\mathrm{pr}(\si(x))}$ (see Remark \ref{rmk:Cartan-Ehresmann}). As $p:T_{\pi(p)}M\to T_{\mathrm{p}}P$ denotes an integral element of $(P,H)$ for $p\in P^{(1)}$, then locally $p(X)=C_p (X)+\eta_p(X)$ for every $X\in X(M)$, with $\eta_p$ some element in $\g^{(1)}_{\mathrm{pr}(p)}$. It follows that, locally,
$$\bullet(X)=\mathrm{pr}^*C(X)+S,$$
where $S$ is a finite sum of terms of the form $f\mathrm{pr}^*(\eta)(X)$, for $\eta\in\Gamma(\g^{(1)})$ and $f\in C^{\infty}(P^{(1)})$ such that$f(\si(x))=0$ (as $C_{\si(x)}=d_x\si$, and $\si(x)(X)=d_x\si(X)$). To simplify notation, we assume that locally $S$ is given by a single term, i.e.
$$\bullet(X)=\mathrm{pr}^*C(X)+f\mathrm{pr}^*(\eta)(X), \quad \eta \in \Ga(\g^{(1)}), f\in C^{\infty}(P^{(1)}), f(\si(x))=0, \forall X\in\X(M) .$$

A direct calculation shows that the right-hand side of \eqref{eq:calculo} is (up to pullbacks and coefficients) a $\mathcal{C}^{\infty}(P^{(1)})$-linear combinations of five kinds of terms (the first three come from $\nabla$ being torsion-free, and the last two from its Leibniz property):
\begin{equation}\label{eq:ya}
\textrm{(i) }[C(X),C(Y)],\quad \textrm{(ii) }[\eta(X),\eta(Y)],\quad \textrm{(iii) }[C(X)+\eta(X),C(Y)+\eta(Y)],\quad  \textrm{(iv) }\eta(X),\quad  \textrm{(v) }\eta(Y).
\end{equation}

In conclusion, we plug our results in equation \eqref{eq:beginning} to get 
\begin{equation}\label{eq:uff!}
\begin{aligned}
\partial_\theta(\widetilde{\kappa}_{H^{(1)}}(j^1_x\si)(X,Y))(Z)&=\kappa_{\theta}(d_\nabla\theta^{(1)}(d_x\si(X),d_x\si(Y)),\si(x)(Z))\\
&=\kappa_\theta(\sigma(x)[X,Y], \sigma(x)(Z))+\kappa_\theta(t_1 \text{(iv)} +t_2 \text{(v)} ,\si(x)(Z))\\
& \ +\kappa_{\theta,\sigma(x)} (r_1\text{(i)}+r_2\text{(ii)}+r_3\text{(iii)}, C(Z)+f\eta(Z))
\end{aligned}
\end{equation}
where the enumeration indicates terms as in \eqref{eq:ya}, $t_1,t_2\in\mathbb{R}$, and $r_1, r_2, r_3\in C^{\infty}(P^{(1)})$. Now, the theorem is proved once we show that
$$\delta_\theta(\widetilde{\kappa}_{H^{(1)}}(j^1_x\si))(X,Y,Z)=\partial_\theta(\widetilde{\kappa}_{H^{(1)}}(j^1_x\si)(X,Y),Z)+\textrm{ cyclic permutations of }(X,Y,Z) = 0.$$ 
Indeed, terms like the first one in the second line of \eqref{eq:uff!} are zero because $\sigma(x)$ is an integral element, i.e.\ $\sigma(x)^*\kappa_\theta=0$. Terms involving $\eta(\cdot)$ and $\si(x)(\cdot)$, such as the second one in the second line of $\eqref{eq:uff!}$, vanish as well, since $\eta\in\g^{(1)}$.

Last, all the terms inside $\kappa_\theta$ in the third line of \eqref{eq:uff!} are vector fields taking values in $H$: indeed, $[C(X),C(Y)]$ and $[C(X)+\eta(X),C(Y)+\eta(Y)]$ are in $H$ because $C$ is a Cartan-Ehresmann connection, and the same holds for $C+\eta$, since $\eta\in\g^{(1)}$ and $\eta(X),\eta(Y) \in \g \subset H$. Therefore, $\kappa_\theta$ evaluated in these terms can be computed as $\theta([\cdot,\cdot])$; we can use the Jacobi identity to show that the part of $\delta_\theta$ involving these terms vanishes.
\end{proof}

\subsection{Formal integrability}

\begin{definition}
A Pfaffian fibration is called {\bf formally integrable} when it is integrable up to any order.
\end{definition}

When a Pfaffian fibration $(P,H)$ is a PDE, it follows from Corollary \ref{cor_integrability_order_k} that the definition of formal integrability coincides with the homonymous one, introduced in Section \ref{sec:PDE_prolongation}. In particular, formal integrability is not always a sufficient condition for PDE-integrability. However, as for PDEs, the situation is nicer in the analytic setting, where we can use Theorem \ref{existence_analytic_solutions_PDE}, to prove the following result:

\begin{theorem}[\bf Existence of analytic local holonomic sections]\label{thm:integrability}
If $(P,H)$ is an analytic formally integrable Pfaffian fibration, then for every $p \in P^{(k)} \subset J^k P$ over $x \in M$ there is an analytic local holonomic section $\be$ of $(P,H)$ such that $j^k_x \be = p$ on a neighbourhood of $x \in \dom(\be)$. In particular, $(P,H)$ is PDE-integrable.
\end{theorem}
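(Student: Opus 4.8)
The plan is to reduce the statement to Goldschmidt's analytic existence theorem for PDEs (Theorem \ref{existence_analytic_solutions_PDE}), by exploiting the fact—established in Proposition \ref{prop:PDE_prolongation} and Corollary \ref{cor_integrability_order_k}—that the prolongation tower of $(P,H)$ coincides with the prolongation tower of a genuine PDE. Concretely, I would regard the first classical prolongation $P^{(1)} = \mathrm{Prol}(P,H) \subset J^1 P$ as an analytic PDE of order $1$ on the fibration $\pi: P \arr M$, and apply the PDE theorem to it.

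First I would verify that $P^{(1)} \subset J^1 P$ is an analytic, formally integrable PDE. Analyticity of $P^{(1)}$ and of the whole tower $P^{(l)}$, together with the fact that the projections are surjective submersions, is exactly the content of the hypothesis that $(P,H)$ is formally integrable (Theorem \ref{thm:classical_prolongation} and the definitions in Section 5.1). The crucial point is that the $l$-th prolongation of $P^{(1)}$ computed as a PDE, namely $J^1\bigl((P^{(1)})^{(l-1)}\bigr) \cap J^{l+1}P$, agrees with its prolongation computed as a Pfaffian fibration; this is Proposition \ref{prop:PDE_prolongation}, and iterating it (Corollary \ref{cor_integrability_order_k}) yields the canonical identification $(P^{(1)})^{(l)} = P^{(1+l)}$. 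Hence formal integrability of the Pfaffian fibration $(P,H)$ translates verbatim into formal integrability of the PDE $P^{(1)}$.

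Next I would match up the two notions of solution. A section $\ga$ of $P \arr M$ solves the PDE $P^{(1)}$ precisely when $j^1_x\ga \in P^{(1)} = J^1_H P$ for all $x$, which by Definition \ref{def_partial_prolongation} means $d_x\ga(T_xM) \subset H_{\ga(x)}$, i.e.\ $\ga$ is a holonomic section of $(P,H)$; this is also consistent with the bijection of Remark \ref{bijection_solutions_prolongations}. With this dictionary in place I would invoke Theorem \ref{existence_analytic_solutions_PDE} with the PDE taken to be $P^{(1)}$ (of order $1$, on the ambient fibration $P \arr M$): for each $l \geq 0$ and each $p \in (P^{(1)})^{(l)} = P^{(1+l)} \subset J^{1+l}P$ over $x$, there is an analytic local solution $\be$ with $j^{1+l}_x\be = p$. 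Taking $l = k-1$ for $k \geq 1$ produces an analytic holonomic section $\be$ of $(P,H)$ with $j^k_x\be = p$, which is exactly the claim.

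Finally, for the ``in particular'' statement I would handle the degenerate level $k=0$ separately, since the order-$0$ PDE $P \subset J^0 P = P$ carries no information. Given $p \in P$, formal integrability guarantees that $\mathrm{pr}: P^{(1)} \arr P$ is a surjective submersion, so I can choose $q \in P^{(1)}$ with $\mathrm{pr}(q) = p$; applying the case $k=1$ gives an analytic holonomic $\be$ with $j^1_x\be = q$, whence $\be(x) = \mathrm{pr}(q) = p$, yielding PDE-integrability. I do not expect a genuine obstacle here: all the analytic content is front-loaded into Proposition \ref{prop:PDE_prolongation} and the cited analytic PDE theorem, and the only care needed is the bookkeeping of which object plays the role of the PDE, its ambient jet space, and its order when applying Theorem \ref{existence_analytic_solutions_PDE}.
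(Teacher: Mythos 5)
Your proposal follows essentially the same route as the paper's proof: both reduce the statement to Goldschmidt's analytic existence theorem (Theorem \ref{existence_analytic_solutions_PDE}) applied to the first classical prolongation $P^{(1)} = \mathrm{Prol}(P,H)$, regarded as a first-order PDE inside $J^1 P$ on the fibration $P \arr M$; both use the identification $(P^{(1)})^{(k-1)} = P^{(k)}$ to transfer jets, translate solutions of the PDE $P^{(1)}$ into holonomic sections of $(P,H)$, and obtain PDE-integrability from the surjectivity of $\mathrm{pr}: P^{(1)} \arr P$ (your separate treatment of $k=0$ is the same observation the paper makes in its ``in particular'' step).

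Two inaccuracies should be corrected, though neither derails the argument. First, you assert $P^{(1)} = J^1_H P$; this equality is false in general --- the classical prolongation is the subset of the partial prolongation cut out by $\zeta^*\kappa_H = 0$, and by Corollary \ref{cor:3} equality forces strong conditions on $H$. Only the inclusion $P^{(1)} \subset J^1_H P$ holds, and fortunately that inclusion is all your dictionary needs: $j^1_x \ga \in P^{(1)} \subset J^1_H P$ already gives $d_x\ga(T_xM) \subset H_{\ga(x)}$, hence holonomicity of $\ga$. Second, the analyticity of $P^{(1)}$ is not ``exactly the content'' of the formal integrability hypothesis: formal integrability only gives smoothness of the tower and surjective submersions, while the hypothesis asserts analyticity of $(P,H)$ itself, not of its prolongations. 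The missing (routine but necessary) verification --- which occupies half of the paper's proof --- is that $J^1_H P$ is the kernel of the analytic affine bundle map $e$ of equation \eqref{prol-c1}, and $P^{(1)} \subset J^1_H P$ is in turn the kernel of the analytic map $\widetilde{\kappa}_H$ of equation \eqref{prol-c2}, so that $P^{(1)}$ is an analytic submanifold of $J^1 P$ and Theorem \ref{existence_analytic_solutions_PDE} genuinely applies. With these two repairs your argument coincides with the paper's.
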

\begin{proof}
If $(P,H)$ is formally integrable, its classical prolongation $P^{(1)} \subset J^1 P$ is a formally integrable PDE. 
Moreover, since $P$ is an analytic manifold, $J^1_H P$ is analytic as well, being the kernel of the analytic bundle map $e$
of equation \eqref{prol-c1}. Similarly, $P^{(1)} \subset J^1_H P$ is analytic because it is the kernel of $\widetilde{\kappa}_H$, which is also an analytic bundle map. 
We conclude that $P^{(1)}$ is an analytic formally integrable PDE, so we can apply Theorem \ref{existence_analytic_solutions_PDE}, which gives precisely the first part of our statement.

In particular, for every $p \in P^{(k)}=(P^{(1)})^{(k-1)}$ over $x$, there exists a solution $\be$ of the PDE $P^{(1)}$ such that $ j^k_x\beta=p$. This means that $\alpha=j^1\beta$ sits inside $P^{(1)}$, i.e.\ $\alpha$ is a holonomic section of $(P^{(1)},H^{(1)})$, and therefore $\mathrm{pr}(\alpha)=\beta$ is a holonomic section of $(P,H)$. The PDE-integrability of $(P,H)$ follows from the PDE-integrability of $P^{(1)}$ and the fact that $\mathrm{pr}:P^{(1)}\to P$ is surjective.
\end{proof}

We look now for sufficient conditions for formal integrability. An immediate one follows from Corollary \ref{integrability_finite_type_case}:

\begin{proposition}
Let $(P,H)$ be a Pfaffian fibration of finite type $l$. If $P$ is integrable up to order $k > l$, then it is formally integrable.
\end{proposition}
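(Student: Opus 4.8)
The plan is to invoke Corollary \ref{integrability_finite_type_case} directly. Since $(P,H)$ is of finite type $l$ and integrable up to order $k$ with $l < k$, that corollary guarantees integrability up to order $k+i$ for every $i \geq 0$; in other words, $(P,H)$ is integrable up to every order $\geq k$, and moreover each projection $\mathrm{pr}:P^{(j)}\to P^{(j-1)}$ is a bijection for $j \geq l$.

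To upgrade this to formal integrability (integrability up to \emph{any} order) I would cover the remaining orders below $k$ using the first bullet of Observation \ref{cor_integrability_order_k}: integrability up to order $k$ automatically yields integrability up to any order $i \leq k$. Combining the two ranges, $(P,H)$ is integrable up to every order $\geq 0$, which is precisely the definition of formal integrability.

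There is no substantial obstacle here: the statement is essentially a bookkeeping corollary of the finite-type result already established in Corollary \ref{integrability_finite_type_case}, whose proof in turn rests on the vanishing of the torsions $\tors^{k+i+1}$ (forced by $\g^{(k+i-1)}=0$ once $k+i-1\geq l$) together with the trivially constant rank of the higher symbol prolongations. The only point worth stating carefully is that the hypothesis $k > l$, equivalently $l < k$, is exactly what is needed to launch the inductive step of the corollary, so that the torsion obstruction is already known to vanish at the first prolongation beyond order $k$.
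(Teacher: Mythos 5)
Your proposal is correct and matches the paper's own treatment: the paper states this proposition as an immediate consequence of Corollary \ref{integrability_finite_type_case} (which gives integrability up to order $k+i$ for all $i \geq 0$, with the condition $k-1 \geq l$ forcing the vanishing of the torsions via $\g^{(k+i-1)} = 0$), and your appeal to the first bullet of Remark \ref{cor_integrability_order_k} to cover orders below $k$ is exactly the implicit bookkeeping step. Nothing is missing.
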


This proposition follows also as a corollary from a straightforward generalisation of the cohomological integrability criterion of Goldschmidt (Theorem \ref{Goldschmidt_criterion_PDE}):

\begin{theorem}
Let $(P,H)$ be a Pfaffian fibration such that
\begin{itemize}
\item The symbol space $\g$ is 2-acyclic, i.e.\ $H^{l, 2}(\g) = 0$ $\forall l \geq 0$
\item $\g^{(1)}$ is smooth and $P^{(1)} \arr P$ is surjective
\end{itemize}

Then $P$ is formally integrable.
\end{theorem}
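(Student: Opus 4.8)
The plan is to prove formal integrability by induction on the order, using the one-step criterion of Proposition \ref{integrability_one_step_higher} and feeding it with the two structural inputs provided by the hypotheses: the $2$-acyclicity of $\g$ controls the higher torsions via Proposition \ref{prop:torsion}, while it also guarantees the smoothness of all higher prolongations via Lemma \ref{lemma:2-acyclic}.

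First I would settle the base case. The hypothesis that $P^{(1)} \arr P$ is surjective is, by Theorem \ref{thm:classical_prolongation}, equivalent to the vanishing $\tors = \tors^1 = 0$; together with the assumed smoothness (constant rank) of $\g^{(1)}$, the same theorem shows that $(P,H)$ is integrable up to order $1$, with $\g(H^{(1)}) = \mathrm{pr}^*\g^{(1)}$. This gives the starting point of the induction at $k = 1$.

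For the inductive step, assume $(P,H)$ is integrable up to order $k \geq 1$ and aim to conclude integrability up to order $k+1$. By Proposition \ref{integrability_one_step_higher} it suffices to verify two things: that $\g^{(k+1)}$ is smooth and that $\tors^{k+1}$ vanishes. For the smoothness I invoke Lemma \ref{lemma:2-acyclic}: since $\g$ is $2$-acyclic and $\g^{(1)}$ is a vector bundle of constant rank by hypothesis, every prolongation $\g^{(i)}$ is again of constant rank, in particular $\g^{(k+1)}$. For the torsion, Proposition \ref{prop:torsion} places $\tors^{k+1}$ inside the Spencer cohomology group $H^{k-1,2}(\g)$; because $k \geq 1$ we have $k-1 \geq 0$, so the $2$-acyclicity condition $H^{l,2}(\g) = 0$ for all $l \geq 0$ forces $H^{k-1,2}(\g) = 0$ and hence $\tors^{k+1} = 0$. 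Applying Proposition \ref{integrability_one_step_higher} then yields integrability up to order $k+1$, completing the induction.

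Iterating shows that $(P,H)$ is integrable up to every order, which is exactly formal integrability. I do not expect a genuine obstacle: the conceptual content has already been isolated in Proposition \ref{prop:torsion} (identifying each torsion as a Spencer cohomology class) and in Lemma \ref{lemma:2-acyclic} (propagating constant rank along prolongations), so the theorem is a clean assembly of these facts. The only point requiring care is the bookkeeping of cohomological indices, namely checking that the degree $k-1$ appearing in $\tors^{k+1}$ falls within the range $l \geq 0$ covered by the $2$-acyclicity hypothesis; this holds precisely because the induction begins at $k = 1$.
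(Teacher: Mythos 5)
Your proof is correct and follows essentially the same route as the paper's: the base case via Theorem \ref{thm:classical_prolongation}, smoothness of all prolongations $\g^{(l)}$ from 2-acyclicity via Lemma \ref{lemma:2-acyclic}, and the inductive step killing $\tors^{k+1} \in H^{k-1,2}(\g)$ through Proposition \ref{prop:torsion} and Proposition \ref{integrability_one_step_higher}. One pedantic refinement: since the symbol tableau here is the generalised one defined by the map $\partial_H$ rather than by an inclusion, the paper invokes Lemma \ref{lemma:2-acyclic} together with Remark \ref{rmk:lemma}, which extends that lemma to this setting; your citation of the lemma alone should strictly be supplemented by the remark.
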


\begin{proof}
From the fact that $\g$ is 2-acyclic and $\g^{(1)}$ is smooth, it follows from Lemma \ref{lemma:2-acyclic} and Remark \ref{rmk:lemma} that $\g^{(l)}$ is smooth also for $l \geq 1$.
Moreover, thanks to our hypotheses, $P$ is already integrable up to order 1 by Theorem \ref{thm:classical_prolongation}. Assume now that $P$ is integrable up to order $l \geq 1$: then the torsion $\tors^{l+1}: P^{(l)} \arr H^{l-1,2}(\g) = 0$ must vanish, hence $P$ is integrable up to order $l+1$ by Proposition \ref{integrability_one_step_higher}. By induction we find that $P$ is formally integrable.
\end{proof}

\end{document}